\DeclareRobustCommand{\SkipTocEntry}[5]{}
\newtheorem{thm}{Theorem}[section]
\newtheorem{cor}[thm]{Corollary}
\newtheorem{prop}[thm]{Proposition}
\newtheorem{lem}[thm]{Lemma}
\theoremstyle{definition}
\newtheorem{defn}[thm]{Definition}
\theoremstyle{remark}
\newtheorem*{rem*}{Remark}
\numberwithin{equation}{section}
\newcounter{notation}
\DeclareUrlCommand\DOI{}
\crefname{figure}{Figure}{Figures}
\crefname{thm}{Theorem}{Theorems}
\crefname{cor}{Corollary}{Corollarys}
\crefname{cor*}{Corollary}{Corollarys}
\crefname{lem}{Lemma}{Lemmas}
\crefname{prop}{Proposition}{Propositions}
\crefname{defn}{Definition}{Definitions}
\crefname{rem}{Remark}{Remarks}
\def\addsymbol #1: #2#3{$#1$ \> \parbox{5.4in}{#2 \dotfill \pageref{#3}}\\} 
\def\addsymbolEND #1: #2#3{$#1$ \> \parbox{5.4in}{#2 \dotfill \pageref{#3}}}
\newcommand{\ds}{\displaystyle}
\newcommand{\fk}[1]{\mathfrak{#1}}
\renewcommand{\bar}{\overline}
\newcommand{\C}{\mathbb{C}}
\newcommand{\cC}{\mathcal{C}}
\newcommand{\Cl}{\mathrm{Cl}}
\newcommand{\kf}{\mathfrak{f}}
\newcommand{\cK}{\mathcal{K}}
\newcommand{\cL}{\mathcal{L}}
\renewcommand{\Im}{\mathrm{Im}}
\newcommand{\smod}[1]{\, (\mathrm{mod}^*{\, #1} )}
\renewcommand{\pmod}[1]{\, (\mathrm{mod} {\, #1})}
\newcommand{\kn}{\mathfrak{n}}
\newcommand{\N}{\mathrm{N}}
\newcommand{\kp}{\mathfrak{p}}
\newcommand{\cP}{\mathcal{P}}
\newcommand{\cO}{\mathcal{O}}
\newcommand{\ord}{\mathrm{ord}\,}
\newcommand{\kq}{\mathfrak{q}}
\newcommand{\Q}{\mathbb{Q}}
\newcommand{\cR}{\mathcal{R}}
\newcommand{\R}{\mathbb{R}}
\renewcommand{\Re}{\mathrm{Re}}
\newcommand{\cS}{\mathcal{S}}
\newcommand{\cT}{\mathcal{T}}
\newcommand{\cZ}{\mathcal{Z}}
\begin{document}

\title{Explicit estimates for the zeros of Hecke $L$-functions}

\author{Asif Zaman}
\address{
Department of Mathematics, University of Toronto \\
Room 6290, 40 St. George St., M5S2E4, Toronto, ON, Canada}
\email{asif@math.toronto.edu}
\thanks{The author was supported in part by an NSERC PGS-D scholarship.}

\date{}



\begin{abstract} Let $K$ be a number field and, for an integral ideal $\mathfrak{q}$ of $K$, let $\chi$ be a character of the narrow ray class group modulo $\kq$. We establish various new and improved explicit results, with effective dependence on $K, \kq$ and $\chi$, regarding the zeros of the Hecke $L$-function $L(s,\chi)$, such as zero-free regions, Deuring-Heilbronn phenomenon, and zero density estimates.
\end{abstract}

\maketitle

\tableofcontents

\section{Introduction}
Let $K$ be a number field of degree $n_K = [K:\Q]$ with absolute discriminant $d_K = |\mathrm{disc}(K/\Q)|$ and ring of integers $\cO_K$. For an integral ideal $\kq \subseteq \cO_K$, the (narrow) ray class group modulo $\kq$ is defined to be $\Cl(\kq) := I(\kq)/P_{\kq}$
where $I(\kq)$ is the group of fractional ideals of $K$ relatively prime to $\kq$, and $P_{\kq}$ is the subgroup of principal ideals $(\alpha)$ of $K$ such that $\alpha \equiv 1 \smod{\kq}$. Recall $\alpha \equiv 1 \smod{\kq}$ if and only if $\alpha$ is totally positive and $v(\alpha-1) \geq  v(\kq)$ for all discrete valuations $v$ of $K/\Q$. Characters $\chi$ of the ray class group will be referred to as Hecke characters, which we will often denote $\chi \pmod{\kq}$. Let $\ord \chi$ be the multiplicative order of $\chi$ in $\Cl(\kq)$. 

A Hecke character $\chi \pmod{\kq}$ possesses an associated $L$-function defined by
\[
L(s,\chi) := \prod_{\substack{ \kp \nmid \kq} } \Big(1 - \frac{\chi(\kp)}{(\N\kp)^{s}} \Big)^{-1} \quad \text{ for $\sigma > 1$,}
\]
where $s = \sigma+it$, $\N = \N^K_{\Q}$ is the absolute norm on integral ideals of $K$, and the product is over prime ideals $\kp \subseteq \cO_K$. In the special case $\kq = \cO_K$ and $\chi = \chi_0$ the principal character, the associated $L$-function is the Dedekind zeta function of $K$ given by
\[
\zeta_K(s) = \prod_{\kp} \Big(1 - \frac{1}{(\N\kp)^{s}} \Big)^{-1} \quad \text{ for $\sigma > 1$}.
\]
It is well known that the zeros of Hecke $L$-functions are intimately related with the distribution of prime ideals of $K$ amongst the equivalence classes of $\Cl(\kq)$ and, by class field theory, with prime ideal decompositions in abelian extensions of $K$. Indeed, the analytic properties of Hecke $L$-functions have been widely studied by many authors such as Fogels \cite{Fogels}. 

However, the known results on zeros of Hecke $L$-functions do not typically have explicit dependence on the field $K$ with explicit absolute constants.  In the last few years, there has been some progress in this direction with explicit results on the zero-free regions of $\zeta_K(s)$ by Kadiri \cite{Kadiri} and zero-free regions for Hecke $L$-functions by Ahn and Kwon \cite{Ahn-Kwon}. Kadiri and Ng \cite{KadNg} have also proved a form of quantitative Deuring-Heilbronn phenomenon  and explicit zero density estimates for $\zeta_K(s)$, the latter of which was subsequently improved by Trudgian \cite{Trudgian_2015}. The aim of this paper is to provide improved and several new explicit results on the zeros of Hecke $L$-functions such as improved zero-free regions, the Deuring-Heilbronn phenomenon for real Hecke characters, and zero density estimates. 

In the classical case $K = \Q$ and $\kq = (q)$, Hecke $L$-functions are the familiar Dirichlet $L$-functions modulo $q$ and prime ideals in equivalence classes of $\Cl(\kq)$ naturally correspond to primes in arithmetic progressions modulo $q$. There is a vast literature on the zeros of Dirichlet $L$-functions, including many with explicit constants, and of particular importance to us is the landmark paper of Heath-Brown \cite{HBLinnik}.

For the statement of the main theorems, note that $\nu(x)$ is \emph{any fixed} increasing real-variable function $\geq 4$ such that $\nu(x) \gg \log(x+4)$ for $x \geq 1$.

\begin{thm} \label{MainTheorem-ZFR}  Suppose $d_K (\N\kq) n_K^{n_K}$ is sufficiently large and let $r \geq 1$ be an integer. 
Then the function $\ds \prod_{\substack{ \chi \pmod{\kq} \\ \ord \chi \geq r} } L(s,\chi)$ has at most 1 zero, counting with multiplicity, in the rectangle
\[
\sigma \geq 1 - \frac{c}{\log d_K + \tfrac{3}{4}\log \N\kq + n_K \cdot \nu(n_K)}, \qquad |t| \leq 1
\]
where $s = \sigma+it$ and
\[
c = 
\begin{cases}
0.1764 & \text{if $r \geq 6$,} \\
0.1489 & \text{if $r = 5$,}  \\
0.1227 & \text{if $r = 2, 3, 4$,} \\
0.0875 & \text{if $r= 1$.}
\end{cases}
\]
Moreover, if this exceptional zero $\rho_1$ exists, then it and its associated character $\chi_1$ are both real. 
\end{thm}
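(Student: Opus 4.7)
The plan is to follow the classical de la Vallée Poussin / Heath-Brown approach via non-negative trigonometric polynomials, adapted to the Hecke setting. The proof rests on three ingredients which, by the structure of the paper, should be available from preceding sections: (i) an explicit partial-fraction upper bound of the form
\[
-\Re \tfrac{L'}{L}(\sigma+it,\chi) \leq A\mathcal{L} - \sum_\rho \Re \tfrac{1}{\sigma+it-\rho} + \tfrac{\delta(\chi,t)}{\sigma-1},
\]
valid for $1<\sigma\leq 2$, with $\mathcal{L}=\log d_K+\tfrac{3}{4}\log\N\kq+n_K\nu(n_K)$, the sum over nontrivial zeros $\rho$ of $L(s,\chi)$, and $\delta(\chi,t)=1$ precisely when $\chi=\chi_0$ and $t=0$; (ii) the Euler product positivity identity
\[
\sum_{k=0}^K a_k\bigl(-\Re\tfrac{L'}{L}(\sigma+ik\gamma,\chi^k)\bigr) = \sum_{\kp,m}\tfrac{\log\N\kp}{(\N\kp)^{m\sigma}} P\bigl(m\arg\chi(\kp)-m\gamma\log\N\kp\bigr) \geq 0
\]
whenever $P(\theta)=\sum_{k=0}^K a_k\cos(k\theta)\geq 0$ with $a_k\geq 0$; and (iii) an extremal non-negative cosine polynomial of degree $K<r$, chosen so that $\chi^k$ is nonprincipal for all $1\leq k\leq K$ whenever $\ord\chi\geq r$.

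Given a putative zero $\rho_1=\beta_1+i\gamma_1$ of some $L(s,\chi_1)$ in the rectangle with $\ord\chi_1\geq r$, I would feed (i) termwise into (ii) at $\gamma=\gamma_1$, $\chi=\chi_1$. Only the $k=0$ summand yields a pole contribution $\tfrac{a_0}{\sigma-1}$ from $\zeta_K$; the $k=1$ summand loses at least $\tfrac{a_1}{\sigma-\beta_1}$ from $\rho_1$; the remaining summands are bounded by $a_k A\mathcal{L}$. Substituting $\sigma=1+\lambda/\mathcal{L}$ and discarding the other (non-negative) zero contributions yields
\[
\tfrac{a_1}{\lambda+(1-\beta_1)\mathcal{L}} \leq \tfrac{a_0}{\lambda} + A\sum_{k=0}^K a_k,
\]
and optimizing over $\lambda>0$ together with the coefficients of $P$ produces the lower bound $1-\beta_1\geq c/\mathcal{L}$ with the stated constant $c$ depending on the admissible degree $K$, hence on $r$.

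For the uniqueness and reality assertions I would run the preceding argument for two putative zeros $\rho_1,\rho_2$ simultaneously. If $\chi_1$ is complex then $\bar\chi_1$ also has order $\geq r$ and $\bar\rho_1$ is a zero of $L(s,\bar\chi_1)$, automatically producing a second zero of the product. More generally, retaining $\rho_2$ in (i) introduces an extra negative term $-\tfrac{a_1}{\sigma-\beta_2}$ (and, when $\gamma_2$ is close to $\gamma_1$, a corresponding strengthening at the $k=2$ coefficient) that tightens the inequality sufficiently to preclude two zeros. Any surviving exceptional zero must therefore satisfy $\rho_1=\bar\rho_1$ with $\chi_1=\bar\chi_1$, i.e., both real.

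The main obstacle will be the extremal problem of selecting, separately in each regime $r\in\{1\},\{2,3,4\},\{5\},\{\geq 6\}$, the non-negative cosine polynomial of maximal admissible degree that jointly maximizes $c$ in the resulting inequality. The case $r=1$ is the most delicate, since $\chi_1$ may itself be principal, so the pole of $\zeta_K$ and the suspected zero lie on the same $L$-function; this forces a reduction essentially to de la Vallée Poussin's three-term polynomial $3+4\cos\theta+\cos 2\theta$ and yields the smaller constant $0.0875$. The progressively larger constants $0.1227$, $0.1489$, $0.1764$ reflect the availability of higher-degree Heath-Brown type extremal polynomials as $r$ grows.
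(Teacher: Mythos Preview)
Your outline has the right overall shape, but it diverges from the paper in ways that matter for obtaining the stated constants, and it contains one genuine misconception.

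First, the classical partial-fraction bound (i) with a bare constant $A$ in front of $\mathcal{L}$ is too crude to reach constants like $0.1764$ or even $0.1227$. The paper does not work directly with $-\Re\tfrac{L'}{L}$; it uses two refinements built earlier: the \emph{smoothed} explicit inequality (\cref{ExplicitNP-Apply}), which replaces the naive sum by a Laplace-weighted one and the constant $A$ by $f(0)\phi$ for a carefully chosen $f$, and the \emph{polynomial} explicit inequality (\cref{PolyEI-Apply}), which replaces the zero term $\tfrac{1}{s-\rho}$ by $P_4\bigl(\tfrac{\sigma-1}{s-\rho}\bigr)$ for the fixed admissible polynomial $P_4(X)=X+X^2+\tfrac{4}{5}X^3+\tfrac{2}{5}X^4$. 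Both the smoothed and the polynomial versions feed into the same degree-$4$ trigonometric identity $(3+10\cos\theta)^2(9+10\cos\theta)^2\geq 0$ (or $(10\cos\theta)^2(7+10\cos\theta)^2\geq 0$ when $\chi_1$ is principal). Your suggestion that the constants grow because one may use cosine polynomials of degree $K<r$ is not what happens: the paper uses the same quartic identity for \emph{all} orders, and the case distinction enters instead through which terms $\chi_1^n$ are principal (and hence contribute a pole) or coincide with $\bar\chi_1$ (and hence allow extraction of the conjugate zero).

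Second, and more seriously, your plan for $r\geq 6$ omits an essential extra ingredient. In that case the paper does \emph{not} derive $\lambda_1\geq 0.1764$ from the trigonometric identity alone. It first invokes the zero-repulsion bounds of \cref{Bounds-Lp-CC,Bounds-L2-CC} (which in turn rest on all of \cref{CC-ZeroRepulsion}) to conclude that the second-worst zero satisfies $\lambda^\star=\min\{\lambda',\lambda_2\}\geq 0.3916$, and then evaluates the smoothed inequality at $\sigma=1-\lambda^\star/\mathcal{L}$ rather than at $\sigma>1$. Without this repulsion input you cannot reach $0.1764$; the identity by itself caps out near the $r=5$ value. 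Similarly, the ``uniqueness and reality'' assertion is not obtained by re-running the same inequality with two zeros inserted, but by showing---case by case---that when $\chi_1$ or $\rho_1$ is complex the resulting lower bound on $\lambda_1$ already exceeds the relevant constant $c$, and that when both are real the full Deuring--Heilbronn machinery of \cref{SZ-ZeroRepulsion} pushes $\lambda'$ and $\lambda_2$ beyond $c$.

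In short: the trigonometric-polynomial skeleton is correct, but the actual proof requires the smoothed and polynomial explicit inequalities in place of (i), keeps the \emph{same} quartic identity across all $r$, and for $r\geq 6$ needs the repulsion results of Section~\ref{CC-ZeroRepulsion} as an additional input rather than a higher-degree cosine polynomial.
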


\begin{cor} \label{Corollary-ZFR}
The Dedekind zeta function $\zeta_K(s)$ has at most 1 zero, counting with multiplicity, in the rectangle
\[
\sigma \geq 1 - \frac{0.0875}{\log d_K + n_K \cdot \nu(n_K)}, \qquad |t| \leq 1,
\]
where $s = \sigma+it$ and provided $d_K n_K^{n_K}$ is sufficiently large. If this exceptional zero exists, it is real. 
\end{cor}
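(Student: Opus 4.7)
The plan is to derive this corollary as an immediate specialization of Theorem~\ref{MainTheorem-ZFR}. Concretely, I will take $\kq = \cO_K$ and apply the theorem with $r = 1$. With this choice $\N \kq = 1$, so the contribution $\tfrac{3}{4}\log \N\kq$ in the denominator vanishes, leaving exactly $\log d_K + n_K \cdot \nu(n_K)$; likewise the hypothesis that $d_K (\N\kq) n_K^{n_K}$ be sufficiently large collapses to $d_K n_K^{n_K}$ sufficiently large, which is precisely the hypothesis of the corollary.

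The key identification is that $\zeta_K(s)$ appears as one of the factors in the product $\prod_{\ord \chi \geq 1} L(s,\chi)$. For $\kq = \cO_K$ the narrow ray class group $\Cl(\cO_K)$ is the narrow ideal class group of $K$, and its trivial character $\chi_0$ has order $1$, so $\chi_0$ lies in the range $\ord \chi \geq 1$. The side condition $\kp \nmid \kq$ in the defining Euler product becomes automatic for every prime ideal $\kp$ when $\kq = \cO_K$, so $L(s,\chi_0) = \prod_{\kp}(1 - \N\kp^{-s})^{-1} = \zeta_K(s)$ on the nose.

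Once this is in hand the conclusion is immediate. Theorem~\ref{MainTheorem-ZFR} with $r = 1$ and $c = 0.0875$ guarantees that the full product has at most one zero, counted with multiplicity, in the stated rectangle. Since the other factors $L(s,\chi)$ with $\chi \neq \chi_0$ are entire, no cancellation of zeros by poles can occur, and every zero of $\zeta_K(s)$ in the rectangle must also be a zero of the product. Thus $\zeta_K(s)$ itself has at most one such zero. If this exceptional zero exists, it is the unique exceptional zero $\rho_1$ of the product, and the final sentence of Theorem~\ref{MainTheorem-ZFR} forces $\rho_1$ (and the associated $\chi_1 = \chi_0$) to be real.

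There is no genuine obstacle here: the corollary is essentially a restatement of the $r = 1$, $\kq = \cO_K$ case of the parent theorem, and the proof consists of recognizing the elementary identity $L(s,\chi_0) = \zeta_K(s)$ together with a substitution of parameters. The only thing worth checking carefully is that this choice of $\kq$ is permitted by the theorem's hypotheses, which it is.
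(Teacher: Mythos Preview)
Your proposal is correct and matches the paper's approach: the paper treats \cref{Corollary-ZFR} as an immediate specialization of \cref{MainTheorem-ZFR} with $\kq = \cO_K$ and $r = 1$, offering no separate argument beyond what you have written.
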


\begin{rem*} For general number fields $K$, it is possible that the exceptional character $\chi_1$ is principal. That is, the Dedekind zeta function $\zeta_K(s)$ may have a real zero exceptionally close to $s=1$.
\end{rem*}
As already mentioned, some explicit results have been shown by Kadiri \cite{Kadiri} and Ahn and Kwon \cite{Ahn-Kwon} for zero-free regions of $L(s,\chi)$ of the form
\begin{equation}
\sigma \geq 1 - \frac{c_0}{\log d_K + \log \N\kq},  \qquad |t| \leq 0.13.
\label{AlternateZFR}
\end{equation}
Note that the dependence on the degree $n_K$ has been ``absorbed" into $\log d_K$.  It has been shown that $L(s,\chi)$ is zero-free (except possibly for one real zero when $\chi$ is real) in the rectangle \eqref{AlternateZFR} for
\[
c_0 = \begin{cases}
0.1149 & \text{if $\ord \chi \geq 5$ \cite{Ahn-Kwon}}, \\
0.1004 & \text{if $\ord \chi = 4$ \cite{Ahn-Kwon}}, \\
0.0662 & \text{if $\ord \chi = 3$ \cite{Ahn-Kwon}},  \\
0.0392 & \text{if $\ord \chi = 2$ and $d_K$ is sufficiently large \cite{Kadiri}}\footnotemark, \\
0.0784 & \text{if $\ord \chi = 1$ and $d_K$ is sufficiently large \cite{Kadiri}.} 
\end{cases}
\]
Note that the results of \cite{Kadiri} also allow for $|t| \leq 1$ to be used in \eqref{AlternateZFR}.\footnotetext{This case is not explicitly written in the cited paper but is directly implied by the case $\ord \chi = 1$.} Comparing the above known values for $c_0$ with \cref{MainTheorem-ZFR}, if a given family of number fields $K$ satisfies
\begin{equation}
n_K = O\Big( \frac{\log(d_K \N\kq)}{\log\log(d_K\N\kq)} \Big),
\label{SmallDegreeAssumption}
\end{equation}
then, for a suitable choice of $\nu(x)$, \cref{MainTheorem-ZFR} is superior to all previously known cases, especially in the $\N\kq$-aspect. A classical theorem of Minkowski states, for any number field $K$, 
\[
n_K = O(\log d_K)
\]
so, unless $n_K$ is unusually large, one would expect that \eqref{SmallDegreeAssumption} typically holds. For example, given a fixed rational prime $p$, one can verify that the family of $p$-power cyclotomic fields $K = \Q(e^{2\pi i/p^m})$ satisfies \eqref{SmallDegreeAssumption}.

We also establish a result,  similar to those of \cite{Graham} and \cite{HBLinnik} for Dirichlet $L$-functions, giving a larger zero-free region but allowing more zeros. 

\begin{thm} \label{MainTheorem-ZFR_2} Suppose $d_K (\N\kq) n_K^{n_K}$ is sufficiently large. 
\noindent
Then $\ds \prod_{\chi \pmod{\kq} } L(s,\chi)$ has at most 2 zeros, counting with multiplicity, in the rectangle
\[
\sigma \geq 1 - \frac{0.2866}{\log d_K+ \tfrac{3}{4} \log \N\kq + n_K \cdot \nu(n_K)} \qquad |t| \leq 1.
\]
Moreover, the Dedekind zeta function $\zeta_K(s)$ has at most 2 zeros, counting with multiplicity, in the rectangle
\[
\sigma \geq 1 - \frac{0.2909}{\log d_K+ n_K \cdot \nu(n_K)} \qquad |t| \leq 1.
\]
\end{thm}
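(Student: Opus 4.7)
The plan is to extend the proof of \cref{MainTheorem-ZFR} by replacing the underlying non-negative trigonometric polynomial (of the type $1+\cos\theta \ge 0$, responsible for the ``at most one zero'' conclusion) with a non-negative polynomial of higher degree that can absorb the contribution of two putative zeros while still being compatible with a Dirichlet-series positivity argument. This mirrors the passage, in the classical Dirichlet setting, from a Page-type bound to the sharper two-zero bounds of Graham \cite{Graham} and Heath-Brown \cite{HBLinnik}.

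Concretely, I would argue by contradiction: suppose $\prod_{\chi\pmod{\kq}} L(s,\chi)$ has three zeros $\rho_1,\rho_2,\rho_3$ (counted with multiplicity) in the claimed rectangle, attached to characters $\chi_1,\chi_2,\chi_3$ and ordinates $t_1,t_2,t_3$. The key positivity is
\[
\sum_{\kn} \frac{\Lambda_K(\kn)}{(\N\kn)^{\sigma_0}} \Bigl|\,\sum_{j=1}^{3} c_j\, \chi_j(\kn)\,(\N\kn)^{-it_j}\,\Bigr|^{2} \;\ge\; 0,
\]
valid for any $c_j \in \C$ and any $\sigma_0 > 1$, where $\Lambda_K$ denotes the von Mangoldt function on integral ideals of $K$. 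Expanding the modulus squared rewrites the left-hand side as a non-negatively weighted combination of values $-\tfrac{L'}{L}\bigl(\sigma_0 + i(t_j-t_k),\,\chi_j\bar{\chi}_k\bigr)$. For each logarithmic derivative I invoke the explicit formula
\[
-\tfrac{L'}{L}(s,\chi) \;=\; \sum_{\rho} \tfrac{1}{s-\rho} \;+\; O\!\bigl(\log d_K + \tfrac{3}{4}\log \N\kq + n_K\,\nu(n_K)\bigr)
\]
already developed for \cref{MainTheorem-ZFR} (the $\tfrac{3}{4}$-factor is precisely what persists into the statement of \cref{MainTheorem-ZFR_2}). Isolating the three hypothetical zeros on the zero side, choosing $\sigma_0 = 1 + \lambda/\mathcal{L}$ with $\mathcal{L}$ denoting the denominator in the theorem, and optimizing jointly over $\lambda$ and the weights $c_j$ drives the constant down to $0.2866$; any triple of zeros in the stated rectangle forces a contradiction between the upper and lower bounds.

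For $\zeta_K(s)$, all characters collapse to the trivial character and the three shifts can be assumed real (reflecting across the real axis if necessary), so the $\tfrac{3}{4}\log \N\kq$ term disappears and the optimization reduces to real weights tested against real-shifted values of $-\zeta_K'/\zeta_K$. This cleaner variational problem admits a marginally better optimum, yielding the improved constant $0.2909$.

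The main obstacle is the joint numerical optimization over $(c_1,c_2,c_3,\lambda)$ subject to the non-negativity constraint built into the Dirichlet series: any inefficiency in the choice of the quadratic form in the $c_j$ translates directly into a weaker final constant, and the specific values $0.2866$ and $0.2909$ are sharp only for particular extremal weight vectors. A secondary but unavoidable bookkeeping issue arises when several ordinates $t_j$ nearly coincide, so that some cross-terms $\chi_j\bar{\chi}_k$ become principal with small imaginary shift; the standard remedy is to include every zero in each rectangle $|t - t_j|\le 1$ as an additional non-negative contribution on the zero side, at the cost of a controllable loss that is already absorbed into the displayed constants.
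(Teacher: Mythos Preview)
Your outline is in the right spirit but diverges substantially from the paper's actual argument, and as written it would not produce the constants $0.2866$ and $0.2909$.

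In the paper, \cref{MainTheorem-ZFR_2} is not proved by a single three-term quadratic form. Instead, the zeros are labelled as in \cref{sec:ZeroFreeGap_and_BadZeros}: $\rho_1$ is the worst zero overall, $\rho'$ the second-worst zero of $L(s,\chi_1)$, and $\rho_2$ the worst zero of some $L(s,\chi_2)$ with $\chi_2 \neq \chi_1,\bar\chi_1$. A third zero in the rectangle forces $\min(\lambda',\lambda_2)\le 0.2866$, and the paper rules this out by proving $\lambda' \ge 0.2866$ and $\lambda_2 \ge 0.2866$ \emph{separately}, via an extensive case analysis (\cref{SZ-L1Lp-Summary,SZ-L1L2-Summary,Bounds-Lp-CC,Bounds-L2-CC}) according to whether $\chi_1$ is principal or quadratic or of higher order, and whether $\rho_1,\rho',\rho_2$ are real or complex. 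Each case uses a tailored trigonometric identity (e.g.\ $(1+\chi_1(\kn))(J+\Re\{\chi_1(\kn)(\N\kn)^{-i\gamma}\})^2$) together with either the smoothed explicit inequality (\cref{ExplicitNP-Apply}) with weights from \cite[Lemmas~7.1,~7.2,~7.5]{HBLinnik} or the polynomial explicit inequality (\cref{PolyEI-Apply}) with $P_4$. The constants $0.2866$ and $0.2909$ are read off from the endpoints of numerically optimized tables (\cref{Table-SZ-L1Lpmedium-Quadratic,Table-Lp-CC_Chi1NonPrincipal,Table-Lp-CC_Chi1Principal_RhopComplex}, etc.).

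The specific gap in your proposal is the explicit inequality you invoke. Writing $-\tfrac{L'}{L}(s,\chi)=\sum_\rho \tfrac{1}{s-\rho}+O(\cL)$ is the \emph{classical} inequality of \cref{ClassicalEI}, with coefficient $\phi=\tfrac14$ in front of $\cL$. With that tool alone, even the best one-zero region only reaches $0.1764$ (cf.\ \cref{MainTheorem-ZFR}); a bare quadratic form in three unknowns $c_j$ against the classical inequality cannot be pushed to $0.2866$. The paper's improvements come precisely from replacing $-\tfrac{L'}{L}$ by the weighted sums $\cK(s,\chi;f)$ and $\cP(s,\chi;P_4)$, whose explicit inequalities have strictly smaller effective main terms, and from exploiting in each case the extra zeros $\bar\rho_1,\bar\rho'$ that become available when $\chi_1$ or $\rho_1$ is real. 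Your sketch does not distinguish these cases and does not invoke the sharper inequalities, so the optimization you allude to would fall well short of the stated constants.
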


When an exceptional zero $\rho_1$ from \cref{MainTheorem-ZFR} exists, we prove an explicit version of the well-known Deuring-Heilbronn phenomenon.  

\begin{thm} \label{MainTheorem-DH} Let $\chi_1 \pmod{\kq}$ be a real character. Suppose 
\[
\beta_1 = 1-\frac{\lambda_1}{\log d_K + \tfrac{3}{4}\log \N\kq + n_K \cdot \nu(n_K)}
\]
is a real zero of $L(s,\chi_1)$ with $\lambda_1 > 0$ . Then, provided $d_K(\N\kq)n_K^{n_K}$ is sufficiently large (depending on $R \geq 1$ and possibly $\epsilon > 0$), the function $\ds \prod_{\chi \pmod{\kq} } L(s,\chi)$ has only the one zero $\beta_1$, counting with multiplicity, in the rectangle
\[
\sigma \geq 1 - \frac{\min\{ c_1 \log(1/\lambda_1), R\} }{\log d_K + \tfrac{3}{4}\log \N\kq + n_K \cdot \nu(n_K)}  , \qquad |t| \leq 1,
\]
where $s=\sigma+it$ and
\[
c_1 = \begin{cases} 
\tfrac{1}{2}-\epsilon & \text{if $\chi_1$ is quadratic and $\lambda_1 \leq 10^{-10}$}, \\
0.2103 & \text{if $\chi_1$ is quadratic and $\lambda_1 \leq 0.1227$}, \\ 
1-\epsilon & \text{if $\chi_1$ is principal and $\lambda_1 \leq 10^{-5}$}, \\
0.7399 & \text{if $\chi_1$ is principal and $\lambda_1 \leq 0.0875$}. \\ 
\end{cases}
\] 
\end{thm}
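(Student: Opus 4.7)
The plan is to argue by contradiction using the standard power-sum / trigonometric-polynomial framework in the spirit of Heath-Brown \cite{HBLinnik} and Kadiri and Ng \cite{KadNg}. Write $\mathcal{L} := \log d_K + \tfrac{3}{4}\log\N\kq + n_K\nu(n_K)$ and, given $R \geq 1$ (and, where relevant, $\epsilon > 0$), suppose for contradiction that some character $\chi_0 \pmod\kq$ has a zero $\rho_0 = \beta_0 + i\gamma_0 \neq \beta_1$ of $L(s,\chi_0)$ with $|\gamma_0|\leq 1$ and
\[
1 - \beta_0 < \lambda_0/\mathcal{L}, \qquad \lambda_0 := \min\{c_1\log(1/\lambda_1),\,R\}.
\]

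First, I would fix a non-negative even trigonometric polynomial $P(\theta) = \sum_{k=0}^{K} b_k\cos(k\theta)$ with coefficients $b_k \geq 0$. For the explicit constants $0.2103$ and $0.7399$ a small-degree polynomial of Fej\'er type suffices, while the asymptotic constants $\tfrac{1}{2} - \epsilon$ and $1-\epsilon$ require a family of polynomials whose degree $K$ grows with $\log(1/\lambda_1)$. Set $\psi_k := \chi_0^k\chi_1^{\epsilon_k}$, where $\epsilon_k \in \{0,1\}$ is chosen so that the combination
\[
\mathcal{S}(\sigma) \;:=\; \sum_{k=0}^{K}b_k\,\Re\!\left(-\frac{L'}{L}(\sigma + ik\gamma_0,\psi_k)\right) \;=\; \sum_{\kn}\frac{\Lambda_K(\kn)\,P(\vartheta_\kn)}{(\N\kn)^\sigma} \;\geq\; 0 \qquad (\sigma > 1)
\]
is manifestly non-negative for a real angle $\vartheta_\kn$ encoding $\chi_0(\kn)$, $\chi_1(\kn)\in\{\pm 1,0\}$, $\gamma_0$, and $\log\N\kn$. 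In the principal case $\chi_1$ is trivial and one simply takes $\epsilon_k=0$; in the quadratic case, alternating $\epsilon_k$ exploits $\chi_1^2=\chi_{\mathrm{triv}}$, and this is the structural feature responsible for the halving of $c_1$ relative to the principal case.

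Next, I would substitute the explicit partial-fraction expansion for $-L'/L(\sigma+it,\psi_k)$ (with every constant rendered effective by the computations in the preceding sections, absorbing the $n_K\log(|k\gamma_0|+4)$ terms into $n_K\nu(n_K)$) into $\mathcal{S}(\sigma)$. After dropping every zero contribution that is non-negative for $\sigma > 1$, except those from $\beta_1$ and from the pair $\rho_0, \overline{\rho_0}$, and isolating the pole at $s=1$ whenever a principal $\psi_k$ appears, the result is an inequality of the form
\[
\frac{B_0}{\sigma - 1} + \frac{B_1}{\sigma - \beta_1} + B_2\,\Re\frac{1}{\sigma + i\gamma_0 - \rho_0} \;\leq\; C(P)\,\mathcal{L},
\]
with explicit non-negative $B_0, B_1, B_2$ and $C(P)$ depending only on $P$. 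Choosing $\sigma = 1 + \eta \lambda_1/\mathcal{L}$ for a parameter $\eta > 0$, each denominator becomes $\Theta(\lambda_1/\mathcal{L})$ or $\Theta((\eta\lambda_1 + \lambda_0)/\mathcal{L})$, and the inequality reduces to an algebraic constraint among $\eta$, $\lambda_0$, $\lambda_1$, and the coefficients of $P$. Optimizing over $\eta$ and $P$, and using that $\lambda_1$ is small (as permitted by the upper bounds $\lambda_1 \leq 10^{-10},\, 0.1227,\, 10^{-5},\, 0.0875$ in the four cases), extracts the claimed value of $c_1$.

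The main obstacle will be the quantitative optimization. For the explicit constants $0.2103$ and $0.7399$, pinning down these numbers demands a tight numerical search over low-degree polynomials $P$ and the parameter $\eta$, combined with careful tracking of every absolute constant appearing in the explicit formula. For the asymptotic constants $\tfrac{1}{2}-\epsilon$ and $1-\epsilon$, one must construct an extremal family of polynomials $P_{\epsilon}$ whose coefficients depend on $\epsilon$ and whose degree grows with $\log(1/\lambda_1)$, show that the resulting $C(P_\epsilon)$ grows at most logarithmically in $1/\lambda_1$, and verify that all implicit $O_\epsilon(1)$ losses can be absorbed into the smallness hypothesis on $d_K\N\kq n_K^{n_K}$ (depending on $R$ and $\epsilon$). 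A subsidiary technical point is the handling of the case $|\gamma_0|>0$, where the shifts $t = k\gamma_0$ introduce $n_K\log(|t|+4) \ll n_K\nu(n_K)$ error terms that must be tracked carefully inside the explicit formula.
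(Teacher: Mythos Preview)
Your general framework---non-negative trigonometric identity plus explicit formula plus optimization---is in the right spirit, but the paper's execution differs from what you sketch, and your method would not recover the stated constants.

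The paper does not use the raw partial-fraction expansion of $-L'/L$ with trigonometric polynomials of growing degree. Following Heath-Brown \cite{HBLinnik}, it uses a \emph{smoothed} explicit inequality (Section~6): for a compactly supported weight $f$, the quantity $\mathcal{K}(s,\chi;f) = \Re\sum_{\kn}\Lambda_K(\kn)\chi(\kn)(\N\kn)^{-s}f(\cL^{-1}\log\N\kn)$ is bounded via the Laplace transform $F$ of $f$ evaluated at scaled zeros. The trigonometric identity is always low degree---essentially $(1+\chi_1(\kn))(1+\Re\{\chi(\kn)(\N\kn)^{-i\gamma}\})\geq 0$---and the key repulsion inequality reads $0\leq 2F(-\lambda')-2F(\lambda_1-\lambda')-F(0)+\psi f(0)+\epsilon$ with $\psi=4\phi$ or $2\phi$ according as $\chi_1$ is quadratic or principal. (The factor of two between cases arises from this count of $\mathcal{K}$-terms, not from alternating $\epsilon_k$.) For the asymptotic constants $\tfrac12-\epsilon$ and $1-\epsilon$, the paper simply takes $f(t)=\max(x_0-t,0)$ and optimizes $x_0$; no growing-degree family is needed. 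The constants $0.2103$ and $0.7399$ are read off numerical tables (Tables~2--5) computed with Heath-Brown's extremal weight from \cite[Lemma~7.2]{HBLinnik} over subintervals of $\lambda_1$, taking the worst ratio $\lambda_b'/\log(1/b)$. The problem is also split into bounding $\lambda'$ (second zero of $L(s,\chi_1)$) and $\lambda_2$ (worst zero of any $\chi\neq\chi_1,\bar\chi_1$) separately, then combined. Your classical $-L'/L$ approach is closer to that of Kadiri--Ng \cite{KadNg} and would be expected to produce constants comparable to theirs (e.g.\ $0.9045$, $0.6546$ in the principal case), not the sharper ones stated here; in particular the specific values $0.2103$ and $0.7399$ are artifacts of the smoothed method and its tables, so you should not expect to land on them from a different optimization.
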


In the classical case $K = \Q$ and $\kq = (q)$, Linnik \cite{Linnik2} was the pioneer of the Deuring-Heilbronn phenomenon. For other number fields,
a non-explicit $K$-uniform variant of \cref{MainTheorem-DH} is due to Lagarias-Montgomery-Odlyzko \cite{LMO} in the case of the Dedekind zeta function $\zeta_K(s)$ and to Weiss \cite{Weiss} for general Hecke $L$-functions. Kadiri and Ng \cite{KadNg} have recently established an explicit version of the Deuring-Heilbronn phenomenon for zeros of the Dedekind zeta function $\zeta_K(s)$ with 
\[
c_1 = \begin{cases}  
0.9045 & \text{if $\lambda_1 \leq 10^{-6}$}, \\
0.6546 & \text{if $\lambda_1 \leq 0.0784$}. \\ 
\end{cases}
\] 
Hence, \cref{MainTheorem-DH} improves upon their result when \eqref{SmallDegreeAssumption} holds and when the primary term $c_1 \log(1/\lambda_1)$ dominates, as normally is the case. 

We also establish explicit bounds related to the zero density of Hecke $L$-functions. For $\lambda > 0$, define $N(\lambda)$ to be the number of non-principal characters $\chi \pmod{\kq}$ with a zero in the region
\begin{equation}
\sigma \geq 1-\frac{\lambda}{\log d_K + \tfrac{3}{4}\log \N\kq + n_K \cdot \nu(n_K)}, \qquad |t| \leq 1. 
\label{MainTheorem-ZD-Region}
\end{equation}
In the classical case $K= \Q$ and $\kq = (q)$, this quantity has been analyzed by \cite{Graham,HBLinnik} for a slowly growing range ($\lambda \ll \log\log\log q$) and by \cite{HBLinnik} for a bounded range $(\lambda \leq 2)$. We establish a result in the same vein as the latter. To do so, we require some  technical assumptions.

Let $0 < \lambda \leq 2$ be given. Let $f \in C_c^2([0,\infty))$ have Laplace transform $F(z) = \int_0^{\infty} f(t) e^{-zt} dt$. Suppose $f$ satisfies all of the following:
\begin{equation}
\begin{aligned}
& f(t) \geq 0 \text{ for $t \geq 0$;} \qquad  \Re\{ F(z) \} \geq 0 \text{ for $\Re\{z\} \geq 0;$} \\
& F(\lambda) > \tfrac{1}{3}f(0); \qquad \Big( F(\lambda) - \tfrac{1}{3}f(0) \Big)^2 > \tfrac{1}{3}f(0)\Big(\tfrac{1}{4}f(0) + F(0) \Big). \\
\end{aligned}
\label{MainTheorem-ZD-Condition}
\end{equation}
Then we have the following result.

\begin{thm} \label{MainTheorem-ZD} Let $\epsilon > 0$ and $0 < \lambda \leq 2$. Suppose $f \in C_c^2([0,\infty))$ satisfies \eqref{MainTheorem-ZD-Condition}. Then unconditionally, 
\[
N(\lambda)\leq 
\frac{\Big(\tfrac{1}{4}f(0) + F(0) \Big)\Big( F(0) - \tfrac{1}{12} f(0) \Big)}
{\Big( F(\lambda) - \tfrac{1}{3}f(0) \Big)^2  - \tfrac{1}{3}f(0) \Big(\tfrac{1}{4}f(0)+ F(0) \Big)} + \epsilon
\]
for $d_K(\N\kq)n_K^{n_K}$ sufficiently large depending on $\epsilon$ and $f$. 
\end{thm}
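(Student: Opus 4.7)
The plan is to adapt Heath-Brown's second-moment approach for the zero density of Dirichlet $L$-functions \cite{HBLinnik} to the Hecke setting. First I would set $L_0 := \log d_K + \tfrac{3}{4}\log\N\kq + n_K\nu(n_K)$, and collect in $\mathscr{C}$ the non-principal Hecke characters $\chi\pmod\kq$ possessing a zero $\rho_\chi = 1 - \lambda_\chi/L_0 + i\gamma_\chi$ lying in \eqref{MainTheorem-ZD-Region}, choosing one such zero per character. Put $N := N(\lambda) = |\mathscr{C}|$.

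Next, attach to the Laplace pair $(f,F)$ a weighted Dirichlet polynomial $\Psi(\chi)$ in $\chi(\kn)\Lambda_K(\kn)$ supported on the logarithmic scale $L_0$, normalised so that all quantities are of constant order rather than of order $L_0$. Via the inverse Laplace representation $f(u) = \tfrac{1}{2\pi i}\int_{(c)}F(s)e^{us}\,ds$ and the Dirichlet expansion $-L'/L(s,\chi) = \sum_\kn \Lambda_K(\kn)\chi(\kn)(\N\kn)^{-s}$, shift contours to obtain the weighted explicit formula
\[
\Psi(\chi) \;=\; -\sum_\rho F\!\big(L_0(1-\rho)\big) \;+\; R(\chi),
\]
the $\rho$-sum running over nontrivial zeros (with multiplicity), and $R(\chi)$ absorbing trivial zeros, archimedean $\Gamma$-factors, and the polar term at $s=1$ (which vanishes for non-principal $\chi$). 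Using Stirling together with $\nu(x)\gg\log(x+4)$ and the definition of $L_0$, one verifies $|R(\chi)| \leq \tfrac{1}{3}f(0) + o(1)$ uniformly. For $\chi\in\mathscr{C}$, the positivity $\Re F(z)\geq 0$ on $\Re z\geq 0$ discards all zero-contributions except that of $\rho_\chi$ (and of $\overline{\rho_\chi}$ when $\chi$ is real), and the monotonicity $F(\lambda_\chi)\geq F(\lambda)$ coming from $f\geq 0$ then yields the individual lower bound $|\Psi(\chi)| \geq B + o(1)$, where $B := F(\lambda) - \tfrac{1}{3}f(0) > 0$ by the first inequality in \eqref{MainTheorem-ZD-Condition}.

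Squaring and summing gives $\sum_{\chi\in\mathscr{C}}|\Psi(\chi)|^2 \geq NB^2 + o(N)$; bounding this above by $\sum_\chi|\Psi(\chi)|^2$, I would apply character orthogonality $\sum_\chi\chi(\kn_1)\bar\chi(\kn_2) = |\Cl(\kq)|\cdot\mathbf{1}[\kn_1\equiv\kn_2 \text{ in }\Cl(\kq)]$ (for $(\kn_1\kn_2,\kq)=1$) to convert to a double sum over ideals. The diagonal $\kn_1=\kn_2$ contributes at leading order the product $A_1A_2 := (\tfrac{1}{4}f(0)+F(0))(F(0)-\tfrac{1}{12}f(0))$ via a Mertens-type estimate for prime ideals of $K$; the off-diagonal (distinct prime powers in the same ray class) is estimated by reapplying the explicit formula so that its mass is absorbed into the pairs of near-one zeros from characters in $\mathscr{C}$, producing an additional term at most $NC$ with $C := \tfrac{1}{3}f(0)\cdot(\tfrac{1}{4}f(0)+F(0)) = \tfrac{1}{3}f(0)\cdot A_1$. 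Combining the upper and lower bounds,
\[
NB^2 \;\leq\; A_1A_2 + NC + o(N+1),
\]
and using $B^2 > C$ from the second inequality in \eqref{MainTheorem-ZD-Condition}, rearrangement yields $N \leq A_1A_2/(B^2-C) + \epsilon$ whenever $d_K(\N\kq)n_K^{n_K}$ is sufficiently large in terms of $\epsilon$ and $f$.

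The main obstacle is the quantitative tracking of constants rather than the overall structure. Three technical points stand out: (i) extracting precisely $\tfrac{1}{3}f(0)$ (not merely $O(f(0))$) from the $\Gamma$-factor term $R(\chi)$ via Stirling combined with a Riemann--von Mangoldt zero count in $|t|\leq 1$; (ii) producing the exact Mertens constant $A_1A_2$ from the diagonal so that the $n_K$-dependence is fully absorbed by the $n_K\nu(n_K)$ summand inside $L_0$; and (iii) detecting the $NC$ off-diagonal contribution rather than a weaker $O(N)$ bound, for which Theorems \ref{MainTheorem-ZFR} and \ref{MainTheorem-DH} are invoked to guarantee sparsity of $\mathscr{C}$ and isolation of its near-one zeros. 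A further subtlety in the individual lower bound is the oscillation $(\N\kn)^{-i\gamma_\chi}$: since a naive positivity drop loses $\gamma_\chi$-dependence, one either inserts a phase twist into $\Psi(\chi)$ depending on $\gamma_\chi$ or averages the explicit formula over a short $t$-window to restore the clean comparison $\Re F(L_0(1-\rho_\chi))\geq F(\lambda) - o(1)$.
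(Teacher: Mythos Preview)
Your overall shape—explicit formula plus a second-moment/Cauchy--Schwarz argument—is right, but two of your proposed mechanisms are wrong and would not produce the stated constants.

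First, extending $\sum_{\chi\in\mathscr{C}}|\Psi(\chi)|^2$ to all characters and invoking orthogonality is fatal: the relation $\sum_\chi\chi(\kn_1)\bar\chi(\kn_2)=h_\kq\cdot\mathbf{1}_{[\kn_1]=[\kn_2]}$ introduces a factor $h_\kq$, which can be as large as $e^{O(L_0)}$, swamping everything. The paper never leaves $\mathscr{C}$. Instead, one writes $\sum_{j\le N}\cK(\sigma+i\gamma^{(j)},\chi^{(j)};f)$ as a single weighted ideal sum with inner factor $\sum_{j\le N}\chi^{(j)}(\kn)(\N\kn)^{-i\gamma^{(j)}}$, applies Cauchy--Schwarz \emph{in the $\kn$-variable}, and obtains $S_1\cdot S_2$ with $S_1=\cK(\sigma,\chi_0;f)$ and $S_2=\sum_{j,k\le N}\cK(\sigma+i(\gamma^{(j)}-\gamma^{(k)}),\chi^{(j)}\bar\chi^{(k)};f)$. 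The $N$ diagonal terms of $S_2$ and the single term $S_1$ are both bounded by the smoothed explicit inequality for $\chi_0$; the $N^2-N$ off-diagonal terms are bounded one at a time by the same inequality for the (non-principal) product character $\chi^{(j)}\bar\chi^{(k)}$, extracting no zeros. No Mertens estimate and no appeal to the zero-free region or Deuring--Heilbronn is needed.

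Second, the constant $\tfrac{1}{3}f(0)$ does not come from Stirling on the $\Gamma$-factors; those contribute only $o(L_0)$ and are absorbed into $\epsilon$. It comes from $f(0)\,\phi\,\cL_\chi/\cL\le f(0)\,\phi/\vartheta$ with $\phi=\tfrac{1}{4}$ the convexity exponent for $L(s,\chi)$ (entering via a Jensen-type argument bounding $-\Re L'/L$) and $\vartheta=\tfrac{3}{4}$ the conductor weight in $L_0$, so $\phi/\vartheta=\tfrac{1}{3}$. Similarly, $\tfrac{1}{4}f(0)+F(0)$ arises from the explicit inequality applied to $\chi_0$ (the $F(0)$ is the pole at $s=1$, the $\tfrac{1}{4}f(0)$ is $f(0)\phi$ since $\cL_0\le\cL$), and $F(0)-\tfrac{1}{12}f(0)$ is simply $\big(\tfrac{1}{4}f(0)+F(0)\big)-\tfrac{1}{3}f(0)$, appearing when one solves the resulting quadratic inequality $B^2N^2\le A_1\big(A_1 N+\tfrac{1}{3}f(0)(N^2-N)\big)$ for $N$.
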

\begin{rem*} Let $\rho_1$ be a certain zero of a Hecke $L$-function $L(s,\chi_1)$ with the property that $\Re\{\rho_1\} \geq \Re\{\rho_{\chi}\}$ for any zero $\rho_{\chi}$ in the rectangle \eqref{MainTheorem-ZD-Region} of any $\chi \pmod{\kq}$. By introducing dependence on $\rho_1$, the bound on $N(\lambda)$ in \cref{MainTheorem-ZD} can be improved. See \cref{sec:ZeroFreeGap_and_BadZeros} for the choice of $\rho_1$ and \cref{NewZDE} for further details. 
\end{rem*}
\cref{MainTheorem-ZD} and its proof are inspired by \cite[Section 12]{HBLinnik} and so similarly, the obtained bounds are non-trivial only for a narrow range of $\lambda$. By choosing\footnote{See the discussion following the proof of \cref{NewZDE} for details.} $f$ roughly optimally, we exhibit a table of bounds derived from \cref{MainTheorem-ZD} below.
\vspace{5mm}

\hspace*{-0.2in}
\begin{tabular}{c|c|c|c|c|c|c|c|c|c|c|c|c|c|c|c|c|c|c|c|c|c} 
$\lambda $ & $.100$ & $.125$ & $.150$ & $.175$ & $.200$ & $.225$ & $.250$ & $.275$ & $.300$ & $.325$ & $.350$ & $.375$ & $.400$ & $.425$ \\ 
 \hline 
$N(\lambda)$ & \vspace*{0.1in}  2 &  2 &  3 &  3 &  4 &  4 &  5 &  6 &  7 &  9 &  11 &  15 &  22 &  46   \\ 
\end{tabular} \\\\
\noindent
One can see that the estimates obtained are comparable to \cref{MainTheorem-ZFR,MainTheorem-ZFR_2} which respectively imply that $N(0.1227) \leq 1$ and $N(0.2866) \leq 2$ . 

In the classical case $K = \Q$, Heath-Brown substantially improved upon all preceding work for zeros of Dirichlet $L$-functions and so, for general number fields $K$, we have taken advantage of the innovations founded in \cite{HBLinnik} to improve on the existing aforementioned results and also to establish new explicit estimates. As such, the general structure of this paper is reminiscent of his work and is subject to small improvements similar to those suggested in \cite[Section 16]{HBLinnik}. Xylouris implemented a number of those suggestions in \cite{Xylouris} so in principle one could refine the results here  by the same methods. 

Finally, we describe the organization of the paper. Section 2 covers well-known facts about Hecke $L$-functions and some elementary estimates. In Section 3, we specify some frequently-used notation and identify zeros of Hecke $L$-functions which will play a key role throughout the paper. Sections 4, 5 and 6 establish several different ``explicit inequalities" related to $-\tfrac{L'}{L}(s,\chi)$ by involving classical arguments, higher derivatives of $-\tfrac{L'}{L}(s,\chi)$, and smooth weights. The results therein form the technical crux of all subsequent proofs and applications. Section 7 provides bounds for the zero density quantity $N(\lambda)$. Section 8 quantifies Deuring-Heilbronn phenomenon for the exceptional case. Section 9 deals with the milder zero repulsion in the non-exceptional case. Section 10 establishes a zero-free region for Hecke $L$-functions.

For the reader who wishes to proceed quickly to the proofs of the theorems:

\begin{itemize}
	\item \cref{MainTheorem-ZFR} and \cref{Corollary-ZFR} are proved in \cref{sec:ZeroFreeRegion}. 
	\item \cref{MainTheorem-ZFR_2} is an immediate corollary of \cref{SZ-L1Lp-Summary,SZ-L1L2-Summary,Bounds-Lp-CC,Bounds-L2-CC}.
	\item \cref{MainTheorem-DH} is an immediate corollary of \cref{SZ-L1Lp-Summary,SZ-L1L2-Summary}. 
	\item \cref{MainTheorem-ZD} is a special case of \cref{NewZDE}. 
\end{itemize}

\addtocontents{toc}{\SkipTocEntry}
\subsection*{Acknowledgements} 
\noindent
It is my pleasure to gratefully acknowledge the support and guidance of my advisor, Prof. John Friedlander, who initially suggested this problem to me and with whom I have had many helpful discussions. I would also like to thank the referee for a careful reading of the paper and many valuable suggestions.

\section{Preliminaries}
\label{sec:Preliminaries}


\subsection{Hecke $L$-functions}
\label{HeckeL-functions}
Recall \emph{Hecke characters} are characters $\chi$ of the ray class group $\Cl(\kq) = I(\kq)/P_{\kq}$. We often write $\chi \pmod{\kq}$ to indicate this relationship. For notational convenience, we pullback the domain of $\chi$ to  $I(\kq)$ and then extend it to all of $I(\cO)$ by zero; that is, $\chi(\kn)$ is defined for all integral ideals $\kn \subseteq \cO$ and $\chi(\kn) = 0$ for $(\kn,\kq) \neq 1$. The \emph{conductor} $\fk{f}_{\chi}$ of a Hecke character $\chi \pmod{\kq}$ is the maximal integral ideal such that $\chi$ is the push-forward of a Hecke character modulo $\fk{f}_{\chi}$. It follows that $\fk{f}_{\chi}$ divides $\kq$. We say $\chi$ is \emph{primitive modulo $\kq$} if $\kf_{\chi} = \kq$. 

Thus, the Hecke $L$-function associated to $\chi \pmod{\kq}$ may be written as
\[
L(s,\chi) = \sum_{\kn \subseteq \cO} \chi(\kn) (\N{\kn})^{-s} = \prod_{\kp} \Big(1-\frac{\chi(\kp)}{(\N{\kp})^s} \Big)^{-1} \qquad \text{for $\sigma > 1$}
\]
where $s = \sigma + it \in \C$. Unless otherwise specified, we shall henceforth refer to Hecke characters as characters.
\subsubsection*{Functional Equation}
\label{subsec:FuncEqn}
Let $\chi \pmod{\kf_{\chi}}$ be a primitive character. Recall that the \emph{L-function of $\chi$ at infinity} is given by
\begin{equation}
L_{\infty}(s,\chi) := \Big[ \pi^{-s/2} \Gamma\Big( \frac{s}{2}\Big) \Big]^{a(\chi)} \cdot \Big[ \pi^{-\frac{s+1}{2}} \Gamma\Big( \frac{s+1}{2} \Big) \Big]^{b(\chi)}
\label{L-Infinite}
\end{equation}
where $\Gamma(s)$ is the Gamma function and $a(\chi), b(\chi)$ are certain non-negative integers satisfying 
\[
a(\chi) + b(\chi) = [K:\Q] = n_K.
\]
Then the \emph{completed $L$-function of $L(s,\chi)$} is defined to be
\begin{equation}
\xi(s,\chi) := \begin{cases}  (d_K\N\kf_{\chi})^{s/2} L(s,\chi) L_{\infty}(s,\chi) & \text{if } \chi \neq \chi_0, \\\\
 (d_K\N\kf_{\chi})^{s/2} L(s,\chi) L_{\infty}(s,\chi)  \cdot  s(1-s) & \text{if } \chi = \chi_0.
 \end{cases}
 \label{L-completed}
\end{equation}
With an appropriate choice of $a(\chi)$ and $b(\chi)$, it is well-known that $\xi(s,\chi)$ is an entire function satisfying the functional equation
\begin{equation}
\xi(s,\chi) = \varepsilon(\chi)  \cdot \xi(1-s,\overline{\chi})
\label{FunctionalEquation}
\end{equation}
where $\varepsilon(\chi) \in \C$ is the global root number having absolute value 1. See \cite[Section 5]{LO} for details. 

\subsubsection*{Convexity Bound} 

\begin{lem} \label{ConvexityBd} Let $\delta \in (0,\tfrac{1}{2})$ be given. Suppose  $\chi$ is a primitive non-principal Hecke character modulo $\kf_{\chi}$. Then
\[
|L(s,\chi)| \ll  \zeta_{\Q}(1+\delta)^{n_K} \Big( \frac{d_K \N\kf_{\chi}}{ (2\pi)^{n_K}} (1+|s|)^{n_K} \Big)^{(1-\sigma+\delta)/2}
\]
and
\[
|(s-1) \cdot \zeta_K(s)| \ll  \zeta_{\Q}(1+\delta)^{n_K} \Big( \frac{d_K}{ (2\pi)^{n_K}} (1+|s|)^{n_K} \Big)^{(1-\sigma+\delta)/2}
\]
uniformly in the region
\[
-\delta \leq \sigma \leq 1+\delta. 
\]  
\end{lem}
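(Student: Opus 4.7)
The plan is to establish this standard convexity estimate by a Phragmén--Lindelöf interpolation between bounds on the two vertical lines $\sigma = 1+\delta$ and $\sigma = -\delta$, using the Euler product on the right and the functional equation \eqref{FunctionalEquation} together with Stirling's formula on the left.

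First I would handle the right edge $\sigma = 1+\delta$. Here the Euler product converges absolutely, so
\[
|L(s,\chi)| \leq \prod_{\kp} \Big(1 - (\N\kp)^{-1-\delta}\Big)^{-1} = \zeta_K(1+\delta).
\]
Since each rational prime $p$ splits into at most $n_K$ prime ideals of $\cO_K$, grouping by rational primes yields $\zeta_K(1+\delta) \leq \zeta_\Q(1+\delta)^{n_K}$, which is exactly the asserted bound at $\sigma = 1+\delta$ (where the exponent $(1-\sigma+\delta)/2 = 0$).

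Next I would handle the left edge $\sigma = -\delta$. The functional equation \eqref{FunctionalEquation} rearranges, via \eqref{L-Infinite} and \eqref{L-completed}, to
\[
L(s,\chi) = \varepsilon(\chi)\, (d_K \N\kf_\chi)^{1/2 - s}\, \frac{L_\infty(1-s,\bar\chi)}{L_\infty(s,\chi)}\, L(1-s,\bar\chi).
\]
On $\sigma = -\delta$ we have $\Re(1-s) = 1+\delta$, so the previous step gives $|L(1-s,\bar\chi)| \leq \zeta_\Q(1+\delta)^{n_K}$. The ratio of infinite $L$-factors is, by \eqref{L-Infinite}, a product of $a(\chi) + b(\chi) = n_K$ Gamma ratios times a pure power of $\pi$; Stirling's formula applied uniformly on a vertical strip bounded away from the poles of the Gamma factors yields
\[
\Big| \frac{L_\infty(1-s,\bar\chi)}{L_\infty(s,\chi)} \Big| \ll \Big( \frac{1+|s|}{2\pi} \Big)^{n_K(1/2 - \sigma)},
\]
with an absolute implied constant. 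Combining the three factors at $\sigma = -\delta$ produces precisely
\[
|L(s,\chi)| \ll \zeta_\Q(1+\delta)^{n_K} \Big( \frac{d_K \N\kf_\chi}{(2\pi)^{n_K}} (1+|s|)^{n_K} \Big)^{1/2 + \delta},
\]
which agrees with the target bound since $(1-\sigma+\delta)/2 = 1/2+\delta$ on this line.

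Finally I would apply Phragmén--Lindelöf on the strip $-\delta \leq \sigma \leq 1+\delta$. The function $L(s,\chi)$ is entire for non-principal $\chi$ (so no poles intervene), and the convexity bounds above on the two boundary lines grow at most polynomially in $|t|$, so the standard three-lines theorem gives the stated bound throughout the strip with the linearly interpolated exponent $(1-\sigma+\delta)/2$. For the Dedekind zeta function, replacing $\zeta_K(s)$ by $(s-1)\zeta_K(s)$ removes the simple pole at $s=1$, yielding an entire function of order $1$; the Euler-product bound on $\sigma = 1+\delta$ and the Stirling/functional-equation analysis on $\sigma = -\delta$ go through verbatim (with $\N\kf_\chi = 1$), and Phragmén--Lindelöf again interpolates, producing the second inequality. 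The only mildly delicate step is ensuring the Stirling bound for the ratio of Gamma factors is truly uniform in the $|t|$ variable all the way down to $t = 0$; this is handled by combining asymptotic Stirling for $|t|$ large with a trivial compactness bound for $|t|$ bounded, absorbed into the implied constant, which is why the estimate is phrased with $(1+|s|)^{n_K}$ rather than $|t|^{n_K}$.
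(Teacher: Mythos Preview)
Your approach is correct and is precisely the Phragm\'en--Lindel\"of argument underlying Rademacher's theorem, which the paper simply cites rather than reproving. So there is no methodological difference: the paper's ``proof'' is a one-line reference to \cite[Theorem~5]{Rademacher}, and you have sketched what Rademacher actually does.

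One genuine wrinkle in your $\zeta_K$ case deserves attention, though. You say the Euler-product bound on $\sigma = 1+\delta$ ``goes through verbatim'' for $(s-1)\zeta_K(s)$, but it does not: on that line $|(s-1)\zeta_K(s)| \leq |s-1|\,\zeta_{\Q}(1+\delta)^{n_K}$, which carries an extra factor $|s-1| \asymp 1+|s|$ that is unbounded in $t$. The same extra factor appears on $\sigma = -\delta$ after the functional equation, so Phragm\'en--Lindel\"of yields the stated bound \emph{times} $(1+|s|)$. Indeed, Rademacher's actual inequality has exactly this extra factor (his bound for $\zeta_K$ contains $|{(1+s)}/{(1-s)}|$, so $|(s-1)\zeta_K(s)|$ picks up $|1+s|$). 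The lemma as stated in the paper appears to have dropped this factor; it is harmless for the paper's applications (in \cref{CriticalStripConstant} the region has $|t| \leq \cT$ and an extra $\log(1+|s|) \ll \log\cT$ is absorbed into $o_\epsilon(\cL)$), but your ``verbatim'' is not literally correct. The clean fix is to run Phragm\'en--Lindel\"of on $\tfrac{s-1}{s+1}\zeta_K(s)$, which is entire and genuinely bounded on the right edge, then multiply back by $|s+1| \leq 1+|s|$.
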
 
\begin{proof} This is a version of \cite[Theorem 5]{Rademacher} which has been simplified for our purposes. In his notation, the constants $v_q, a_p, a_{p+r_2}, v_p$ are all zero for characters of $\Cl(\kq)$. Recall that $\zeta_{\Q}(\, \cdot \,)$ is the classical Riemann zeta function. 
\end{proof}
\subsubsection*{Explicit Formula} 
Using the Hadamard product for $\xi(s,\chi)$, one may derive an explicit formula for the logarithmic derivative of $L(s,\chi)$. Before recording this classical result, we introduce an additional piece of notation which will be used throughout the paper: 
\begin{equation}
E_0(\chi) := \begin{cases} 1 & \text{if $\chi$ is principal,} \\ 0 & \text{otherwise.} \end{cases}
\label{def:E_0}
\end{equation}
\begin{lem}
\label{LogDiffCorollary}
Let $\chi$ be a primitive Hecke character modulo $\kf_{\chi}$. Then for all $s \in \C$ away from zeros of $\xi(s,\chi)$, 
\[
-\frac{L'}{L}(s,\chi) =  \frac{E_0(\chi)}{s-1} + \frac{E_0(\chi)}{s} + \frac{1}{2} \log (d_K \N\kq) + \frac{L_{\infty}'}{L_{\infty}}(s, \chi)  - B(\chi) - \sum_{\rho} \Big(\frac{1}{s-\rho} + \frac{1}{\rho} \Big),
\]
where $B(\chi) \in \C$ is a constant depending on $\chi$ and the conditionally convergent sum is over all zeros $\rho$ of $\xi(s,\chi)$. Moreover, 
\[
\Re\{ B(\chi) \}= -\frac{1}{2}  \sum_{\rho} \Big(\frac{1}{1-\rho} + \frac{1}{\rho} \Big) = -\sum_{\rho} \Re \frac{1}{\rho} < 0.
\label{BChi}
\]
\end{lem}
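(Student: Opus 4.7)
The plan is to derive the formula from the Hadamard product of the completed $L$-function $\xi(s,\chi)$, and then compute $\Re B(\chi)$ using the functional equation together with complex conjugation symmetry.

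First, I would recall (or verify by a standard Stirling/convexity argument using \cref{ConvexityBd} together with \eqref{L-completed}) that $\xi(s,\chi)$ is entire of order $1$. By Hadamard's factorization theorem, there exist constants $A(\chi), B(\chi) \in \C$ such that
\[
\xi(s,\chi) = e^{A(\chi) + B(\chi) s} \prod_{\rho} \Big(1-\frac{s}{\rho}\Big) e^{s/\rho},
\]
the product being over the nontrivial zeros $\rho$ of $L(s,\chi)$ (together with $s=0,1$ if $\chi$ is principal). Taking the logarithmic derivative yields
\[
\frac{\xi'}{\xi}(s,\chi) = B(\chi) + \sum_{\rho} \Big(\frac{1}{s-\rho} + \frac{1}{\rho}\Big).
\]
On the other hand, logarithmically differentiating the definition \eqref{L-completed} of $\xi(s,\chi)$ and isolating $-L'/L(s,\chi)$ produces exactly the asserted formula (the terms $E_0(\chi)/s$ and $E_0(\chi)/(s-1)$ coming from the logarithmic derivative of the factor $s(1-s)$ in the principal case, where $\N\kf_{\chi_0}=1$).

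For the identity for $\Re B(\chi)$, I would apply the functional equation \eqref{FunctionalEquation} to obtain $\frac{\xi'}{\xi}(s,\chi) = -\frac{\xi'}{\xi}(1-s,\overline{\chi})$; setting $s=0$ gives
\[
B(\chi) = -B(\overline{\chi}) - \sum_{\rho'} \Big(\frac{1}{1-\rho'} + \frac{1}{\rho'}\Big),
\]
where $\rho'$ ranges over zeros of $\xi(s,\overline{\chi})$. Since the Dirichlet series defining $L(s,\chi)$ has the symmetry $\overline{L(\bar s,\chi)} = L(s,\overline{\chi})$, and similarly for $L_\infty$, one has $\overline{B(\chi)} = B(\overline{\chi})$ and $\{\rho'\} = \{\overline{\rho}\}$. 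Taking real parts of the displayed identity thus yields
\[
2\Re B(\chi) = -\sum_\rho \Big(\Re\frac{1}{1-\rho} + \Re\frac{1}{\rho}\Big).
\]
Finally, the functional equation forces the zero set $\{\rho\}$ to be stable under $\rho \mapsto 1-\overline{\rho}$, so the $(1-\rho)$-sum equals the $\rho$-sum, giving $\Re B(\chi) = -\sum_\rho \Re(1/\rho)$; positivity of $\Re(1/\rho)$ for zeros in the critical strip ($0 < \Re\rho < 1$) furnishes the final strict inequality.

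The only genuine subtlety is the conditional convergence of the sums over zeros and the rearrangements performed above: they must be interpreted symmetrically (pairing $\rho$ with $\bar\rho$, or with $1-\bar\rho$), which is the standard convention for the sum in the explicit formula and is already implicit in the statement. Everything else is a direct manipulation of the Hadamard product and functional equation, so I expect no real obstacle beyond keeping the bookkeeping of the principal-character factor $s(1-s)$ straight.
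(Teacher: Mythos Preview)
Your approach is correct and is exactly the standard argument; the paper itself does not give a proof but simply cites \cite[Section 5]{LO}, where the derivation proceeds precisely along the lines you sketch (Hadamard product for $\xi$, logarithmic differentiation, and functional equation plus conjugation symmetry to identify $\Re B(\chi)$). One small slip: in the principal case $s=0,1$ are \emph{not} zeros of $\xi$ and do not appear in the Hadamard product --- the factor $s(1-s)$ is already built into the definition \eqref{L-completed} of $\xi$, as you correctly use later when producing the $E_0(\chi)/s + E_0(\chi)/(s-1)$ terms --- so that parenthetical should be dropped.
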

\begin{proof} See \cite[Section 5]{LO} for a proof. Note $\ds ``\sum_{\rho}"$ denotes $``\ds \lim_{T \rightarrow \infty} \sum_{|\Im \rho| \leq T}"$. 
\end{proof}
\noindent
\cref{LogDiffCorollary}  gives the desired formula for $-\frac{L'}{L}(s,\chi)$ with only $\frac{L_{\infty}'}{L_{\infty}}(s,\chi)$ to be estimated.
\begin{lem} \label{LogDiff-Infinite} Let $\chi$ be a primitive Hecke character.  If $\Re\{s\} \geq 1/8$, then
\[
\frac{L_{\infty}'}{L_{\infty}}(s,\chi) \ll n_K \log(2 + |s|). 
\]
\end{lem}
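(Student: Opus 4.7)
The plan is to reduce the estimate to a standard bound on the digamma function $\psi(w) = \Gamma'/\Gamma(w)$. Taking the logarithmic derivative of the defining expression
\[
L_{\infty}(s,\chi) = \Big[ \pi^{-s/2} \Gamma\Big( \tfrac{s}{2}\Big) \Big]^{a(\chi)} \cdot \Big[ \pi^{-(s+1)/2} \Gamma\Big( \tfrac{s+1}{2} \Big) \Big]^{b(\chi)},
\]
I get
\[
\frac{L_{\infty}'}{L_{\infty}}(s,\chi) = -\tfrac{n_K}{2}\log \pi + \tfrac{a(\chi)}{2}\,\psi\!\Big(\tfrac{s}{2}\Big) + \tfrac{b(\chi)}{2}\,\psi\!\Big(\tfrac{s+1}{2}\Big),
\]
using $a(\chi)+b(\chi) = n_K$ to collapse the $\log\pi$ terms.

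Next I would invoke the classical Stirling bound for the digamma function: for any fixed $\delta > 0$, uniformly in the half-plane $\Re\{w\} \geq \delta$ one has
\[
\psi(w) = \log w + O_{\delta}(1) \ll_{\delta} \log(2+|w|),
\]
since $w$ is then bounded away from the poles of $\psi$ at the non-positive integers. In our setting $\Re\{s\} \geq 1/8$ forces $\Re\{s/2\} \geq 1/16$ and $\Re\{(s+1)/2\} \geq 9/16$, so both arguments lie safely inside such a half-plane, and both satisfy $|w| \leq 1 + |s|$. Hence each digamma contribution is $\ll \log(2+|s|)$.

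Combining, and absorbing the $-\tfrac{n_K}{2}\log \pi$ term into the implied constant times $n_K$, gives
\[
\frac{L_{\infty}'}{L_{\infty}}(s,\chi) \ll (a(\chi)+b(\chi))\log(2+|s|) = n_K\log(2+|s|),
\]
as required. There is no real obstacle here: the only point to verify carefully is that the half-plane $\Re\{s\}\geq 1/8$ keeps $s/2$ and $(s+1)/2$ away from the poles of $\Gamma$, which it clearly does. If one prefers to avoid quoting Stirling for $\psi$, one can instead use the Weierstrass product $\psi(w) = -\gamma - \tfrac{1}{w} + \sum_{n\geq 1}\big(\tfrac{1}{n}-\tfrac{1}{w+n}\big)$ and split the sum at $n \asymp |w|$ to recover the same $\log(2+|w|)$ bound directly.
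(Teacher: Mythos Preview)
Your proof is correct and is precisely the standard argument underlying the cited result: the paper simply refers to \cite[Lemma 5.3]{LO} rather than writing out the details, but the proof there (and the one implicit in the paper's context) proceeds exactly as you do, by taking the logarithmic derivative of \eqref{L-Infinite} and applying the Stirling bound $\psi(w) = \log w + O_\delta(1)$ to each digamma term once $\Re\{s\} \geq 1/8$ keeps $s/2$ and $(s+1)/2$ in a fixed right half-plane.
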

\begin{proof} See \cite[Lemma 5.3]{LO}. 
\end{proof}

\subsection{Elementary Estimates}
\label{Elementary}
\begin{lem} \label{ImprimitiveQ} Let $\kq$ be an integral ideal. Then, for $\epsilon > 0$, 
\[
\sum_{\kp \mid \kq} \frac{\log \N\kp}{\N\kp} \leq \sqrt{ n_K \log \N\kq } \leq \frac{1}{2}\Big( \frac{n_K}{\epsilon} + \epsilon \log \N\kq \Big)
\]
where is sum is over prime ideals $\kp$ dividing $\kq$.
\end{lem}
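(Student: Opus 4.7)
The statement has two inequalities. For the second, $\sqrt{n_K \log \N\kq} \leq \tfrac12(n_K/\epsilon + \epsilon \log \N\kq)$, the plan is to invoke AM--GM on the pair $a = n_K/\epsilon$ and $b = \epsilon \log \N\kq$, whose geometric mean is exactly $\sqrt{n_K \log \N\kq}$.

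For the first inequality, the plan is to apply the Cauchy--Schwarz inequality to the splitting $\frac{\log \N\kp}{\N\kp} = (\log \N\kp)^{1/2} \cdot \frac{(\log \N\kp)^{1/2}}{\N\kp}$, yielding
\[
\Big( \sum_{\kp \mid \kq} \frac{\log \N\kp}{\N\kp} \Big)^2 \leq \Big( \sum_{\kp \mid \kq} \log \N\kp \Big) \cdot \Big( \sum_{\kp \mid \kq} \frac{\log \N\kp}{(\N\kp)^2} \Big).
\]
The first factor is immediately $\leq \log \N\kq$ from the factorization $\N\kq = \prod (\N\kp)^{a_\kp}$ with exponents $a_\kp \geq 1$. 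For the second factor, the plan is to enlarge the sum to all prime ideals of $\cO_K$ and group by the rational prime $p$ lying below $\kp$. If $\kp_1,\ldots,\kp_g$ are the primes above $p$ with ramification indices $e_i$ and residue degrees $f_i$, the identity $\sum_i e_i f_i = n_K$ forces $\sum_i f_i \leq n_K$, and the bound $p^{2 f_i} \geq p^2$ gives
\[
\sum_{\kp \mid p} \frac{\log \N\kp}{(\N\kp)^2} = \sum_i \frac{f_i \log p}{p^{2 f_i}} \leq \frac{n_K \log p}{p^2}.
\]
Summing over $p$ bounds the second Cauchy--Schwarz factor by $n_K \sum_p (\log p)/p^2$.

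The one remaining (very mild) obstacle is then the numerical inequality $\sum_p (\log p)/p^2 \leq 1$, which follows either by truncating the sum and estimating the tail or by the dominating comparison $\sum_p (\log p)/p^2 < \sum_{n \geq 2} (\log n)/n^2 = -\zeta'(2) \approx 0.937 < 1$. Multiplying the two bounded factors then yields $(\sum_{\kp \mid \kq} \frac{\log \N\kp}{\N\kp})^2 \leq n_K \log \N\kq$, and taking square roots finishes the proof.
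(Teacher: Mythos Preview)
Your proof is correct and follows essentially the same approach as the paper: the same AM--GM for the second inequality, and the same Cauchy--Schwarz splitting $\frac{\log \N\kp}{\N\kp} = (\log \N\kp)^{1/2} \cdot \frac{(\log \N\kp)^{1/2}}{\N\kp}$ for the first, with the first factor bounded by $\log \N\kq$ and the second by $n_K$ times a numerical constant below $1$. The only cosmetic difference is that the paper groups the second factor by the value $m = \N\kp$ (using that at most $n_K$ prime ideals have a given prime-power norm) to get $n_K \sum_{m\geq 1} (\log m)/m^2 < n_K$, whereas you group by the rational prime below and use $\sum_i f_i \leq n_K$ to get the sharper $n_K \sum_p (\log p)/p^2 < n_K$; both invoke $-\zeta'(2) < 1$.
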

\begin{proof} The second inequality follows from $(x+y)/2 \geq \sqrt{xy}$ for $x,y \geq 0$. It suffices to prove the first estimate. 
Write $\kq = \prod_{i=1}^r \kp_i^{e_i}$ in its unique ideal factorization where $\kp_i$ are  distinct prime ideals and $e_i \geq 1$. Denote $q_i = \N\kp_i$ and $a_m = \#\{ i : q_i = m\}$. Observe that $a_m = 0$ unless $m$ is a power of a rational prime $p$. Since the principal ideal $(p)$ factors into at most $n_K$ prime ideals in $K$, it follows $a_m \leq n_K$ for $m \geq 1$. Thus, by Cauchy-Schwarz,
\begin{align*}
\sum_{\kp \mid \kq} \frac{\log \N\kp}{\N\kp} 
= \sum_{i=1}^r \frac{\log q_i}{q_i} 
& \leq \Big(\sum_{i=1}^r \frac{\log q_i}{q_i^2} \Big)^{1/2} \Big( \sum_{i=1}^r \log q_i\Big)^{1/2}  \\
& = \Big(\sum_{m \geq 1} a_m \frac{\log m}{m^2} \Big)^{1/2} \Big( \sum_{i=1}^r \log q_i\Big)^{1/2}  \\
& \leq n_K^{1/2} \Big(\sum_{m \geq 1} \frac{\log m}{m^2} \Big)^{1/2} \Big( \sum_{i=1}^r e_i \log q_i\Big)^{1/2}  \\
& =  \Big(\sum_{m \geq 1} \frac{\log m}{m^2} \Big)^{1/2}\sqrt{n_K \log \N\kq} 
\end{align*}
Since $\sum_{m \geq 1} \frac{\log m}{m^2} < 1$, the result follows. 
\end{proof}

\begin{lem} For $\sigma > 1$, 
\begin{align*}
\zeta_K(\sigma) & \leq \zeta(\sigma)^{n_K} \leq \Big(\frac{\sigma}{\sigma-1}\Big)^{n_K}, \\
\log \zeta_K(\sigma) &  \leq n_K \log\Big(\frac{\sigma}{\sigma-1} \Big), \\
-\frac{\zeta_K'}{\zeta_K}(\sigma) & \leq -n_K \frac{\zeta'}{\zeta}(\sigma) \leq  \frac{n_K}{\sigma-1}. 
\end{align*}
where $\zeta(s) = \zeta_{\Q}(s)$ is the classical Riemann zeta function.
\label{LogDedekind}
\end{lem}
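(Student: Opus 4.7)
The plan is to prove each of the three chains in turn by exploiting the Euler product of $\zeta_K$ and grouping by the rational prime $p$ lying below each prime ideal $\kp$. Writing $f_\kp = f(\kp \mid p)$ so that $\N\kp = p^{f_\kp}$, we have $(1-\N\kp^{-\sigma})^{-1} \le (1-p^{-\sigma})^{-1}$ since $f_\kp \ge 1$, and there are at most $n_K$ prime ideals above $p$ (from $\sum_{\kp\mid p}e_\kp f_\kp = n_K$). Hence $\prod_{\kp\mid p}(1-\N\kp^{-\sigma})^{-1} \le (1-p^{-\sigma})^{-n_K}$, and multiplying over $p$ gives $\zeta_K(\sigma)\le\zeta(\sigma)^{n_K}$. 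The bound $\zeta(\sigma)\le\sigma/(\sigma-1)$ follows from the monotone integral comparison $\sum_{n\ge 2}n^{-\sigma}\le\int_1^\infty x^{-\sigma}\,dx = 1/(\sigma-1)$. The second chain is just the logarithm of the first.

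For the third chain, I would compute $-\zeta_K'/\zeta_K(\sigma) = \sum_\kp\sum_{k\ge 1}(\log\N\kp)/\N\kp^{k\sigma}$, group by $p$, and sum the geometric series in $k$ to obtain $\sum_{\kp\mid p}f_\kp\log p/(p^{f_\kp\sigma}-1)$. Then $p^{f_\kp\sigma}-1\ge p^\sigma-1$ together with $\sum_{\kp\mid p}f_\kp\le n_K$ bounds this prime-by-prime by $n_K\log p/(p^\sigma-1) = n_K\sum_{k\ge 1}(\log p)/p^{k\sigma}$. Summing over $p$ delivers $-\zeta_K'/\zeta_K(\sigma)\le -n_K\zeta'/\zeta(\sigma)$.

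The final step $-\zeta'/\zeta(\sigma)\le 1/(\sigma-1)$ is the main obstacle. A direct Chebyshev--Mellin approach writing $-\zeta'/\zeta(\sigma) = \sigma\int_1^\infty \psi(x)x^{-\sigma-1}dx$ with $\psi(x)\le x$ yields only $\sigma/(\sigma-1)$, which is off by the factor $\sigma$; and the sharper $\psi(x)\le x-1$ fails numerically (for instance $\psi(5)>4$). I would instead use a layer-cake argument: from $\log n = \int_0^\infty\mathbf{1}[e^t<n]\,dt$,
\[
-\zeta'(\sigma) \;=\; \sum_{n\ge 1}\frac{\log n}{n^\sigma} \;=\; \int_0^\infty\Big(\sum_{n>e^t}n^{-\sigma}\Big)\,dt.
\]
For $t\in[\log k,\log(k+1))$, the inner sum equals $\sum_{n\ge k+1}n^{-\sigma}\le\int_k^\infty x^{-\sigma}\,dx = k^{1-\sigma}/(\sigma-1)$, and the interval has length $\log(1+1/k)\le 1/k$. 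Summing over $k\ge 1$ gives $-\zeta'(\sigma)\le(\sigma-1)^{-1}\sum_k k^{-\sigma} = \zeta(\sigma)/(\sigma-1)$, and dividing by $\zeta(\sigma)$ finishes the proof. Equivalently, this is the assertion that $(\sigma-1)\zeta(\sigma)$ is non-decreasing on $\sigma>1$, which the layer-cake bound establishes without appealing to any prime-counting estimate.
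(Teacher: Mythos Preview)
Your proof is correct and follows essentially the same approach as the paper: group the Euler product by the rational prime below, use that at most $n_K$ prime ideals lie above each $p$, and then bound the Riemann zeta factor. The paper handles the two classical inequalities $\zeta(\sigma)\le \sigma/(\sigma-1)$ and $-\zeta'/\zeta(\sigma)\le 1/(\sigma-1)$ by citing \cite[Corollary~1.14]{MV} and \cite[Lemma~(a)]{Louboutin_1992} respectively, whereas you supply self-contained arguments; in particular your layer-cake proof that $-\zeta'(\sigma)\le \zeta(\sigma)/(\sigma-1)$ (equivalently, that $(\sigma-1)\zeta(\sigma)$ is non-decreasing) is a clean elementary replacement for the Louboutin citation.
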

\begin{proof} For the first inequality, observe
\[
\zeta_K(\sigma) = \prod_{\kp} (1- (\N\kp)^{-\sigma})^{-1} = \prod_{p} \prod_{(p) \subseteq \kp} (1- (\N\kp)^{-\sigma})^{-1} \leq \prod_p (1-p^{-\sigma})^{-n_K} = \zeta(\sigma)^{n_K}
\]
and note $\zeta(\sigma) \leq \big( \frac{\sigma}{\sigma-1}\big)$ from \cite[Corollary 1.14]{MV}. The second inequality follows easily from the first. The third inequality follows by an argument similar to that of the first and additionally noting $-\frac{\zeta'}{\zeta}(\sigma) < \frac{1}{\sigma-1}$ by \cite[Lemma (a)]{Louboutin_1992} for example.
\end{proof}

\begin{lem} Let $k \geq 1$ and $\chi \pmod{\kq}$ be a Hecke character. Then
\[
\frac{1}{k!} \frac{d^k}{ds^k} \frac{L_{\infty}'}{L_{\infty}}(s,\chi)  \ll n_K
\]
provided $\Re\{s\} > 1$. 
\label{LInfinity-HigherDerivatives}
\end{lem}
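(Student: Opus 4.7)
The plan is to reduce the claim to a uniform bound on polygamma functions. Starting from the definition \eqref{L-Infinite}, taking the logarithmic derivative gives
\[
\frac{L_\infty'}{L_\infty}(s,\chi) \;=\; -\frac{n_K \log \pi}{2} \;+\; \frac{a(\chi)}{2}\,\psi\!\Big(\frac{s}{2}\Big) \;+\; \frac{b(\chi)}{2}\,\psi\!\Big(\frac{s+1}{2}\Big),
\]
where $\psi = \Gamma'/\Gamma$. For $k\geq 1$ the constant term disappears, and differentiating $k$ times (using the chain rule factor $1/2^k$) yields
\[
\frac{1}{k!}\frac{d^{k}}{ds^{k}}\frac{L_\infty'}{L_\infty}(s,\chi) \;=\; \frac{a(\chi)}{2^{k+1}}\cdot\frac{\psi^{(k)}(s/2)}{k!} \;+\; \frac{b(\chi)}{2^{k+1}}\cdot\frac{\psi^{(k)}((s+1)/2)}{k!}.
\]
So it suffices to bound $\psi^{(k)}(z)/k!$ uniformly in $k$ when $\Re\{z\} > 1/2$.

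For that I would invoke the standard series representation
\[
\psi^{(k)}(z) \;=\; (-1)^{k+1}\, k!\sum_{n=0}^{\infty} \frac{1}{(z+n)^{k+1}},
\]
valid for $z \notin \{0,-1,-2,\ldots\}$. When $\Re\{s\} > 1$ both $s/2$ and $(s+1)/2$ have real part $> 1/2$, so $|z+n| \geq n + 1/2$ for each $n \geq 0$, and therefore
\[
\frac{|\psi^{(k)}(z)|}{k!} \;\leq\; \sum_{n=0}^{\infty}\frac{1}{(n+1/2)^{k+1}} \;\leq\; \sum_{n=0}^{\infty}\frac{1}{(n+1/2)^{2}} \;=\; \frac{\pi^{2}}{2}
\]
for every $k \geq 1$. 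Substituting back and using $a(\chi) + b(\chi) = n_K$, I obtain
\[
\Big|\,\frac{1}{k!}\frac{d^{k}}{ds^{k}}\frac{L_\infty'}{L_\infty}(s,\chi)\,\Big| \;\leq\; \frac{n_K \pi^{2}}{2^{k+2}} \;\ll\; n_K,
\]
which is the desired estimate (in fact with an exponentially decaying factor $2^{-k}$ to spare).

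There is no real obstacle here: the only point to be careful about is that $\Re\{s\} > 1$ keeps both shifted arguments strictly to the right of $0$, and that the geometric-type decay in $n$ lets one replace $(n+1/2)^{-(k+1)}$ by the $k=1$ term to get a bound independent of $k$. The factor $n_K$ comes cleanly from the identity $a(\chi) + b(\chi) = n_K$ appearing in the functional equation setup.
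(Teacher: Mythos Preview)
Your proof is correct and follows essentially the same approach as the paper: both reduce to bounding $\psi^{(k)}(z)/k!$ for $\Re\{z\}>1/2$ via the series $\psi^{(k)}(z)=(-1)^{k+1}k!\sum_{n\geq 0}(z+n)^{-(k+1)}$ and the estimate $|z+n|\geq n+\tfrac12$, then invoke $a(\chi)+b(\chi)=n_K$. The only cosmetic difference is that the paper records the sum as $(2^{k+1}-1)\zeta_{\mathbb{Q}}(k+1)$ and then bounds $\zeta_{\mathbb{Q}}(k+1)\leq \pi^2/6$, whereas you pass directly to the $k=1$ bound $\sum_{n\geq 0}(n+\tfrac12)^{-2}=\pi^2/2$.
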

\begin{proof} Denote $\psi^{(k)}(\, \cdot \, ) = \frac{d^k}{ds^k} \frac{\Gamma'}{\Gamma}(\, \cdot \,)$. From \eqref{L-Infinite}, we have that
\begin{equation}
\frac{d^k}{ds^k} \frac{L_{\infty}'}{L_{\infty}}(s,\chi) =\frac{a(\chi)}{2^{k+1}}   \cdot \psi^{(k)}\Big(\frac{s}{2} \Big)  +  \frac{b(\chi)}{2^{k+1}} \cdot \psi^{(k)}\Big( \frac{s+1}{2} \Big).
\label{LInfinity-HigherDerivatives-ETS}
\end{equation}
Since $a(\chi) + b(\chi) = n_K$, it suffices to bound $\psi^{(k)}(z)$ for $\Re\{z\} > 1/2$. From the well-known logarithmic derivative of the Gamma function (see \cite[(C.10)]{MV} for example), observe
\[
\Big| \frac{\psi^{(k)}(z)}{k!} \Big| 
= \Big| (-1)^k \sum_{n=1}^{\infty} \frac{1}{(n+z)^{k+1}}\Big| \leq 2^{k+1} \sum_{n \text{ odd} } \frac{1}{n^{k+1}} = (2^{k+1}-1)\zeta_{\Q}(k+1)
\]
for $\Re\{z\} > 1/2$, which yields the desired result when combined with \eqref{LInfinity-HigherDerivatives-ETS} as $\zeta_{\Q}(k+1) \leq \zeta_{\Q}(2) = \pi^2/6$. 
\end{proof}
\section{Zero-Free Gap and Labelling of Zeros}
\label{sec:ZeroFreeGap_and_BadZeros}
The main goal of this section is to show that there is a thin rectangle inside the critical strip above which there is a zero-free gap for 
\[
Z(s) := \prod_{\chi \pmod{\kq}} L(s,\chi). 
\]
Afterwards, we label important zeros of $Z(s)$ which we will refer to throughout the paper.  This zero-free gap is necessary for the proof of \cref{ExplicitNP-Apply} -- a crucial component for later sections.  \\

Let $\vartheta \in [\tfrac{3}{4}, 1]$ be fixed\footnote{For the purposes of this paper, setting $\vartheta = \tfrac{3}{4}$ would be sufficient but we wish to maintain flexibility for possible future investigations of Hecke $L$-functions.} and let $\nu(x), \eta(x)$ be any fixed increasing functions for $x \in [1,\infty)$ such that
\begin{equation}
\begin{aligned}
& \nu(x) \in [4,\infty), \qquad \nu(x) \gg \log(x+4), \\
& \eta(x) \in [2,\infty), \qquad \eta(x) \rightarrow \infty \text{ as } x \rightarrow \infty, \qquad \text{ and } \tfrac{x}{\eta(x) \log(x+1)} \text{ is increasing.} \\
\end{aligned}
\label{NuFunction}
\end{equation}
One could take $\eta(x) = \tfrac{1}{2}\log x + 2$, for example. Denote
\begin{equation}
\begin{aligned}
\cL & := \log d_K + \vartheta \cdot \log \N\kq + n_K \cdot \nu(n_K) , \\
\cL^* & := \log d_K + \vartheta \cdot \log \N\kq,  \\
\cT & := (\cL^*)^{1/ \eta(n_K)\log(n_K+1)} +  \nu( n_K). 
\end{aligned}
\label{cL-definition}
\end{equation}
Similarly, for a Hecke character $\chi \pmod{\kq}$ with conductor $\kf_{\chi}$, define
\begin{equation}
\label{cL-definition-1}
\begin{aligned}
\cL_{\chi} & :=  \log d_K + \log \N\kf_{\chi} +  n_K \cdot \nu(n_K), \\
\cL_{\chi}^* & :=  \log d_K + \log \N\kf_{\chi} \\
\cL_0 & := \cL_{\chi_0} = \log d_K + n_K \cdot \nu(n_K)
\end{aligned}
\end{equation}
For the remainder of the paper, we shall maintain this notation because these quantities will be ubiquitous in all of our estimates. \emph{Henceforth, all implicit constants will be absolute (in particular, independent of $K, \kq$ and all Hecke characters $\chi$ modulo $\kq$) and will only implicitly depend on the fixed $\vartheta, \nu$ and $\eta$.}\\

First, we record some simple relationships between the quantities defined in \eqref{cL-definition} and \eqref{cL-definition-1}.
\begin{lem} \label{QuantityRelations} For the quantities defined in \eqref{cL-definition} and \eqref{cL-definition-1}, all of the following hold: 
\begin{enumerate}[(i)]
	\item $4 \leq \cT \leq \cL$.
	\item $n_K\log \cT = o(\cL)$.
	\item $\cL^* + n_K \log \cT \leq \cL + o(\cL) \quad $ and $ \quad \cL^*_{\chi} + n_K \log \cT \leq \cL_{\chi} + o(\cL)$.
	\item $\cT \rightarrow \infty$ if and only if $\cL \rightarrow \infty$. 
	\item $a \cL_0 + b \cL_{\chi} \leq (a+b) \cL$ provided $0 \leq b \leq 3a$.
\end{enumerate}
\end{lem}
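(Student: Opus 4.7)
The plan is to verify each of (i)--(v) directly from the definitions in \eqref{cL-definition}--\eqref{cL-definition-1}. Parts (i), (iv), (v) are essentially algebraic, (iii) follows immediately from (ii), and (ii) is the technical heart of the lemma. For (i), the lower bound $\cT \geq \nu(n_K) \geq 4$ is immediate; for the upper bound, $\eta(n_K)\log(n_K+1) \geq 2 \log 2 > 1$ forces the exponent $1/(\eta(n_K)\log(n_K+1))$ into $(0,1)$ and hence $(\cL^*)^{1/(\eta(n_K)\log(n_K+1))} \leq \max(\cL^*, 1)$, giving $\cT \leq \max(\cL^*,1) + n_K \nu(n_K) \leq \cL$ once $\cL^* \geq 1$, which is automatic when $d_K(\N\kq) n_K^{n_K}$ is sufficiently large (the $n_K \geq 2$ case being handled directly by $1 + \nu(n_K) \leq n_K \nu(n_K)$).

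For (ii), the key step is the bound
\[
\log \cT \leq \log 2 + \frac{\log \cL^*}{\eta(n_K)\log(n_K+1)} + \log \nu(n_K),
\]
obtained from $\log(a+b) \leq \log 2 + \log a + \log b$ for $a, b \geq 1$. I would then show that $n_K$ times each of the three terms is $o(\cL)$. The $n_K \log 2$ contribution uses $\cL \geq n_K \nu(n_K) \geq 4 n_K$ together with the observation that either $n_K \to \infty$ (so $\nu(n_K) \to \infty$, since $\nu(x) \gg \log(x+4)$) or $n_K$ stays bounded (so $\cL \to \infty$ forces $\cL^* \to \infty$). The $n_K \log \nu(n_K)$ contribution is $o(n_K \nu(n_K)) \leq o(\cL)$ by the same dichotomy plus $\log x = o(x)$. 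The middle contribution $\frac{n_K \log \cL^*}{\eta(n_K)\log(n_K+1)}$ is the main obstacle: I would case split on whether $\cL^* \leq n_K \nu(n_K)$, bounding $\log \cL^* = O(\nu(n_K))$ (since $\log n_K \ll \nu(n_K)$ and $\log \nu(n_K) = o(\nu(n_K))$) and dividing by $\cL \geq n_K \nu(n_K)$ to obtain $O\bigl(1/(\eta(n_K)\log(n_K+1))\bigr) \to 0$; or on whether $\cL^* > n_K \nu(n_K)$, in which case $\cL^* \geq 4 n_K$ and the decay $\log \cL^* / \cL^* \leq \log(4n_K)/(4n_K)$ combines with the monotonicity of $x/(\eta(x)\log(x+1))$ to yield the required bound.

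Part (iii) follows from (ii) by writing $\cL^* + n_K \log \cT = \cL - n_K \nu(n_K) + n_K \log \cT \leq \cL + o(\cL)$, with the identical argument for $\cL^*_\chi$ and $\cL_\chi$. For (iv), if $\cL \to \infty$, then either $n_K \to \infty$ forces $\nu(n_K) \to \infty$ and hence $\cT \geq \nu(n_K) \to \infty$, or $n_K$ remains bounded so that $\eta(n_K)\log(n_K+1)$ is bounded while $\cL^* \to \infty$, making $(\cL^*)^{1/(\eta(n_K)\log(n_K+1))} \to \infty$; the converse splits the same way, using that $\nu(n_K)$ bounded forces $n_K$ bounded (since $\nu(x) \gg \log(x+4)$). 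Finally, (v) reduces after subtracting $(a+b)(\log d_K + n_K \nu(n_K))$ from both sides to $b \log \N\kf_\chi \leq (a+b) \vartheta \log \N\kq$, which follows from $\log \N\kf_\chi \leq \log \N\kq$ (since $\kf_\chi \mid \kq$) combined with the chain $b \leq 3a \implies b \leq \tfrac{3}{4}(a+b) \leq \vartheta(a+b)$ valid for $\vartheta \geq 3/4$.
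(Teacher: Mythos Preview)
Your proof is correct and follows essentially the same outline as the paper's: (i), (iii), (iv), (v) are routine, and (ii) starts from the same three-term upper bound for $\log\cT$ and handles each summand separately. The paper is much terser (for instance, (v) is dismissed with ``follows from $\vartheta\geq 3/4$''), but your detailed reductions are exactly what one would fill in.

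The one genuine difference is in the treatment of the main term $\dfrac{n_K\log\cL^*}{\eta(n_K)\log(n_K+1)}$ in (ii). The paper invokes Minkowski's theorem to obtain $n_K=O(\log d_K)=O(\cL^*)$ in one stroke, then feeds this into the monotonicity of $x/(\eta(x)\log(x+1))$ to conclude that the term is $\ll \cL^*/\eta(\cL^*)=o(\cL)$. You instead case-split on $\cL^*\lessgtr n_K\nu(n_K)$: in the latter case you recover $\cL^*\geq 4n_K$ without Minkowski and then use the same monotonicity, while the former case is dispatched by the direct bound $\log\cL^*=O(\nu(n_K))$. Your route is thus self-contained (no appeal to Minkowski) at the price of a short extra case. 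One small point: in your Case~2 the bound $\log\cL^*/\cL^*\leq \log(4n_K)/(4n_K)$ is not really needed---the monotonicity substitution $n_K\mapsto \cL^*$ already gives $\frac{n_K\log\cL^*}{\eta(n_K)\log(n_K+1)}\leq \frac{\cL^*\log\cL^*}{\eta(\cL^*)\log(\cL^*+1)}\ll \cL^*/\eta(\cL^*)$ directly, and that is what actually closes the argument when $n_K$ stays bounded.
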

\begin{proof} Statements (i) and (iii) follow easily from (ii) and/or the definitions  of $\cT, \cL$ and $\cL^*$. For (ii), observe that
\[
n_K\log \cT \leq \frac{n_K  \log \cL^*}{\eta(n_K) \log (n_K+1)} + n_K \log \nu(n_K) + n_K \log 2 
\]
The second and third terms are $o(\cL)$ as $\nu(x)$ is increasing. For the first term, note that  $\tfrac{n_K}{\eta(n_K) \log (n_K+1)}$ is increasing as a function $n_K$ by \eqref{NuFunction} and so plugging in the upper bound $n_K = O(\log d_K) = O(\cL^*)$ from Minkowski's theorem we deduce
\[
n_K \log \cT \ll \frac{\cL^*}{\eta(\cL^*)} + o(\cL) = o(\cL)
\]
since $\eta(x) \rightarrow \infty$ as $x \rightarrow \infty$ and $\cL^* \leq \cL$. For (iv), if $n_K$ is bounded, then necessarily $\cL^* \rightarrow \infty$ in which case both $\cT$ and $\cL$ approach infinity. Otherwise, if $n_K \rightarrow \infty$, then both $\cT$ and $\cL$ approach infinity since $\nu(x) \rightarrow \infty$ as $x \rightarrow \infty$. For (v), the claim follows from the fact that $\vartheta \geq \tfrac{3}{4}$. 
\end{proof}

Next, we establish the desired zero-free gap which motivates the choice of $\cL$ and its related quantities.
\begin{lem} \label{ZeroGap} Let $C_0 >0 $ be a sufficiently large absolute constant and let $\cT$ be defined as in \eqref{cL-definition}. For $\cL$ sufficiently large, there exists a positive integer  $T_0 = T_0(\kq) \leq \frac{ \cT }{10}$ such that $\prod_{\chi \pmod{\kq}}L(s,\chi)$ has no zeros in the region
\[
1-\frac{\log\log\cT}{C_0\cL} \leq \sigma \leq 1, \quad T_0 \leq |t| \leq 10T_0.
\]
\end{lem}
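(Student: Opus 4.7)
The plan is to combine a log-free zero density estimate for the family of Hecke $L$-functions modulo $\kq$ with a dyadic pigeonhole argument in the $t$-direction.

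Set $\delta := (\log\log\cT)/(C_0 \cL)$ and let
\[
\cN := \sum_{\chi \pmod{\kq}} \#\bigl\{\rho : L(\rho,\chi) = 0,\; \Re\rho \geq 1-\delta,\; |\Im\rho|\leq \cT\bigr\}.
\]
The classical log-free zero density estimate for Hecke $L$-functions, due to Fogels \cite{Fogels}, supplies an absolute constant $c>0$ with
\[
\cN \ll (d_K \N\kq \,\cT^{n_K})^{c\delta}.
\]
Since $\vartheta \geq \tfrac{3}{4}$ gives $\log(d_K\N\kq) \leq \tfrac{4}{3}\cL^*$, and \cref{QuantityRelations}(ii)--(iii) bounds $n_K \log \cT = o(\cL)$ and $\cL^* \leq \cL$, one obtains $\log(d_K\N\kq\cT^{n_K}) \leq (1+o(1))\cdot\tfrac{4}{3}\cL$. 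Plugging in the choice of $\delta$ yields
\[
\cN \ll (\log\cT)^{2c/C_0 + o(1)}.
\]

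Next, consider the windows $I_j := [10^j, 10^{j+1}]$ for $j = 0, 1, \ldots, J$, where $J := \lfloor \log_{10}(\cT/10)\rfloor$. These are pairwise disjoint apart from shared endpoints, contained in $[1,\cT]$, and each endpoint $10^j$ is a positive integer bounded by $\cT/10$. By \cref{QuantityRelations}(iv) we have $\cT\to\infty$ as $\cL\to\infty$, so $J+1 \geq \tfrac{1}{2}\log_{10}\cT$ for $\cL$ large. Since the zero set of $\prod_\chi L(s,\chi)$ is preserved by $\rho \mapsto \bar\rho$ (as $\chi \mapsto \bar\chi$ is a bijection on characters $\bmod\,\kq$), the number of indices $j$ for which $\prod_\chi L(s,\chi)$ has a zero satisfying $\Re\rho \geq 1-\delta$ and $|\Im\rho|\in I_j$ is at most $2\cN$. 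Choosing $C_0 > 4c$ makes $2\cN = o(\log \cT)$, strictly less than $J+1$ for $\cL$ sufficiently large, so pigeonhole furnishes an index $j^\ast$ with no such zero. Setting $T_0 := 10^{j^\ast}$ delivers the conclusion.

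The main obstacle is securing the log-free density estimate in the form above: the base $d_K\N\kq\cT^{n_K}$ must contain all of $d_K$, $\N\kq$, $\cT$ without a parasitic factor of $|\Cl(\kq)|$ outside the exponent, which is the crux of Fogels's character-orthogonality argument. A self-contained derivation is also possible by applying Littlewood's lemma to $L(s,\chi)$ on a thin rectangle abutting $\sigma=1$ and bounding the boundary contributions via \cref{LogDiffCorollary} and \cref{ConvexityBd}; the orthogonality relations across $\chi \pmod \kq$ then absorb the class number. Either route gives the same end result, and only the absolute-constant form of $c$ is needed since $C_0$ is free to be taken sufficiently large.
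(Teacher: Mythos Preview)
Your proof is correct and follows essentially the same route as the paper's: a log-free zero density estimate combined with dyadic pigeonhole over intervals $[10^j,10^{j+1}]$. The only substantive difference is the source of the density bound---the paper invokes Weiss \cite[Theorem 4.3]{Weiss}, which in his notation (with $H=P_{\kq}$, $Q=n_K^{n_K}d_K\N\kq$, and $h_H=\#\Cl(\kq)\leq e^{O(\cL)}$ via \cite[Lemma 1.16]{Weiss}) already absorbs the class number into the exponent, giving $N_{\kq}(\alpha,T)\ll(e^{\cL}T^{n_K})^{c_8(1-\alpha)}$ directly. This sidesteps the ``parasitic $|\Cl(\kq)|$'' issue you flag; Fogels's original statement is less convenient here, so your concern is well-placed, and citing Weiss is the cleaner resolution.
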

\begin{proof} For $0 \leq \alpha \leq 1$ and $T \geq 0$, denote
\[
N_{\kq}(\alpha, T) =  \sum_{\chi \pmod{\kq}} \#\{ \rho \in \C \mid L(\rho, \chi) = 0, \quad \alpha \leq \beta \leq 1, \quad 0 \leq |\gamma| \leq T \}
\]
where we count zeros with multiplicity. 
We shall apply a simplified version of a result of Weiss \cite[Theorem 4.3]{Weiss}; in his notation, we restrict to the case $H = P_{\kq}$ and $Q = n_K^{n_K}d_K\N\kq$ from which it follows $h_H = \#I(\kq)/H \leq e^{O(\cL)}$ by \cite[Lemma 1.16]{Weiss} and also $Q \leq e^{O(\cL)}$ since $\nu(x) \gg \log(x+4)$ by \eqref{NuFunction}. Therefore, by \cite[Theorem 4.3]{Weiss}, for $c_6 \leq \alpha \leq 1- \frac{c_7}{\cL}$, we have
\begin{equation}
N_{\kq}(\alpha,T) \ll  ( e^{\cL} \cdot T^{n_K})^{c_8(1-\alpha)}
\label{WeissLemma}
\end{equation}
 for some absolute constants $0 < c_6 < 1, c_7 > 0, c_8 >0$ and provided $T$ and $\cL$ are sufficiently large. Now suppose, for a contradiction, that no such $T_0$ exists. Setting $\alpha = 1- \frac{\log\log \cT}{C_0\cL}$, it follows that every region
\[
\alpha \leq \sigma \leq 1, \quad 10^j \leq |t| \leq 10^{j+1},
\]
for $0 \leq j < J$ where $J := \big[ \dfrac{\log \cT}{\log 10} \big]$, contains at least one zero of $\prod_{\chi \pmod{\kq}} L(s,\chi)$. Hence,
\[
N_{\kq}(\alpha, \cT) \geq J \gg \log \cT. 
\]
On the other hand, by \eqref{WeissLemma} with $T = \cT$ sufficiently large, our choice of $\alpha$ implies
\[
N_{\kq}(\alpha, \cT)  \ll \big( e^{4\cL/3} \cT^{n_K} \big)^{c_8(1-\alpha)} \ll \exp\Big( \frac{c_8}{C_0} \Big( \tfrac{4}{3} \log\log \cT + \frac{n_K \log \cT \log\log \cT}{\cL}\Big) \Big).
\]
From \cref{QuantityRelations}, $n_K \log \cT = o(\cL)$ so for some absolute constant $c_9 > 0$ 
\begin{align*}
N_{\kq}(\alpha, \cT) & \ll \exp\Big( \frac{c_9}{C_0} \log\log \cT \Big) \ll (\log \cT)^{ \tfrac{c_9}{C_0}}.
\end{align*}
Upon taking $C_0 = 2c_9$, we obtain a contradiction for $\cT$ sufficiently large. From \cref{QuantityRelations}, we may equivalently ask that $\cL$ is sufficiently large. 
\end{proof}

Using the zero-free gap from \cref{ZeroGap}, we label important ``bad" zeros of $Z(s) = \prod_{\chi \pmod{\kq}} L(s,\chi)$.  
These zeros will be referred to throughout the paper. A typical zero of $L(s,\chi)$ will be denoted $\rho = \beta + i\gamma$ or $\rho_{\chi} = \beta_{\chi} + i\gamma_{\chi}$ when necessary. \\

\noindent
	 \emph{Worst Zero of each Character} \\
	Consider the rectangle
	\[
	\cR = \cR(\kq) := \{ s \in \C : 1-\frac{\log \log \cT}{C_0\cL} \leq \sigma \leq 1, \quad |t| \leq T_0 \}
	\]
	for $T_0 = T_0(\kq) \in [1, \tfrac{\cT}{10}]$ and $C_0 > 0$ defined by \cref{ZeroGap}. Denote $\mathcal{Z}$ to be the multiset of zeros of $Z(s)$ contained in $\cR$. Choose finitely many zeros $\rho_1, \rho_2,\dots$ from $\mathcal{Z}$ as follows:
	\begin{enumerate}
		\item Pick $\rho_1$ such that $\beta_1$ is maximal, and let $\chi_1$ be the corresponding character. Remove all zeros of $L(s,\chi_1)$ and $L(s,\overline{\chi_1})$ from $\mathcal{Z}$. 
		\item Pick $\rho_2$ such that $\beta_2$ is maximal, and let $\chi_2$ be the corresponding character. Remove all zeros of $L(s,\chi_2)$ and $L(s,\overline{\chi_2})$ from $\mathcal{Z}$.
		\item[\vdots]
	\end{enumerate}
	Continue in this fashion until $\cR$ has no more zeros to choose. Then if $\chi \neq \chi_i, \overline{\chi_i}$ for $1 \leq i < k$, then by \cref{ZeroGap} every zero $\rho$ of $L(s,\chi)$ satisfies:
	\begin{equation}
	\Re(\rho) \leq \Re(\rho_k) \quad \text{ or } \quad |\Im(\rho)| \geq 10T_0.
	\label{ZeroGapProperty}
	\end{equation}
	For convenience of notation, denote
	\[
	\rho_k = \beta_k + i\gamma_k, \quad \beta_k = 1 - \frac{\lambda_k}{\cL}, \quad \gamma_k = \frac{\mu_k}{\cL}. 
	\]
	$ $ \\
	\emph{Second Worst Zero of the Worst Character}\\
	Suppose $L(s,\chi_1)$ has a zero $\rho' \neq \rho_1, \overline{\rho_1}$ in the rectangle $\cR$, or possibly a repeated real zero $\rho' = \rho_1$. Choose $\rho'$ with $\Re(\rho')$ maximal and write
	\[
	\rho' = \beta' + i\gamma', \quad \beta' = 1 - \frac{\lambda'}{\cL}, \quad \gamma' = \frac{\mu'}{\cL}. 
	\]


\section{Classical Explicit Inequality}
\label{ClassicalExpIneq}
In this section, we prove an inequality for $-\Re\{\frac{L'}{L}(s,\chi)\}$ based on a bound for $L(s,\chi)$ in the critical strip and a type of Jensen's formula employed by Heath-Brown in \cite[Section 3]{HBLinnik}. First, we deal with non-primitive characters. 
\begin{lem}
\label{ImprimitiveChar}
Suppose $\chi \pmod{\kq}$ is induced by the character $\chi^* \pmod{\kf}$. Then for $\epsilon > 0$, we have
\[
 \frac{L'}{L}(s,\chi) = \frac{L'}{L}(s,\chi^*) + O\Big(\frac{n_K}{\epsilon} + \epsilon \cL^*\Big)
\]
uniformly in the range $\sigma > 1$. 
\end{lem}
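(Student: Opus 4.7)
The plan is to exploit the standard Euler-product relationship between an imprimitive character and its primitive inducing character, and then to bound the resulting finite sum using \cref{ImprimitiveQ}.

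First I would recall that since $\chi \pmod{\kq}$ is induced by $\chi^* \pmod{\kf}$ with $\kf \mid \kq$, the two $L$-functions differ only by the Euler factors at primes dividing $\kq$ but not $\kf$:
\[
L(s,\chi) = L(s,\chi^*) \prod_{\substack{\kp \mid \kq \\ \kp \nmid \kf}} \Big(1 - \frac{\chi^*(\kp)}{(\N\kp)^s}\Big).
\]
Taking the logarithmic derivative of both sides gives
\[
\frac{L'}{L}(s,\chi) - \frac{L'}{L}(s,\chi^*) = \sum_{\substack{\kp \mid \kq \\ \kp \nmid \kf}} \frac{\chi^*(\kp) (\log \N\kp)(\N\kp)^{-s}}{1 - \chi^*(\kp)(\N\kp)^{-s}},
\]
so it remains only to bound the right-hand side.

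For $\sigma > 1$ one has $|1 - \chi^*(\kp)(\N\kp)^{-s}| \geq 1 - (\N\kp)^{-\sigma} \geq 1 - (\N\kp)^{-1} \gg 1$ (since $\N\kp \geq 2$), so the sum in absolute value is
\[
\ll \sum_{\kp \mid \kq} \frac{\log \N\kp}{\N\kp}.
\]
Applying \cref{ImprimitiveQ} with the given $\epsilon > 0$ yields
\[
\sum_{\kp \mid \kq} \frac{\log \N\kp}{\N\kp} \leq \tfrac{1}{2}\Big(\tfrac{n_K}{\epsilon} + \epsilon \log \N\kq\Big) \ll \frac{n_K}{\epsilon} + \epsilon \cL^*,
\]
where in the last step I use $\log \N\kq \leq \tfrac{1}{\vartheta}\cL^* \leq \tfrac{4}{3}\cL^*$ from the definition of $\cL^*$ in \eqref{cL-definition} together with $\vartheta \geq \tfrac{3}{4}$. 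This yields the claimed bound uniformly for $\sigma > 1$.

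There is no real obstacle here; the proof is essentially a one-line computation paired with the already-established \cref{ImprimitiveQ}. The only minor point to keep track of is that the implicit absolute constant absorbs the factor $\tfrac{1}{2}$ and the factor $\tfrac{4}{3}$ relating $\log \N\kq$ to $\cL^*$, which is permissible since the implicit constants in the paper are absolute (depending only on the fixed parameters $\vartheta, \nu, \eta$).
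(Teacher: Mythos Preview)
Your proof is correct and follows essentially the same approach as the paper: both bound $\bigl|\tfrac{L'}{L}(s,\chi)-\tfrac{L'}{L}(s,\chi^*)\bigr|$ by an absolute constant times $\sum_{\kp\mid\kq}\tfrac{\log\N\kp}{\N\kp}$ and then invoke \cref{ImprimitiveQ}. The only cosmetic difference is that the paper writes the difference directly as the double sum $\sum_{\kp\mid\kq}\sum_{j\ge 1}\tfrac{\log\N\kp}{(\N\kp)^j}$ rather than passing through the closed-form Euler factor.
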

\begin{proof} Observe 
\[
\begin{aligned}
& \Big| \frac{L'}{L}(s,\chi) - \frac{L'}{L}(s,\chi^*)\Big|  \leq  \sum_{ \kp \mid \kq} \sum_{j \geq 1} \frac{\log \N\kp}{(\N\kp)^{j}}  \leq 2\sum_{\kp \mid \kq} \frac{\log \N\kp}{\N\kp}. 
 \end{aligned}
\]
The desired result then follows from \cref{ImprimitiveQ}. 
\end{proof}
Next, we give a bound for $L(s,\chi)$ in a relevant region of the critical strip.  
\begin{lem} \label{CriticalStripConstant} Let $\chi \pmod{\kq}$ be non-principal and induced by the primitive character $\chi^* \pmod{\kf_{\chi}}$. There exists an absolute constant $\phi > 0$ such that for $\epsilon > 0$,
we have
\[
|L(s,\chi^*)| \leq \exp\Big\{ 2\phi  \cL_{\chi} (1-\sigma+\epsilon)  + o_{\epsilon}(\cL) \Big\}
\]
and
\[
|(s-1) \cdot \zeta_K(s)| \leq  \exp\Big\{ 2\phi  \cL_0 (1-\sigma+\epsilon)  + o_{\epsilon}(\cL) \Big\}
\]
uniformly in the region
\[
\frac{1}{2} \leq \sigma \leq 1 + \frac{\log \cL}{\cL}, \qquad |t| \leq \cT.
\]
In particular, we may take $\phi = \tfrac{1}{4}$. 
\end{lem}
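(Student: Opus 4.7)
The plan is to deduce the bound directly from the convexity estimate of \cref{ConvexityBd}, handling the range $\sigma\leq 1+\epsilon$ via that lemma and the thin range $1+\epsilon<\sigma\leq 1+\log\cL/\cL$ via absolute convergence of the Euler product.

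In the first range I would apply \cref{ConvexityBd} with $\delta=\epsilon$ and take logarithms to obtain
\[
\log|L(s,\chi^*)|\leq n_K\log\zeta_\Q(1+\epsilon)+\tfrac{1-\sigma+\epsilon}{2}\Bigl[\log d_K+\log\N\kf_{\chi}+n_K\log\tfrac{1+|s|}{2\pi}\Bigr]+O(1).
\]
Every auxiliary piece must then collapse into $o_\epsilon(\cL)$: the term $n_K\log\zeta_\Q(1+\epsilon)=O_\epsilon(n_K)$ is $o_\epsilon(\cL)$ because $4n_K\leq n_K\nu(n_K)\leq\cL$ forces $n_K=o(\cL)$ (either $n_K$ is bounded or $\nu(n_K)\to\infty$); moreover $n_K\log(1+|s|)\leq n_K\log(\cT+2)=o(\cL)$ by \cref{QuantityRelations}(ii), with the coefficient $\tfrac{1-\sigma+\epsilon}{2}\leq 1$ preserving the estimate. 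The remaining principal contribution $\tfrac{1-\sigma+\epsilon}{2}(\log d_K+\log\N\kf_{\chi})\leq \tfrac{1}{2}\cL_\chi(1-\sigma+\epsilon)$ matches the target with $\phi=\tfrac14$.

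For the range $1+\epsilon<\sigma\leq 1+\log\cL/\cL$ I would instead invoke the Euler product to obtain $|L(s,\chi^*)|\leq\zeta_K(\sigma)\leq (1+1/\epsilon)^{n_K}$ by \cref{LogDedekind}, so that $\log|L(s,\chi^*)|=O_\epsilon(n_K)=o_\epsilon(\cL)$. Since $|1-\sigma+\epsilon|\leq\log\cL/\cL$ throughout this strip, the would-be main term $2\phi\cL_\chi(1-\sigma+\epsilon)$ has magnitude $O(\log\cL)=o(\cL)$, so any discrepancy is absorbed into $o_\epsilon(\cL)$. The bound for $(s-1)\zeta_K(s)$ follows by the identical argument applied to the second half of \cref{ConvexityBd}: set $\log\N\kf_{\chi}=0$ so that $\cL_\chi$ is replaced by $\cL_0$, and note that in the range $\sigma>1+\epsilon$ the extra $|s-1|$ factor contributes only $\log(|s|+1)=O(\log\cT)=o(\cL)$.

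The argument is essentially a verification that the classical convexity bound already implies the claim after unwrapping definitions, so the only real difficulty is bookkeeping: one must check that each parasitic factor---$\zeta_\Q(1+\epsilon)^{n_K}$, the analytic conductor piece $n_K\log(1+|s|)$, and the overshoot in the range $\sigma>1$---is uniformly $o_\epsilon(\cL)$ throughout the stated box. This uniformity rests squarely on \cref{QuantityRelations}, which packages the key inequalities $n_K\log\cT=o(\cL)$ and, via the structural constraints on $\nu$, $n_K=o(\cL)$ that make the matching go through.
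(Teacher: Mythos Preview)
Your proof is correct and follows essentially the same route as the paper: apply \cref{ConvexityBd} with $\delta=\epsilon$, take logarithms, and absorb the terms $n_K\log\zeta_\Q(1+\epsilon)=O_\epsilon(n_K)$ and $n_K\log(1+|s|)\ll n_K\log\cT$ into $o_\epsilon(\cL)$ via \cref{QuantityRelations}. Your separate treatment of the thin strip $\sigma>1+\epsilon$ is a harmless extra precaution; the paper simply relies on the fact that this range is empty once $\cL$ is large enough that $\log\cL/\cL<\epsilon$, which is implicit in the $o_\epsilon(\cL)$ convention.
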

\begin{rem*}
The term $o_{\epsilon}(\cL)/\cL$ goes to zero as $\cL \rightarrow \infty$ but the rate of convergence depends on $\epsilon > 0$.   
\end{rem*}

\begin{proof}  Take $\phi = \tfrac{1}{4}$. From \cref{ConvexityBd}, we have
\[
L(s,\chi^*) \ll  \zeta(1+\epsilon)^{n_K} \Big( \frac{d_K \N\kf_{\chi}}{ (2\pi)^{n_K}} (1+|s|)^{n_K} \Big)	^{2\phi(1-\sigma+\epsilon)}
\] 
uniformly in the region $-\epsilon \leq \sigma \leq 1+\epsilon$. Noting $1+|s| \leq 3+ \cT$ and using \cref{LogDedekind}, the above is therefore
\[
\ll  \exp\Big\{ (\cL_{\chi}^* + n_K\log( \cT+3) ) \cdot 2\phi (1-\sigma+\epsilon) + O_{\epsilon}(n_K) \Big\}
\]
From \cref{QuantityRelations},  it follows $\cL_{\chi}^* + n_K \log( \cT+3) \leq \cL_{\chi} + o(\cL)$. Substituting this into the above, the desired result then follows upon noting $n_K = o(\cL)$. The proof for $\zeta_K(s)$ is similar.
\end{proof}

Any improvement on the constant $\phi$ will have a crucial effect on the final result. For example, the Lindel\"{o}f hypothesis for Hecke $L$-functions gives $\phi = \epsilon$. \emph{For the remainder of this paper, we set }
\[
\phi := \frac{1}{4}. 
\]
We may now establish the main result of this section.
\begin{lem} \label{ClassicalEI} Let $\chi \pmod{\kq}$ be arbitrary. For any $\epsilon > 0$, there exists a $\delta = \delta(\epsilon) > 0$ such that
\[
-\Re \frac{L'}{L}(s,\chi) \leq ( \phi + \epsilon )\cL_{\chi} +  \Re\Big\{ \frac{E_0(\chi)}{s-1}  \Big\} - \sum_{|1+it-\rho| \leq \delta} \Re\Big\{ \frac{1}{s-\rho} \Big\}  + o_{\epsilon}(\cL)
\]
uniformly for
\[
1 + \frac{1}{\cL \log \cL} \leq \sigma \leq 1 + \frac{\log\cL}{\cL}, \qquad |t| \leq \cT.
\]
\end{lem}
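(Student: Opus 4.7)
The plan is to combine the classical explicit formula with a Poisson-Jensen saving that trades the coefficient $\tfrac{1}{2}$ furnished by the functional equation for the sharper $\phi = \tfrac{1}{4}$. First, I would reduce to the primitive character $\chi^* \pmod{\kf_\chi}$ via \cref{ImprimitiveChar}, absorbing the discrepancy $O(n_K/\epsilon + \epsilon\cL^*) = o_\epsilon(\cL)$. Applying \cref{LogDiffCorollary} to $\chi^*$ and using the identity $\Re B(\chi^*) = -\sum_\rho \Re(1/\rho)$ from that same lemma to cancel the Weierstrass constant against the corresponding zero term, the real part becomes
\[
-\Re \frac{L'}{L}(s,\chi^*) = \tfrac{1}{2}\log(d_K \N \kf_\chi) + \Re \frac{L_\infty'}{L_\infty}(s,\chi^*) + \Re\frac{E_0(\chi)}{s-1} + \Re\frac{E_0(\chi)}{s} - \sum_\rho \Re \frac{1}{s-\rho}.
\]
The gamma contribution is $O(n_K\log(2+|t|)) = o(\cL)$ by \cref{LogDiff-Infinite} and \cref{QuantityRelations}~(ii); the $E_0(\chi)/s$ term is $O(1)$; and since $\Re(1/(s-\rho)) > 0$ for $\sigma > 1 > \beta$, truncating the sum to $|1+it-\rho|\leq\delta$ preserves the $\leq$ direction.

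The crux is improving the coefficient $\tfrac{1}{2}$ on $\log(d_K\N\kf_\chi) = \cL_\chi^*$ to $\phi = \tfrac{1}{4}$ on the (possibly larger) $\cL_\chi$, a factor-of-two saving the explicit formula alone does not supply, since $\tfrac{1}{2}\cL_\chi^*$ need not be $\leq \phi\cL_\chi + o(\cL)$. I would obtain this saving by a Poisson-Jensen identity applied to $h(z) := L(z,\chi^*)(z-1)^{E_0(\chi)}$ on the closed disk $D$ centered at $s$ of radius $R = \sigma - \tfrac{1}{2}$, chosen so the leftmost boundary point is $\tfrac{1}{2}+it$. The function $h$ is analytic and nonvanishing at $s$, and differentiating the Poisson-Jensen formula at the center yields
\[
-\Re\frac{L'}{L}(s,\chi^*) = \Re\frac{E_0(\chi)}{s-1} - \frac{1}{\pi R}\int_0^{2\pi}\cos\theta\, \log\bigl|h(s+Re^{i\theta})\bigr|\, d\theta - \sum_{|\rho-s|\leq R}\bigl(\Re\tfrac{1}{s-\rho}- \tfrac{\sigma-\beta}{R^2}\bigr).
\]
The Blaschke correction $P_\rho := \Re(1/(s-\rho)) - (\sigma-\beta)/R^2$ is nonnegative inside $D$ and satisfies $P_\rho \geq (1 - \delta^2/R^2)\Re(1/(s-\rho))$ for $|\rho - s| \leq \delta$, so choosing $\delta = \delta(\epsilon)$ sufficiently small recovers the full nearby-zero sum up to an $O(\epsilon)\cL$ loss, while the remaining zeros in $D$ contribute nonpositively and can be dropped.

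The boundary integral is split at $\cos\theta = 0$. On the right half ($\cos\theta > 0$), $\Re(s+Re^{i\theta}) > \sigma > 1$, and the Euler product together with \cref{LogDedekind} give $|\log|L|| = O(n_K\log\cL) = o(\cL)$. On the left half ($\cos\theta < 0$), \cref{CriticalStripConstant} gives $\log|L| \leq 2\phi\cL_\chi(1-\Re z+\epsilon)+o_\epsilon(\cL)\leq 2\phi\cL_\chi(-R\cos\theta+\epsilon)+o_\epsilon(\cL)$, dropping the non-positive $1-\sigma$. Integrating the dominant piece against $-\cos\theta \geq 0$ over the left half,
\[
\frac{2\phi\cL_\chi\, R}{\pi R}\int_{\pi/2}^{3\pi/2}\cos^2\theta\, d\theta = \phi\cL_\chi,
\]
reproduces the target constant via $\tfrac{1}{\pi}\int_{\pi/2}^{3\pi/2}\cos^2\theta\, d\theta = \tfrac{1}{2}$; this is precisely the factor-of-two saving. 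The principal technical obstacle is the clean bookkeeping: absorbing the Blaschke corrections, the Euler-product bound on the right half, the convexity $\epsilon$-losses on the left half, and the bounded contribution of the pole-killing factor $(z-1)^{E_0}$ all into $O(\epsilon)\cL_\chi + o_\epsilon(\cL)$, which requires careful calibration of $\delta$ and repeated appeals to \cref{QuantityRelations} to dominate terms such as $n_K\log \cL$.
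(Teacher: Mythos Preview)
Your approach is essentially the paper's: apply a differentiated Poisson--Jensen formula on a disk of radius $\approx \tfrac12$ centered at $s$, bound the boundary integral by the Euler product on the right half-circle and by the convexity estimate (\cref{CriticalStripConstant}) on the left half-circle, and discard the Blaschke corrections using nonnegativity and a zero-count of size $O(\cL)$. The paper invokes \cite[Lemma~3.2]{HBLinnik} with fixed $R=\tfrac12$ rather than your $R=\sigma-\tfrac12$, but this is cosmetic.

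One estimate needs tightening. Your claim that the right half contributes $O(n_K\log\cL)=o(\cL)$ fails in general: if $\nu(x)\asymp\log x$ and $\cL\asymp n_K\nu(n_K)$, then $n_K\log\cL\asymp\cL$. The pointwise Euler-product bound $\log|L(s+Re^{i\theta})|\ll n_K\log\bigl(1/(\sigma-1+R\cos\theta)\bigr)$ blows up as $\theta\to\pi/2$, and integrating it crudely against $\cos\theta$ is not enough. The paper repairs this by splitting $[0,\pi/2]$ at $\theta=\pi/2-(\sigma-1)$: on the larger piece one uses $\sigma-1+\tfrac12\cos\theta\geq\tfrac12\cos\theta$ to get a contribution $\ll n_K$, while the short piece near $\pi/2$ has length $\sigma-1$ and contributes $\ll n_K(\sigma-1)\log\bigl(1/(\sigma-1)\bigr)\ll(\log\cL)^2$. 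Both are $o(\cL)$. This is exactly the ``clean bookkeeping'' you anticipated; the rest of your outline matches the paper.
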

\begin{rem*} When $K=\Q$,  Heath-Brown showed the same inequality in \cite[Lemma 3.1]{HBLinnik} with $\phi = \tfrac{1}{6}$ instead of $\phi = \tfrac{1}{4}$ by leveraging Burgess' estimate for character sums. Note that Heath-Brown's notation for $\phi$ differs by a factor of 2 with our notation. 
\end{rem*}
\begin{proof} We closely follow the arguments leading to the proof of \cite[Lemma 3.1]{HBLinnik}. Assume without loss that $\epsilon \in (0,1/2)$. Suppose $\chi \pmod{\kq}$ is induced from a non-principal primitive character $\chi^* \pmod{\kf}$. Apply \cite[Lemma 3.2]{HBLinnik} with $f( \, \cdot \,) = L(\, \cdot \, ,\chi^*)$ with $a=s$ and $R = \tfrac{1}{2}$. Then
\begin{equation}
-\Re \frac{L'}{L}(s,\chi^*) = -\sum_{|s-\rho| < \frac{1}{2}} \Re\Big\{ \frac{1}{s-\rho} - 4(s-\rho) \Big\} - J
\label{JensenUse}
\end{equation}
where
\[
J := \frac{2}{\pi} \int_0^{2\pi} (\cos \theta) \cdot \log| L(s+\tfrac{1}{2} e^{i\theta},\chi^*)|d\theta.
\]
We require a lower bound for $J$ so we divide the contribution of the integral into three separate intervals depending on the sign of $\cos \theta$. 
\begin{itemize}
	\item For $\theta \in [0,\pi/2]$, by \cref{LogDedekind} it follows that
\begin{align*}
\log|L(s+\tfrac{1}{2}e^{i\theta},\chi^*)| 
&  \leq \log \zeta_K(\sigma+\tfrac{1}{2}\cos\theta)  \leq n_K \log \Big( \frac{2}{\sigma-1+\tfrac{1}{2}\cos\theta}\Big) 
\end{align*}
On the interval $I_1 := [0, \tfrac{\pi}{2} - (\sigma-1) ]$, as $\sigma-1 \geq 0$, the contribution of the integral $J$ is
\[
\ll  n_K  \int_0^{\pi/2} (\cos \theta) \log(4/\cos \theta) d\theta \ll n_K. 
\]
On the interval $I_2 := [\tfrac{\pi}{2} - (\sigma-1), \tfrac{\pi}{2}]$, as $\cos \theta \geq 0$, the contribution of the integral $J$ is
\[
\ll  n_K \log(2/(\sigma-1))  \int_{I_2} (\cos \theta) d\theta \ll n_K (\sigma-1) \log(2/(\sigma-1)) \ll \frac{n_K (\log \cL)^2}{\cL}  \ll (\log \cL)^2
\]
because $(\cL\log\cL)^{-1} \leq \sigma -1 \leq \cL^{-1} \log \cL $ and $n_K \ll \cL$. 

\item For $\theta \in [\pi/2, 3\pi/2]$, notice
\[
\frac{1}{2} \leq \sigma-\frac{1}{2} \leq \sigma + \frac{1}{2}\cos\theta \leq \sigma \leq 1+\frac{\log \cL}{\cL}.
\]
Hence, we may use \cref{CriticalStripConstant} to see that
\begin{align*}
 \log|L(s+\tfrac{1}{2}e^{i\theta},\chi^*)| 
 & \leq 2\phi  \cL_{\chi} (1-\sigma-\tfrac{1}{2}\cos\theta + \epsilon) + o_{\epsilon}(\cL)    \\
 & \leq 2\phi  \cL_{\chi}(-\tfrac{1}{2}\cos\theta + \epsilon) + o_{\epsilon}(\cL). 
\end{align*}
This implies that
\begin{align*}
  \int_{\pi/2}^{3\pi/2} (\cos \theta) \cdot \log|L(s+\tfrac{1}{2}e^{i\theta},\chi)|d\theta 
&  \geq 2\phi  \cL_{\chi}   \int_{\pi/2}^{3\pi/2} \big(- \tfrac{1}{2}\cos^2 \theta + \epsilon \cos \theta \big) d\theta  + o_{\epsilon}(\cL) \\
&  = \phi \cL_{\chi} \big( -\tfrac{\pi}{2} -  4\epsilon \big) + o_{\epsilon}(\cL).
\end{align*}

\item For $\theta \in [3\pi/2, 2\pi]$, we obtain the same contribution as $\theta \in [0,\pi/2]$ by a similar argument. 
\end{itemize} 
Combining all contributions, we have
\begin{equation}
J \geq  \Big(- \phi - \frac{2\epsilon}{\pi} \Big)\cL_{\chi}  + o_{\epsilon}(\cL)
\label{Jintegral}
\end{equation}
since $n_K = o(\cL)$. For the sum over zeros in \eqref{JensenUse}, notice that we may arbitrarily discard zeros from the sum since for $|s-\rho| < \tfrac{1}{2}$, 
\[
\Re\Big\{ \frac{1}{s-\rho} - 4(s-\rho) \Big\} = (\sigma-\beta) \Big( \frac{1}{|s-\rho|^2} - \frac{1}{(\tfrac{1}{2})^2} \Big) \geq 0. 
\]
Thus, for any $0 < \delta < \tfrac{1}{2}- \tfrac{\log \cL}{\cL}$, we may restrict our sum over zeros from $|s-\rho| < \tfrac{1}{2}$ to a smaller circle within it: $|1+it-\rho| \leq \delta$. From our previous observation, we may discard zeros outside of this smaller circle. As $\cL \geq 4$ by \cref{QuantityRelations}, we may instead impose that $0 < \delta < \tfrac{1}{8}$.

Now, from \cite[Lemma 2.1]{LMO} and \cref{QuantityRelations}, we have that 
\[
\#\{ \rho : |1+it-\rho| \leq \delta\} \ll  \cL_{\chi}^* + n_K \log(\cT +3)  \ll \cL_{\chi} + o(\cL).
\]
Further, for such zeros $\rho$ satisfying $|1+it-\rho| \leq \delta$, notice
\[
\Re\{ s-\rho \} = \sigma-\beta \leq   \frac{\log \cL}{\cL} + \delta 
\]
implying, for some absolute constant $c_0 \geq 1$, 
\begin{equation}
\sum_{|1+it-\rho| \leq \delta} \Re\{ 4(s-\rho) \Big\} \leq 4c_0(\cL_{\chi} + o(\cL) ) \Big(\frac{\log \cL}{\cL} + \delta \Big) = 4 c_0 \delta \cL_{\chi} + O(\log \cL). 
\label{SmallerCircle}
\end{equation}
Combining these observations, from \eqref{JensenUse}, \eqref{Jintegral}, and \eqref{SmallerCircle} and \cref{ImprimitiveChar} we see that
\begin{align*}
-\Re \frac{L'}{L}(s,\chi) 
&  \leq -\sum_{|1+it-\rho| \leq \delta} \Re\Big\{ \frac{1}{s-\rho} - 4(s-\rho) \Big\}  + \big(  \phi + \frac{2 \epsilon}{\pi} \big)\cL_{\chi}  + o_{\epsilon}(\cL)\\
&   \leq -\sum_{|1+it-\rho| \leq \delta} \Re\Big\{ \frac{1}{s-\rho} \Big\}  +  \big(  \phi + \frac{2\epsilon}{\pi} + 4c_0 \delta \big)\cL_{\chi}  + o_{\epsilon}(\cL). 
\end{align*}
Taking $\delta = \epsilon/4c_0$, note $\delta \in (0,1/8)$ as $\epsilon < 1/2$ by assumption and $c_0 \geq 1$. Rescaling $\epsilon$ appropriately completes the proof for $\chi$ non-principal.

If $\chi = \chi_0$ is principal, then we  proceed in the same manner, applying Lemma 3.2 of \cite{HBLinnik} with $f(\,\cdot\,) = (s-1)\zeta_K(\, \cdot \,)$. This choice gives rise to the additional term $\frac{1}{s-1}$, but otherwise we continue with the same argument. The only other difference occurs in the analysis of the integral $J$ for $\theta \in [0,\pi/2] \cup [3\pi/2, 2]$ where we must instead estimate
\[
\log\big( |s-1 +\tfrac{1}{2}e^{i\theta}| \cdot |\zeta_K(s+\tfrac{1}{2}e^{i\theta})| \big). 
\]
As  $|s-1+\tfrac{1}{2}e^{i\theta}| \geq \tfrac{1}{2}$, the contribution to the integral $J$ will be bounded and hence ignored. 
\end{proof}
\section{Polynomial Explicit Inequality}
\label{PolynomialEXP}
By including higher derivatives of $-\tfrac{L'}{L}(s,\chi)$, the goal of this section is establish a generalization of the ``classical explicit inequality"  based on techniques in \cite[Section 4]{HBLinnik}.   Let $\chi \pmod{\kq}$ be a Hecke character. For a polynomial $P(X) = \sum_{k=1}^d a_k X^k \in \R[X]$ of degree $d \geq 1$,  define a real-valued function
\begin{equation}
\cP(s,\chi) = \cP(s,\chi; P)  := \sum_{\kn \subseteq \cO} \frac{\Lambda_K(\kn)}{(\N\kn)^\sigma} \Big( \sum_{k=1}^d a_k \frac{\big( (\sigma-1) \log \N\kn\big)^{k-1}}{(k-1)!} \Big) \cdot \Re\Big\{ \frac{\chi(\kn)}{(\N\kn)^{it} } \Big\}
\label{PolyAction-Trig}
\end{equation}
for $\sigma > 1$ and where $\Lambda_K(\,\cdot\,)$ is the von Mangoldt $\Lambda$-function on integral ideals of $\cO_K$ defined by
\[
\Lambda_K(\kn) = \begin{cases} \log \N\kp & \text{if $\kn$ is a power of a prime ideal $\kp$,} \\ 0 & \text{otherwise}. \end{cases}
\]
From the classical formula
\[
-\frac{L'}{L}(s,\chi) = \sum_{\kn \subseteq \cO} \Lambda_K(\kn) \chi(\kn) (\N\kn)^{-s} \qquad \text{ for } \sigma > 1,
\]
it is straightforward to deduce that
\begin{equation}
\begin{aligned}
\cP(s,\chi) 
& =\sum_{k=1}^d a_k (\sigma-1)^{k-1} \cdot \Re\Big\{  \frac{ (-1)^k}{(k-1)!} \frac{d^{k-1}}{ds^{k-1}} \frac{L'}{L}(s,\chi)  \Big\}  & \text{ for $\sigma > 1$}.\\
\end{aligned}
\label{PolyAction}
\end{equation}
To prove an explicit inequality using $\cP(s,\chi)$, we first reduce the problem to primitive characters.
\begin{lem} \label{PolyEI-Imprimitive} Let $\chi \pmod{\kq}$ be induced from the primitive character $\chi^* \pmod{\kf_{\chi}}$. Let $P(X) \in \R[X]$ be a polynomial with $P(0) = 0$. Then, for $\epsilon > 0$, 
\[
\cP(s,\chi) = \cP(s, \chi^*) + O_P( \epsilon^{-1} n_K + \epsilon \cL)
\]
uniformly in the region 
\[
1 < \sigma \leq 1 + \frac{100}{\cL}. 
\]
\end{lem}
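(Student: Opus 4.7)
The plan is to exploit the Dirichlet-series form \eqref{PolyAction-Trig} of $\cP(s,\chi)$ directly, which concentrates the dependence on $P$ into a single polynomial factor $Q_\sigma(y) := \sum_{k=1}^d a_k ((\sigma-1)y)^{k-1}/(k-1)!$. Since $\Lambda_K$ is supported on prime powers and $\chi(\kp^j) = \chi^*(\kp^j)$ whenever $\kp \nmid \kq$, the only ideals contributing to $\cP(s,\chi) - \cP(s,\chi^*)$ are prime powers $\kn = \kp^j$ with $\kp \mid \kq$ and $\kp \nmid \kf_\chi$, for which $\chi(\kp^j) = 0$ but $\chi^*(\kp^j) = \chi^*(\kp)^j$. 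Thus
\[
\cP(s,\chi) - \cP(s,\chi^*) \;=\; -\!\!\sum_{\substack{\kp \mid \kq\\ \kp \nmid \kf_\chi}} \sum_{j \geq 1} \frac{\log\N\kp}{(\N\kp)^{j\sigma}}\, Q_\sigma(j\log\N\kp)\, \Re\Bigl\{\frac{\chi^*(\kp)^j}{(\N\kp)^{ijt}}\Bigr\}.
\]

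Next I would bound the summand uniformly in the given strip. The hypothesis $\sigma - 1 \leq 100/\cL$ yields $|Q_\sigma(j\log\N\kp)| \ll_P \sum_{k=1}^d j^{k-1}(\log\N\kp/\cL)^{k-1}$, and the elementary estimate $\sum_{j \geq 1} j^{k-1}/(\N\kp)^{j\sigma} \ll_k 1/\N\kp$ (valid for $\N\kp \geq 2$ and $\sigma \geq 1$, by pulling out one factor of $\N\kp^{-1}$ and comparing the remaining series to one with $\sigma = 1$) gives
\[
\sum_{j\geq 1} \frac{\log\N\kp}{(\N\kp)^{j\sigma}} |Q_\sigma(j\log\N\kp)| \;\ll_P\; \frac{\log\N\kp}{\N\kp} \cdot \sum_{k=1}^d \Bigl(\frac{\log\N\kp}{\cL}\Bigr)^{k-1} \;\ll_P\; \frac{\log\N\kp}{\N\kp},
\]
where in the last step I use that $\log\N\kp \leq \log\N\kq \leq \cL/\vartheta$ makes $(\log\N\kp/\cL)^{k-1}$ bounded and the $k$-sum has only $d = O_P(1)$ terms. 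Summing over the dividing primes and invoking the second inequality of \cref{ImprimitiveQ} then yields
\[
|\cP(s,\chi) - \cP(s,\chi^*)| \;\ll_P\; \sum_{\kp \mid \kq} \frac{\log\N\kp}{\N\kp} \;\leq\; \tfrac{1}{2}\bigl(\epsilon^{-1} n_K + \epsilon \log\N\kq\bigr) \;\ll\; \epsilon^{-1} n_K + \epsilon \cL,
\]
which is the desired estimate.

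The main obstacle, quite minor in this setting, is tracking the polynomial factors $j^{k-1}$ and $(\log\N\kp)^{k-1}$ against the exponential decay from $(\N\kp)^{j\sigma}$ and the scale $\cL$; the clean resolution is that $\sigma - 1 \leq 100/\cL$ tames $Q_\sigma$ pointwise while $\log\N\kq \leq \cL/\vartheta$ absorbs the factors $(\log\N\kp/\cL)^{k-1}$ into $P$-dependent constants. An alternative route via \eqref{PolyAction} would require establishing \cref{ImprimitiveChar}-type estimates for each higher derivative $\frac{d^{k-1}}{ds^{k-1}}\frac{L'}{L}(s,\chi) - \frac{d^{k-1}}{ds^{k-1}}\frac{L'}{L}(s,\chi^*)$ separately and recombining; this works but is more tedious than the direct Dirichlet-series computation above, since there $P$ enters only through $Q_\sigma$.
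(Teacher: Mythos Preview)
Your proof is correct and follows essentially the same approach as the paper: both start from the Dirichlet-series definition \eqref{PolyAction-Trig}, observe that the difference is supported on prime powers $\kp^j$ with $\kp \mid \kq$, use $\sigma-1 \leq 100/\cL$ together with $\log\N\kp \ll \cL$ to absorb the polynomial factor into $O_P(1)$, sum the geometric-type series in $j$, and finish with \cref{ImprimitiveQ}. The paper's version is slightly more compressed (it bounds directly by the top-degree term $(\log\N\kn/\cL)^{d-1}$ rather than summing over $k$), but the argument is the same.
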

\begin{proof} Denote $d = \deg P$. Observe
\[
 \Big| \cP(s,\chi) -  \cP(s,\chi^*)\Big| 
 	\ll_P \sum_{ (\kn,\kq) \neq 1}\frac{\Lambda_K(\kn)}{\N\kn} \Big( \frac{\log \N\kn}{\cL} \Big)^{d-1}  
	\ll_{P} \sum_{ \kp \mid \kq} \sum_{j \geq 1} \frac{\log \N\kp}{(\N\kp)^{j}} \cdot \Big( \frac{j \log \N\kp}{\cL} \Big)^{d-1}.  
\]
For $\kp \mid \kq$, note $\log \N\kp \ll \log \N\kq \ll \cL$ and so the above is
\[
\ll_{P} \sum_{ \kp \mid \kq} \sum_{j \geq 1}  \frac{j^{d-1}\log \N\kp}{(\N\kp)^{j}}  \ll_{P} \sum_{\kp \mid \kq} \frac{\log \N\kp}{\N\kp}. 
\]
The desired result then follows from \cref{ImprimitiveQ}. 
\end{proof}
\begin{prop} \label{PolyEI} Let $\chi \pmod{\kq}$ and $\epsilon > 0$ be arbitrary. Suppose the polynomial $P(X) = \sum_{k=1}^d a_k X^k$
of degree $d \geq 1$ has non-negative real coefficients. Then there exists $\delta = \delta(\epsilon,P) > 0$, such that 
\begin{equation}
\tfrac{1}{\cL}  \cdot \cP(s, \chi)  \leq  \Re\bigg\{ \frac{ P\big( \frac{\sigma-1}{s-1}\big)}{\sigma-1} E_0(\chi) -  \sum_{|1+it-\rho_{\chi}| \leq \delta} \frac{P\big( \frac{\sigma-1}{s-\rho_{\chi}} \big)}{\sigma-1} \bigg\} \cdot \frac{1}{\cL} + a_1 \phi \tfrac{\cL_{\chi}}{\cL}  + \epsilon 
\label{PolyEI-Inequality}
\end{equation}
uniformly in the region
\[
1 + \frac{1}{\cL \log \cL} \leq \sigma \leq 1+\frac{100}{\cL}, \qquad |t| \leq \cT
\]
provided $\cL$ is sufficiently large depending on $\epsilon$ and $P$. 
\end{prop}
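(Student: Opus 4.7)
The strategy is to reduce to the case when $\chi$ is primitive, decompose $\cP(s,\chi)$ via \eqref{PolyAction}, apply the classical explicit inequality \cref{ClassicalEI} to the $k=1$ term, and for $k \geq 2$ differentiate the explicit formula \cref{LogDiffCorollary} directly, controlling the resulting sum over zeros by splitting at radius $\delta$ around $1+it$.

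First, since $P(0) = 0$, \cref{PolyEI-Imprimitive} lets me pass to the primitive inducing character at a cost $O_P(\epsilon^{-1}n_K + \epsilon \cL)$, which contributes $o(1) + O_P(\epsilon)$ after division by $\cL$. So I may assume $\chi$ is primitive. Writing
\[
\cP(s,\chi) = \sum_{k=1}^d a_k (\sigma-1)^{k-1}\Re\{T_k(s)\}, \qquad T_k(s) := \tfrac{(-1)^k}{(k-1)!}\tfrac{d^{k-1}}{ds^{k-1}}\tfrac{L'}{L}(s,\chi),
\]
the case $k=1$ is $T_1(s) = -\tfrac{L'}{L}(s,\chi)$, and \cref{ClassicalEI} gives, for some $\delta_1 = \delta_1(\epsilon) > 0$, an upper bound whose pieces are exactly the $k=1$ summands of $\Re\{E_0(\chi) P(\tfrac{\sigma-1}{s-1})/(\sigma-1)\}/\cL$ and $-\sum_{|1+it-\rho|\leq\delta_1}\Re\{P(\tfrac{\sigma-1}{s-\rho})/(\sigma-1)\}/\cL$, together with the main $a_1\phi \cL_\chi/\cL$ term. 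The additional $a_1\epsilon \cL_\chi/\cL = O_P(\epsilon)$ is absorbed into the final $\epsilon$ by rescaling.

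For each $k \geq 2$, differentiating the explicit formula $(k-1)$ times kills the constants $B(\chi)$ and $\tfrac{1}{2}\log(d_K \N\kf_\chi)$, leaving after a short sign computation
\[
T_k(s) = \tfrac{E_0(\chi)}{(s-1)^k} + \tfrac{E_0(\chi)}{s^k} - \tfrac{(-1)^k}{(k-1)!}\tfrac{d^{k-1}}{ds^{k-1}}\tfrac{L_\infty'}{L_\infty}(s,\chi) - \sum_{\rho}\tfrac{1}{(s-\rho)^k}.
\]
The $E_0(\chi)/(s-1)^k$ term yields the $k$-th summand of $E_0(\chi)P(\tfrac{\sigma-1}{s-1})/(\sigma-1)$ on the right-hand side. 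The $E_0(\chi)/s^k$ term is $O(1)$, and \cref{LInfinity-HigherDerivatives} bounds the archimedean derivative by $O(n_K)$; multiplied by $a_k(\sigma-1)^{k-1}/\cL$ with $\sigma-1\leq 100/\cL$, both are $O_P(\cL^{-k}) = o(1)$. Splitting the zero-sum at $|1+it-\rho| = \delta$, the nearby zeros yield the $k$-th summand of the desired $-\sum \Re\{P(\tfrac{\sigma-1}{s-\rho})/(\sigma-1)\}/\cL$.

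The main technical obstacle is bounding the far-zero tail $\sum_{|1+it-\rho|>\delta}(s-\rho)^{-k}$. For $\cL$ large we have $\sigma-1 < \delta/2$, so $|s-\rho| \geq |1+it-\rho|/2$ for these zeros, and using $|1+it-\rho|^{-k} \leq \delta^{-(k-2)}|1+it-\rho|^{-2}$ together with the standard estimate
\[
\sum_\rho \frac{1}{1+|1+it-\rho|^2} \ll \cL_\chi + n_K \log(|t|+3) \ll \cL,
\]
(which follows by taking the real part of \cref{LogDiffCorollary} at $s=2+it$ and applying the zero-counting bound \cite[Lemma~2.1]{LMO} for the near-real-axis zeros), this tail is $O_{\delta,k}(\cL)$. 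After multiplication by $a_k(\sigma-1)^{k-1}/\cL$ it becomes $O_P(\cL^{1-k}) = o(1)$. I finally take $\delta := \min_{1\leq k\leq d}\delta_k(\epsilon)$, sum the contributions across $k$, and rescale $\epsilon$ to absorb the cumulative $o(1) + O_P(\epsilon)$ error. The $\chi$-uniform control on this far-zero tail is the single delicate step; everything else is algebraic manipulation of the explicit formula.
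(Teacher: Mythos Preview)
Your proposal is correct and follows essentially the same approach as the paper: reduce to primitive via \cref{PolyEI-Imprimitive}, handle the $k=1$ term by \cref{ClassicalEI}, handle $k\ge 2$ by differentiating the explicit formula and invoking \cref{LInfinity-HigherDerivatives}, and control the far-zero tail via the standard bound $\sum_\rho (1+|t-\gamma_\chi|^2)^{-1}\ll \cL$. The only organizational difference is that the paper simply takes $\delta$ to be the one supplied by \cref{ClassicalEI} and uses that same $\delta$ for the $k\ge 2$ splitting (any fixed $\delta>0$ works there), whereas you phrase it as $\delta=\min_k \delta_k(\epsilon)$ even though no $\delta_k$ for $k\ge 2$ was ever actually constrained; this is harmless since shrinking $\delta$ in \cref{ClassicalEI} only discards non-negative terms from the zero sum.
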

\begin{proof} Let $\chi^* \pmod{\kf_{\chi}}$ be the primitive character inducing $\chi \pmod{\kq}$. From \cref{PolyEI-Imprimitive} and the observation $n_K = o(\cL)$, it follows
\[
\tfrac{1}{\cL} \cP(s,\chi) = \tfrac{1}{\cL} \cP(s,\chi^*) + \epsilon
\]
for $\cL$ sufficiently large depending on $\epsilon$ and $P$. Thus, it suffices to show \eqref{PolyEI-Inequality} with $\cP(s,\chi^*)$ instead of $\cP(s,\chi)$. 
Define
\[
P_2(X) := \sum_{k=2}^d a_k X^k = P(X) - a_1X.
\]
Using \cref{LogDiffCorollary} and \cref{LInfinity-HigherDerivatives}, we see for $k \geq 2$ and $\sigma > 1$ that
\begin{align*}
\frac{(-1)^k}{(k-1)!} \frac{d^{k-1}}{ds^{k-1}} \frac{L'}{L}(s,\chi^*)  
& =   \frac{E_0(\chi)}{(s-1)^k} -\sum_{\rho_{\chi}} \frac{1}{(s-\rho_{\chi})^k } + \frac{E_0(\chi)}{s^k} - \frac{(-1)^k}{(k-1)!} \frac{d^{k-1}}{ds^{k-1}} \frac{L_{\infty}'}{L_{\infty}}(s, \chi^*)  \\
& =  \frac{E_0(\chi)}{(s-1)^k} -\sum_{\rho_{\chi}} \frac{1}{(s-\rho_{\chi})^k }  + O(n_K).
\end{align*}
Substituting these formulae into $\cP(s,\chi^*; P_2)$ defined via \eqref{PolyAction}, it follows for $\sigma > 1$ that
\begin{equation}
\label{PolyDeg2}
\begin{aligned}
\cP(s,\chi^*; P_2)
& =  \frac{1}{\sigma-1} \sum_{k=2}^d a_k  \Re\bigg\{ \Big( \frac{\sigma-1}{s-1} \Big)^k E_0(\chi) -  \sum_{\rho_{\chi}} \Big( \frac{\sigma-1}{s-\rho_{\chi} } \Big)^k \bigg\}    + O_P(n_K).
\end{aligned}
\end{equation}
Obtain $\delta = \delta(\epsilon)$ from \cref{ClassicalEI}. Since $ \sigma < 1+\tfrac{100}{\cL}$, we see by the zero density estimate \cite[Lemma 2.1]{LMO} and \cref{QuantityRelations} that
\begin{align*}
\sum_{|1+it-\rho_{\chi}| \geq \delta} \Big| \frac{\sigma-1}{s-\rho_{\chi} } \Big|^k 
& \ll \Big( \frac{100}{\cL} \Big)^k \sum_{|1+it-\rho_{\chi}| \geq \delta} \frac{1}{|s-\rho_{\chi}|^k } 
 \ll_{\delta} \Big( \frac{100}{\cL} \Big)^k  \sum_{\rho_{\chi}}  \frac{1}{1+ |t-\gamma_{\chi}|^2}  \\
&\qquad  \ll_{\delta} \Big( \frac{100}{\cL} \Big)^k  \cdot(\cL^*+ n_K \log \cT )\\
& \qquad  \ll_{\delta} \frac{ (100)^{k} }{\cL^{k-1}} \ll_{\delta, P} \frac{1}{\cL}.
\end{align*}
Hence, 
\[
\frac{1}{\sigma-1} \sum_{k=2}^d a_k \Re\bigg\{ \sum_{|1+it-\rho_{\chi}| \geq \delta} \Big( \frac{\sigma-1}{s-\rho_{\chi} } \Big)^k \bigg\} \ll_{\epsilon, P} \log \cL 
\]
since $\sigma > 1 + \frac{1}{\cL \log \cL}$ and $\delta$ depends only on $\epsilon$. Removing this contribution in \eqref{PolyDeg2} implies that
\begin{align*}
\cP(s,\chi^*; P_2)
& =  \frac{1}{\sigma-1} \sum_{k=2}^d a_k  \Re\bigg\{ \Big( \frac{\sigma-1}{s-1} \Big)^k E_0(\chi) -  \sum_{|1+it-\rho_{\chi}| \leq \delta} \Big( \frac{\sigma-1}{s-\rho_{\chi} } \Big)^k \bigg\}   + O_{\epsilon, P}( n_K + \log \cL ) \\
& =  \Re\bigg\{ \frac{ P_2( \frac{\sigma-1}{s-1} ) \} }{\sigma-1} E_0(\chi) -   \sum_{|1+it-\rho_{\chi}| \leq \delta} \frac{P_2( \frac{\sigma-1}{s-\rho_{\chi} }) }{\sigma-1}  \bigg\}   + O_{\epsilon, P}( n_K + \log \cL ). 
\end{align*}
For the linear polynomial $P_1(X) := a_1X$, we apply \cref{ClassicalEI} directly to find that
\[
\cP(s,\chi^*; P_1) \leq a_1\big( \phi + \epsilon \big) \cL_{\chi}  +  \Re\bigg\{ \frac{ P_1\big( \frac{\sigma-1}{s-1}\big)}{\sigma-1} E_0(\chi) -  \sum_{|1+it-\rho_{\chi}| \leq \delta} \frac{P_1\big( \frac{\sigma-1}{s-\rho_{\chi}} \big)}{\sigma-1} \bigg\}  + o_{\epsilon, P}(\cL) 
\]
for $\cL$ sufficiently large depending on $\epsilon$. 

Finally, from \eqref{PolyAction}, we see that $\cP(s, \chi^*; P) = \cP(s, \chi^* ; P_1) + \cP(s,\chi^*; P_2)$ since $P = P_1 + P_2$, so combining the above inequality with the previous equation we conclude
\begin{align*}
\cP(s,\chi^*) 
& \leq a_1\big( \phi + \epsilon \big) \cL_{\chi}  +  \Re\bigg\{ \frac{ P\big( \frac{\sigma-1}{s-1}\big)}{\sigma-1} E_0(\chi) -  \sum_{|1+it-\rho_{\chi}| \leq \delta} \frac{P\big( \frac{\sigma-1}{s-\rho_{\chi}} \big)}{\sigma-1} \bigg\}  \\
& \qquad + O_{\epsilon,P}(n_K+ \log \cL ) + o_{\epsilon, P}(\cL). 
\end{align*}
Dividing both sides by $\cL$ and taking $\cL$ sufficiently large depending on $\epsilon$ and $P$, the errors may be made arbitrarily small. Choosing a new $\epsilon$ yields the desired result.  
\end{proof}

We wish to use \cref{PolyEI} in many contexts but typically we want to restrict the sum over zeros $\rho$ to just a few specified zeros. To do so, we impose an additional condition on  $P(X)$. 

\begin{defn} \label{Admissible} A polynomial $P(X) \in \R_{\geq 0}[X]$ is \emph{admissible} if $P(0) = 0$ and
\[
\Re\Big\{ P\Big(\frac{1}{z}\Big) \Big\} \geq 0 \qquad \text{when $\Re\{z\} \geq 1$}. 
\]
\end{defn}

Now we establish a general lemma which we will repeatedly apply in varying circumstances. 

\begin{lem} \label{PolyEI-Apply} 
Let $\epsilon > 0$ and $0 < \lambda < 100$ be arbitrary, and let $s = \sigma + it$ with
\[
\sigma = 1 + \frac{\lambda}{\cL}, \qquad  |t| \leq \cT. 
\]
Let $\chi \pmod{\kq}$ be an arbitrary Hecke character and let $\mathcal{Z} := \{ \tilde{\rho_1}, \tilde{\rho_2}, \dots, \tilde{\rho_J}\}$
 be a finite multiset of zeros of $L(s,\chi)$ (called the \underline{extracted zeros}) where
 \[
 \tilde{\rho_j} = \tilde{\beta_j} + i\tilde{\gamma_j} =  \Big(1-\frac{\tilde{\lambda_j}}{\cL}\Big) + i\cdot \frac{\tilde{\mu_j}}{\cL}, \qquad 1 \leq j \leq J.
 \]
Suppose $P(X) = \sum_{k=1}^d a_k X^k$ is an admissible polynomial. Then
 \begin{equation*}
\begin{aligned}
\frac{\lambda}{\cL} \cdot \cP(s,\chi) & \leq    \Re\bigg\{ E_0(\chi) P\Big( \frac{\lambda}{\lambda + i\mu}\Big) - \sum_{j=1}^J  P\Big( \frac{\lambda}{\lambda + \tilde{\lambda_j} + i(\mu - \tilde{\mu_j}) } \Big) \bigg\} + a_1 \lambda \phi \tfrac{\cL_{\chi}}{\cL} + \epsilon
\end{aligned}
\end{equation*}
for $\cL$ sufficiently large depending only on $\epsilon$, $\lambda$,  the polynomial $P$, and the number of extracted zeros $J$. 

\end{lem}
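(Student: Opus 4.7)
The plan is to derive this from \cref{PolyEI} by a direct comparison of the sums over zeros: use admissibility to discard from the $\delta$-ball those zeros not in $\mathcal{Z}$, and use $P(0)=0$ to absorb into $\epsilon$ the contributions from any extracted zeros lying outside the ball.

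Given $\epsilon > 0$, I would first apply \cref{PolyEI} with accuracy $\epsilon' := \epsilon/(2(1+\lambda))$, obtaining some $\delta = \delta(\epsilon', P) > 0$ such that
\[
\tfrac{1}{\cL}\cP(s,\chi) \leq \Re\bigg\{ \frac{P\big(\tfrac{\sigma-1}{s-1}\big)}{\sigma-1}E_0(\chi) - \sum_{|1+it-\rho_\chi|\leq\delta} \frac{P\big(\tfrac{\sigma-1}{s-\rho_\chi}\big)}{\sigma-1}\bigg\}\cdot\tfrac{1}{\cL} + a_1\phi\tfrac{\cL_\chi}{\cL} + \epsilon'.
\]
This is applicable: $\sigma = 1+\lambda/\cL$ lies in the range required by \cref{PolyEI} once $\cL$ is sufficiently large (ensuring $\lambda \geq 1/\log\cL$). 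Multiplying through by $\lambda$ uses $\cL(\sigma-1) = \lambda$ to eliminate the prefactors $(\sigma-1)^{-1}\cL^{-1} = \lambda^{-1}$; substituting $s-1 = (\lambda+i\mu)/\cL$ and $s-\rho_\chi = (\lambda+\lambda_\chi + i(\mu-\mu_\chi))/\cL$, where $\rho_\chi = (1-\lambda_\chi/\cL) + i\mu_\chi/\cL$, converts the arguments of $P$ into exactly those in the target inequality.

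Next I would reconcile the sum $\sum_{|1+it-\rho_\chi|\leq\delta}$ against $\sum_{j=1}^{J}$ by partitioning zeros of $L(s,\chi)$ into three classes: (a) zeros of $\mathcal{Z}$ lying inside the $\delta$-ball, (b) other zeros inside the $\delta$-ball, and (c) zeros of $\mathcal{Z}$ lying outside the $\delta$-ball. For any nontrivial zero $\rho_\chi$ the bound $\beta_\chi \leq 1$ yields $\Re\{(s-\rho_\chi)/(\sigma-1)\} = (\sigma-\beta_\chi)/(\sigma-1) \geq 1$, and admissibility of $P$ (\cref{Admissible}) then forces $\Re\{P((\sigma-1)/(s-\rho_\chi))\} \geq 0$. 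Therefore the class (b) terms, each contributing a nonnegative amount to the subtracted sum, may simply be dropped -- this only enlarges the right-hand side, preserving the inequality. What remains is to append the class (c) terms to the subtracted sum; these appear as additional nonnegative subtractions that shrink the RHS, so their total magnitude must be absorbed into the error.

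For $\tilde\rho_j$ of class (c), $|1+it-\tilde\rho_j| > \delta$ combined with $\tilde\lambda_j \geq 0$ gives
\[
|\lambda+\tilde\lambda_j + i(\mu-\tilde\mu_j)| \;\geq\; \sqrt{\tilde\lambda_j^{\,2}+(\mu-\tilde\mu_j)^2} \;>\; \delta\cL,
\]
so the argument of $P$ has modulus at most $\lambda/(\delta\cL) \leq 100/(\delta\cL)$. Since $P(0) = 0$ we have $|P(z)| \ll_P |z|$ for small $|z|$, and summing over the at most $J$ class (c) zeros contributes $O_{P,\lambda}(J/(\delta\cL))$, which is below $\epsilon/2$ once $\cL$ is large enough in terms of $\epsilon, \lambda, P, J$. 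Combining with the error $\lambda\epsilon' \leq \epsilon/2$ inherited from the first step completes the argument. The principal obstacle is exactly this class (c): extracted zeros far from $1+it$ are not part of the PolyEI sum, and accommodating them rests entirely on $P(0) = 0$ together with a largeness of $\cL$ commensurate with $J$.
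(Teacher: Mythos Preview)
Your proposal is correct and follows essentially the same approach as the paper's own proof: apply \cref{PolyEI}, use admissibility to discard non-extracted zeros inside the $\delta$-ball, and use $P(0)=0$ together with $|1+it-\tilde\rho_j|>\delta$ to absorb into $\epsilon$ the extracted zeros outside the ball. Your explicit inequality $|\lambda+\tilde\lambda_j+i(\mu-\tilde\mu_j)| \geq \sqrt{\tilde\lambda_j^{\,2}+(\mu-\tilde\mu_j)^2} > \delta\cL$ is a slightly more direct version of the paper's ``$|\tilde\mu_j-\mu|\gg_\delta\cL$ or $\tilde\lambda_j\gg_\delta\cL$'' step, but the substance is identical.
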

\begin{proof} From \cref{PolyEI} and the admissibility of $p$, it follows that
\begin{equation}
\begin{aligned}
\tfrac{\lambda}{\cL} \cP(s, \chi)
& \leq a_1 \lambda \phi  \tfrac{\cL_{\chi}}{\cL} + \epsilon + \Re\bigg\{ E_0(\chi) P\Big( \frac{\lambda}{\lambda + i\mu}\Big)  -  \sum_{\substack{ |1+it-\rho_{\chi}| \leq \delta \\ \rho_{\chi} \in \mathcal{Z}}} P\Big( \frac{\lambda}{\lambda + \lambda_{\chi} + i(\mu - \mu_{\chi} ) } \Big)   \bigg\}  
\end{aligned}
\label{PolyEI-Application-0}
\end{equation}
for some $\delta = \delta(\epsilon,p)$ and $\cL$ sufficiently large depending on $\epsilon, P$ and $\lambda$.  Note the admissibility of $P$ was used to restrict the sum over zeros further by throwing out $\rho_{\chi} \not\in \cZ$ satisfying $|1+it-\rho_{\chi}| \leq \delta$.  For the remaining sum, consider $\tilde{\rho}_j \in \mathcal{Z}$. If $|1+it-\tilde{\rho}_j| \geq \delta$, then $|\tilde{\mu}_j-\mu| \gg_{\delta} \cL$  or $\tilde{\lambda}_j \gg_{\delta} \cL$. As $P(0) = 0$, it follows 
\[
 \Re\bigg\{ P\Big( \frac{\lambda}{\lambda + \tilde{\lambda_j} + i(\mu - \tilde{\mu_j} ) } \Big) \bigg\} \ll_{\epsilon, p, \lambda} \cL^{-1}.
\]
Hence, in the sum over zeros in \eqref{PolyEI-Application-0}, we may include each extracted zero $\tilde{\rho}_j$ with error $O_{\epsilon,P,\lambda}(\cL^{-1})$ implying
\begin{equation*}
\begin{aligned}
\sum_{ \substack{ |1+it-\rho_{\chi}| \leq \delta \\ \rho_{\chi} \in \mathcal{Z}} } \Re\bigg\{ P\Big( \frac{\lambda}{\lambda + \lambda_{\chi} + i(\mu - \mu_{\chi} ) } \Big) \bigg\} 
& = \sum_{j=1}^J  \Re\bigg\{ P\Big( \frac{\lambda}{\lambda + \tilde{\lambda_j} + i(\mu - \tilde{\mu_j}) } \Big) \bigg\} + O_{\epsilon, P, \lambda}(\cL^{-1} J).
\end{aligned}
 \label{PolyEI-Application-1}
\end{equation*}
Using this estimate in \eqref{PolyEI-Application-0} and taking $\cL$ sufficiently large depending on $\epsilon, \lambda, p$ and $J$, we have the desired result upon choosing a new $\epsilon$. 
\end{proof}

During computations, we will employ \cref{PolyEI-Apply} with $P(X) = P_4(X)$ as given in \cite{HBLinnik}. That is, \textit{for the remainder of this paper, denote}
\begin{equation}
P_4(X) := X + X^2 + \tfrac{4}{5}X^3 + \tfrac{2}{5}X^4.
\label{def:P_4}
\end{equation}
We establish a key property of $P_4(X)$ in \cref{PmLemma} using the following observation. 

\begin{lem} \label{GmLemma}
Let $V,W \geq 0$ be arbitrary and $m \geq 1$ be a positive integer. Define
\[
G_m(x,y,z) := V \cdot \frac{x^m}{(x^2+z^2)^m} + W \cdot \frac{y^m}{(y^2+z^2)^m} - \frac{1}{(1+z^2)^m}.
\]
for $x,y,z \in \R$. If $x,y \geq 1$ then
\[
G_m(x,y,z) \geq 0 \quad \text{ provided } \quad \frac{V}{x^m} + \frac{W}{y^m} \geq 1.
\]
\end{lem}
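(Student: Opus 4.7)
The plan is to reduce everything to a comparison with the $z=0$ case by exploiting the hypothesis $x,y \geq 1$. The key observation is that for $x \geq 1$, the inequality $x^2 + z^2 \leq x^2(1+z^2)$ holds (it is equivalent to $1 \leq x^2$), and hence
\[
(x^2 + z^2)^m \leq x^{2m}(1+z^2)^m,
\]
which rearranges to the pointwise bound
\[
\frac{x^m}{(x^2+z^2)^m} \;\geq\; \frac{1}{x^m(1+z^2)^m}.
\]
The same reasoning, with $y$ in place of $x$, gives $\dfrac{y^m}{(y^2+z^2)^m} \geq \dfrac{1}{y^m(1+z^2)^m}$.

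Next I would plug these two lower bounds into the definition of $G_m$ and factor out $(1+z^2)^{-m}$:
\[
G_m(x,y,z) \;\geq\; \frac{1}{(1+z^2)^m}\left(\frac{V}{x^m} + \frac{W}{y^m} - 1\right).
\]
Since $(1+z^2)^{-m} > 0$ for all real $z$, the hypothesis $\tfrac{V}{x^m} + \tfrac{W}{y^m} \geq 1$ immediately forces the right-hand side to be nonnegative, yielding $G_m(x,y,z) \geq 0$, as desired.

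There is no real obstacle here — the lemma is essentially a one-line manipulation once one notices that the inequality $x^2 + z^2 \leq x^2(1+z^2)$ holds precisely because $x \geq 1$. The only thing to double-check is that $V,W \geq 0$ is used correctly (it ensures that we may apply the pointwise lower bounds without flipping signs), and that nothing degenerate happens when $z = 0$ (in which case the inequality reduces directly to $V/x^m + W/y^m \geq 1$, matching the hypothesis exactly).
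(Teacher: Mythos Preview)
Your proof is correct and essentially identical to the paper's: the paper rewrites $\dfrac{x^m}{(x^2+z^2)^m}=\dfrac{1/x^m}{(1+(z/x)^2)^m}$ and uses $(z/x)^2\leq z^2$ for $x\geq 1$, which is exactly your inequality $x^2+z^2\leq x^2(1+z^2)$ in disguise. The final bound $G_m(x,y,z)\geq \bigl(\tfrac{V}{x^m}+\tfrac{W}{y^m}-1\bigr)\tfrac{1}{(1+z^2)^m}$ is the same in both.
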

\begin{proof} Notice
\[
G_m(x,y,z) = \frac{V/x^m}{(1+(z/x)^2)^m} + \frac{W/y^m}{(1+(z/y)^2)^m} - \frac{1}{(1+z^2)^m} \geq \Big( \frac{V}{x^m} + \frac{W}{y^m} - 1\Big) \frac{1}{(1+z^2)^m}. 
\]
\end{proof}
\begin{lem}
\label{PmLemma}
The polynomial $P_4(X)$ is admissible. Additionally, if $0 < a \leq b \leq c$, $A > 0$, and $B,C \geq 0$, then
\begin{equation}
\Re\{ C \cdot P_4\big( \frac{a}{c+it} \big) + B \cdot P_4\big( \frac{a}{b+it} \big) -  A \cdot P_4\big( \frac{a}{a+it} \big)  \} \geq 0
\label{RealPart-Pm-Positivity}
\end{equation}
provided 
\[
 \frac{C}{c^4} + \frac{B}{b^4} \geq  \frac{A}{a^4}. 
\]
\end{lem}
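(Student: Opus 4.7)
The proof breaks naturally into two independent claims.

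\textbf{Admissibility.} Under the substitution $w = 1/z$, the region $\Re z \geq 1$ becomes the closed disk $|w - \tfrac{1}{2}| \leq \tfrac{1}{2}$. I would verify that $\Re P_4(w) \geq 0$ on this disk. Since $\Re P_4$ is harmonic with $P_4(0) = 0$, the maximum principle reduces the check to the boundary circle $w = \tfrac{1}{2}(1 + e^{i\theta})$, which yields a one-variable trigonometric inequality that can be confirmed directly (or cited from \cite[Section 4]{HBLinnik}, where exactly this polynomial is introduced).

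\textbf{Main inequality.} Write $u_X := a/(X+it)$ and expand $P_4(u) = \sum_{k=1}^{4} a_k u^k$ with $(a_1,a_2,a_3,a_4) = (1,1,\tfrac{4}{5},\tfrac{2}{5})$. Then the left-hand side of \eqref{RealPart-Pm-Positivity} equals
\[
\sum_{k=1}^{4} a_k \bigl[ C \Re(u_c^k) + B \Re(u_b^k) - A \Re(u_a^k) \bigr].
\]
The principal tool is \cref{GmLemma}. Substituting $x = c/a$, $y = b/a$, $z = t/a$, $V = C/A$, $W = B/A$, and $m = k$, and clearing denominators by $A a^k$, gives
\[
C (\Re u_c)^k + B (\Re u_b)^k \geq A (\Re u_a)^k, \qquad 1 \leq k \leq 4,
\]
provided $C/c^k + B/b^k \geq A/a^k$. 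This weaker condition follows from the hypothesis $C/c^4 + B/b^4 \geq A/a^4$ by multiplying through by $a^{4-k}$ and using $a^{4-k} \leq c^{4-k}$ and $a^{4-k} \leq b^{4-k}$ (valid because $a \leq b \leq c$ and $4-k \geq 0$).

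To bridge the gap between $(\Re u)^k$, which \cref{GmLemma} controls, and $\Re(u^k)$, which actually appears in $\Re P_4(u)$, I would invoke the Chebyshev-type expansion
\[
\Re(u^k) = \sum_{2j \leq k} \binom{k}{2j}(-1)^j (\Re u)^{k-2j}(\Im u)^{2j},
\]
together with $(\Im u_X)^2 = |u_X|^2 - (\Re u_X)^2 = a^2 t^2/(X^2+t^2)^2$. This reexpresses each $\Re(u_X^k)$ as a $\Z$-linear combination of monomials $a^k X^{k-2j} t^{2j}/(X^2+t^2)^k$, and each such monomial can again be handled by an appropriate invocation of \cref{GmLemma} (choosing $m$ to match the total denominator exponent).

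\textbf{Main obstacle.} The hard part is controlling the alternating $(-1)^j$ signs in the Chebyshev expansion: the $j > 0$ terms contribute negatively, and one must show that the aggregate negative contribution is dominated by the aggregate positive one, uniformly in $t$. The coefficients $(1, 1, \tfrac{4}{5}, \tfrac{2}{5})$ of $P_4$ are chosen in \cite{HBLinnik} precisely so that this cancellation succeeds once one sums over all $k \in \{1,2,3,4\}$. Making the bookkeeping explicit is the main computational step; I expect it to reduce to a finite collection of monomial inequalities whose verification follows from repeated application of \cref{GmLemma}, with the four exponents of $P_4$ each matched to a corresponding $G_m$.
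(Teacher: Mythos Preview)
Your proposal has a genuine gap in the main inequality. The issue is exactly the one you flag under ``Main obstacle'' but do not resolve: the Chebyshev expansion produces monomials $a^k X^{k-2j} t^{2j}/(X^2+t^2)^k$ with $j>0$, and these are \emph{not} of the shape that \cref{GmLemma} handles. That lemma controls only expressions of the form $x^m/(x^2+z^2)^m$, i.e.\ equal exponent in numerator and denominator; the mixed monomials $X^{k-2j}/(X^2+t^2)^k$ with $k-2j<k$ are neither monotone in $X$ nor reducible to $G_m$ by a change of variables. So ``repeated application of \cref{GmLemma}'' does not go through term by term, and you have not supplied an alternative mechanism to absorb the negative $j>0$ contributions. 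The sentence ``I expect it to reduce to a finite collection of monomial inequalities'' is precisely where the argument is missing.

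The paper avoids this problem entirely by not working term by term. Instead it computes $\Re\{P_4(a/(b+it))\}$ directly as a closed-form rational function in $a,b,t$ and finds the explicit identity
\[
\Re\Big\{P_4\Big(\frac{a}{b+it}\Big)\Big\} = \frac{16}{5}\,\frac{(ab)^4}{(b^2+t^2)^4} + \frac{a(b-a)}{5(b^2+t^2)^3}\,Q(a,b,t),
\]
where $Q(a,b,t)=5t^4+2(5b^2+5ab-a^2)t^2+b^2(5b^2+10ab+14a^2)$ is visibly positive for $0<a\le b$. Thus $\Re\{P_4(a/(b+it))\} \ge \tfrac{16}{5}(ab)^4/(b^2+t^2)^4$ for $0<a\le b$, with equality when $b=a$. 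Applying the lower bound to the $B$- and $C$-terms and the identity (with equality) to the $A$-term collapses the left side of \eqref{RealPart-Pm-Positivity} to $\tfrac{16A}{5}\,G_4(c/a,b/a,t/a)$ with $V=C/A$, $W=B/A$, and a \emph{single} invocation of \cref{GmLemma} with $m=4$ finishes. The specific coefficients $(1,1,\tfrac45,\tfrac25)$ of $P_4$ are what make the remainder term $Q$ positive; that is where they are actually used, not in a term-by-term cancellation.
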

\begin{proof} The proof that  $P_4(X)$ is admissible is given in \cite[Section 4]{HBLinnik}. It remains to prove \eqref{RealPart-Pm-Positivity}. By direct computation, one can verify that
\begin{equation}
\Re\{P_4\big( {a\over b+it}\big)\}
={16\over 5}{(ab)^4\over (b^2+t^2)^4}
+{a(b-a)\over 5(b^2+t^2)^3}Q(a,b,t),
\label{RealPart-P4-Identity}
\end{equation}
where $Q(a,b,t)
=5t^4+2(5b^2+5ab-a^2)t^2+b^2(5b^2+10ab+14a^2)$ is clearly positive for $0<a\leq b$ and $t \in \R$. Thus, for $0 <  a \leq b$ and $t \in \R$, we have
\begin{equation}
\Re\{ P_4\big( \frac{a}{b+it} \big) \} \geq \frac{16}{5} \frac{ (a b)^4}{(b^2+t^2)^4}.
\label{RealPart-P4}
\end{equation}
Now, consider the LHS of \eqref{RealPart-Pm-Positivity}. Apply \eqref{RealPart-P4} to the first and second term and \eqref{RealPart-P4-Identity}  to the third term deducing that the LHS of \eqref{RealPart-Pm-Positivity} is
\begin{equation}
 \geq  \frac{16a^4}{5} \cdot \Big( C \cdot \frac{ c^4}{(c^2+t^2)^4} + B \cdot \frac{ b^4}{(b^2+t^2)^4} -  A \cdot \frac{ a^4}{(a^2+t^2)^4} \Big) \geq \frac{16A}{5} \cdot G_4\big(\tfrac{c}{a}, \tfrac{b}{a}, \tfrac{t}{a}\big) 
 \label{RealPart-P4-Reduction}
\end{equation}
where $G_4(x,y,z)$ is defined in \cref{GmLemma} with $V = C/A, W = B/A$. Applying \cref{GmLemma} to $G_4\big(\tfrac{c}{a}, \tfrac{b}{a}, \tfrac{t}{a}\big)$ immediately implies \eqref{RealPart-Pm-Positivity} with the desired condition.
\end{proof}

\section{Smoothed Explicit Inequality}
\label{GeneralizedExpIneq}
We further generalize the ``classical explicit inequality" to smoothly weighted versions of $-\tfrac{L'}{L}(s,\chi)$, similar to the well-known Weil's explicit formula. For any Hecke character $\chi \pmod{\kq}$ and function $f : [0,\infty) \rightarrow \R$ with compact support, define
\begin{align*}
\mathcal{W}(s,\chi; f) & := \sum_{\kn \subseteq \cO} \Lambda_K(\kn) \chi(\kn) (\N\kn)^{-s} f\Big(\frac{\log (\N\kn)}{\cL} \Big) \qquad \text{for $\sigma > 1$,} \\
\cK(s,\chi;f) & := \Re\{ \mathcal{W}(s,\chi;f) \}.
\end{align*}
We begin with the same setup as \cite[Section 5]{HBLinnik}. Assume $f$ satisfies the following condition:\\

\noindent
\textbf{Condition 1} \emph{Let $f$ be a continuous function from $[0,\infty)$ to $\R$, supported in $[0,x_0)$ and bounded absolutely by $M$, and let $f$ be twice differentiable on $(0,x_0)$, with $f''$ being continuous and bounded by $B$.}\\

Recall that the \emph{Laplace transform of $f$} is given by
\begin{equation}
F(z) :=  \int_0^{\infty} e^{-zt} f(t) dt, \qquad z \in \C. 
\label{Laplace}
\end{equation}
Note $F(z)$ is entire since $f$ has compact support. For $\Re(z) > 0$, we have
\begin{equation}
F(z) = \frac{1}{z} f(0) + F_0(z),
\label{F0Laplace}
\end{equation}
where 
\begin{equation}
|F_0(z)| \leq |z|^{-2} A(f)
\label{BoundF0}
\end{equation}
with
\[
A(f) = 3Bx_0 + \frac{2|f(0)|}{x_0}. 
\]
Define the \emph{content of $f$} to be 
\begin{equation}
\cC = \cC(f) := (x_0, M, B, f(0)).
\label{ContentofF}
\end{equation}
For the purposes of generality, estimates in this section will depend only on the content of $f$. For all subsequent sections, we will ignore this distinction and allow dependence on $f$ in general.  We first reduce our analysis to primitive characters and then prove the main result.
\begin{lem} Suppose $\chi \pmod{\kq}$ is induced from $\chi^* \pmod{\kf_{\chi}}$. For $\epsilon > 0$ and $f$ satisfying Condition 1, 
\[
\mathcal{W}(s, \chi; f) = \mathcal{W}(s,\chi^*; f) + O_{\cC}\Big( \frac{n_K}{\sqrt{\epsilon}} + \epsilon \cL^* \Big)
\]
uniformly in the region $\sigma > 1$. 
\label{Reduce2Prim}
\end{lem}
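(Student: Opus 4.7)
The plan is to follow the template of Lemmas \ref{ImprimitiveChar} and \ref{PolyEI-Imprimitive}: express the difference $\mathcal{W}(s,\chi^*;f)-\mathcal{W}(s,\chi;f)$ as a Dirichlet series supported on prime-power ideals dividing $\kq$, bound it term-by-term using only the crude bounds on $f$ from Condition 1, and then invoke Lemma \ref{ImprimitiveQ}.

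First, I would unpack how $\chi$ and $\chi^*$ differ. Since $\chi$ is induced from $\chi^*$, one has $\chi(\kn)=\chi^*(\kn)$ whenever $(\kn,\kq)=1$, and $\chi(\kn)=0$ for $(\kn,\kq)\neq 1$, while $\chi^*(\kn)=0$ only when $(\kn,\kf_\chi)\neq 1$. Hence
\[
\mathcal{W}(s,\chi^*;f)-\mathcal{W}(s,\chi;f) = \sum_{\substack{\kn\subseteq\cO \\ (\kn,\kq)\neq 1}} \frac{\Lambda_K(\kn)\chi^*(\kn)}{(\N\kn)^{s}}\, f\!\left(\frac{\log \N\kn}{\cL}\right),
\]
and because $\Lambda_K$ is supported on prime powers, only $\kn=\kp^{j}$ with $\kp\mid\kq$ (and $\kp\nmid\kf_\chi$) contribute.

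Next I would apply the size bounds from Condition 1, namely $|\chi^*|\leq 1$ and $|f|\leq M$, together with $(\N\kp)^{-j\sigma}\leq (\N\kp)^{-j}$ for $\sigma>1$, and sum the resulting geometric-type tail in $j$:
\[
\bigl|\mathcal{W}(s,\chi^*;f)-\mathcal{W}(s,\chi;f)\bigr| \;\leq\; M\sum_{\kp\mid\kq}\sum_{j\geq 1}\frac{\log \N\kp}{(\N\kp)^{j}} \;\ll_{\cC}\; \sum_{\kp\mid\kq}\frac{\log \N\kp}{\N\kp}.
\]
Only the ``size'' pieces $M$ and $x_{0}$ of the content $\cC(f)$ play a role here; the regularity information encoded in $B$ and $f(0)$ is not used at this step.

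Finally, I would apply Lemma \ref{ImprimitiveQ}, using the sharper square-root bound $\sum_{\kp\mid\kq}\log\N\kp/\N\kp \leq \sqrt{n_K\log\N\kq}$ and then splitting it by a weighted AM--GM with the parameter chosen to match the claimed shape $n_K/\sqrt{\epsilon}+\epsilon\cL^{*}$ (the underlying split is the case analysis $\log\N\kq\lessgtr n_K/\epsilon$). No deep obstacle is expected; the argument is essentially bookkeeping parallel to Lemmas \ref{ImprimitiveChar} and \ref{PolyEI-Imprimitive}, the only new wrinkle being that one must absorb the $f$-dependent constants into the implicit constant via the content $\cC$.
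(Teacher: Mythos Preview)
Your proposal is correct and follows essentially the same approach as the paper: bound the difference by $M\sum_{\kp\mid\kq}\sum_{j\geq 1}\frac{\log\N\kp}{(\N\kp)^{j}}\leq 2M\sum_{\kp\mid\kq}\frac{\log\N\kp}{\N\kp}$ and invoke Lemma~\ref{ImprimitiveQ}. The paper's proof is even more terse than yours, and in fact the only part of the content $\cC$ actually used is $M$; note also that the application in Proposition~\ref{ExplicitNP} only needs the weaker shape $O_{\cC}(n_K/\epsilon+\epsilon\cL^*)$ coming directly from Lemma~\ref{ImprimitiveQ}.
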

\begin{proof} Observe 
\[
\begin{aligned}
& \Big| \mathcal{W}(s,\chi; f) -  \mathcal{W}(s,\chi^*; f)\Big| \leq \sum_{ (\kn,\kq) \neq 1}\frac{\Lambda_K(\kn)}{\N\kn} |f( \cL^{-1}(\log \N\kn))|  \\
& \qquad  \leq M\sum_{ (\kn,\kq) \neq 1} \frac{\Lambda_K(\kn)}{\N\kn} 
= M\sum_{ \kp \mid \kq} \sum_{j \geq 1} \frac{\log \N\kp}{(\N\kp)^{j}}  \leq 2M \sum_{\kp \mid \kq} \frac{\log \N\kp}{\N\kp}. 
 \end{aligned}
\]
The desired result then follows from \cref{ImprimitiveQ}. 
\end{proof}

\begin{prop} \label{ExplicitNP} Let $\chi \pmod{\kq}$ and $\epsilon > 0$ be arbitrary, and suppose $s = \sigma + it$ satisfies
\[
|\sigma-1| \leq \frac{(\log \cL)^{1/2}}{\cL}, \quad |t| \leq \cT.
\]
 Suppose $f$ satisfies Condition 1 and that $f(0) \geq 0 $. Then there exists $\delta = \delta(\cC, \epsilon) \in (0,1)$ depending only on $\epsilon$ and the content of $f$ (and independent of $\chi, \kq, K$ and $s$) such that 
 \begin{equation}
 \label{ExplicitNP1}
\begin{aligned}
\tfrac{1}{\cL}\cdot \cK(s,\chi; f) & \leq  E_0(\chi) \cdot \Re\{  F((s-1)\cL) \}  - \sum_{|1+it-\rho| \leq \delta} \Re\{ F( (s-\rho) \cL) \} \\
& \qquad + f(0)\phi \tfrac{\cL_{\chi}}{\cL}  + \epsilon
\end{aligned}
\end{equation}
provided $\cL$ is sufficiently large depending on $\epsilon$ and the content of $f$. 
\end{prop}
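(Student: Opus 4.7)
My plan is to mirror the strategy of the Polynomial Explicit Inequality (\cref{PolyEI}), with the smooth weight $f$ playing the role of the polynomial $P$ and its Laplace transform $F$ providing the bridge to zeros of $L(s,\chi^*)$. First I would reduce to a primitive character $\chi^*$ inducing $\chi$ via \cref{Reduce2Prim}; the resulting error, after dividing by $\cL$ and using $n_K = o(\cL)$ from \cref{QuantityRelations}, is $O(\epsilon)$ and absorbed. The key analytic step is to exploit the decomposition $F(z) = f(0)/z + F_0(z)$ from \eqref{F0Laplace}, with $|F_0(z)| \leq A(f)/|z|^2$ for $\Re\{z\} > 0$. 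Combining Laplace inversion $f(u) = \tfrac{1}{2\pi i}\int_{(c)} F(z) e^{zu} dz$ (applied with $u = \log\N\kn/\cL$) with the Dirichlet series for $-L'/L$ leads, after splitting via $F = f(0)/z + F_0$, to the formal representation
\[
\mathcal{W}(s,\chi^*;f) = f(0)\left(-\frac{L'}{L}(s,\chi^*)\right) + \frac{\cL}{2\pi i}\int_{(c/\cL)} F_0(w\cL)\left(-\frac{L'}{L}(s-w,\chi^*)\right) dw,
\]
where the first term arises via a Perron-type evaluation of the contribution from the $f(0)/z$ piece.

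For the classical term, taking real parts and applying \cref{ClassicalEI} yields the principal $f(0)\phi\cL_\chi$ contribution together with $f(0)\Re\{E_0(\chi)/(s-1)\}$ and $-f(0)\sum_{|1+it-\rho|\le\delta}\Re\{1/(s-\rho)\}$. For the correction integral involving $F_0$, I would shift the contour from $(c/\cL)$ across the poles of $-L'/L(s-w,\chi^*)$ at $w = s-1$ (for principal $\chi$) and at $w = s-\rho$ (at each zero, with multiplicity), collecting the residues $\cL E_0(\chi) F_0((s-1)\cL)$ and $-\cL F_0((s-\rho)\cL)$. On the shifted contour the remaining integral is bounded using $|F_0(z)| \leq A(f)/|z|^2$ together with the logarithmic-derivative estimates from \cref{LogDedekind}, giving an $o(\cL)$ contribution. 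The identities $\cL F((s-1)\cL) = f(0)/(s-1) + \cL F_0((s-1)\cL)$ and its zero-analog now combine the classical and residue contributions into $E_0(\chi)\Re\{F((s-1)\cL)\}$ and $-\sum\Re\{F((s-\rho)\cL)\}$ as desired. Zeros with $|1+it-\rho|>\delta$ contribute $\cL F_0$-terms bounded by $A(f)/(\delta^2\cL)$ each, and combining with the $O(\cL)$ bound on the number of such zeros from \cite[Lemma 2.1]{LMO} gives a total $O(1/\delta^2)$ contribution, which is absorbed into $\epsilon$ upon choosing $\delta = \delta(\cC,\epsilon)$ sufficiently small.

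The principal obstacle I anticipate lies in rigorously justifying the displayed representation: the slow $|z|^{-1}$ decay of $F$ on vertical lines precludes a naive interchange of summation and integration, while for $\sigma \leq 1$ the Dirichlet series for $-L'/L$ fails to converge on the working contour, so one must invoke analytic continuation via the explicit formula (\cref{LogDiffCorollary}). The decomposition $F = f(0)/z + F_0$ is essential: the $f(0)/z$ piece is isolated by a Perron-type Cauchy-residue argument that bypasses absolute convergence, while the $F_0$ piece enjoys absolute convergence thanks to \eqref{BoundF0} and is amenable to standard contour manipulation.
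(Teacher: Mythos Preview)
Your approach is essentially the paper's: the same integral of $(-L'/L)\cdot F_0$, the same term-wise evaluation yielding $\mathcal{W}=f(0)(-L'/L)+\cL\cdot(\text{integral})$, the same application of \cref{ClassicalEI} to the $f(0)(-L'/L)$ piece, and the same contour shift (the paper shifts to $\Re w=-\tfrac12$ and uses the functional equation there) to collect the $F_0((s-1)\cL)$ and $F_0((s-\rho)\cL)$ residues, which recombine with the classical terms into $F((s-1)\cL)$ and $F((s-\rho)\cL)$.

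Two refinements are needed. First, the obstacle you flag for $\sigma\le 1$ is not resolved by analytic continuation of the integral representation; the paper instead proves \eqref{ExplicitNP1} first in the safe range $\sigma\ge 1+2\cL^{-1}$ and then extends by replacing $f(t)$ with $g(t)=e^{\alpha t}f(t)$ for $0\le\alpha\le(\log\cL)/3x_0$: since $\mathcal{W}(s,\chi;g)=\mathcal{W}(s-\alpha/\cL,\chi;f)$, $G(z)=F(z-\alpha)$, and $g(0)=f(0)$, the inequality for $g$ at $s$ is exactly the inequality for $f$ at $s-\alpha/\cL$. Second, your discard of far zeros is too crude: summing the uniform bound $A(f)/(\delta^2\cL)$ over all zeros diverges. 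Use instead the full decay $|F_0((s-\rho)\cL)|\le A(f)/(|s-\rho|^2\cL^2)$ and the estimate $\sum_{|1+it-\rho|\ge\delta}|s-\rho|^{-2}\ll_\delta\sum_\rho(1+|t-\gamma|^2)^{-1}\ll\cL$ from \cite[Lemma 2.1]{LMO} and \cref{QuantityRelations}.
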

\begin{proof} The proof will closely follow the arguments of \cite[Lemma 5.2]{HBLinnik}. Let $\chi^* \pmod{\kf_{\chi}}$ be the primitive character inducing $\chi$. From \cref{Reduce2Prim}, 
\[
\cK(s,\chi; f) = \cK(s, \chi^*; f) + O_{\cC}( \epsilon^{-1} n_K  + \epsilon \cL^*).
\]
Dividing both sides by $\cL$ and recalling $n_K = o(\cL)$, it follows that
\[
\cL^{-1} \cK(s,\chi; f) \leq \cL^{-1} \cK(s, \chi^*; f) + \epsilon
\]
for $\cL$ sufficiently large depending on $\epsilon$ and the content of $f$.  Thus, we may prove \eqref{ExplicitNP1} with $\cK(s, \chi^*; f)$ instead of $\cK(s, \chi; f)$.

Let $\sigma \geq 1+2\cL^{-1}$ and set $\sigma_0 := 1+\cL^{-1}$ so $\sigma_0 < \sigma$. Consider
\begin{equation}
I := \frac{1}{2\pi i} \int_{\sigma_0 -i\infty}^{\sigma_0+i\infty} \Big( -\frac{L'}{L}(w, \chi^*) \Big) F_0((s-w)\cL) dw. 
\label{LapInvNP}
\end{equation}
Since $F_0$ satisfies \eqref{BoundF0} and 
\[
-\frac{L'}{L}(w,\chi^*) \ll |\frac{\zeta_K'}{\zeta_K}(\sigma_0)| \ll n_K (\sigma_0-1)^{-1}
\]
by \cref{LogDedekind},  the integral converges absolutely. Hence, we may compute $I$ by interchanging the summation and integration, and calculating the integral against $(\N\kn)^{-w}$ term-wise. That is to say,
\begin{equation}
I = \sum_{\kn \subseteq \cO} \Lambda(\kn) \chi^*(\kn) \Big( \frac{1}{2\pi i} \int_{\sigma_0-i\infty}^{\sigma_0+i\infty} (\N \kn)^{-w} F_0( (s-w)\cL) dw \Big). 
\label{LIswap}
\end{equation}
Arguing as in \cite[Section 5, p.21]{HBLinnik} and using Lebesgue's Dominated Convergence Theorem, one can verify
\[
\frac{1}{2\pi i} \int_{\sigma_0-i\infty}^{\sigma_0+i\infty} (\N \kn)^{-w} F_0( (s-w)\cL) dw = \frac{(\N \kn)^{-s}}{\cL} \cdot \big( f(\cL^{-1} \log \N\kn) - f(0) \big)
\]
since $f$ satisfies Condition 1.   Substituting this result into \eqref{LIswap}, we see that
\begin{equation}
I = \frac{1}{\cL} \Big( \mathcal{W}(s, \chi^*; f)  + \frac{L'}{L}(s,\chi^*) f(0) \Big). 
\label{LapInv1}
\end{equation}
Returning to \eqref{LapInvNP}, we shift the line of integration from $(\sigma_0 \pm \infty)$ to $(-\frac{1}{2} \pm \infty)$ yielding
\begin{equation}
\begin{aligned}
I & =  E_0(\chi) F_0((s-1)\cL) - \sum_{\rho} F_0((s-\rho) \cL)  \\
& \quad - r(\chi) F_0(s\cL) + \frac{1}{2\pi i} \int_{-\frac{1}{2} -i\infty}^{-\frac{1}{2}+i\infty} \Big( -\frac{L'}{L}(w, \chi^*) \Big) F_0((s-w)\cL) dw 
\end{aligned}
\label{LapInv2a}
\end{equation}
where the sum is over the non-trivial zeros of $L(w,\chi)$ and $r(\chi) \geq 0$ is the order of the trivial zero $w=0$ of $L(w,\chi^*)$. From \eqref{L-Infinite}, notice $r(\chi) \leq n_K$ so by \eqref{BoundF0},
\[
r(\chi) |F_0(s\cL)| \ll \frac{n_K A(f)}{|s \cL|^2} \ll  \frac{n_K A(f)}{\cL^2} \ll \frac{A(f)}{\cL}.
\]
To bound the remaining integral in \eqref{LapInv2a}, we apply the functional equation \eqref{FunctionalEquation} of $L(w,\chi^*)$ and \cref{LogDiff-Infinite}; namely, we note for $\Re\{w\} = -1/2$ that
\begin{align*}
-\frac{L'}{L}(w,\chi^*) & =  \cL_{\chi}^* + \frac{L'}{L}(1-w, \bar{\chi^*}) + O( n_K \log(2 + |w|) ) = \cL_{\chi}^* + O(n_K \log(2+|w|))
\end{align*}
using \cref{LogDedekind} since $\Re\{1-w\} = 3/2$. From \eqref{BoundF0}, we therefore find that
\begin{align*}
& \hspace*{-0.5in} \frac{1}{2\pi i} \int_{-\frac{1}{2} -i\infty}^{-\frac{1}{2}+i\infty} \Big( -\frac{L'}{L}(w, \chi^*) \Big) F_0((s-w)\cL) dw \\
&  = \frac{\cL_{\chi}^*}{2\pi i}\int_{-\frac{1}{2} -i\infty}^{-\frac{1}{2}+i\infty} F_0((s-w)\cL) dw + O\Big( \frac{A(f)}{\cL^2}  \int_{-\frac{1}{2} -i\infty}^{-\frac{1}{2}+i\infty}  \frac{n_K \cdot \log(2+|w|)}{|s-w|^2}  dw \Big)
\end{align*}
Since $F_0$ is entire and satisfies \eqref{BoundF0}, we may pull the line of integration in the first integral as far left as we desire, concluding that the first integral vanishes. One can readily verify that integral in the error term is  
\[
\ll \frac{A(f)}{\cL^2} \cdot n_K \log(2+|s|) \ll \frac{A(f) n_K \log \cT}{\cL^2} \ll  \frac{A(f)}{\cL}
\]
by \cref{QuantityRelations}. Combining these bounds into \eqref{LapInv2a} and comparing with \eqref{LapInv1}, we deduce
\begin{equation}
\begin{aligned}
\tfrac{1}{\cL} \cdot \mathcal{W}(s,\chi^*; f) & = -\frac{L'}{L}(s,\chi^*) f(0) \tfrac{1}{\cL} + E_0(\chi)  \cdot F_0((s-1)\cL)  
 -  \sum_{\rho} F_0((s-\rho)\cL) + O\Big( \frac{A(f)}{\cL}   \Big). 
\end{aligned}
\label{LapInv2b}
\end{equation}
We wish to apply \cref{ClassicalEI} giving $\delta = \delta(\epsilon)$, but we must discard zeros in the above sum where $|1+it-\rho| \geq \delta$. By \eqref{BoundF0}, \cite[Lemma 2.1]{LMO}, and \cref{QuantityRelations}, this discard induces an error
\begin{align*}
\ll \sum_{|1+it-\rho|\geq \delta} \frac{A(f)}{\cL^2|s-\rho|^2} 
& \ll_{\delta} \frac{A(f)}{\cL^2} \sum_{\rho} \frac{1}{1+|t-\gamma|^2} \ll_{\delta} \frac{A(f)}{\cL^2}(\cL^*+n_K \log \cT) \ll_{\delta} \frac{A(f) }{\cL}.
\end{align*}
Hence, taking real parts of \eqref{LapInv2b}, applying \cref{ClassicalEI}, and using \eqref{F0Laplace}, we find
\begin{align*}
\frac{\cK(s,\chi^*; f)}{ \cL}  & \leq E_0(\chi) \Re\Big\{ F((s-1)\cL)  -\sum_{|1+it-\rho| < \delta}  F((s-\rho)\cL) \Big\} + f(0) \big( \phi + \epsilon \big) \tfrac{\cL_{\chi}}{\cL}  + O_{\delta, \cC}\big( \cL^{-1} \big).
\end{align*}
Taking $\cL$ sufficiently large depending on $\epsilon$ and the content of $f$, the error term may be made arbitrarily small. Upon choosing a new $\epsilon$, we have established \eqref{ExplicitNP1} in the range
\[
1+2\cL^{-1} \leq \sigma \leq 1 + (\log \cL)^{1/2} \cL^{-1}.
\]
Similar to the discussion in \cite[Section 5, p.22-23]{HBLinnik}, one may show \eqref{ExplicitNP1} holds in the desired extended range by considering $g(t) = e^{\alpha t}f(t)$ for  $0 \leq \alpha \leq (\log \cL)/3x_0$. 
\end{proof}

In analogy with \cref{PolyEI} and \cref{PolyEI-Apply}, we would like to use \cref{ExplicitNP} restricting the sum over zeros $\rho$ to just a few specified zeros. To do so, we require our weight $f$ to satisfy an additional condition which was introduced in \cite[Section 6]{HBLinnik}. \\

\noindent
\textbf{Condition 2} \emph{The function $f$ is non-negative. Moreover, its Laplace transform $F$ satisfies
\[
\Re\{ F(z) \} \geq 0 \text{ for $\Re(z) \geq 0$}.
\]
}

\vspace{-4mm} 
\noindent
Condition 2 implies that, viewed as a real-variable function of $t \in \R$, $F(t)$ is a positive decreasing real-valued function. We may now give a more convenient version of \cref{ExplicitNP} in the following lemma. 
\begin{lem} \label{ExplicitNP-Apply} Let $\epsilon \in (0,1)$ be arbitrary, and let $s = \sigma + it$ with
\[
|\sigma-1| \leq \frac{\log\log \cT}{C_0 \cL}, \qquad |t| \leq 5T_0
\]
where constants $C_0 > 0$ and $T_0 \geq 1$ come from \cref{ZeroGap}. Write $\sigma = 1 -  \lambda/\cL$ and $t = \mu/\cL.$ 

Let $\chi \pmod{\kq}$ be an arbitrary Hecke character and let $\mathcal{Z} := \{ \tilde{\rho}_1, \tilde{\rho}_2, \dots, \tilde{\rho}_J\}$
 be a finite, possibly empty, multiset of zeros of $L(s,\chi)$ (called the \underline{extracted zeros}) containing the multiset
 \[
 \{ \rho_{\chi} :  \sigma < \beta_{\chi} \leq 1, \quad |\gamma_{\chi}| \leq T_0 \}. 
 \]
 Write $\tilde{\rho_j} = \tilde{\beta_j} + i\tilde{\gamma_j} =  \Big(1-\dfrac{\tilde{\lambda_j}}{\cL}\Big) + i\cdot \dfrac{\tilde{\mu_j}}{\cL}$ for $1 \leq j \leq J$ and suppose $f$ satisfies Conditions 1 and 2. Then
 \begin{equation*}
\begin{aligned}
\cL^{-1} \cdot \cK(s,\chi; f) & \leq E_0(\chi) \cdot \Re\{  F(-\lambda + i\mu) \} - \sum_{j=1}^J \Re\{ F(\tilde{\lambda_j}-\lambda - i(\tilde{\mu_j}-\mu) ) \}   + f(0) \phi \tfrac{\cL_{\chi}}{\cL}   + \epsilon
\end{aligned}
\end{equation*}
for $\cL$ sufficiently large depending only on $\epsilon$, the content of $f$, and the number of extracted zeros $J$. 
\end{lem}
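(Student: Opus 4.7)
The plan is to invoke \cref{ExplicitNP} at $s$ with parameter $\epsilon$, which yields some $\delta=\delta(\cC,\epsilon)\in(0,1)$ together with the bound
\[
\cL^{-1}\cK(s,\chi;f) \le E_0(\chi)\Re\{F((s-1)\cL)\}-\sum_{|1+it-\rho|\le\delta}\Re\{F((s-\rho)\cL)\}+f(0)\phi\tfrac{\cL_\chi}{\cL}+\epsilon.
\]
Since $(s-1)\cL=-\lambda+i\mu$ and $(s-\tilde\rho_j)\cL=(\tilde\lambda_j-\lambda)-i(\tilde\mu_j-\mu)$, the first main term already agrees with the statement, and only the sum needs to be converted into a sum over the prescribed multiset $\mathcal{Z}$. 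I would do this by comparing the two sums term by term according to the four combinations of the conditions $\rho\in\mathcal{Z}$ and $|1+it-\rho|\le\delta$.

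The crux is to show that every nearby zero $\rho\notin\mathcal{Z}$ can be safely discarded from the restricted sum, and that every $\tilde\rho_j\in\mathcal{Z}$ not already nearby can be added back with negligible cost. For the former, the containment hypothesis on $\mathcal Z$ forces either $\beta\le\sigma$ or $|\gamma|>T_0$. If $\beta\le\sigma$, then $\Re\{(s-\rho)\cL\}\ge 0$, so Condition 2 gives $\Re\{F((s-\rho)\cL)\}\ge 0$; this term appears with a minus sign in the bound, so dropping it only weakens the inequality. If instead $|\gamma|>T_0$, then $|\gamma|\le|t|+\delta\le 5T_0+1\le 10T_0$ places $\gamma$ inside $[T_0,10T_0]$, while $\beta>\sigma\ge 1-\log\log\cT/(C_0\cL)$ places $\beta$ above the critical threshold, contradicting \cref{ZeroGap} once $\cL$ is large. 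Hence no such $\rho$ exists.

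For the latter adjustment, any $\tilde\rho_j\in\mathcal{Z}$ with $|1+it-\tilde\rho_j|>\delta$ satisfies $|(s-\tilde\rho_j)\cL|\ge\tfrac{\delta}{2}\cL$ for $\cL$ large, by the triangle inequality and $|\sigma-1|=o(1)$. The decomposition \eqref{F0Laplace}--\eqref{BoundF0} then yields $|F((s-\tilde\rho_j)\cL)|\ll_{\cC}\cL^{-1}$, so including all at most $J$ such zeros contributes an overall error $O_{\cC,J}(\cL^{-1})$, which is absorbed into $\epsilon$ by taking $\cL$ sufficiently large in terms of $\epsilon$, $\cC$, and $J$. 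The principal technical point is the exclusion of the $|\gamma|>T_0$ case via \cref{ZeroGap}, which relies on the fact that the strict inequality $\beta>\sigma\ge 1-\log\log\cT/(C_0\cL)$ pushes $\beta$ into the zero-free rectangle regardless of the (much larger) fixed size of $\delta$; the rest of the argument amounts to routine term-by-term bookkeeping.
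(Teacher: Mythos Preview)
Your proposal is correct and follows essentially the same approach as the paper's own proof: both start from \cref{ExplicitNP}, use Condition 2 to discard nearby non-extracted zeros with $\beta\le\sigma$, invoke \cref{ZeroGap} to rule out nearby non-extracted zeros with $\beta>\sigma$ (via the $T_0\le|\gamma|\le 10T_0$ trap), and use the decay estimates \eqref{F0Laplace}--\eqref{BoundF0} to insert far-away extracted zeros at cost $O_{\cC,\epsilon}(J/\cL)$. The organization differs only cosmetically---the paper frames the discard step as a single claim ``$\beta\le\sigma$'' proved by contradiction, whereas you split directly into the two subcases---but the logical content is identical.
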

\begin{rem*} The dependence of ``sufficiently large" on $J$ is insignificant for our purposes, as we will employ the lemma with $0 \leq J \leq 10$ in all of our applications.
\end{rem*}

\begin{proof} From \cref{ExplicitNP}, it follows that
\begin{equation}
\begin{aligned}
\frac{\cK(s,\chi; f)}{\cL} & \leq f(0)\big( \phi \tfrac{\cL_{\chi}}{\cL} + \epsilon \big)  + E_0(\chi) \cdot \Re\{  F(-\lambda + i\mu) \} \\
& \qquad - \sum_{|1+it-\rho| \leq \delta} \Re\{ F( (s-\rho) \cL) \}
\end{aligned}
\label{EI-Application-0}
\end{equation}
for some $\delta = \delta(\epsilon,\cC)$.  We consider the sum over zeros depending on whether $\rho \in \cZ$ or not.  For any $\rho = \tilde{\rho}_j \in \mathcal{Z}$, if $|1+it-\tilde{\rho}_j| \geq \delta$, then $|\tilde{\mu}_j-\mu| \gg_{\delta} \cL$ or $\tilde{\lambda}_j \gg_{\delta} \cL$. From \eqref{F0Laplace} and \eqref{BoundF0}, it follows that
\[
\Re\{F((s-\tilde{\rho}_j)\cL)\} \ll_{\epsilon, \cC} \cL^{-1}.
\]
implying
\begin{equation}
 \sum_{ \substack{ |1+it-\rho| \leq \delta \\ \rho \in \mathcal{Z}} } \Re\{ F( (s-\rho) \cL) \} = \sum_{j=1}^J \Re\{ F(\tilde{\lambda_j}-\lambda - i(\tilde{\mu_j}-\mu) ) \} +  O_{\epsilon, \cC}( J \cL^{-1}).
 \label{EI-Application-1}
\end{equation}
Next, for all zeros $\rho = \beta + i\gamma \not\in \cZ$ satisfying $|1+it-\rho| \leq \delta$, we claim $\beta \leq \sigma$. Assuming the claim, it follows by Condition 2 that
\begin{equation}
\sum_{ \substack{ |1+it-\rho| \leq \delta \\ \rho \not\in \mathcal{Z}} } \Re\{ F( (s-\rho) \cL) \} \geq 0. 
\label{EI-Application-2}
\end{equation}
To see the claim, assume for a contradiction that $\sigma \leq \beta \leq 1$ for some zero $\rho = \beta + i\gamma$ occurring in \eqref{EI-Application-2}. As $|1+it-\rho| \leq \delta$, it follows that
\[
|\gamma| \leq |t|+\delta \leq 5T_0+1 \leq 6T_0.
\]
From \cref{ZeroGap}, either 
\[
|\gamma| \leq T_0 \qquad \text{ or } \qquad \beta \leq 1-\frac{\log \log \cT}{C_0 \cL}. 
\]
In the latter case, it follows $\beta \leq \sigma$ which is a contradiction, so it must be that $|\gamma| \leq T_0$ and $\sigma \leq \beta \leq 1$. By the assumptions of the lemma, it follows $\rho \in \mathcal{Z}$, which is also a contradiction. This proves the claim.

Therefore, combining \eqref{EI-Application-1} and \eqref{EI-Application-2}, we may conclude 
\[
- \sum_{|1+it-\rho| \leq \delta} \Re\{ F( (s-\rho) \cL) \} \leq -\sum_{j=1}^J \Re\{ F(\tilde{\lambda_j}-\lambda - i(\tilde{\mu_j}-\mu) ) \} +  O_{\epsilon, \cC}( J \cL^{-1}).
\]
Using this bound in \eqref{EI-Application-0} and taking $\cL$ sufficiently large depending on $\epsilon, \cC$ and $J$, we have the desired result upon choosing a new $\epsilon$. 
\end{proof}

We also record a lemma useful for applications of \cref{ExplicitNP-Apply} in \cref{SZ-ZeroRepulsion,CC-ZeroRepulsion}. 

\begin{lem} \label{RepelLemma} Suppose $f$ satisfies Conditions 1 and 2. For $a,b \geq 0$ and $y  \in \R$, we have that
\[
\Re\{ F(-a + iy) - F(iy) - F(b-a + iy) \} \leq 
\begin{cases} F(-a) - F(0) & \text{if } b \geq a, \\ 
F(-a) - F(b-a) & \text{if } b \leq a.
\end{cases} 
\]
\end{lem}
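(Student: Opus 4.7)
The plan is to reduce the inequality to two pointwise positivity statements about the real-variable integrand of $F$, one of which is immediate from $f \geq 0$ and $\cos \leq 1$, and the other of which comes directly from Condition 2. Since $f$ is real-valued and $F(z) = \int_0^\infty e^{-zt} f(t)\,dt$, for any $x \in \R$ and $y \in \R$ we have
\[
F(x) - \Re\{F(x+iy)\} = \int_0^\infty e^{-xt}\bigl(1-\cos(yt)\bigr) f(t)\,dt,
\]
and the integrand is nonnegative because $f(t) \geq 0$ (Condition 2) and $1-\cos(yt) \geq 0$. This is the workhorse estimate.

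For the case $b \geq a$, I would rewrite the target inequality as
\[
\bigl[\Re\{F(-a+iy)\} - \Re\{F(iy)\}\bigr] - \Re\{F(b-a+iy)\} \leq F(-a) - F(0).
\]
Since $b-a \geq 0$, Condition 2 gives $\Re\{F(b-a+iy)\} \geq 0$, so it suffices to prove
\[
\Re\{F(-a+iy)\} - \Re\{F(iy)\} \leq F(-a) - F(0).
\]
Subtracting the two instances of the workhorse identity (at $x=-a$ and $x=0$) this reduces to
\[
\int_0^\infty (e^{at}-1)\bigl(1-\cos(yt)\bigr) f(t)\,dt \geq 0,
\]
which holds because $a \geq 0$, so $e^{at} \geq 1$ on $[0,\infty)$.

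For the case $b \leq a$, I would symmetrically rewrite
\[
\bigl[\Re\{F(-a+iy)\} - \Re\{F(b-a+iy)\}\bigr] - \Re\{F(iy)\} \leq F(-a) - F(b-a).
\]
Now Condition 2 gives $\Re\{F(iy)\} \geq 0$, and it remains to prove
\[
\Re\{F(-a+iy)\} - \Re\{F(b-a+iy)\} \leq F(-a) - F(b-a).
\]
Again subtracting the workhorse identity at $x=-a$ and $x=b-a$, this reduces to
\[
\int_0^\infty \bigl(e^{at}-e^{(a-b)t}\bigr)\bigl(1-\cos(yt)\bigr) f(t)\,dt \geq 0,
\]
which holds because $b \geq 0$ forces $at \geq (a-b)t$ on $[0,\infty)$.

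There is no real obstacle here; the only mild subtlety is recognizing the correct rearrangement in each case, so that precisely one of the two hypotheses (Condition 2 for the ``free'' term, pointwise positivity of $f$ for the bracketed comparison) applies to exactly one of the terms. The sign of $b-a$ dictates which of $F(iy)$ or $F(b-a+iy)$ is discarded using Condition 2, and which is retained and paired with $F(-a+iy)$ via the workhorse identity.
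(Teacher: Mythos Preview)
Your proof is correct and is essentially the same as the paper's: in each case you drop one term via Condition~2 (the paper drops $\Re\{F(b-a+iy)\}$ when $b\geq a$ and $\Re\{F(iy)\}$ when $b\leq a$, exactly as you do) and then use the integral representation together with $f\geq 0$ and $\cos\leq 1$. The only cosmetic difference is that the paper writes the remaining step directly as $\int_0^\infty f(t)(e^{at}-1)\cos(yt)\,dt \leq \int_0^\infty f(t)(e^{at}-1)\,dt$, whereas you phrase the same fact via the nonnegativity of $\int_0^\infty (e^{at}-1)(1-\cos(yt))f(t)\,dt$.
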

\begin{proof} If $b \geq a$, then by Condition 2, $\Re\{F(b-a+iy)\} \geq 0$ so the LHS of the desired inequality is
\begin{align*}
\leq \Re\{F(-a+iy)-F(iy) \}  & = \int_0^{\infty} f(t) (e^{at}-1) \cos(yt) dt   \leq \int_0^{\infty} f(t) (e^{at}-1) dt  = F(-a) - F(0)
\end{align*}
since $f(t) \geq 0$ and $a \geq 0$. A similar argument holds for $b \leq a$, except we exclude $\Re\{F(iy)\}$ in this case. 
\end{proof}

\section{Numerical Zero Density Estimate}
\label{NewZeroDensityEstimate}
Let us first introduce some notation intended only for this section. \\

\noindent
\emph{Worst Low-lying Zeros of each Character} \\
	Consider the rectangle
	\[
	 \{ s \in \C : 0 \leq \sigma \leq 1, \quad |t| \leq 1\}. 
	\]
	For each character with a zero in this rectangle, index it $\chi^{(k)}$ for $k=1,2,\dots$ with a zero $\rho^{(k)}$ in this rectangle defined by:
	\[
	\Re(\rho^{(k)}) = \max\{ \Re(\rho) : L(\rho, \chi^{(k)}) = 0, |\gamma| \leq 1\},
	\]
	so $\chi^{(j)} \neq \chi^{(k)}$ for $j \neq k$. Write
	\[
	\rho^{(k)} := \beta^{(k)} + i\gamma^{(k)}, \quad \beta^{(k)} = 1 - \frac{\lambda^{(k)}}{\cL}, \quad \gamma^{(k)} = \frac{\mu^{(k)}}{\cL}. 
	\]
	Without loss, we may assume $\lambda^{(1)} \leq \lambda^{(2)} \leq \dots$ and so on. 

	\begin{rem*} Upon comparing with the indexing given in \cref{sec:ZeroFreeGap_and_BadZeros}, we always have the bound $\lambda_k \leq \lambda^{(k)}$ for all $k$ where both quantities exist. 
	\end{rem*}

\noindent
\emph{Low-lying Zero Density} \\
	For $\lambda \geq 0$, consider the rectangle
	\[
	\cS = \cS(\lambda) := \{ s \in \C : 1- \frac{\lambda}{\cL} \leq \sigma \leq 1, \quad |t| \leq 1\}. 
	\]
	Define 
	\begin{align*}
	N = N(\lambda) & := \# \{ \chi \neq \chi_0 \pmod{\kq} \mid L(s,\chi) \text{ has a zero in }\cS(\lambda) \} =  \sum_{\substack{ \lambda^{(k)} \leq \lambda \\ \chi^{(k)} \neq \chi_0} } 1
	\end{align*}
	Below is the main result of this section which gives bounds on $N(\lambda)$ using the smoothed explicit inequality.

\begin{thm} \label{NewZDE}Suppose $f$ satisfies Conditions 1 and 2 and let $\epsilon > 0$.  Assume $\lambda_1 \geq b$ for some $b \geq 0$.  For $\lambda \geq 0$,  if
\[
F(\lambda-b) > \tfrac{1}{\vartheta} f(0)\phi,
\]
and
\[
\Big( F(\lambda-b) - \tfrac{1}{\vartheta}f(0)\phi \Big)^2 > \tfrac{1}{\vartheta} f(0)\phi \Big(f(0)\phi + F(-b)\Big)
\]
then unconditionally, 
\begin{equation}
N(\lambda)\leq 
\frac{\Big( f(0)\phi + F(-b) \Big)\Big( F(-b) - (\tfrac{1}{\vartheta}-1)f(0)\phi \Big)}
{\Big( F(\lambda-b) - \tfrac{1}{\vartheta}f(0)\phi \Big)^2 -\tfrac{1}{\vartheta} f(0)\phi \Big(f(0)\phi + F(-b) \Big)} + \epsilon
\label{ZDE-eqn}
\end{equation}
for $\cL$ sufficiently large depending on $\epsilon$ and $f$. 
\end{thm}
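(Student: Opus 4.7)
The plan is to adapt Heath-Brown's zero-density strategy for Dirichlet $L$-functions \cite{HBLinnik} to the Hecke setting, by combining the smoothed explicit inequality (\cref{ExplicitNP-Apply}) with the Cauchy--Schwarz inequality over ideals. Throughout, set $\sigma_0 := 1 - b/\cL$ and $a_\kn := \Lambda_K(\kn)(\N\kn)^{-\sigma_0} f(\log\N\kn/\cL) \geq 0$. The hypothesis $\lambda_1 \geq b$ guarantees that no zero of $Z(s) = \prod_{\chi \pmod{\kq}} L(s,\chi)$ in the rectangle $\cR$ satisfies $\beta > \sigma_0$, so for any character $\chi \pmod{\kq}$ and any point $s = \sigma_0 + it$ with $|t|\leq T_0$, the ``required'' extracted multiset in \cref{ExplicitNP-Apply} is empty and I am free to choose the extracted zero set.

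The core quantity to analyse is
\[
\Sigma := \sum_{k \in S}\cK(s_k, \chi^{(k)}; f), \qquad s_k := \sigma_0 + i\gamma^{(k)}.
\]
For an upper bound on $\Sigma$, I will apply \cref{ExplicitNP-Apply} to each non-principal $\chi^{(k)}$ at $s_k$ extracting $\cZ = \{\rho^{(k)}\}$; the extracted zero contributes $\Re F(\lambda^{(k)}-b) = F(\lambda^{(k)}-b) \geq F(\lambda-b)$ by the monotone decay of $F$ on $[0,\infty)$ (a consequence of Condition 2), and using $\cL_{\chi^{(k)}}/\cL \leq 1/\vartheta$ produces
\[
\Sigma \leq -N\cL\bigl(F(\lambda-b) - \tfrac{1}{\vartheta} f(0)\phi\bigr) + N\cL\epsilon,
\]
which is strictly negative by the first hypothesis once $\epsilon$ is small enough. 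For the lower bound I will write $\mathcal{W}(s_k, \chi^{(k)}; f) = \sum_\kn a_\kn \chi^{(k)}(\kn)(\N\kn)^{-i\gamma^{(k)}}$ and apply Cauchy--Schwarz in $\kn$:
\[
\Sigma^2 \leq \Bigl|\sum_{k\in S}\mathcal{W}(s_k,\chi^{(k)};f)\Bigr|^2 \leq U \cdot W,
\]
where $U := \sum_\kn a_\kn$ and $W := \sum_\kn a_\kn \bigl|\sum_{k\in S}\chi^{(k)}(\kn)(\N\kn)^{-i\gamma^{(k)}}\bigr|^2$.

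Next I will bound $U$ and $W$ by further applications of \cref{ExplicitNP-Apply}. Applying it to $\chi_0$ at $s = \sigma_0$ with empty $\cZ$, using $\cL_0/\cL \leq 1$, and absorbing the ramified-prime discrepancy $U - \cK(\sigma_0, \chi_0; f) = o(\cL)$ from \cref{ImprimitiveQ}, yields $U \leq \cL(F(-b) + f(0)\phi + \epsilon)$. To bound $W$ I will expand the square so that the $(j,k)$-term becomes $\mathcal{W}(\sigma_0 + i(\gamma^{(j)}-\gamma^{(k)}), \chi^{(j)}\overline{\chi^{(k)}}; f)$, using that Hecke characters vanish off $I(\kq)$. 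The diagonal $j = k$ sums to $N\,\cK(\sigma_0, \chi_0; f) \leq N\cL(F(-b) + f(0)\phi + \epsilon)$. For $j \neq k$ the character $\chi^{(j)}\overline{\chi^{(k)}}$ is non-principal (by the distinctness of the $\chi^{(k)}$ from the labelling of Section~\ref{sec:ZeroFreeGap_and_BadZeros}), and pairing $(j,k)$ with $(k,j)$ makes the off-diagonal sum real; a second application of \cref{ExplicitNP-Apply} with empty $\cZ$ gives an off-diagonal bound $N(N-1)\cL(\tfrac{1}{\vartheta}f(0)\phi + \epsilon)$, so
\[
W \leq N\cL(F(-b) + f(0)\phi + \epsilon) + N(N-1)\cL\bigl(\tfrac{1}{\vartheta}f(0)\phi + \epsilon\bigr).
\]

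Combining $N^2\cL^2(F(\lambda-b) - \tfrac{1}{\vartheta}f(0)\phi)^2 \leq \Sigma^2 \leq UW$, cancelling one factor of $N$, and solving the resulting linear inequality in $N$ will produce the stated bound. The principal obstacle is the bookkeeping of several error terms (the $\epsilon$-errors from four invocations of \cref{ExplicitNP-Apply}, the $o(\cL)$ ramified-prime error, and the error from replacing $F(\lambda^{(k)}-b)$ by $F(\lambda-b)$) so that the displayed numerator $(f(0)\phi + F(-b))(F(-b) - (\tfrac{1}{\vartheta}-1)f(0)\phi)$ emerges with its precise asymmetry; this asymmetry reflects the use of the sharper bound $\cL_0/\cL \leq 1$ in the diagonal (principal) contribution to $W$ against the weaker $\cL_\chi/\cL \leq 1/\vartheta$ for the non-principal off-diagonal terms.
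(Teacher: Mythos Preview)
Your proposal is correct and follows essentially the same route as the paper's proof: apply \cref{ExplicitNP-Apply} at $s=\sigma_0+i\gamma^{(k)}$ extracting $\rho^{(k)}$, sum over $k$, then Cauchy--Schwarz and bound the diagonal and off-diagonal pieces of the square by further applications of \cref{ExplicitNP-Apply}. The only cosmetic difference is that the paper keeps the outer sum restricted to $(\kn,\kq)=1$ from the start (so that $S_1=\cK(\sigma_0,\chi_0;f)$ exactly), whereas you take $U=\sum_{\kn}a_\kn$ over all ideals and then absorb the ramified-prime discrepancy as $o(\cL)$; both are fine.
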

\begin{rem*} Recall $\vartheta \in [\tfrac{3}{4}, 1]$ by the definition of $\cL$ in \cref{sec:ZeroFreeGap_and_BadZeros}. 
\end{rem*}
\begin{rem*} If $\zeta_K(s)$ has a real zero in $\cS(\lambda)$, then one can extract this zero from $\cK(\sigma, \chi_0; f)$ in the argument below and hence improve \eqref{ZDE-eqn} to 
\[
N(\lambda)\leq 
\frac{\Big( f(0)\phi + F(-b) - F(\lambda-b)\Big)\Big( F(-b)- F(\lambda-b) - (\tfrac{1}{\vartheta}-1)f(0)\phi \Big)}
{\Big( F(\lambda-b) - \tfrac{1}{\vartheta}f(0)\phi \Big)^2 -\tfrac{1}{\vartheta} f(0)\phi \Big(f(0)\phi + F(-b) - F(\lambda-b)\Big)} + \epsilon
\]
with naturally modified assumptions. The utility of such a bound is not entirely clear. If the real zero is exceptional, then the Deuring-Heilbronn  phenomenon from \cref{SZ-ZeroRepulsion} would likely be a better substitute. 
\end{rem*}

\begin{proof} We closely follow the arguments in \cite[Section 12]{HBLinnik}. Let $\chi \pmod{\kq}$ denote a non-principal character with a zero $\tilde{\rho} = \tilde{\beta} + i\tilde{\gamma}$ in $\cS(\lambda)$; that is, $b \leq \lambda_1 \leq \tilde{\lambda} \leq \lambda.$
Applying \cref{ExplicitNP-Apply} with $s = \sigma+i\tilde{\gamma}$ where $\sigma = 1- \tfrac{b}{\cL}$ and $\mathcal{Z} = \{ \tilde{\rho}\}$ we find that
\begin{equation}
 \cL^{-1} \cdot \cK(\sigma + i\tilde{\gamma}, \chi; f)  \leq f(0) \phi \tfrac{\cL_{\chi}}{\cL}   - F(\tilde{\lambda}-b)+ \epsilon
\label{ZD0} 
\end{equation}
for $\cL$ sufficiently large depending on $\epsilon$ and the content of $f$.  Since $F$ is decreasing by Condition 2,  it follows that 
$F(\tilde{\lambda}-b) \geq F(\lambda-b).$ Also recalling  that $\tfrac{\cL_{\chi}}{\cL} \leq \vartheta^{-1}$ by \eqref{cL-definition} and \eqref{cL-definition-1}, we see that \eqref{ZD0} implies:
\begin{equation} 
\cL^{-1} \cdot \cK(\sigma + i\tilde{\gamma}, \chi; f) \leq  
 f(0) \tfrac{1}{\vartheta} \phi  - F(\lambda-b) + \epsilon. 
\label{ZD1}
\end{equation}
Summing \eqref{ZD1} over $\chi = \chi^{(j)}$ (which are non-principal by construction) and $\tilde{\gamma} = \gamma^{(j)}$ for $j=1,\dots,N$ where $N = N (\lambda)$, we deduce that
\begin{equation}
\begin{aligned}
\Big( F(\lambda-b) - f(0)\tfrac{1}{\vartheta}\phi - \epsilon \Big) N \cL &  \leq   -\sum_{j \leq N} \cK(\sigma+i\gamma^{(j)}, \chi^{(j)}; f) \\
& = -\sum_{(\kn,\kq) =1} \Lambda(\kn) (\N\kn)^{-\sigma} f\Big( \frac{\log \N\kn}{\cL}\Big) \Re\Big\{ \sum_{j \leq N} \chi^{(j)}(\kn) (\N\kn)^{-i\gamma^{(j)}} \Big\} \\
& \leq \sum_{(\kn,\kq) =1} \Lambda(\kn) (\N\kn)^{-\sigma} f\Big( \frac{\log \N\kn}{\cL}\Big) \Big| \sum_{j \leq N} \chi^{(j)}(\kn) (\N\kn)^{-i\gamma^{(j)}} \Big|. \\
\end{aligned}
\label{CSready}
\end{equation}
The LHS of \eqref{CSready} is positive by assumption so after squaring both sides of \eqref{CSready}, we apply Cauchy-Schwarz to the last expression on the RHS implying
\[
\big(\text{LHS of \eqref{CSready}}\big)^2 \leq \ds S_1 S_2
\]
where
\begin{align*}
S_1 & = \sum_{(\kn,\kq)=1} \Lambda(\kn) (\N\kn)^{-\sigma} f\Big( \frac{\log \N\kn}{\cL}\Big) = \cK(\beta, \chi_0; f), \\
\text{and} \qquad S_2 & = \sum_{(\kn,\kq) =1} \Lambda(\kn) (\N\kn)^{-\sigma} f\Big( \frac{\log \N\kn}{\cL}\Big) \Big| \sum_{j \leq N} \chi^{(j)}(\kn) (\N\kn)^{-i\gamma^{(j)}} \Big|^2   \\
& = \sum_{j,k \leq N} \cK(\sigma+i(\gamma^{(j)}-\gamma^{(k)}), \chi^{(j)}\bar{\chi}^{(k)}; f). 
\end{align*}
The  $1$ term from $S_1$ and the $N$ terms in $S_2$ with $j=k$ give
\[
\cK(\sigma,\chi_0; f) \leq \cL \big( f(0) \phi  + F(-b )  + \epsilon \big)
\]
by \cref{ExplicitNP-Apply}. For the $N^2-N$ terms in $S_2$ with $j \neq k$, apply \cref{ExplicitNP-Apply} extracting no zeros to see that
\[
\cK(\sigma+i(\gamma^{(j)}-\gamma^{(k)}), \chi^{(j)}\bar{\chi}^{(k)}; f) \leq \cL \big( f(0) \tfrac{1}{\vartheta} \phi + \epsilon \big).
\]
 Therefore, from \eqref{CSready}, we conclude
\begin{align*}
&  \Big( F(\lambda-b) - f(0)\tfrac{1}{\vartheta}\phi - \epsilon \Big)^2 N^2 \cL^2  \\
& \quad \leq   \cL \Big[ f(0) \phi + \epsilon  + F(-b )  \Big]  \times \cL \Big[ \Big( f(0) \phi + \epsilon + F(-b )  \Big) N + \big(f(0) \tfrac{1}{\vartheta}\phi + \epsilon \big) (N^2-N) \Big] 
\end{align*}
Dividing both sides by $N \cL^2$, solving the inequality, and choosing a new $\epsilon > 0$ depending on $f$, we find
\[
N \leq 
\frac{\Big( f(0)\phi + F(-b) \Big)\Big( F(-b) - (\tfrac{1}{\vartheta}-1)f(0)\phi \Big)}
{\Big( F(\lambda-b) - \tfrac{1}{\vartheta}f(0)\phi \Big)^2 -\tfrac{1}{\vartheta} f(0)\phi \Big(f(0)\phi + F(-b) \Big)} + \epsilon
\]
provided the denominator is $> 0$ which is one of our hypotheses.
\end{proof}

To demonstrate the utility of \cref{NewZDE}, we produce a table of numerical bounds for $N(\lambda)$. Just as in Heath-Brown's case \cite[Table 13]{HBLinnik}, it turns out that the acquired bounds only hold for certain bounded ranges of $\lambda \in [0, \lambda_b]$ depending on $\lambda_1 \geq b$. However, for small values of $\lambda$, the resulting bounds are expected to be better than an explicit version of classical zero density estimates which has yet to be established.  

We apply \cref{NewZDE} using $\vartheta = \tfrac{3}{4}$. From the definitions of $\cL, N(\lambda)$ and $\cS(\lambda)$, it is immediate that the same bounds hold for all $\vartheta \in [\tfrac{3}{4}, 1]$. Choose the weight $f = f_{\hat{\theta}, \hat{\lambda}}$ from \cite[Lemma 7.1]{HBLinnik} with parameters $\hat{\theta}$ and $\hat{\lambda}$, say, taking
\[
\hat{\theta} =  1.63 + 1.28 b - 4.35 \lambda,  \qquad \hat{\lambda} = \lambda.
\]  
This is roughly optimal based on numerical experimentation and produces \cref{Table-NewZDE}. Only non-trivial bounds are displayed since trivially $N(\lambda) \leq 1$ for $\lambda < \lambda_1$. 
\begin{table}
\hspace*{-0.3in}
\begin{tabular}{l|c|c|c|c|c|c|c|c|c|c} 
 & $\lambda_1 \geq 0$ & $\lambda_1 \geq .0875$ & $\lambda_1 \geq .1$ & $\lambda_1 \geq .1227$ & $\lambda_1 \geq .15$ & $\lambda_1 \geq .20$ & $\lambda_1 \geq .25$ & $\lambda_1 \geq .30$ & $\lambda_1 \geq .35$  \\ 
 \hline 
 $\lambda $ & $N(\lambda)$ & $N(\lambda)$ & $N(\lambda)$ & $N(\lambda)$ & $N(\lambda)$ & $N(\lambda)$ & $N(\lambda)$ & $N(\lambda)$ & $N(\lambda)$ \\ 
 \hline 
 .1 &  2 &  2 &  &  &  &  &  &  &  \\ 
 .125 &  2 &  2 &  2 &  2 &  &  &  &  &  \\ 
 .150 &  3 &  3 &  3 &  3 &  &  &  &  &  \\ 
 .175 &  3 &  3 &  3 &  3 &  3 &  &  &  &  \\ 
 .200 &  4 &  4 &  4 &  3 &  3 &  &  &  &  \\ 
 .225 &  4 &  4 &  4 &  4 &  4 &  4 &  &  &  \\ 
 .250 &  5 &  5 &  5 &  5 &  4 &  4 &  &  &  \\ 
 .275 &  6 &  6 &  5 &  5 &  5 &  5 &  5 &  &  \\ 
 .300 &  7 &  6 &  6 &  6 &  6 &  6 &  5 &  &  \\ 
 .325 &  9 &  8 &  7 &  7 &  7 &  7 &  6 &  6 &  \\ 
 .350 &  11 &  9 &  9 &  9 &  8 &  8 &  7 &  7 &  \\ 
 .375 &  15 &  11 &  11 &  10 &  10 &  9 &  8 &  8 &  7 \\ 
 .400 &  22 &  15 &  14 &  13 &  12 &  11 &  10 &  9 &  8 \\ 
 .425 &  46 &  22 &  20 &  18 &  16 &  14 &  12 &  11 &  10 \\ 
 .450 &  $\infty$ &  41 &  36 &  29 &  24 &  19 &  16 &  13 &  12 \\ 
 .475 &  &  1087 &  207 &  85 &  51 &  30 &  22 &  18 &  15 \\ 
 .500 &  & $\infty$ & $\infty$  & $\infty$ & $\infty$  &  90 &  40 &  27 &  21 \\ 
 .525 &  &  &  &  &  & $\infty$ &  413 &  61 &  34 \\ 
 .550 &  &  &  &  &  &  & $\infty$ & $\infty$ &  127 \\ 
 .575 &  &  &  &  &  &  &  &  &  $\infty$ \\ 
 .600 &  &  &  &  &  &  &  &  &  \\ 
\end{tabular}

\caption{Bounds for $N(\lambda)$}
\label{Table-NewZDE}
\end{table}

\section{Zero Repulsion: $\chi_1$ and $\rho_1$ are real}
\label{SZ-ZeroRepulsion}

Recall the indexing of zeros from \cref{sec:ZeroFreeGap_and_BadZeros}. Throughout this section, we assume $\chi_1$ and $\rho_1$ are real. We wish to quantify the zero repulsion (also called Deuring-Heilbronn phenomenon) of $\rho_1$ with $\rho'$ and $\rho_2$ using the results of \cref{GeneralizedExpIneq,PolynomialEXP} along with various trigonometric identities analogous to the classical one:  $3 + 4\cos \theta + \cos2\theta \geq 0$. We emphasize that $\chi_1$ can be quadratic or possibly principal. 

We will primarily use the smoothed explicit inequality (\cref{ExplicitNP-Apply}) and so we assume that the weight function $f$ continues to satisfy Conditions 1 and 2. For simplicity, henceforth denote $\cK(s, \chi) = \cK(s, \chi; f).$ Suppose characters $\chi, \chi_*$ have zeros $\rho, \rho_*$ respectively. Our starting point is the trigonometric identity
\[
0 \leq \chi_0(\kn)\big( 1 + \Re\{ \chi(\kn) (\N\kn)^{i\gamma} \} \big) \big( 1 + \Re\{ \chi_*(\kn) (\N\kn)^{i\gamma_*} \} \big).
\]
Multiplying by $\Lambda(\kn) f( \cL^{-1}\log\N\kn) (\N\kn)^{-\sigma}$ and summing over $\kn$, it follows that
\begin{equation}
\begin{aligned}
0 & \leq \cK(\sigma, \chi_0)  + \cK(\sigma+i\gamma, \chi) + \cK(\sigma+i\gamma_*, \chi_*)  \\
& \qquad + \frac{1}{2} \cK(\sigma+i\gamma + i\gamma_*, \chi \chi_*)+ \frac{1}{2} \cK(\sigma+i\gamma - i\gamma_*, \chi \bar{\chi_*})
\end{aligned} \qquad \text{ for } \sigma  > 0.
\label{TrigIdentity}
\end{equation}
 In some cases, we will use a simpler trigonometric identity:
 \[
0 \leq  \chi_0(\kn) + \Re\{ \chi(\kn) (\N\kn)^{i\gamma} \} 
 \]
which similarly yields
\begin{equation}
\begin{aligned}
0 & \leq \cK(\sigma, \chi_0)  + \cK(\sigma+i\gamma, \chi) 
\end{aligned} \qquad \text{ for } \sigma  > 0.
\label{TrigIdentity-0}
\end{equation}
\subsection{Bounds for $\lambda'$}
\label{SZ-BoundsLp}

We establish zero repulsion results for $\rho'$ in terms of $\rho_1$, using different methods depending on various ranges of $\lambda_1$.  In this subsection, we intentionally include more details to proofs but in later subsections we shall omit these extra explanations as the arguments will be similar to those found here. 

\begin{lem} \label{SZ-L1LpIdentity} Assume $\chi_1$ and $\rho_1$ are real. Let $\epsilon > 0$ and suppose $f$ satisfies Conditions 1 and 2. Provided $\cL$ is sufficiently large depending on $\epsilon$ and $f$, the following holds:

\begin{enumerate}[(a)]

	\item If $\chi_1$ is quadratic and $\lambda' \leq \lambda_2$, then with $\psi = 4\phi$ it follows that
		\begin{align*}
0 \leq & \quad F(-\lambda')  - F(0) - F(\lambda_1-\lambda') + \Re\{ F(-\lambda'+i\mu') - F(i\mu') - F(\lambda_1-\lambda'+i\mu') \} + f(0)\psi+\epsilon.
\end{align*}

	\item If $\chi_1$ is principal, then with $\psi = 2\phi$ it follows that
	\	\begin{align*}
0 \leq & \quad F(-\lambda')  - F(0) - F(\lambda_1-\lambda')  + \Re\{ F(-\lambda'+i\mu')  - F(i\mu')  - F(\lambda_1-\lambda'+i\mu')\}  + f(0)\psi+\epsilon.
\end{align*}
\end{enumerate}
\end{lem}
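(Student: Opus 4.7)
The plan is to apply the trigonometric inequality \eqref{TrigIdentity} with the choice $\chi = \chi_1$, $\gamma = 0$ (allowed since $\rho_1$ is real) and $\chi_* = \chi_1$, $\gamma_* = \gamma'$. Reality of $\chi_1$ yields $\chi\overline{\chi_*} = \chi\chi_* = \chi_1^2 = \chi_0$ in both cases (a) and (b), and the symmetry $\cK(\sigma-i\gamma',\chi_0) = \cK(\sigma+i\gamma',\chi_0)$ (from $\chi_0$ and $f$ being real-valued) lets me combine the two cross-terms. Setting $\sigma = 1 - \lambda'/\cL$, the identity then collapses in case (a) to
\[
0 \leq \cK(\sigma,\chi_0) + \cK(\sigma,\chi_1) + \cK(\sigma+i\gamma',\chi_1) + \cK(\sigma+i\gamma',\chi_0),
\]
and in case (b) (where $\chi_1 = \chi_0$) to $0 \leq \cK(\sigma,\chi_0) + \cK(\sigma+i\gamma',\chi_0)$ after dividing by $2$.

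Next I would estimate each $\cK$-term via \cref{ExplicitNP-Apply}. From the terms evaluated at $t = \gamma'$ I extract the zeros $\rho_1$ and $\rho'$, contributing $-\Re\{F(\lambda_1-\lambda'+i\mu')\}$ and $-F(0)$ respectively. The delicate $-\Re\{F(i\mu')\}$ in the target is produced by \emph{also} extracting both $\rho'$ and its conjugate $\overline{\rho'}$ from the $t = 0$ terms: since $F(\bar z) = \overline{F(z)}$ for real-valued $f$, this contributes $-F(-i\mu') - F(i\mu')$, whose real part equals $-2\Re\{F(i\mu')\}$. Such over-extraction is permitted by \cref{ExplicitNP-Apply} because $\rho'$ and $\overline{\rho'}$ lie on the boundary $\beta = \sigma$, so they are not among the \emph{required} extractions but may be added freely. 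In case (a), the hypothesis $\lambda' \leq \lambda_2$ together with the labelling in \cref{sec:ZeroFreeGap_and_BadZeros} guarantees that no zero of $\zeta_K$ has real part strictly above $\sigma$, so no extraction is forced from the $\chi_0$-terms.

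Summing the bounds and dividing by $\cL$, the $f(0)\phi \cL_\chi/\cL$ contributions total $(2\cL_0 + 2\cL_{\chi_1})f(0)\phi/\cL$ in case (a), which by \cref{QuantityRelations}(v) (applicable since $a = b = 2$ satisfies $b \leq 3a$) is at most $4 f(0)\phi$, giving $\psi = 4\phi$; in case (b) every term uses $\chi_0$, and $2 f(0) \phi \cL_0/\cL \leq 2 f(0) \phi$ gives $\psi = 2\phi$. Finally, $\Re\{F(i\mu')\} \geq 0$ by Condition 2 allows me to weaken $-2\Re\{F(i\mu')\}$ to $-\Re\{F(i\mu')\}$, completing the claimed inequality. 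I expect the main difficulty to be conceptual rather than computational: namely, recognizing that the optional extraction of the boundary pair $\{\rho', \overline{\rho'}\}$---which \cref{ExplicitNP-Apply} does not require---is exactly the device that generates the $-\Re\{F(i\mu')\}$ term in the target, and then carefully tracking the $\cL_\chi$-weights so that the two values of $\psi$ come out correctly in each case.
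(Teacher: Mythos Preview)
Your setup via \eqref{TrigIdentity} and the collapse to the four-term inequality in case (a), and to the two-term inequality in case (b), is exactly right and matches the paper. The gap is in the extraction plan: your scheme never produces the term $-F(\lambda_1-\lambda')$ that appears in the target. That term arises from extracting the real zero $\rho_1$ from the $t=0$ term carrying $\chi_1$ (in case (a), from $\cK(\beta',\chi_1)$; in case (b), from $\cK(\beta',\chi_0)$), since $(s-\rho_1)\cL=\lambda_1-\lambda'$ there. Your plan extracts $\{\rho_1,\rho'\}$ only at $t=\gamma'$ and $\{\rho',\overline{\rho'}\}$ at $t=0$, so $\rho_1$ is never extracted at $t=0$ and the inequality you would obtain is strictly weaker than the one claimed.

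The $\overline{\rho'}$ device you introduce is also unnecessary (and mildly problematic if $\rho'$ happens to be real with simple multiplicity, since you would then be extracting the same zero twice). The paper simply extracts $\{\rho_1,\rho'\}$ from $\cK(\beta',\chi_1)$: the $\rho_1$-contribution gives $-F(\lambda_1-\lambda')$, and the $\rho'$-contribution gives $-\Re\{F(-i\mu')\}=-\Re\{F(i\mu')\}$ directly, because $f$ is real-valued and hence $F(\bar z)=\overline{F(z)}$. No over-extraction and no weakening step is needed. With this single change to your $t=0$ extraction, the rest of your outline (the $\cL_\chi$ bookkeeping via \cref{QuantityRelations}(v) and the verification that no $\chi_0$-extraction is forced under $\lambda'\le\lambda_2$) goes through exactly as you describe.
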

\begin{proof} (a) In \eqref{TrigIdentity}, choose $\chi = \chi_* =  \chi_1, \rho = \rho'$ and $\rho_* = \rho_1$ with $\sigma = \beta'$ in \eqref{TrigIdentity} giving
\begin{equation}
\begin{aligned}
0 & \leq \cK(\beta', \chi_0)  + \cK(\beta'+ i\gamma', \chi_1) + \cK(\beta', \chi_1)  + \cK(\beta'+ i\gamma', \chi_0).
\end{aligned}
\label{TI-SZ-Lp}
\end{equation}
Apply \cref{ExplicitNP-Apply} to each $\cK(\ast, \ast)$ term and extract the relevant zeros as follows:
\begin{itemize}
	\item For $\cK(\beta', \chi_0)$ and $\cK(\beta'+i\gamma', \chi_0)$, extract no zeros since by assumption $\lambda' \leq \lambda_2$ yielding
\begin{equation}
\begin{aligned}
\cL^{-1} \cK(\beta', \chi_0)  & \leq f(0)\phi \tfrac{\cL_0}{\cL}   + F(-\lambda')+ \epsilon, \\
\cL^{-1} \cK(\beta'+i\gamma', \chi_0) & \leq f(0)  \phi \tfrac{\cL_0}{\cL}   + \Re\{ F(-\lambda' + i\mu')\}+ \epsilon.
\label{SZa-Chi0}
\end{aligned}
\end{equation}

 \item For $\cK(\beta'+i\gamma', \chi_1)$ and $\cK(\beta', \chi_1)$, extract $\{ \rho_1, \rho'\}$ implying
\begin{equation}
\begin{aligned}
\cL^{-1} \cK(\beta'+i\gamma', \chi_1) &  \leq f(0) \phi \tfrac{\cL_{\chi_1}}{\cL}  - F(0) - \Re\{F(\lambda_1-\lambda' + i\mu')\}+ \epsilon, \\
\cL^{-1} \cK(\beta', \chi_1) &  \leq f(0) \phi \tfrac{\cL_{\chi_1}}{\cL}  - F(\lambda_1-\lambda') - \Re\{F(i\mu')\}+ \epsilon. 
\end{aligned}
\label{SZa-Chi1a}
\end{equation}
\end{itemize}
Using \eqref{SZa-Chi0} and \eqref{SZa-Chi1a} in \eqref{TI-SZ-Lp}  and rescaling $\epsilon$,  the desired inequality follows from  except with  $\psi = \phi \cdot \tfrac{2\cL_0 + 2\cL_{\chi_1}}{\cL}.$ From \cref{QuantityRelations},  $\psi \leq 4\phi$ so we may use $\psi = 4\phi$ instead. 

\noindent (b) Use \eqref{TrigIdentity-0} with $\chi = \chi_0, \sigma = \beta'$ and $\rho = \rho'$, from which we deduce
\[
0 \leq \cK(\beta', \chi_0) + \cK(\beta' + i\gamma', \chi_0) \quad \text{ for } \sigma > 0. 
\]
Similar to (a), for both $\cK(\ast, \chi_0)$ terms, apply \cref{ExplicitNP-Apply}
extracting both zeros $\{ \rho_1,\rho' \}$ yielding
\begin{align*}
\cL^{-1} \cK(\beta', \chi_0)  & \leq  f(0)  \phi \tfrac{\cL_0}{\cL} + F(-\lambda') - F(\lambda_1-\lambda')  - \Re\{ F(i\mu')\} + \epsilon \\
\cL^{-1} \cK(\beta'+i\gamma', \chi_0)  & \leq  f(0)  \phi \tfrac{\cL_0}{\cL}    + \Re\{ F(-\lambda'+i\mu') - F(\lambda_1 - \lambda' + i\mu') \}  - F(0) + \epsilon
\end{align*}
Combined with the previous inequality, this yields the desired result with $\psi = 2\phi \cdot \frac{\cL_0}{\cL}$. By \cref{QuantityRelations}, we may use $\psi = 2\phi$ instead. \end{proof}

\subsubsection{$\lambda_1$ very small} We now obtain a preliminary version of the Deuring-Heilbronn phenomenon for zeros of $L(s,\chi_1)$. 

\begin{lem} \label{SZ-L1Lp_verysmall} Assume $\chi_1$ and $\rho_1$ are real. Let $\epsilon > 0$ and suppose $\cL$ is sufficiently large depending on $\epsilon$.

\begin{enumerate}[(a)]
	\item If $\chi_1$ is quadratic and $\lambda' \leq \lambda_2$, then either $\lambda' < 4e$ or 
	\[
	\lambda' \geq \Big(\frac{1}{2} - \epsilon\Big) \log(\lambda_1^{-1}), 
	\]
	which is non-trivial for $\lambda_1 \leq 3.5 \times 10^{-10}$. 
	\item If $\chi_1$ is principal, then either $\lambda' < 4e$ or
	\[
	\lambda' \geq \big(1 - \epsilon\big) \log(\lambda_1^{-1}),
	\]
	which is non-trivial for $\lambda_1 \leq 1.8 \times 10^{-5}$. 
\end{enumerate}
\end{lem}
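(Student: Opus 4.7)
The plan is to reduce Lemma~\ref{SZ-L1LpIdentity} to a purely real inequality via Lemma~\ref{RepelLemma} and then obtain the asserted logarithmic lower bound by specializing to a carefully chosen weight $f$.

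Since $\rho_1$ is the largest-real-part zero of $\chi_1$ in the rectangle $\cR$ by construction, one has $\lambda_1 \leq \lambda'$. Consequently Lemma~\ref{RepelLemma} applies in its $b \leq a$ regime with $a = \lambda'$, $b = \lambda_1$, $y = \mu'$, yielding
\[
\Re\{F(-\lambda'+i\mu') - F(i\mu') - F(\lambda_1-\lambda'+i\mu')\} \leq F(-\lambda') - F(\lambda_1 - \lambda').
\]
Substituting this into Lemma~\ref{SZ-L1LpIdentity} collapses the assertion to the real-variable inequality
\[
F(0) - f(0)\cdot C \leq 2\bigl[ F(-\lambda') - F(\lambda_1 - \lambda') \bigr] + \epsilon,
\]
with $C = 4\phi = 1$ in case (a) (quadratic $\chi_1$) and $C = 2\phi = \tfrac{1}{2}$ in case (b) (principal $\chi_1$), recalling $\phi = \tfrac14$.

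Next I would specialize $f$ to a smooth weight satisfying Conditions 1 and 2 whose effective support is $[0,A]$, with $A$ slightly greater than $2$ in case (a) and slightly greater than $1$ in case (b); a suitable member of Heath-Brown's family $f_{\hat\theta,\hat\lambda}$ from \cite[Lemma 7.1]{HBLinnik} works. For such a weight one can arrange $f(0) = 1$ and $F(0) = A$ (up to an $\epsilon$-error absorbed in the final bound), and the relevant asymptotic is
\[
F(-\lambda') - F(\lambda_1 - \lambda') = \frac{A\lambda_1\, e^{A\lambda'}}{\lambda'}\bigl(1 + o(1)\bigr)
\]
uniformly for $\lambda_1 \to 0$ and $\lambda' \geq 1$. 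Combined with the previous inequality this produces
\[
A\lambda' \geq \log(1/\lambda_1) + \log\!\Big(\frac{(A-C)\lambda'}{2A}\Big) - O(\epsilon).
\]
Since $A = 2C$ in both cases, $(A-C)/(2A) = 1/4$, so the correction term is $\log(\lambda'/4)$.

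Finally the dichotomy is immediate: if $\lambda' < 4e$ the first alternative of the lemma holds and we are done; otherwise $\log(\lambda'/4) \geq 1$, so $A\lambda' \geq \log(1/\lambda_1) - O(\epsilon)$, which after rescaling $\epsilon$ gives $\lambda' \geq (1/A - \epsilon)\log(1/\lambda_1)$. With $A = 2$ in case (a) and $A = 1$ in case (b) this is exactly the stated bound. The thresholds $3.5\times 10^{-10}$ and $1.8\times 10^{-5}$ arise as the solutions of $(1/2)\log(1/\lambda_1) = 4e$ and $\log(1/\lambda_1) = 4e$, respectively, below which the second alternative first exceeds $4e$ and thereby becomes non-trivial.

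The main obstacle is the construction of a $C^2$ weight whose Laplace transform has non-negative real part on the right half-plane (Condition 2) while still approximating the sharp cutoff $\mathbf{1}_{[0,A]}$ closely enough that the identities $F(0) = A$, $f(0) = 1$, and the leading asymptotic for $F(-\lambda') - F(\lambda_1-\lambda')$ hold with errors of size $O(\epsilon)$ uniformly in the relevant $\lambda'$-range. Handling the $\log(\lambda'/4)$ correction cleanly near $\lambda' = 4e$ also requires a small amount of care so the constants $1/2$ and $1$ are preserved.
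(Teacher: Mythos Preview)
Your reduction via Lemmas~\ref{SZ-L1LpIdentity} and~\ref{RepelLemma} to the real inequality
\[
F(0)-f(0)\psi\le 2\bigl[F(-\lambda')-F(\lambda_1-\lambda')\bigr]+\epsilon
\]
is exactly right and matches the paper. The gap is in the weight choice that follows.

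You propose a \emph{fixed} weight with $f(0)=1$, $F(0)=A$, ``effective support'' $[0,A]$, and the asymptotic $F(-\lambda')-F(\lambda_1-\lambda')\sim A\lambda_1 e^{A\lambda'}/\lambda'$. But the exponent in any upper bound of this type is governed by the actual support length $x_0$, not by the ratio $A=F(0)/f(0)$: one has
\[
F(-\lambda')-F(\lambda_1-\lambda')=\int_0^{x_0}f(t)e^{\lambda't}(1-e^{-\lambda_1 t})\,dt\le \lambda_1\int_0^{x_0}f(t)\,t\,e^{\lambda't}\,dt,
\]
which is of order $e^{x_0\lambda'}$. Your argument therefore really gives $\lambda'\ge(1/x_0-\epsilon)\log(1/\lambda_1)$, and to recover the stated constants you would need $x_0=A$. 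With $x_0=A$, however, continuity forces $f(A)=0$, so $f(0)=1$ together with $\int_0^A f=A$ drives $f$ toward the indicator $\mathbf{1}_{[0,A]}$; but any such function fails Condition~2, since already for the indicator $\Re F(iy)=\sin(Ay)/y$ changes sign. Heath-Brown's Lemma~7.1 family does not furnish a weight with all three properties simultaneously.

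The paper avoids this obstruction by taking the explicit triangular weight $f(t)=(x_0-t)_+$, which \emph{does} satisfy Conditions~1 and~2 (one computes $\Re F(iy)=(1-\cos(x_0y))/y^2\ge 0$) and has $F(0)/f(0)=x_0/2$. The crucial point is that $x_0$ is chosen to depend on $\lambda'$, namely $x_0=2\psi+1/\lambda'+2\epsilon$: this keeps $F(0)-f(0)\psi=x_0(\tfrac{1}{2\lambda'}+\epsilon)$ just barely positive while letting $x_0\to 2\psi$ as $\lambda'\to\infty$, which is precisely what produces the sharp constant $1/(2\psi)$. A fixed weight cannot achieve this, so your proposal as written does not close.
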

\begin{proof} The proof is a close adaptation of \cite[p. 37]{HBLinnik}. 
From \cref{RepelLemma} and \cref{SZ-L1LpIdentity}, we have that
\[
0 \leq 2F(-\lambda') - F(0) - 2F(\lambda_1-\lambda') + f(0)(\psi+\epsilon).
\]
where $\psi$ depends on the cases in \cref{SZ-L1LpIdentity} and we assume $f(0) > 0$. As in \cite[p. 37]{HBLinnik}, choose 
\[
f(t) = \begin{cases} x_0 - t & 0 \leq t \leq x_0 \\
0 & t \geq x_0
\end{cases}
\]
for which Conditions 1 and 2 hold. Then by the same calculations, we see that
\begin{align*}
 2F(-\lambda') - 2F(\lambda_1-\lambda') \leq  \frac{2x_0 \lambda_1 \exp(x_0 \lambda')}{(\lambda')^2}, \qquad F(0) = \tfrac{1}{2} x_0^2, \qquad 
 f(0) = x_0,
\end{align*}
and so from the first inequality, we have that
\[
2x_0\lambda_1(\lambda')^{-2} \exp(x_0\lambda') - \frac{1}{2}x_0^2 + x_0(\psi+\epsilon) \geq 0. 
\]
Choose $x_0 := 2\psi + \frac{1}{\lambda'} + 2\epsilon$ so that the dependence on $f$ is uniform for $\lambda' \geq 1$. With this choice, our inequality above then leads to
\[
\lambda_1 \geq \frac{\lambda'}{4} \exp(-x_0 \lambda') = \frac{\lambda'}{4e} \exp(-(2\psi+2\epsilon) \lambda'). 
\]
When $\lambda' \geq 4e$, we conclude
\[
\lambda' \geq (\tfrac{1}{2\psi}-\epsilon) \log(\lambda_1^{-1}).
\]
The result in each case follows from the value of $\psi$ given in \cref{SZ-L1LpIdentity} and noting $\phi = 1/4$. 
\end{proof}
\subsubsection{$\lambda_1$ small} 

Here we create a  ``numerical version" of \cref{SZ-L1LpIdentity}. 
\begin{lem} \label{SZ-L1Lpsmall} Let $\epsilon > 0$ and for $b \geq 0$, assume $0 < \lambda_1 \leq b$ and retain the assumptions of \cref{SZ-L1LpIdentity}. Suppose, for some $\lambda_b' > 0$, we have 
\begin{equation}
2F(-\lambda_b') - 2F(b-\lambda_b') - F(0) +   f(0) \psi  \leq 0
\label{L1small-numerical}
\end{equation}
where $\psi = 4\phi$ or $2\phi$ if $\chi_1$ is quadratic or principal respectively. Then $\lambda' \geq \lambda_b' - \epsilon$ for $\cL$ sufficiently large depending on $\epsilon, b$ and $f$. 
\end{lem}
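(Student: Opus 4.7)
The plan is to convert the identity from \cref{SZ-L1LpIdentity} into a purely real-variable numerical inequality, and then extract the lower bound on $\lambda'$ by a monotonicity and continuity argument. Recall that \cref{SZ-L1LpIdentity} gives (in either case) the starting inequality
\[
0 \leq F(-\lambda') - F(0) - F(\lambda_1-\lambda') + \Re\bigl\{F(-\lambda'+i\mu') - F(i\mu') - F(\lambda_1-\lambda'+i\mu')\bigr\} + f(0)\psi + \epsilon,
\]
for $\cL$ sufficiently large depending on $\epsilon$ and $f$, with $\psi$ the constant indicated in each case.

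First I would apply \cref{RepelLemma} to the complex-argument expression, matching its ``$a$'' parameter to $\lambda'$, its ``$b$'' parameter to $\lambda_1$, and $y = \mu'$. By the construction of $\rho'$ in \cref{sec:ZeroFreeGap_and_BadZeros}, $\Re(\rho')$ is maximal among the permitted zeros of $L(s,\chi_1)$, so $\beta' \leq \beta_1$, i.e.\ $\lambda_1 \leq \lambda'$. This puts us in the second case of \cref{RepelLemma}, yielding $F(-\lambda') - F(\lambda_1-\lambda')$ as the upper bound on the real part, and substitution gives
\[
0 \leq 2F(-\lambda') - 2F(\lambda_1-\lambda') - F(0) + f(0)\psi + \epsilon.
\]
Because $F$ is decreasing on $\R$ by Condition 2 and $\lambda_1 \leq b$, we may replace $F(\lambda_1-\lambda')$ by the smaller quantity $F(b-\lambda')$; setting $G(x) := 2F(-x) - 2F(b-x) - F(0) + f(0)\psi$, this becomes $G(\lambda') \geq -\epsilon$.

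Next I would observe that $G$ is strictly increasing. Differentiating through the Laplace representation of $F$,
\[
G'(x) = 2\int_0^\infty t\,f(t)\bigl(e^{xt} - e^{(x-b)t}\bigr)\,dt > 0,
\]
since $f \geq 0$ is not identically zero and $b \geq 0$. The hypothesis \eqref{L1small-numerical} is exactly $G(\lambda_b') \leq 0$, so by strict monotonicity and continuity there exists $\delta = \delta(\epsilon, b, f) > 0$ with $G(\lambda_b' - \epsilon) = -\delta$. Choosing the ``$\epsilon$'' supplied by \cref{SZ-L1LpIdentity} to be strictly smaller than $\delta$---which is permissible once $\cL$ is sufficiently large in terms of $\epsilon$, $b$, and $f$---forces $G(\lambda') > G(\lambda_b' - \epsilon)$, and hence $\lambda' > \lambda_b' - \epsilon$ by monotonicity, as claimed.

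The argument is essentially mechanical, and the only delicate step is the bookkeeping: the $\epsilon$ in the inequality coming from \cref{SZ-L1LpIdentity} must be taken smaller than the intrinsic gap $|G(\lambda_b' - \epsilon)|$, whose magnitude depends on the weight $f$ and on $b$. That dependence is precisely what is recorded in the conclusion by the phrase ``$\cL$ sufficiently large depending on $\epsilon, b$, and $f$''.
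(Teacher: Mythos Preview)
Your proof is correct and follows essentially the same approach as the paper: apply \cref{RepelLemma} to the complex-argument block in \cref{SZ-L1LpIdentity}, replace $\lambda_1$ by $b$ using monotonicity, and then conclude via the strict monotonicity of $G(x)=2F(-x)-2F(b-x)-F(0)+f(0)\psi$. The paper phrases the monotonicity step slightly differently (observing directly that $F(-\lambda)-F(b-\lambda)=\int_0^\infty f(t)e^{\lambda t}(1-e^{-bt})\,dt$ is increasing in both $\lambda$ and $b$), and it outsources the final $\epsilon$-bookkeeping to \cite{KadNg}, whereas you spell out the gap argument explicitly; but the substance is the same.
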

\begin{proof}
\cref{SZ-L1LpIdentity} and \cref{RepelLemma} imply that
\[
0 \leq 2F(-\lambda') - 2F(\lambda_1-\lambda') - F(0) + f(0)\psi+\epsilon.
\]
Now, by Conditions 1 and 2, the function
\[
F(-\lambda) - F(b-\lambda) = \int_0^{\infty} f(t) e^{\lambda} (1-e^{-b}) dt
\]
is an increasing function of $\lambda$ and also of $b$. Hence, the previous inequality implies that 
\[
0 \leq 2F(-\lambda') - 2F(b-\lambda') - F(0) + f(0)\psi+\epsilon.
\]
On the other hand, from the increasing behaviour of $F(-\lambda) - F(b-\lambda)$, we may deduce that, if \eqref{L1small-numerical} holds for some $\lambda_b'$, then
\[
0 \leq 2F(-\lambda) - 2F(b-\lambda) - F(0) + f(0) \psi \qquad \text{only if $\lambda \geq \lambda_b'$.}
\]
Comparing with the previous inequality and choosing a new value of $\epsilon$, we conclude $\lambda' \geq \lambda_b' - \epsilon$. See \cite[p.773]{KadNg} for details on this last argument. 
\end{proof}

 In each case, employing \cref{SZ-L1Lpsmall}  for various values of $b$ requires a choice of $f$ depending on $b$ which maximizes the computed value of $\lambda_b'$. Based on numerical experimentation, we choose $f = f_{\lambda}$ from  \cite[Lemma 7.2]{HBLinnik} with parameter $\lambda = \lambda(b)$. This produces \cref{Table-SZ-L1LpSmall-Quadratic,Table-SZ-L1LpSmall-Principal}. Note that the bounds in \cref{Table-SZ-L1LpSmall-Quadratic} are applicable in a later subsection for bounds on $\lambda_2$.

\begin{table}
\captionsetup{justification=centering}
\caption{
Bounds for $\lambda^{\star} = \lambda'$ with $\chi_1$ quadratic, $\rho_1$ real and $\lambda_1$ small; \newline and for $\lambda^{\star} = \lambda_2$ with $\chi_1$ quadratic, $\rho_1$ real,  $\chi_2$ principal and $\lambda_1$ small.
}

\begin{tabular}{l|c|c|c} 
$\lambda_1 \leq$ & $\tfrac{1}{2}\log \lambda_1^{-1} \geq$ & $\lambda^{\star} \geq$ & $\lambda$ \\
 \hline 
 $10^{-10}$ &  11.51 &  10.99 &  .8010 \\ 
 $10^{-9}$ &  10.36 &  9.920 &  .7975 \\ 
 $10^{-8}$ &  9.210 &  8.838 &  .7930 \\ 
 $10^{-7}$ &  8.059 &  7.740 &  .7873 \\ 
$10^{-6}$ &  6.908 &  6.623 &  .7796 \\ 
 $10^{-5}$ &  5.756 &  5.481 &  .7687 \\ 
 $10^{-4}$ &  4.605 &  4.303 &  .7521 \\ 
 .001 &  3.454 &  3.075 &  .7239 \\ 
 .005 &  2.649 &  2.176 &  .6896 \\ 
 .010 &  2.303 &  1.778 &  .6679 \\ 
 .015 &  2.100 &  1.542 &  .6522 \\ 
 .020 &  1.956 &  1.374 &  .6394 \\ 
 .025 &  1.844 &  1.244 &  .6283 \\ 
 \end{tabular}
 \qquad
 \begin{tabular}{l|c|c|c} 
$\lambda_1 \leq$ & $\tfrac{1}{2}\log \lambda_1^{-1} \geq$ & $\lambda^{\star} \geq$ & $\lambda$ \\
 \hline 
 .030 &  1.753 &  1.137 &  .6183 \\ 
 .035 &  1.676 &  1.048 &  .6092 \\ 
 .040 &  1.609 &  .9699 &  .6007 \\ 
 .045 &  1.551 &  .9016 &  .5927 \\ 
 .050 &  1.498 &  .8407 &  .5852 \\ 
 .055 &  1.450 &  .7859 &  .5780 \\ 
 .060 &  1.407 &  .7362 &  .5711 \\ 
 .065 &  1.367 &  .6906 &  .5644 \\ 
 .070 &  1.330 &  .6487 &  .5580 \\ 
 .075 &  1.295 &  .6098 &  .5517 \\ 
 .080 &  1.263 &  .5738 &  .5457 \\ 
 .085 &  1.233 &  .5401 &  .5397 \\ 
\end{tabular}

\label{Table-SZ-L1L2small-Chi2Principal}
\label{Table-SZ-L1LpSmall-Quadratic}
\end{table}

\begin{table}
\begin{tabular}{l|c|c|c} 
$\lambda_1 \leq$ & $\log \lambda_1^{-1} \geq$ & $\lambda' \geq$ & $\lambda$ \\
\hline
 $10^{-5}$ &  11.51 &  11.66 &  1.545 \\ 
 $10^{-4}$ &  9.210 &  9.324 &  1.516 \\ 
 .001 &  6.908 &  6.902 &  1.468 \\ 
 .005 &  5.298 &  5.135 &  1.413 \\ 
 .010 &  4.605 &  4.352 &  1.379 \\ 
 .015 &  4.200 &  3.887 &  1.355 \\ 
 .020 &  3.912 &  3.555 &  1.336 \\ 
 .025 &  3.689 &  3.297 &  1.319 \\ 
 .030 &  3.507 &  3.084 &  1.304 \\ 
 .035 &  3.352 &  2.905 &  1.291 \\ 
 .040 &  3.219 &  2.749 &  1.279 \\ 
 .045 &  3.101 &  2.611 &  1.267 \\ 
 .050 &  2.996 &  2.488 &  1.257 \\ 
 .055 &  2.900 &  2.377 &  1.246 \\ 
 .060 &  2.813 &  2.275 &  1.237 \\ 
 .065 &  2.733 &  2.181 &  1.227 \\ 
 .070 &  2.659 &  2.095 &  1.218 \\ 
 .075 &  2.590 &  2.015 &  1.210 \\ 
 .080 &  2.526 &  1.940 &  1.201 
 \end{tabular}
 \qquad
 \begin{tabular}{l|c|c|c} 
$\lambda_1 \leq$ & $\log \lambda_1^{-1} \geq$ & $\lambda' \geq$ & $\lambda$ \\
\hline
 .085 &  2.465 &  1.869 &  1.193 \\ 
 .0875 &  2.436 &  1.836 &  1.189 \\ 
 .090 &  2.408 &  1.803 &  1.185 \\ 
 .095 &  2.354 &  1.741 &  1.178 \\ 
 .100 &  2.303 &  1.681 &  1.170 \\ 
 .105 &  2.254 &  1.625 &  1.163 \\ 
 .110 &  2.207 &  1.572 &  1.156 \\ 
 .115 &  2.163 &  1.521 &  1.149 \\ 
 .120 &  2.120 &  1.472 &  1.142 \\ 
 .125 &  2.079 &  1.426 &  1.135 \\ 
 .130 &  2.040 &  1.381 &  1.129 \\ 
 .135 &  2.002 &  1.338 &  1.122 \\ 
 .140 &  1.966 &  1.297 &  1.116 \\ 
 .145 &  1.931 &  1.258 &  1.110 \\ 
 .150 &  1.897 &  1.220 &  1.103 \\ 
 .155 &  1.864 &  1.183 &  1.097 \\ 
 .160 &  1.833 &  1.148 &  1.091 \\ 
 .165 &  1.802 &  1.113 &  1.085 \\ 
 .170 &  1.772 &  1.080 &  1.079 
\end{tabular}

\caption{Bounds for $\lambda'$ with $\chi_1$ principal, $\rho_1$ real and $\lambda_1$ small.}
\label{Table-SZ-L1LpSmall-Principal}
\end{table}

\subsubsection{$\lambda_1$ medium}

 As a first attempt, we use techniques similar to before. 

\begin{lem} \label{SZ-L1Lpmed-0} Assume $\chi_1$ and $\rho_1$ are real.   Provided $\cL$ is sufficiently large, it follows that if $\rho'$ is real then
\[
\lambda' \geq 
\begin{cases} 
0.6069 &\text{ if } \chi_1 \text{ is quadratic and } \lambda' \leq \lambda_2, \\ 
1.2138 & \text{ if } \chi_1 \text{ is principal}, 
\end{cases}
\]
and if $\rho'$ is complex then
\[
\lambda' \geq
\begin{cases} 
0.1722 & \text{ if } \chi_1 \text{ is quadratic and } \lambda' \leq \lambda_2,
\\ 0.3444 & \text{ if } \chi_1 \text{ is principal}. 
\end{cases}
\]
\end{lem}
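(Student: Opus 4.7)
The strategy is to combine \cref{SZ-L1LpIdentity} with \cref{RepelLemma} to reduce the problem to a univariate inequality in $\lambda'$, then optimize numerically over weights $f$ satisfying Conditions 1 and 2. In the real case ($\mu' = 0$) all imaginary arguments in \cref{SZ-L1LpIdentity} collapse onto their real counterparts, and the inequality simplifies to
\begin{equation*}
0 \leq 2F(-\lambda') - 2F(0) - 2F(\lambda_1 - \lambda') + f(0)\psi + \epsilon.
\end{equation*}
Since Condition 2 forces $F(t) \geq 0$ for all real $t$, discarding the term $F(\lambda_1 - \lambda')$ removes any dependence on $\lambda_1$ and leaves
\begin{equation*}
F(-\lambda') \,\geq\, F(0) - \tfrac{1}{2} f(0)\psi - \tfrac{\epsilon}{2},
\end{equation*}
where $\psi = 4\phi = 1$ (quadratic $\chi_1$) or $\psi = 2\phi = \tfrac{1}{2}$ (principal $\chi_1$).

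For the complex case I will invoke \cref{RepelLemma} with $a = \lambda'$, $b = \lambda_1$, $y = \mu'$ to bound the oscillatory $\Re\{\,\cdot\,\}$ term in \cref{SZ-L1LpIdentity}. Since the eventual bound must hold uniformly in $\lambda_1$, the operative subcase is $\lambda_1 \leq \lambda'$, which gives $\Re\{\,\cdot\,\} \leq F(-\lambda') - F(\lambda_1 - \lambda')$; the opposite subcase $\lambda_1 \geq \lambda'$ reproduces the real-case constraint above and is therefore the stronger of the two. Discarding $F(\lambda_1 - \lambda') \geq 0$ once more produces
\begin{equation*}
F(-\lambda') \,\geq\, \tfrac{1}{2} F(0) - \tfrac{1}{2} f(0)\psi - \tfrac{\epsilon}{2},
\end{equation*}
a constraint weaker by a factor of $2$ on the $F(0)$ term, which accounts quantitatively for the smaller complex bounds in the statement.

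The remaining step is a numerical optimization over $f$. Choosing a quadratic-type weight such as $f_{\lambda}$ from \cite[Lemma 7.2]{HBLinnik} (or a smoothing of the simpler linear weight $f(t) = (x_0 - t)\mathbf{1}_{[0,x_0]}(t)$) makes $F(0)$, $f(0)$, and $F(-\lambda')$ explicit functions of the weight parameter; I then solve numerically for the smallest $\lambda'$ satisfying each of the two constraints above, which produces the four stated constants. The exact relation $\lambda'_{\mathrm{princ}} = 2\,\lambda'_{\mathrm{quad}}$ in both the real and complex sub-cases reflects $\psi_{\mathrm{princ}} = \tfrac{1}{2}\,\psi_{\mathrm{quad}}$ together with a natural scaling of the optimal weight parameter under this rescaling. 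The main technical point is verifying that the chosen weight is essentially optimal within Conditions 1 and 2; once a suitable weight is fixed, the remainder is routine manipulation of the resulting explicit inequality.
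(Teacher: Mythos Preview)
Your overall strategy (start from \cref{SZ-L1LpIdentity}, use \cref{RepelLemma} for the oscillatory part, then optimize over $f$) matches the paper, but there is a genuine gap that prevents you from reaching the stated constants.

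The key point you miss is that $\lambda_1 \leq \lambda'$ is \emph{automatic}: by construction $\rho_1$ has maximal real part among all zeros of $L(s,\chi_1)$ in the relevant rectangle, so $\beta_1 \geq \beta'$, i.e.\ $\lambda_1 \leq \lambda'$. Consequently $\lambda_1-\lambda'\leq 0$, and since $F$ is decreasing on the real line (Condition~2), one has $F(\lambda_1-\lambda')\geq F(0)$, not merely $F(\lambda_1-\lambda')\geq 0$. In the real case ($\mu'=0$) the paper therefore obtains
\[
0\leq F(-\lambda')-2F(0)+\tfrac12 f(0)\psi+\epsilon,
\]
whereas your discarding step only yields $0\leq F(-\lambda')-F(0)+\tfrac12 f(0)\psi+\epsilon$. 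In the complex case the same replacement, applied to both copies of $F(\lambda_1-\lambda')$ after \cref{RepelLemma}, gives $0\leq 2F(-\lambda')-3F(0)+f(0)\psi+\epsilon$, i.e.\ a ``$k=3/2$'' inequality, while your version is only ``$k=1/2$''. Since $F(0)>0$ for any nontrivial admissible $f$, your inequalities are strictly weaker, and optimizing over $f$ will produce strictly smaller lower bounds for $\lambda'$ than $0.6069$ and $0.1722$.

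Two smaller remarks. First, there is no ``opposite subcase $\lambda_1\geq\lambda'$'' to consider: as noted, $\lambda_1\leq\lambda'$ always holds, so the $b\leq a$ branch of \cref{RepelLemma} is the only relevant one. Second, the specific constants $0.6069$ and $0.1722$ come from the extremal weights of \cite[Lemma~7.5]{HBLinnik} with $k=2$ and $k=3/2$ respectively (giving $\lambda'\geq (\cos^2\theta)/(\tfrac12\psi)$ for the appropriate $\theta$); the weights from \cite[Lemma~7.2]{HBLinnik} or the linear weight are not optimal for this particular shape of inequality. Your scaling observation $\lambda'_{\mathrm{princ}}=2\lambda'_{\mathrm{quad}}$ is correct and is exactly $\lambda'\geq 0.6069/\psi$ (resp.\ $0.1722/\psi$) with $\psi=1,\tfrac12$.
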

\begin{proof} If $\rho'$ is real, then $\mu' = 0$. From \cref{SZ-L1LpIdentity} it follows that
\[
0 \leq F(-\lambda') - F(0) - F(\lambda_1-\lambda') + \tfrac{1}{2} f(0)\psi + \epsilon
\]
where $\epsilon, f, \psi$ are specified in \cref{SZ-L1LpIdentity}. Since $F$ is decreasing by Condition 2, 
\[
0 \leq F(-\lambda') - 2F(0) + \tfrac{1}{2} f(0)\psi + \epsilon.
\]
We select the function from \cite[Lemma 7.5]{HBLinnik} corresponding to $k=2$. Hence,
\[
\frac{1}{\lambda'} \cos^2 \theta 	\leq \tfrac{1}{2}\psi + \epsilon.
\]
For $k =2$, we find $\theta = 0.9873...$ and so $\lambda' \geq \frac{0.6069}{\psi}$ for an appropriate choice of $\epsilon$. If $\rho'$ is complex, then we follow a similar argument  selecting $f$ from \cite[Lemma 7.5]{HBLinnik} corresponding to $k=\tfrac{3}{2}$ (i.e. $\theta = 1.2729...$).
\end{proof}

For $\rho'$ complex, a method based on \cref{PolynomialEXP} leads to better bounds than \cref{SZ-L1Lpmed-0}. 
\begin{lem} \label{SZ-L1Lpmedium} Assume $\chi_1$ and $\rho_1$ is real and also suppose $\rho'$ is complex.  Let  $\lambda > 0$ and $J > 0$. If $\cL$ is sufficiently large depending on $\epsilon, \lambda$ and $J$ then
\begin{align*}
0 & \leq (J^2 + \tfrac{1}{2}) \big( P_4(1) - P_4\big( \frac{\lambda}{\lambda+\lambda_1} \big) \big) - 2J \cdot P_4\big( \frac{\lambda}{\lambda+\lambda'} \big)  +
\begin{cases}   
2 \phi  (J+1)^2 \lambda  + \epsilon& \text{if $\chi_1$ is quadratic}, \\
\phi  (J+1)^2 \lambda + \epsilon & \text{if $\chi_1$ is principal},
\end{cases}
\end{align*}
provided
\begin{equation}
\frac{J_0}{(\lambda+\lambda')^4} + \frac{1}{(\lambda+\lambda_1)^4} > \frac{1}{\lambda^4} \qquad \text{with  $J_0 = \min\{ \tfrac{J}{2} + \tfrac{1}{2J}, 4J\}$.}
\label{SZ-L1Lpmedium-Condition}
\end{equation}
\end{lem}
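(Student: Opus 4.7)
The plan is to carry out the same type of argument as in \cref{SZ-L1Lpmed-0} but with the polynomial explicit inequality \cref{PolyEI-Apply} (applied to $P_4$ from \eqref{def:P_4}) in place of the smooth one, and with an additional $\chi_1$-twist that injects the term $P_4(\lambda/(\lambda+\lambda_1))$ via a zero extraction. The starting point is the pointwise non-negativity
\[
0 \leq \bigl[1 + \chi_1(\kn)\bigr]\bigl[ J + \cos(\gamma' \log \N\kn) \bigr]^2,
\]
which holds since $\chi_1$ is real and hence $\chi_1(\kn) \in \{-1,0,+1\}$. Expanding using $\chi_1^2 = \chi_0$, multiplying by the $P_4$-weight $\Lambda_K(\kn)(\N\kn)^{-\sigma}\sum_{k=1}^4 a_k((\sigma-1)\log\N\kn)^{k-1}/(k-1)!$ at $\sigma = 1 + \lambda/\cL$, and summing over integral ideals yields
\begin{align*}
0 \leq & \ (J^2+\tfrac{1}{2})\bigl[\cP(\sigma,\chi_0) + \cP(\sigma,\chi_1)\bigr] + 2J\bigl[\cP(\sigma+i\gamma',\chi_0) + \cP(\sigma+i\gamma',\chi_1)\bigr] \\
& + \tfrac{1}{2}\bigl[\cP(\sigma+2i\gamma',\chi_0) + \cP(\sigma+2i\gamma',\chi_1)\bigr].
\end{align*}
For $\chi_1$ principal we instead use the bare identity $(J+\cos\theta)^2 \geq 0$, which removes the $\chi_1$-copies and halves the $\phi$-coefficient below.

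Next I apply \cref{PolyEI-Apply} term by term with $P = P_4$, tailoring the extraction set. From $\cP(\sigma,\chi_1)$ (or $\cP(\sigma,\chi_0)$ in the principal case) I extract $\rho_1$, producing $-P_4(\lambda/(\lambda+\lambda_1))$ with overall coefficient $J^2+\tfrac12$. From $\cP(\sigma+i\gamma',\chi_1)$ I extract both $\rho_1$ and $\rho'$, producing the decisive $-2J\, P_4(\lambda/(\lambda+\lambda'))$ together with a residual $-2J\,\Re P_4(\lambda/(\lambda+\lambda_1+i\mu'))$. From $\cP(\sigma+2i\gamma',\chi_1)$ I extract $\rho_1$, producing $-\tfrac{1}{2}\Re P_4(\lambda/(\lambda+\lambda_1+2i\mu'))$. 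The $\chi_0$-terms contribute the pole $P_4(1)$ at $s=\sigma$ together with the complex pole values $+2J\,\Re P_4(\lambda/(\lambda+i\mu'))$ and $+\tfrac{1}{2}\Re P_4(\lambda/(\lambda+2i\mu'))$. The coefficient of $\lambda\phi$ aggregates into $(J+1)^2(\cL_0+\cL_{\chi_1})/\cL$ in the quadratic case and $(J+1)^2\cL_0/\cL$ in the principal case; by \cref{QuantityRelations}(v) these are bounded by $2(J+1)^2$ and $(J+1)^2$ respectively, exactly matching the two stated error terms.

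The decisive step is to absorb the leftover complex residuals
\[
2J\bigl[\Re P_4(\tfrac{\lambda}{\lambda + i\mu'}) - \Re P_4(\tfrac{\lambda}{\lambda+\lambda_1+i\mu'})\bigr] + \tfrac{1}{2}\bigl[\Re P_4(\tfrac{\lambda}{\lambda+2i\mu'}) - \Re P_4(\tfrac{\lambda}{\lambda+\lambda_1+2i\mu'})\bigr]
\]
into the available budget $-2J\,P_4(\lambda/(\lambda+\lambda'))$ without introducing any new real-part terms. This is where \cref{PmLemma} enters, with $a=\lambda$, $b=\lambda+\lambda_1$, $c=\lambda+\lambda'$: the hypothesis $J_0/c^4 + 1/b^4 > 1/a^4$ is precisely \eqref{SZ-L1Lpmedium-Condition}, and its conclusion supplies $\Re P_4(\lambda/(\lambda+it)) \leq J_0\,\Re P_4(\lambda/(\lambda+\lambda'+it)) + \Re P_4(\lambda/(\lambda+\lambda_1+it))$ at $t=\mu'$ and $t=2\mu'$, whereupon admissibility of $P_4$ together with the standard estimate $|\Re P_4(z)| \leq P_4(|z|)$ allows the $(\lambda+\lambda')$-terms to be dominated by $P_4(\lambda/(\lambda+\lambda'))$. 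The two alternative values $J_0 \in \{J/2 + 1/(2J),\, 4J\}$ reflect two ways to balance the $(2J,\tfrac12)$-weights across the two frequencies: a simultaneous PmLemma application at both heights produces the former, while discarding the $\tfrac12$-term via admissibility and invoking PmLemma only at $t=\mu'$ produces the latter, which is sharper for large $J$. The main obstacle is this final combinatorial balancing, but it proceeds mechanically once the structure above is in place.
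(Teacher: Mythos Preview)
Your setup is correct through the six-term $\cP$ inequality, but the zero-extraction scheme is incomplete and the ``absorption'' step does not work as written.

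In the quadratic case you extract $\rho_1$ from every $\chi_1$-term but extract $\rho'$ only from $\cP(\sigma+i\gamma',\chi_1)$. This leaves you with residuals of the form
\[
2J\bigl[\Re P_4(\tfrac{\lambda}{\lambda+i\mu'})-\Re P_4(\tfrac{\lambda}{\lambda+\lambda_1+i\mu'})\bigr]
+\tfrac12\bigl[\Re P_4(\tfrac{\lambda}{\lambda+2i\mu'})-\Re P_4(\tfrac{\lambda}{\lambda+\lambda_1+2i\mu'})\bigr],
\]
which contain no $(\lambda+\lambda')$-terms at all. Your plan is then to invoke \cref{PmLemma} as an \emph{upper bound} $\Re P_4(\lambda/(\lambda+it))\le J_0\Re P_4(\lambda/(\lambda+\lambda'+it))+\Re P_4(\lambda/(\lambda+\lambda_1+it))$ and bound the newly introduced $(\lambda+\lambda')$-terms by $P_4(\lambda/(\lambda+\lambda'))$. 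But this adds a strictly positive contribution of order $(2J+\tfrac12)J_0\,P_4(\lambda/(\lambda+\lambda'))$ to the right-hand side; it cannot be absorbed into the existing $-2J\,P_4(\lambda/(\lambda+\lambda'))$ without destroying the coefficient in the statement. The direction of the argument is backwards.

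What the paper does---and what you need---is to extract $\rho'$ (and in one place $\bar{\rho'}$) from \emph{every} $\chi_1$-term, not just the middle one. At $s=\sigma$ and $s=\sigma+2i\gamma'$ this produces $-\Re P_4(\lambda/(\lambda+\lambda'\pm i\mu'))$ with total weight $J^2+1$, and at $s=\sigma+i\gamma'$ the extra extraction of $\bar{\rho'}$ produces $-\Re P_4(\lambda/(\lambda+\lambda'+2i\mu'))$ with weight $2J$. The residuals then organize into two complete \cref{PmLemma} triples
\[
A=(J^2+1)\Re P_4(\tfrac{\lambda}{\lambda+\lambda'+i\mu'})+2J\Re P_4(\tfrac{\lambda}{\lambda+\lambda_1+i\mu'})-2J\Re P_4(\tfrac{\lambda}{\lambda+i\mu'}),
\]
\[
B=2J\Re P_4(\tfrac{\lambda}{\lambda+\lambda'+2i\mu'})+\tfrac12\Re P_4(\tfrac{\lambda}{\lambda+\lambda_1+2i\mu'})-\tfrac12\Re P_4(\tfrac{\lambda}{\lambda+2i\mu'}),
\]
which are subtracted and shown to be $\ge 0$. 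This is also where $J_0$ actually comes from: $A\ge 0$ requires $\tfrac{J^2+1}{2J}/c^4+1/b^4>1/a^4$ (giving $J_0=J/2+1/(2J)$) and $B\ge 0$ requires $4J/c^4+1/b^4>1/a^4$ (giving $J_0=4J$); taking $J_0=\min$ of the two guarantees both. It is not a matter of ``two ways to balance'' but of two separate positivity conditions that must hold simultaneously.
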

\begin{rem*} Recall $P_4(X)$ is a fixed polynomial throughout the paper and is defined by \eqref{def:P_4}. 
\end{rem*}
\begin{proof} For an admissible polynomial $P(X) = \sum_{k=1}^d a_k X^k$, we begin with the inequality
\begin{align*}
0 & \leq \chi_0(\kn) (1+\chi_1(\kn)) \big( J + \Re\{ \chi_1(\kn) (\N\kn)^{-i\gamma'} \} \big)^2 \\
& = (J^2+\tfrac{1}{2})\big( \chi_0(\kn) + \chi_1(\kn) \big) + 2J \cdot \big( \Re\{ \chi_0(\kn) (\N\kn)^{-i\gamma'} \} + \Re\{ \chi_1(\kn) (\N\kn)^{-i\gamma'} \} \big)  \\
& \qquad + \tfrac{1}{2} \cdot \big( \Re\{ \chi_0(\kn) (\N\kn)^{-2i\gamma'} \} + \Re\{ \chi_1(\kn) (\N\kn)^{-2i\gamma'} \} \big). 
\end{align*}
To introduce $\cP(s,\chi) = \cP(s, \chi; P)$, we multiply the above inequality by 
\[
\frac{\Lambda(\kn)}{(\N\kn)^\sigma} \Big( \sum_{k=1}^d a_k \frac{\big( (\sigma-1) \log \N\kn\big)^{k-1}}{(k-1)!} \Big)
\]
with $\sigma = 1 + \tfrac{\lambda}{\cL}$ and sum over ideals $\kn$ yielding
\begin{equation}
\label{SZ-L1Lpmedium-Identity}
\begin{aligned}
0 & \leq (J^2+\tfrac{1}{2})\big( \cP(\sigma, \chi_0) + \cP(\sigma, \chi_1) \big)   + 2J \cdot \big( \cP(\sigma+i\gamma', \chi_0)  +  \cP(\sigma+i\gamma', \chi_1)  \big) \\
& \qquad + \tfrac{1}{2} \cdot \big(  \cP(\sigma+2i\gamma', \chi_0) +  \cP(\sigma+2i\gamma', \chi_1)\big). 
\end{aligned}
\end{equation}
Taking $P(X) = P_4(X)$ so $a_1 = 1$, we consider the two cases depending on $\chi_1$. \\

\noindent
\underline{\emph{(a) $\chi_1$ is quadratic:}} Apply \cref{PolyEI-Apply} to each $\cP(\ast, \ast)$ term in \eqref{SZ-L1Lpmedium-Identity} extracting the pole from $\chi_0$-terms and the zeros $\rho_1,\rho'$ (and possibly $\bar{\rho'}$) from the $\chi_1$-terms. Each of these applications yields the following:
\begin{align*}
\cL^{-1} \cdot \cP(\sigma, \chi_0) & \leq \phi  \tfrac{\cL_0}{\cL} + \epsilon + \frac{P_4(1)}{\lambda}, \\
\cL^{-1} \cdot \cP(\sigma, \chi_1) & \leq  \phi \tfrac{\cL_{\chi_1}}{\cL} + \epsilon - \frac{1}{\lambda} \cdot \Big( P_4\big( \frac{\lambda}{\lambda+\lambda_1} \big)  + \Re\big\{ P_4\big( \frac{\lambda}{\lambda+\lambda' + i\mu'} \big) \big\}  \Big), \\
\cL^{-1} \cdot \cP(\sigma+i\gamma', \chi_0) & \leq  \phi  \tfrac{\cL_0}{\cL} + \epsilon + \frac{1}{\lambda} \cdot  \Re\Big\{ P_4\big( \frac{\lambda}{\lambda +i\mu'} \big) \Big\}, \\
\cL^{-1} \cdot \cP(\sigma+i\gamma', \chi_1) & \leq \phi \tfrac{\cL_{\chi_1}}{\cL} + \epsilon - \frac{1}{\lambda} \cdot \Big(   P_4\big( \frac{\lambda}{\lambda+\lambda'} \big) + \Re\big\{ P_4\big( \frac{\lambda}{\lambda+\lambda_1+i\mu'} \big) + P_4\big( \frac{\lambda}{\lambda+\lambda'+2i\mu'} \big) \big\} \Big), \\
\cL^{-1} \cdot \cP(\sigma+2i\gamma', \chi_0) & \leq  \phi  \tfrac{\cL_0}{\cL} + \epsilon + \frac{1}{\lambda} \cdot  \Re\Big\{ P_4\big( \frac{\lambda}{\lambda +2i\mu'} \big) \Big\}, \\
\cL^{-1} \cdot \cP(\sigma+2i\gamma', \chi_1) & \leq \phi \tfrac{\cL_{\chi_1}}{\cL} + \epsilon  - \frac{1}{\lambda} \cdot \Re\big\{ P_4\big( \frac{\lambda}{\lambda+\lambda_1+2i\mu'} \big)   +  P_4\big( \frac{\lambda}{\lambda+\lambda' + i\mu'} \big)  \big\},
\end{align*}
provided $\cL$ is sufficiently large depending on $\epsilon$ and $\lambda$. For the term $\cP(\sigma+i\gamma',\chi_1)$ we extracted all 3 zeros of $\chi_1$, i.e. that $\mu' \neq 0$. Substituting into \eqref{SZ-L1Lpmedium-Identity} and noting $\frac{\cL_0+\cL_{\chi_1}}{\cL} \leq 2$ by \cref{QuantityRelations}, we find 
\begin{equation}
\label{SZ-L1Lpmedium-Identity-a}
\begin{aligned}
0 & \leq (J^2 + \tfrac{1}{2}) \big( P_4(1) - P_4\big( \frac{\lambda}{\lambda+\lambda_1} \big) \big) - 2J P_4\big( \frac{\lambda}{\lambda+\lambda'} \big)  - A - B  + 2\phi (J+1)^2 \lambda + \epsilon
\end{aligned}
\end{equation}
where
\begin{align*}
A & = \Re\big\{ (J^2 + 1)  \cdot  P_4\big( \frac{\lambda}{\lambda+\lambda' + i\mu'} \big) \big\}  +  2J \cdot P_4\big( \frac{\lambda}{\lambda+\lambda_1+i\mu'} \big) - 2J \cdot P_4\big( \frac{\lambda}{\lambda +i\mu'} \big) \Big\}, \\
B & =  \Re\big\{ 2J \cdot  P_4\big( \frac{\lambda}{\lambda+\lambda' + 2i\mu'} \big)   +  \frac{1}{2} \cdot  P_4\big( \frac{\lambda}{\lambda+\lambda_1+2i\mu'} \big)  - \frac{1}{2} \cdot P_4\big( \frac{\lambda}{\lambda +2i\mu'} \big) \Big\}.
\end{align*}
From \cref{PmLemma}, we see that $A,B \geq 0$ provided
\begin{align*}
\frac{J^2+1}{(\lambda+\lambda')^4} + \frac{2J}{(\lambda+\lambda_1)^4} > \frac{2J}{\lambda^4}  \qquad \text{and} \qquad \frac{2J}{(\lambda+\lambda')^4} + \frac{1/2}{(\lambda+\lambda_1)^4} > \frac{1/2}{\lambda^4}.
\end{align*}
Assumption \eqref{SZ-L1Lpmedium-Condition}  implies both of these inequalities.  \\

\noindent
\underline{\emph{(b) $\chi_1$ is principal:}} Then \eqref{SZ-L1Lpmedium-Identity} becomes
\[
0 \leq (2J^2+ 1) \cP(\sigma, \chi_0) + 4J \cdot  \cP(\sigma+i\gamma', \chi_0)  + \cP(\sigma+2i\gamma', \chi_0). 
\]

We similarly apply \cref{PolyEI-Apply} to each term above extracting the pole and zeros $\rho_1,\rho'$ (and possibly $\bar{\rho'}$). Each of these applications yields the following:
\begin{align*}
\cL^{-1} \cdot \cP(\sigma, \chi_0) & \leq  \phi \tfrac{\cL_0}{\cL} + \epsilon + \frac{1}{\lambda} \Big(  P_4(1) - P_4\big( \frac{\lambda}{\lambda+\lambda_1} \big)  + \Re\Big\{ P_4\big( \frac{\lambda}{\lambda+\lambda' + i\mu'} \big) \Big\}  \Big), \\
\cL^{-1} \cdot \cP(\sigma+i\gamma', \chi_0) & \leq  \phi \tfrac{\cL_0}{\cL} + \epsilon + \frac{1}{\lambda} \Big( - P_4\big( \frac{\lambda}{\lambda+\lambda'} \big)    +   \Re\Big\{ P_4\big( \frac{\lambda}{\lambda +i\mu'} \big) - P_4\big( \frac{\lambda}{\lambda+\lambda_1+i\mu'} \big) \\
& \hspace{2.4in} - P_4\big( \frac{\lambda}{\lambda+\lambda'+2i\mu'} \big) \Big\} \Big),  & \\
\cL^{-1} \cdot \cP(\sigma+2i\gamma', \chi_0) & \leq \phi \tfrac{\cL_0}{\cL} + \epsilon  + \frac{1}{\lambda} \cdot  \Re\Big\{ P_4\big( \frac{\lambda}{\lambda +2i\mu'} \big) - P_4\big( \frac{\lambda}{\lambda+\lambda_1+2i\mu'} \big)   - P_4\big( \frac{\lambda}{\lambda+\lambda' + i\mu'} \big) \Big\}.
\end{align*}
Substituting these into the previous inequality, noting $\frac{\cL_0}{\cL} \leq 1$ and dividing by 2, we obtain  \eqref{SZ-L1Lpmedium-Identity-a} except with $2\phi$ replaced by $\phi$. Following the same argument, we obtain the desired result. 
\end{proof}

Again, we exhibit a ``numerical version" of \cref{SZ-L1Lpmedium}.
\begin{cor} \label{SZ-L1Lpmedium-Numerical} Assume $\chi_1$ and $\rho_1$ is real and suppose $\rho'$ is complex. Let $\epsilon > 0$. Suppose $0 < \lambda_1 \leq b, \lambda > 0, J > 0$ and that there exists $\lambda_b' \in [0,\infty)$ satisfying
\begin{align*}
 (J^2 + \tfrac{1}{2}) \big( P_4(1) - P_4\big( \frac{\lambda}{\lambda+ b} \big) \big) - 2J \cdot P_4\big( \frac{\lambda}{\lambda+\lambda_b'} \big)  + \psi (J+1)^2 \lambda \leq 0
\end{align*}
where $\psi = 2\phi$ or $\phi$ if $\chi_1$ is quadratic or principal respectively. Then it follows that $\lambda' \geq \lambda_b' - \epsilon$ for $\cL$ sufficiently large depending on $\epsilon, \lambda$ and $J$ provided
\[
\frac{J_0}{(\lambda+\lambda'_b)^4} + \frac{1}{(\lambda+b)^4} > \frac{1}{\lambda^4} \qquad \text{where $J_0 = \min\{ \tfrac{J}{2} + \tfrac{1}{2J}, 4J\}$. }
\] 
\end{cor}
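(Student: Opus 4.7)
The plan is to adapt the argument used in the proof of Lemma \ref{SZ-L1Lpsmall} to the polynomial setting of Lemma \ref{SZ-L1Lpmedium}: reduce the numerical statement to a single application of the lemma via two monotonicity steps, and argue by contradiction that $\lambda' < \lambda_b' - \epsilon$ is impossible. If $\lambda' \geq \lambda_b'$ the conclusion is trivial, so throughout suppose $\lambda' < \lambda_b' - \epsilon$.

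First I would verify that the side condition \eqref{SZ-L1Lpmedium-Condition} of Lemma \ref{SZ-L1Lpmedium} is actually satisfied under this setup. Since $\lambda_1 \leq b$ and $\lambda' < \lambda_b'$, monotonicity of $x \mapsto (\lambda + x)^{-4}$ yields
\[
\frac{J_0}{(\lambda+\lambda')^4} + \frac{1}{(\lambda+\lambda_1)^4} > \frac{J_0}{(\lambda+\lambda_b')^4} + \frac{1}{(\lambda+b)^4} > \frac{1}{\lambda^4}
\]
by hypothesis. So Lemma \ref{SZ-L1Lpmedium} applies and, for any prescribed $\epsilon_0 > 0$, gives
\[
0 \leq (J^2+\tfrac{1}{2})\Bigl(P_4(1) - P_4\bigl(\tfrac{\lambda}{\lambda+\lambda_1}\bigr)\Bigr) - 2J\, P_4\bigl(\tfrac{\lambda}{\lambda+\lambda'}\bigr) + \psi(J+1)^2 \lambda + \epsilon_0
\]
once $\cL$ is large enough in terms of $\epsilon_0, \lambda, J$. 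Next, because $P_4$ is increasing on $[0,1]$ and $\lambda_1 \mapsto \lambda/(\lambda+\lambda_1)$ is decreasing, the function $\lambda_1 \mapsto P_4(1) - P_4(\lambda/(\lambda+\lambda_1))$ is non-decreasing, so replacing $\lambda_1$ by $b$ only enlarges the right-hand side; this yields the same inequality with $\lambda_1$ replaced by $b$.

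To finish, define
\[
G(x) := (J^2+\tfrac{1}{2})\Bigl(P_4(1) - P_4\bigl(\tfrac{\lambda}{\lambda+b}\bigr)\Bigr) - 2J\, P_4\bigl(\tfrac{\lambda}{\lambda+x}\bigr) + \psi(J+1)^2 \lambda.
\]
Since $P_4' > 0$ on $(0,1]$ and $x \mapsto \lambda/(\lambda+x)$ is strictly decreasing, $G$ is continuous and strictly increasing. The previous step reads $G(\lambda') \geq -\epsilon_0$, while the hypothesis of the corollary says $G(\lambda_b') \leq 0$. Setting $\delta := G(\lambda_b') - G(\lambda_b' - \epsilon) > 0$, a quantity depending only on $\epsilon, \lambda, b, J, \phi$, strict monotonicity together with $\lambda' < \lambda_b' - \epsilon$ forces $G(\lambda') \leq G(\lambda_b' - \epsilon) \leq -\delta$. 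Choosing $\epsilon_0 := \delta/2$ and taking $\cL$ correspondingly large (which is allowed since $\delta$ is determined by $\epsilon, \lambda, J$ and the fixed data $b, \phi, P_4$), we reach the contradiction $-\delta/2 \leq G(\lambda') \leq -\delta$. The argument is essentially bookkeeping on top of Lemma \ref{SZ-L1Lpmedium}; the only genuine point to be careful about is verifying the side condition under the contradiction hypothesis and tracking the quantitative dependence of $\delta$ on $\epsilon$, which is what forces the final largeness of $\cL$ to depend on $\epsilon, \lambda, J$.
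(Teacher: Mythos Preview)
Your proof is correct and follows essentially the same approach as the paper: apply Lemma \ref{SZ-L1Lpmedium} and exploit the monotonicity of the resulting expression in both $\lambda_1$ and $\lambda'$ (since $P_4$ has non-negative coefficients and $P_4(0)=0$). The paper's proof is quite terse on this point, so your explicit verification of the side condition \eqref{SZ-L1Lpmedium-Condition} under the contradiction hypothesis and your careful tracking of the $\epsilon$--$\delta$ dependence are welcome elaborations rather than a different route.
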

\begin{proof} 
From \cref{SZ-L1Lpmedium}, 
\[
0 \leq (J^2 + \tfrac{1}{2}) \big( P_4(1) - P_4\big( \frac{\lambda}{\lambda+ \lambda_1} \big) \big) - 2J \cdot P_4\big( \frac{\lambda}{\lambda+\lambda'} \big)  + (2-E_0(\chi_1)) \cdot \phi (J+1)^2 \lambda + \epsilon.
\]
Since $P_4$ has non-negative coefficients and $P_4(0) = 0$, the above expression is \emph{increasing} with $\lambda_1$ and $\lambda'$. From this observation, the desired result follows. 
\end{proof}

\cref{SZ-L1Lpmedium-Numerical} gives lower bounds for $\lambda'$ for certain ranges of $\lambda_1$. For each range $0 < \lambda_1 \leq b$, we choose $\lambda = \lambda(b) > 0, J = J(b) > 0$ to produce an optimal lower bound $\lambda_b'$ for $\lambda'$. This produces \cref{Table-SZ-L1Lpmedium-Quadratic,Table-SZ-L1Lpmedium-Principal}.
\begin{table}

\begin{tabular}{c|c|c|c|c} 
 $\lambda_1 \leq$ & $\tfrac{1}{2}\log(1/\lambda_1)$ & $\lambda^{\star} \geq $ & $\lambda$ & $J$ \\ 
 \hline 
 .09 &  1.204 &  .5261 &  1.239 &  .8837 \\ 
 .10 &  1.151 &  .5063 &  1.265 &  .8793 \\ 
 .11 &  1.104 &  .4880 &  1.289 &  .8752 \\ 
 .12 &  1.060 &  .4709 &  1.310 &  .8714 \\ 
 .1227 &  1.049 &  .4665 &  1.316 &  .8704 \\ 
 .13 &  1.020 &  .4549 &  1.330 &  .8677 \\ 
 .14 &  .9831 &  .4398 &  1.348 &  .8642 \\ 
 .15 &  .9486 &  .4257 &  1.364 &  .8608 \\ 
 .16 &  .9163 &  .4122 &  1.379 &  .8575 \\ 
 .17 &  .8860 &  .3995 &  1.393 &  .8544 \\ 
 .18 &  .8574 &  .3874 &  1.405 &  .8513 \\ 
 \end{tabular}
 \qquad
 \begin{tabular}{c|c|c|c|c} 
 $\lambda_1 \leq$ & $\tfrac{1}{2}\log(1/\lambda_1)$ & $\lambda^{\star} \geq $ & $\lambda$ & $J$ \\ 
 \hline 
  .19 &  .8304 &  .3759 &  1.417 &  .8483 \\ 
 .20 &  .8047 &  .3649 &  1.428 &  .8454 \\ 
 .21 &  .7803 &  .3544 &  1.438 &  .8426 \\ 
 .22 &  .7571 &  .3443 &  1.447 &  .8398 \\ 
 .23 &  .7348 &  .3347 &  1.455 &  .8370 \\ 
 .24 &  .7136 &  .3254 &  1.463 &  .8343 \\ 
 .25 &  .6931 &  .3165 &  1.471 &  .8316 \\ 
 .26 &  .6735 &  .3080 &  1.477 &  .8289 \\ 
 .27 &  .6547 &  .2998 &  1.483 &  .8263 \\ 
 .28 &  .6365 &  .2918 &  1.489 &  .8237 \\ 
 .2866 &  .6248 &  .2868 &  1.493 &  .8220 \\ 
\end{tabular}

\captionsetup{justification=centering}
\caption{Bounds for $\lambda^{\star} = \lambda'$ with $\chi_1$ quadratic, $\rho_1$ real and $\lambda_1$ medium; 
\newline and for $\lambda^{\star} = \lambda_2$ with $\chi_1$ quadratic, $\rho_1$ real, $\chi_2$ principal, and $\rho_2$ complex.}
\label{Table-SZ-L1L2medium-QuadPrincip}
\label{Table-SZ-L1Lpmedium-Quadratic}
\end{table}

\begin{table}

\begin{tabular}{c|c|c|c|c} 
 $\lambda_1 \leq $ & $\log(1/\lambda_1) \geq $ & $\lambda' \geq $ & $\lambda$ & $J$ \\ 
 \hline 
 .18 &  1.715 &  1.052 &  2.478 &  .8837 \\ 
 .19 &  1.661 &  1.032 &  2.505 &  .8815 \\ 
 .20 &  1.609 &  1.013 &  2.530 &  .8793 \\ 
 .21 &  1.561 &  .9939 &  2.555 &  .8772 \\ 
 .22 &  1.514 &  .9759 &  2.578 &  .8752 \\ 
 .23 &  1.470 &  .9586 &  2.600 &  .8733 \\ 
 .24 &  1.427 &  .9418 &  2.621 &  .8714 \\ 
 .25 &  1.386 &  .9255 &  2.641 &  .8695 \\ 
 .26 &  1.347 &  .9098 &  2.660 &  .8677 \\ 
 .27 &  1.309 &  .8945 &  2.678 &  .8659 \\ 
 .28 &  1.273 &  .8797 &  2.695 &  .8642 \\ 
 .29 &  1.238 &  .8653 &  2.712 &  .8625 \\ 
 .30 &  1.204 &  .8513 &  2.728 &  .8608 \\ 
 .31 &  1.171 &  .8377 &  2.743 &  .8592 \\ 
 .32 &  1.139 &  .8245 &  2.758 &  .8575 \\ 
 .33 &  1.109 &  .8116 &  2.772 &  .8560 \\ 
 .34 &  1.079 &  .7990 &  2.785 &  .8544 \\ 
 .35 &  1.050 &  .7867 &  2.798 &  .8528 \\ 
 .36 &  1.022 &  .7748 &  2.811 &  .8513 \\ 
  .37 &  .9943 &  .7631 &  2.822 &  .8498 \\ 
 .38 &  .9676 &  .7517 &  2.834 &  .8483 \\ 
 \end{tabular}
 \qquad
 \begin{tabular}{c|c|c|c|c} 
 $\lambda_1 \leq $ & $\log(1/\lambda_1) \geq $ & $\lambda' \geq $ & $\lambda$ & $J$ \\ 
 \hline 
  .39 &  .9416 &  .7406 &  2.845 &  .8469 \\ 
 .40 &  .9163 &  .7297 &  2.855 &  .8454 \\ 
 .41 &  .8916 &  .7191 &  2.866 &  .8440 \\ 
 .42 &  .8675 &  .7087 &  2.875 &  .8426 \\ 
 .43 &  .8440 &  .6985 &  2.885 &  .8412 \\ 
 .44 &  .8210 &  .6886 &  2.894 &  .8398 \\ 
 .45 &  .7985 &  .6788 &  2.903 &  .8384 \\ 
 .46 &  .7765 &  .6693 &  2.911 &  .8370 \\ 
 .47 &  .7550 &  .6600 &  2.919 &  .8356 \\ 
 .48 &  .7340 &  .6508 &  2.927 &  .8343 \\ 
 .49 &  .7133 &  .6418 &  2.934 &  .8329 \\ 
 .50 &  .6931 &  .6330 &  2.941 &  .8316 \\ 
 .51 &  .6733 &  .6244 &  2.948 &  .8303 \\ 
 .52 &  .6539 &  .6159 &  2.955 &  .8289 \\ 
 .53 &  .6349 &  .6076 &  2.961 &  .8276 \\ 
 .54 &  .6162 &  .5995 &  2.967 &  .8263 \\ 
 .55 &  .5978 &  .5915 &  2.973 &  .8250 \\ 
 .56 &  .5798 &  .5837 &  2.978 &  .8237 \\ 
 .57 &  .5621 &  .5760 &  2.984 &  .8224 \\ 
 .5733 &  .5563 &  .5735 &  2.985 &  .8220 \\ 
\end{tabular}

\caption{Bounds for $\lambda'$ with $\chi_1$ principal, $\rho_1$ real and $\lambda_1$ medium.}
\label{Table-SZ-L1Lpmedium-Principal}
\end{table}

\subsubsection{Summary of bounds on $\lambda'$}
We collect the results of the previous subsections for each range of $\lambda_1$ into a single result for ease of use. 

\begin{prop}  \label{SZ-L1Lp-Summary} Assume $\chi_1$ and $\rho_1$ are real. Suppose $\cL$ is sufficiently large depending on $\epsilon  > 0$. Then:
\begin{enumerate}[(a)]
	\item Suppose $\chi_1$ is quadratic and $\lambda' \leq \lambda_2$. Then
	\[
	\lambda' \geq 
	\begin{cases}
	(\tfrac{1}{2}-\epsilon) \log \lambda_1^{-1} & \text{if $\lambda_1 \leq 10^{-10}$}  \\
	0.2103 \log \lambda_1^{-1} & \text{if $\lambda_1 \leq 0.1227$}
	\end{cases}
	\]
	and if $\lambda_1 > 0.1227$ then the bounds in \cref{Table-SZ-L1Lpmedium-Quadratic} apply and $\lambda' \geq 0.2866.$
	\item Suppose $\chi_1$ is principal. If $\lambda_1 \leq 0.0875$, then 
	\[
	\lambda' \geq 
	\begin{cases}
	(1-\epsilon) \log \lambda_1^{-1} & \text{if $\lambda_1 \leq 10^{-5}$}  \\
	0.7399 \log \lambda_1^{-1} & \text{if $\lambda_1 \leq 0.0875$}
	\end{cases}
	\]
	and if $\lambda_1 > 0.0875$ then the bounds in \cref{Table-SZ-L1LpSmall-Principal,Table-SZ-L1Lpmedium-Principal} apply and 	$\lambda' \geq 0.5733. $
\end{enumerate}
\end{prop}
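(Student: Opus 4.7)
The proof is essentially a consolidation of the lemmas, corollaries, and numerical tables established earlier in the section, so the plan is to partition the range of $\lambda_1$ into the ``very small,'' ``small,'' and ``medium'' regimes and cite the appropriate result in each. For part (a), when $\lambda_1 \leq 10^{-10}$, I would apply \cref{SZ-L1Lp_verysmall}(a) directly: that lemma gives either $\lambda' < 4e$ or $\lambda' \geq (\tfrac12 - \epsilon)\log \lambda_1^{-1}$, and for $\lambda_1 \leq 10^{-10}$ one has $(\tfrac12 - \epsilon)\log \lambda_1^{-1} \geq 11 > 4e$, so the first alternative is excluded. An analogous argument for $\chi_1$ principal with $\lambda_1 \leq 10^{-5}$ disposes of the leading case of part (b) via \cref{SZ-L1Lp_verysmall}(b).

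For the ``small'' range in part (a), $10^{-10} < \lambda_1 \leq 0.1227$, I would invoke \cref{SZ-L1Lpsmall} together with \cref{Table-SZ-L1LpSmall-Quadratic} and, for $\lambda_1$ approaching the upper endpoint, also \cref{SZ-L1Lpmedium-Numerical} together with \cref{Table-SZ-L1Lpmedium-Quadratic}. The claim to check is that the minimum of $\lambda_b'/\log \lambda_1^{-1}$ over all tabulated values of $\lambda_1$ in this range is at least $0.2103$, and then invoke monotonicity of $\lambda_b'$ in $\lambda_1$ (guaranteed by the monotonicity of $F(-\lambda) - F(b-\lambda)$ in $b$ used in the proof of \cref{SZ-L1Lpsmall}, and the monotonicity of the admissible polynomial inequality in \cref{SZ-L1Lpmedium-Numerical}) to interpolate between tabulated entries. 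The same scheme handles part (b) in the range $\lambda_1 \leq 0.0875$, yielding the constant $0.7399$ via \cref{Table-SZ-L1LpSmall-Principal} and \cref{Table-SZ-L1Lpmedium-Principal}.

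For the ``medium'' range ($\lambda_1 > 0.1227$ in part (a), $\lambda_1 > 0.0875$ in part (b)), I would split on whether $\rho'$ is real or complex. When $\rho'$ is real, \cref{SZ-L1Lpmed-0} provides $\lambda' \geq 0.6069$ (quadratic) or $\lambda' \geq 1.2138$ (principal). When $\rho'$ is complex, \cref{SZ-L1Lpmedium-Numerical} combined with \cref{Table-SZ-L1Lpmedium-Quadratic} or \cref{Table-SZ-L1Lpmedium-Principal} yields $\lambda' \geq 0.2866$ or $\lambda' \geq 0.5733$ respectively, the minimum being attained precisely at the chosen breakpoint values $\lambda_1 = 0.2866$ or $\lambda_1 = 0.5733$. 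Taking the minimum over the two sub-cases produces the advertised bounds.

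The main obstacle is strictly bookkeeping: one must verify that the stated linear envelopes $c \log \lambda_1^{-1}$ with $c = 0.2103$ and $c = 0.7399$ lie uniformly below the discrete table values across the indicated range. The monotonicity properties noted above reduce this to a finite check at the tabulated breakpoints, but one must also confirm consistency at the junction between the ``small'' and ``medium'' tables (e.g.\ that both tables agree to within the stated constant near $\lambda_1 = 0.1227$ and $\lambda_1 = 0.0875$). No new analytic input is required beyond what has already been established; the proposition is a packaging statement designed for convenient reference in the applications that follow.
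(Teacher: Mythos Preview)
Your overall plan matches the paper's: partition into very small, small, and medium ranges and cite the appropriate lemma or table in each. The small and medium ranges are handled correctly, and your subinterval-by-subinterval reading of the tables to extract the constants $0.2103$ and $0.7399$ is exactly what the paper does.

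However, there is a genuine logical gap in your treatment of the ``very small'' range. You write that \cref{SZ-L1Lp_verysmall}(a) gives the disjunction $\lambda' < 4e$ or $\lambda' \geq (\tfrac12-\epsilon)\log\lambda_1^{-1}$, and that since $(\tfrac12-\epsilon)\log\lambda_1^{-1} > 4e$ for $\lambda_1 \leq 10^{-10}$, ``the first alternative is excluded.'' This does not follow: the fact that the second alternative, \emph{if it holds}, yields a value exceeding $4e$ says nothing about whether the first alternative $\lambda' < 4e$ actually occurs. The disjunction by itself is compatible with $\lambda' = 1$, for instance. The same slip recurs in your treatment of part (b) for $\lambda_1 \leq 10^{-5}$.

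The paper closes this gap by first invoking the ``small'' table to obtain an unconditional lower bound on $\lambda'$ that already exceeds $4e$: from \cref{Table-SZ-L1LpSmall-Quadratic} one has $\lambda' \geq 10.99 > 4e$ when $\lambda_1 \leq 10^{-10}$, and from \cref{Table-SZ-L1LpSmall-Principal} one has $\lambda' \geq 11.66 > 4e$ when $\lambda_1 \leq 10^{-5}$. Only after ruling out $\lambda' < 4e$ in this way does \cref{SZ-L1Lp_verysmall} deliver the asymptotic bound. So the fix is simple---use \cref{SZ-L1Lpsmall} and its table once more at the very small endpoint---but it is not optional.
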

\begin{rem*} The constants $0.1227$ and $0.0875$ come a posteriori from the corresponding zero-free regions established in \cref{sec:ZeroFreeRegion}. 
\end{rem*}
\begin{proof} (a) Suppose $\lambda_1 \leq 10^{-10}$. From \cref{Table-SZ-L1LpSmall-Quadratic}, we see that $\lambda' \geq 10.99 > 4e$ and so the desired bound follows from \cref{SZ-L1Lp_verysmall}.  Suppose $\lambda_1 \leq 0.1227$. One compares \cref{SZ-L1Lpmed-0,Table-SZ-L1Lpmedium-Quadratic} and finds that the latter gives weaker bounds.  Thus, we only consider  \cref{Table-SZ-L1LpSmall-Quadratic,Table-SZ-L1Lpmedium-Quadratic} for this range of $\lambda_1$. For the subinterval $\lambda_1 \in [0.12, 0.1227]$, it follows that
\[
\lambda' \geq 0.4663 \geq  \frac{0.4663}{\log 1/0.12} \log \lambda_1^{-1} \geq 0.2200 \log \lambda_1^{-1}.
\]
Repeat this process for each subinterval $[10^{-10}, 10^{-9}], [10^{-9}, 10^{-8}], \dots, [0.85,0.9], \dots, [0.12, 0.1227]$ to obtain the desired bound. For $\lambda_1 > 0.1227$, one again compares \cref{SZ-L1Lpmed-0,Table-SZ-L1Lpmedium-Quadratic} and finds that the latter gives weaker bounds. For (b), we argue analogous to (a) except we only use \cref{Table-SZ-L1LpSmall-Principal} for $\lambda_1 \leq 0.0875$. 
\end{proof}

\subsection{Bounds for $\lambda_2$} 
\label{SZ-BoundsL2}
We follow the same general approach as $\lambda'$ with natural modifications. Throughout, we shall assume $\lambda_2 \leq \lambda'$; otherwise, we may use the bounds from \cref{SZ-BoundsLp} on $\lambda'$. 
\begin{lem} \label{SZ-L1L2Identity} Assume $\chi_1$ and $\rho_1$ are real and also that $\lambda_2 \leq \lambda'$. Suppose $f$ satisfies Conditions 1 and 2. For $\epsilon > 0$, provided $\cL$ is sufficiently large depending on $\epsilon$ and $f$,  the following holds: 

\begin{enumerate}[(a)]
	\item If $\chi_1,\chi_2$ are non-principal, then with $\psi = 4\phi$ it follows that
\[
0 \leq F(-\lambda_2) - F(0) - F(\lambda_1-\lambda_2) + f(0)\psi+\epsilon.
\]
	\item If $\chi_1$ is principal, then $\chi_2$ is necessarily non-principal and with $\psi = 2\phi$ it follows that
	\begin{align*}
0 \leq & \quad F(-\lambda_2)  - F(0) - F(\lambda_1-\lambda_2) + f(0)\psi+\epsilon.
\end{align*}

	\item If $\chi_2$ is principal, then $\chi_1$ is necessarily non-principal and with $\psi = 4\phi$ it follows that
	\	\begin{align*}
0 \leq & \quad F(-\lambda_2)  - F(0) - F(\lambda_1-\lambda_2)  + \Re\{ F(-\lambda_2 + i\mu_2) - F(i\mu_2) - F(\lambda_1-\lambda_2+i\mu_2)  \} + f(0)\psi + \epsilon.
\end{align*}
\end{enumerate}
\end{lem}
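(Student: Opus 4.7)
The plan is to adapt the proof of \cref{SZ-L1LpIdentity} by systematically replacing the pair $(\chi_1,\rho')$ with $(\chi_2,\rho_2)$, handling the three cases according to which of $\chi_1,\chi_2$ is principal. In each case, I would start from either the trigonometric identity \eqref{TrigIdentity} (when two distinct characters are involved) or the simpler identity \eqref{TrigIdentity-0}, evaluated at $\sigma = \beta_2$, and then estimate each resulting $\cK$-term via the smoothed explicit inequality \cref{ExplicitNP-Apply} with an appropriate multiset $\mathcal{Z}$ of extracted zeros. The hypothesis $\lambda_2 \leq \lambda'$ is used throughout to ensure that, for the character $\chi_1$, no zero other than $\rho_1$ needs to be extracted: the second-worst zero $\rho'$ of $\chi_1$ satisfies $\beta' \leq \beta_2$ and hence lies outside the regime $\beta_\chi > \sigma$ that \cref{ExplicitNP-Apply} requires.

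For case (a), apply \eqref{TrigIdentity} with $\chi = \chi_2$, $\chi_* = \chi_1$, $\rho = \rho_2$, $\rho_* = \rho_1$. Since $\gamma_1 = 0$ and $\bar\chi_1 = \chi_1$, the two symmetric terms collapse into a single $\cK(\beta_2 + i\gamma_2, \chi_1\chi_2)$, leaving four $\cK$-terms in total. Apply \cref{ExplicitNP-Apply} with $\mathcal{Z} = \{\rho_2\}$ to $\cK(\beta_2+i\gamma_2,\chi_2)$, producing the term $-F(0)$; with $\mathcal{Z} = \{\rho_1\}$ to $\cK(\beta_2,\chi_1)$, producing $-F(\lambda_1-\lambda_2)$; and with $\mathcal{Z} = \emptyset$ to both the $\chi_0$- and $\chi_1\chi_2$-terms. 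This last choice is justified by the greedy construction of \cref{sec:ZeroFreeGap_and_BadZeros}: if $\chi_1\chi_2$ had a zero in $\cR$ with real part exceeding $\beta_2$, then $\chi_1\chi_2$ would equal $\chi_1 = \bar\chi_1$, forcing $\chi_2 = \chi_0$ and contradicting case (a); similarly for $\chi_0$. Summing the four bounds and invoking \cref{QuantityRelations}(v) to estimate $\cL_0 + \cL_{\chi_1} + \cL_{\chi_2} + \cL_{\chi_1\chi_2} \leq 4\cL$ delivers $\psi = 4\phi$ after rescaling $\epsilon$.

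Case (b) is simpler: with $\chi_1 = \chi_0$, the identity \eqref{TrigIdentity-0} applied to $\chi = \chi_2$ produces only two $\cK$-terms. Extract $\rho_1$ (which is a zero of $\zeta_K$) from the $\chi_0$-term, producing $F(-\lambda_2) - F(\lambda_1-\lambda_2)$, and $\rho_2$ from the $\chi_2$-term, producing $-F(0)$; the bound $\cL_0 + \cL_{\chi_2} \leq 2\cL$ then yields $\psi = 2\phi$. For case (c), with $\chi_2 = \chi_0$, apply \eqref{TrigIdentity} with $\chi = \chi_0$, $\chi_* = \chi_1$; again the two symmetric terms collapse because $\chi_1$ is real and $\gamma_1 = 0$, leaving four $\cK$-terms. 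The $\mu_2$-dependent terms in the claimed inequality arise naturally by extracting $\rho_2$ in \emph{both} $\chi_0$-terms (yielding $-\Re F(i\mu_2)$ from $\cK(\beta_2,\chi_0)$ and $-F(0)$ from $\cK(\beta_2+i\gamma_2,\chi_0)$) and extracting $\rho_1$ in both $\chi_1$-terms (yielding $-F(\lambda_1-\lambda_2)$ and $-\Re F(\lambda_1-\lambda_2+i\mu_2)$); the pole of $\zeta_K$ contributes $F(-\lambda_2) + \Re F(-\lambda_2+i\mu_2)$, and $2\cL_0 + 2\cL_{\chi_1} \leq 4\cL$ again gives $\psi = 4\phi$.

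The only delicate point, which is really bookkeeping rather than a serious obstacle, is verifying in each case that $\mathcal{Z}$ contains every zero with $\beta_\chi > \sigma = \beta_2$ and $|\gamma_\chi| \leq T_0$ that \cref{ExplicitNP-Apply} requires. Each verification reduces to a short argument from the greedy indexing of $\rho_1,\rho_2,\dots$ in \cref{sec:ZeroFreeGap_and_BadZeros} together with $\lambda_2 \leq \lambda'$; the reality of $\chi_1$ is exploited throughout to ensure $\bar\chi_1 = \chi_1$ and to collapse the two symmetric $\cK$-terms in \eqref{TrigIdentity}.
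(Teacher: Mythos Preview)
Your proposal is correct and follows essentially the same approach as the paper: the paper starts from \eqref{TrigIdentity} with $(\chi,\rho)=(\chi_1,\rho_1)$, $(\chi_*,\rho_*)=(\chi_2,\rho_2)$, $\sigma=\beta_2$ (your swap of the roles of $\chi,\chi_*$ is immaterial since $\gamma_1=0$ and $\bar\chi_1=\chi_1$ make the identity symmetric), and then applies \cref{ExplicitNP-Apply} to each $\cK$-term extracting $\rho_1$ or $\rho_2$ whenever possible, exactly as you describe. Your use of \eqref{TrigIdentity-0} in case~(b) is just the specialization of \eqref{TrigIdentity} when $\chi_1=\chi_0$, and your bound $\cL_0+\cL_{\chi_1}+\cL_{\chi_2}+\cL_{\chi_1\chi_2}\leq 4\cL$ in case~(a) follows from three applications of \cref{QuantityRelations}(v) with $a=\tfrac13,\,b=1$.
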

\begin{proof} In \eqref{TrigIdentity}, set $(\chi, \rho) = (\chi_1, \rho_1)$ and $(\chi_*, \rho_*)  = (\chi_2, \rho_2)$ and $\sigma = \beta_2$, which gives
\begin{equation}
\begin{aligned}
0 & \leq \cK(\beta_2, \chi_0)  + \cK(\beta_2, \chi_1) + \cK(\beta_2 + i\gamma_2, \chi_2) + \tfrac{1}{2} \cK(\beta_2+ i\gamma_2, \chi_1\chi_2) + \tfrac{1}{2}\cK(\beta_2- i\gamma_2, \chi_1\bar{\chi_2}).
\end{aligned}
\label{TI-SZ-L2}
\end{equation}
The arguments involved are entirely analogous to \cref{SZ-L1LpIdentity} so we omit the details here. For all cases, one applies \cref{ExplicitNP-Apply} to each $\cK(\ast, \ast)$ term, extracting $\rho_1$ or $\rho_2$ whenever possible. We remark that $\chi_1\chi_2$ and $\chi_1\bar{\chi_2}$ are always non-principal by construction (see \cref{sec:ZeroFreeGap_and_BadZeros}).
\end{proof}

\subsubsection{$\lambda_1$ very small}

We include the final result here without proof for the sake of brevity. 

\begin{lem} \label{SZ-L1L2_verysmall} Assume $\chi_1$ and $\rho_1$ are real and $\lambda_2 \leq \lambda'$.  Suppose $\cL$ is sufficiently large depending on $\epsilon > 0$.

\begin{enumerate}[(a)]
	\item If $\chi_1,\chi_2$ are non-principal, then either $\lambda_2 < 2e$ or 
	\[
	\lambda_2 \geq \Big(\frac{1}{2} - \epsilon\Big) \log(\lambda_1^{-1}), 
	\]
	which is non-trivial for $\lambda_1 \leq 1.8 \times 10^{-5}$. 
	\item If $\chi_1$ is principal, then $\chi_2$ is necessarily non-principal and either $\lambda_2 < 2e$ or
	\[
	\lambda_2 \geq \big(1 - \epsilon\big) \log(\lambda_1^{-1}),
	\]
	which is non-trivial for $\lambda_1 \leq 4.3 \times 10^{-3}$. 
	\item If $\chi_2$ is principal, then $\chi_1$ is necessarily non-principal and either $\lambda_2 < 4e$ or
	\[
	\lambda_2 \geq \Big(\frac{1}{2} - \epsilon\Big) \log(\lambda_1^{-1}),
	\]
	which is non-trivial for $\lambda_1 \leq 3.5 \times 10^{-10}$. 
\end{enumerate}
\end{lem}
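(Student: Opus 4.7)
The proof will closely parallel the proof of \cref{SZ-L1Lp_verysmall}, which is the analogous statement for $\lambda'$. Starting point: each case of \cref{SZ-L1L2Identity} provides an inequality of the form
\[
0 \leq F(-\lambda_2) - F(0) - F(\lambda_1-\lambda_2) + (\text{possibly }\mu_2\text{-terms}) + f(0)\psi + \epsilon,
\]
with $\psi = 4\phi = 1$ in cases (a) and (c), and $\psi = 2\phi = \tfrac{1}{2}$ in case (b). In case (c), the $\mu_2$-dependent terms are collapsed using \cref{RepelLemma} applied with $a = \lambda_2$, $b = \lambda_1$, $y = \mu_2$ (noting $b \leq a$ since $\lambda_1$ is small), which yields an effective doubling of the $F(-\lambda_2) - F(\lambda_1-\lambda_2)$ contribution, exactly as in the $\lambda'$ case (a). Cases (a) and (b) are already in the required single-copy form; case (c) ends up in the same ``double-copy'' form as the $\lambda'$ analysis.

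The plan is then to insert the extremal weight $f(t) = \max(0,x_0-t)$ used in \cref{SZ-L1Lp_verysmall}, for which $f(0) = x_0$, $F(0) = \tfrac{1}{2}x_0^2$, and
\[
F(-\lambda_2) - F(\lambda_1-\lambda_2) \leq \frac{x_0 \lambda_1}{\lambda_2^2} \exp(x_0 \lambda_2).
\]
In cases (a) and (b), the inequality then reads
\[
\frac{x_0 \lambda_1}{\lambda_2^2}\exp(x_0 \lambda_2) \geq \tfrac{1}{2} x_0^2 - x_0(\psi+\epsilon),
\]
and choosing $x_0 = 2\psi + \tfrac{1}{\lambda_2} + 2\epsilon$ reduces the right-hand side to $x_0/(2\lambda_2)$, yielding
\[
\lambda_1 \geq \tfrac{\lambda_2}{2e} \exp\bigl(-(2\psi+2\epsilon)\lambda_2\bigr);
\]
thus for $\lambda_2 \geq 2e$ we obtain $\lambda_2 \geq \bigl(\tfrac{1}{2\psi}-\epsilon\bigr)\log \lambda_1^{-1}$, giving the factor $\tfrac{1}{2}$ when $\psi=1$ and the factor $1$ when $\psi=\tfrac{1}{2}$. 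In case (c) the doubled inequality forces the cutoff to be $\lambda_2 \geq 4e$ (matching the $\lambda'$ derivation), producing the constant $\tfrac{1}{2}$ with threshold $\lambda_1 \leq e^{-8e} \approx 3.5\times 10^{-10}$.

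The thresholds $1.8\times 10^{-5}$, $4.3\times 10^{-3}$, and $3.5\times 10^{-10}$ arise simply as $e^{-4e}$, $e^{-2e}$, and $e^{-8e}$ from the requirement that the asymptotic bound exceed the auxiliary cutoff on $\lambda_2$. No genuinely new difficulty appears: the main subtlety — and really the only step requiring care — is verifying that $\lambda_1 \leq \lambda_2$ (guaranteed by the indexing in \cref{sec:ZeroFreeGap_and_BadZeros}) so that \cref{RepelLemma} applies with $b \leq a$ in case (c), and that the auxiliary characters $\chi_1\chi_2$ and $\chi_1\bar{\chi_2}$ arising in the proof of \cref{SZ-L1L2Identity} remain non-principal (ensured by the distinct-character selection procedure). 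The remainder is the routine optimization over $x_0$ already spelled out in \cref{SZ-L1Lp_verysmall}, followed by a final rescaling of $\epsilon$.
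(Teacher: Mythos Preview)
Your proposal is correct and follows exactly the approach the paper intends: the paper's own proof reads simply ``Analogous to \cref{SZ-L1Lp_verysmall} using \cref{SZ-L1L2Identity} in place of \cref{SZ-L1LpIdentity}. We omit the details for brevity.'' You have correctly identified that cases (a) and (b) of \cref{SZ-L1L2Identity} lack the $\mu$-terms and hence give the single-copy inequality with cutoff $2e$, while case (c) requires \cref{RepelLemma} and yields the double-copy inequality with cutoff $4e$, and your derivation of the thresholds $e^{-4e}$, $e^{-2e}$, $e^{-8e}$ is exactly right.
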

\begin{proof} Analogous to \cref{SZ-L1Lp_verysmall} using \cref{SZ-L1L2Identity} in place of \cref{SZ-L1LpIdentity}. We omit the details for brevity.
\end{proof}
\subsubsection{$\lambda_1$ small} 
\begin{lem} \label{SZ-L1L2small} Assume $\chi_1$ and $\rho_1$ are real and also that $\lambda_2 \leq \lambda'$. Suppose $f$ satisfies Conditions 1 and 2.  Let $\epsilon > 0$ and assume $0 <  \lambda_1 \leq b$ for some $b > 0$. Suppose, for some $\tilde{\lambda}_b > 0$, we have 
\[
\begin{array}{ll}
 F(-\tilde{\lambda}_b) - F(b-\tilde{\lambda}_b) - F(0) +   4\phi f(0)   \leq 0 & \text{if $\chi_1, \chi_2$ are non-principal,} \\\\
 F(-\tilde{\lambda}_b) - F(b-\tilde{\lambda}_b) - F(0) +   2\phi f(0)   \leq 0 & \text{if $\chi_1$ is principal,} \\\\
 2F(-\tilde{\lambda}_b) - 2F(b-\tilde{\lambda}_b) - F(0) +   4\phi f(0)  \leq 0 & \text{if $\chi_2$ is principal} \\
\end{array}
\]
Then, according to the above cases, $\lambda_2 \geq \tilde{\lambda}_b - \epsilon$ provided $\cL$ is sufficiently large depending on $\epsilon, b$ and $f$.
\end{lem}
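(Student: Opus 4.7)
The plan is to parallel the proof of \cref{SZ-L1Lpsmall}, using \cref{SZ-L1L2Identity} as the starting input in place of \cref{SZ-L1LpIdentity}. I would split into the three cases of the hypothesis and in each case read off the inequality supplied by \cref{SZ-L1L2Identity}. Cases (a) and (b) immediately give
\[
0 \leq F(-\lambda_2) - F(\lambda_1 - \lambda_2) - F(0) + f(0)\psi + \epsilon,
\]
with $\psi = 4\phi$ or $2\phi$ respectively. Case (c) comes with an additional complex contribution $\Re\{F(-\lambda_2+i\mu_2) - F(i\mu_2) - F(\lambda_1-\lambda_2+i\mu_2)\}$ that must be controlled.

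To control it, I would invoke \cref{RepelLemma}. By the labelling of zeros in \cref{sec:ZeroFreeGap_and_BadZeros}, $\beta_1$ is chosen with maximal real part so $\beta_2 \leq \beta_1$ and hence $\lambda_1 \leq \lambda_2$. Taking $a=\lambda_2$, $y=\mu_2$, and $\lambda_1$ in the role of the ``$b$'' of \cref{RepelLemma} therefore places us in its ``$b\leq a$'' branch, giving
\[
\Re\{F(-\lambda_2+i\mu_2) - F(i\mu_2) - F(\lambda_1-\lambda_2+i\mu_2)\} \leq F(-\lambda_2) - F(\lambda_1-\lambda_2).
\]
Substituting collapses case (c) to
\[
0 \leq 2\bigl(F(-\lambda_2) - F(\lambda_1-\lambda_2)\bigr) - F(0) + 4\phi f(0) + \epsilon.
\]

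Next I would exploit the integral representation
\[
F(-\lambda) - F(c-\lambda) = \int_0^{\infty} f(t)\,e^{\lambda t}(1 - e^{-ct})\,dt,
\]
which under Conditions 1 and 2 is nonnegative and increasing in both $\lambda \geq 0$ and $c \geq 0$. Monotonicity in $c$ lets me replace $\lambda_1$ by its upper bound $b$ in each of the three inequalities, yielding
\[
0 \leq \alpha\bigl(F(-\lambda_2) - F(b - \lambda_2)\bigr) - F(0) + f(0)\psi + \epsilon,
\]
with $(\alpha,\psi) \in \{(1,4\phi), (1,2\phi), (2,4\phi)\}$ in cases (a), (b), (c) respectively.

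Finally, monotonicity in $\lambda$ shows that
\[
g(\lambda) := \alpha\bigl(F(-\lambda) - F(b-\lambda)\bigr) - F(0) + f(0)\psi
\]
is continuous and nondecreasing on $[0,\infty)$. The three hypotheses of the lemma say precisely that $g(\tilde\lambda_b) \leq 0$, while the displayed inequality says $g(\lambda_2) \geq -\epsilon$. Rescaling $\epsilon$ using the uniform continuity of $g$ on a compact interval, in the manner of \cite[p.~773]{KadNg}, then forces $\lambda_2 \geq \tilde\lambda_b - \epsilon$, as required. No step presents a real obstacle; the only place requiring attention is the direction of the inequality $\lambda_1 \leq \lambda_2$, which is what allows the correct branch of \cref{RepelLemma} to be used in case (c).
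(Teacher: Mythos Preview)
Your proposal is correct and follows exactly the route the paper has in mind: it says only ``Analogous to \cref{SZ-L1Lpsmall} using \cref{SZ-L1L2Identity} in place of \cref{SZ-L1LpIdentity},'' and you have faithfully unpacked that, including the use of \cref{RepelLemma} (with the $b\le a$ branch, justified by $\lambda_1\le\lambda_2$) to collapse the complex contribution in case (c), and the monotonicity argument in $\lambda_1$ and then in $\lambda$ to finish.
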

\begin{proof} Analogous to \cref{SZ-L1Lpsmall} using \cref{SZ-L1L2Identity} in place of \cref{SZ-L1LpIdentity}. Hence, we omit the proof.  
\end{proof}

As before,  \cref{SZ-L1L2small} requires a choice of $f$ depending on $b$ which maximizes the computed value of $\tilde{\lambda}_b$. Based on numerical experimentation, we choose $f = f_{\lambda}$ from  \cite[Lemma 7.2]{HBLinnik} with parameter $\lambda = \lambda(b)$ for all cases. This produces \cref{Table-SZ-L1L2small-Chi1Chi2NonPrincipal,Table-SZ-L1L2small-Chi2Principal,Table-SZ-L1L2small-Chi1Principal}.

\begin{table}
\begin{tabular}{l|c|c|c} 
$\lambda_1 \leq$ & $\tfrac{1}{2}\log \lambda_1^{-1} \geq$ & $\lambda_2 \geq$ & $\lambda$ \\
 \hline 
 $10^{-5}$ &  5.756 &  5.828 &  .7725 \\ 
 $10^{-4}$ &  4.605 &  4.662 &  .7579 \\ 
 .001 &  3.454 &  3.451 &  .7342 \\ 
 .005 &  2.649 &  2.569 &  .7065 \\ 
 .010 &  2.303 &  2.178 &  .6896 \\ 
 .015 &  2.100 &  1.947 &  .6776 \\ 
 .020 &  1.956 &  1.783 &  .6679 \\ 
 .025 &  1.844 &  1.654 &  .6596 \\ 
 .030 &  1.753 &  1.550 &  .6522 \\ 
 .035 &  1.676 &  1.461 &  .6455 \\ 
 .040 &  1.609 &  1.384 &  .6394 \\ 
 .045 &  1.551 &  1.317 &  .6337 \\ 
 .050 &  1.498 &  1.256 &  .6283 \\ 
 .055 &  1.450 &  1.202 &  .6232 \\ 
 .060 &  1.407 &  1.152 &  .6183 \\ 
 .065 &  1.367 &  1.107 &  .6137 \\ 
 .070 &  1.330 &  1.065 &  .6092 \\ 
 .075 &  1.295 &  1.026 &  .6049 \\ 
 .080 &  1.263 &  .9895 &  .6007 \\ 
 .085 &  1.233 &  .9555 &  .5967 \\ 
 .090 &  1.204 &  .9236 &  .5928 \\ 
 .095 &  1.177 &  .8935 &  .5890 \\ 
 .100 &  1.151 &  .8652 &  .5853 \\ 
 .105 &  1.127 &  .8383 &  .5816 \\ 
 .110 &  1.104 &  .8127 &  .5781 \\ 
 .115 &  1.081 &  .7884 &  .5746 \\ 
 .120 &  1.060 &  .7653 &  .5712 \\ 
 .125 &  1.040 &  .7432 &  .5679 \\ 
 .130 &  1.020 &  .7221 &  .5646 \\ 
 .135 &  1.001 &  .7019 &  .5613 \\ 
 .140 &  .9831 &  .6825 &  .5582 \\ 
 .145 &  .9655 &  .6639 &  .5550 \\ 
 .150 &  .9486 &  .6460 &  .5520 \\

\end{tabular}
 \qquad
 \begin{tabular}{l|c|c|c} 
 $\lambda_1 \leq$ & $\tfrac{1}{2}\log \lambda_1^{-1} \geq$ & $\lambda_2 \geq$ & $\lambda$ \\
 \hline 
 .155 &  .9322 &  .6288 &  .5489 \\ 
 .160 &  .9163 &  .6122 &  .5459 \\ 
 .165 &  .9009 &  .5962 &  .5429 \\ 
 .170 &  .8860 &  .5808 &  .5400 \\ 
 .175 &  .8715 &  .5659 &  .5371 \\ 
 .180 &  .8574 &  .5515 &  .5342 \\ 
 .185 &  .8437 &  .5376 &  .5314 \\ 
 .190 &  .8304 &  .5242 &  .5286 \\ 
 .195 &  .8174 &  .5111 &  .5258 \\ 
 .200 &  .8047 &  .4985 &  .5231 \\ 
 .205 &  .7924 &  .4863 &  .5203 \\ 
 .210 &  .7803 &  .4744 &  .5176 \\ 
 .215 &  .7686 &  .4629 &  .5150 \\ 
 .220 &  .7571 &  .4517 &  .5123 \\ 
 .225 &  .7458 &  .4408 &  .5097 \\ 
 .230 &  .7348 &  .4302 &  .5070 \\ 
 .235 &  .7241 &  .4200 &  .5044 \\ 
 .240 &  .7136 &  .4100 &  .5018 \\ 
 .245 &  .7032 &  .4002 &  .4993 \\ 
 .250 &  .6931 &  .3908 &  .4967 \\ 
 .255 &  .6832 &  .3816 &  .4942 \\ 
 .260 &  .6735 &  .3726 &  .4916 \\ 
 .265 &  .6640 &  .3638 &  .4891 \\ 
 .270 &  .6547 &  .3553 &  .4866 \\ 
 .275 &  .6455 &  .3470 &  .4841 \\ 
 .280 &  .6365 &  .3389 &  .4817 \\ 
 .285 &  .6276 &  .3310 &  .4792 \\ 
 .290 &  .6189 &  .3233 &  .4768 \\ 
 .295 &  .6104 &  .3158 &  .4743 \\ 
 .300 &  .6020 &  .3084 &  .4719 \\ 
\end{tabular}
\caption{Bounds for $\lambda_2$ with $\chi_1$ quadratic, $\rho_1$ real,  $\chi_2$ non-principal and $\lambda_1$ small.}
\label{Table-SZ-L1L2small-Chi1Chi2NonPrincipal}
\end{table}

\begin{table}
\begin{tabular}{l|c|c|c} 
$\lambda_1 \leq$ & $\log \lambda_1^{-1} \geq$ & $\lambda_2 \geq$  & $\lambda$ \\
 \hline 
 .004 &  5.521 &  6.150 &  1.448 \\ 
 .006 &  5.116 &  5.705 &  1.434 \\ 
 .008 &  4.828 &  5.386 &  1.422 \\ 
 .010 &  4.605 &  5.137 &  1.413 \\ 
 .015 &  4.200 &  4.682 &  1.394 \\ 
 .020 &  3.912 &  4.357 &  1.379 \\ 
 .025 &  3.689 &  4.103 &  1.366 \\ 
 .03 &  3.507 &  3.895 &  1.355 \\ 
 .04 &  3.219 &  3.565 &  1.336 \\ 
 .05 &  2.996 &  3.309 &  1.319 \\ 
 .06 &  2.813 &  3.099 &  1.304 \\ 
 .07 &  2.659 &  2.922 &  1.291 \\ 
 .08 &  2.526 &  2.769 &  1.279 \\ 
 .0875 &  2.436 &  2.666 &  1.270 \\ 
 .10 &  2.303 &  2.513 &  1.257 \\ 
 .12 &  2.120 &  2.304 &  1.237 \\ 
 .14 &  1.966 &  2.130 &  1.218 \\ 
 .16 &  1.833 &  1.979 &  1.201 \\ 
 .18 &  1.715 &  1.847 &  1.186 \\ 
  .20 &  1.609 &  1.730 &  1.171 \\ 
 .22 &  1.514 &  1.625 &  1.156 \\ 
 \end{tabular}
 \qquad
 \begin{tabular}{l|c|c|c} 
$\lambda_1 \leq$ & $\log \lambda_1^{-1} \geq$ & $\lambda_2 \geq$  & $\lambda$ \\
 \hline 
  .24 &  1.427 &  1.531 &  1.142 \\ 
 .26 &  1.347 &  1.444 &  1.129 \\ 
 .28 &  1.273 &  1.365 &  1.116 \\ 
 .30 &  1.204 &  1.292 &  1.104 \\ 
 .32 &  1.139 &  1.224 &  1.092 \\ 
 .34 &  1.079 &  1.162 &  1.080 \\ 
 .36 &  1.022 &  1.103 &  1.068 \\ 
 .38 &  .9676 &  1.048 &  1.057 \\ 
 .40 &  .9163 &  .9970 &  1.046 \\ 
 .42 &  .8675 &  .9488 &  1.035 \\ 
 .44 &  .8210 &  .9033 &  1.025 \\ 
 .46 &  .7765 &  .8605 &  1.014 \\ 
 .48 &  .7340 &  .8199 &  1.004 \\ 
 .50 &  .6931 &  .7816 &  .9934 \\ 
 .52 &  .6539 &  .7452 &  .9833 \\ 
 .54 &  .6162 &  .7106 &  .9733 \\ 
 .56 &  .5798 &  .6778 &  .9633 \\ 
 .58 &  .5447 &  .6466 &  .9535 \\ 
 .60 &  .5108 &  .6168 &  .9438 \\ 
 .6068 &  .4996 &  .6070 &  .9405 \\ 
\end{tabular}

\caption{Bounds for $\lambda_2$ with $\chi_1$ principal, $\rho_1$ real and $\lambda_1$ small.}
\label{Table-SZ-L1L2small-Chi1Principal}
\end{table}

\subsubsection{$\lambda_1$ medium}

We first deal with the case when $\rho_2$ is real and $\chi_2$ is principal, i.e. $\mu_2 = 0$.  

\begin{lem} \label{SZ-L1L2med-0} Assume $\chi_1$ and $\rho_1$ are real. Suppose $\cL$ is sufficiently large. If $\rho_2$ is real, then
\[
\lambda_2 \geq 
\begin{cases} 
0.3034 & \text{if $\chi_1, \chi_2$ are non-principal},  \\
0.6069 & \text{otherwise}.
\end{cases}
\]
If $\rho_2$ is complex, then
\[
\lambda_2 \geq 
\begin{cases} 
0.3034 & \text{if $\chi_1, \chi_2$ are non-principal},  \\
0.6069 & \text{if $\chi_1$ is principal}, \\
0.1722 & \text{if $\chi_2$ is principal}.
\end{cases}
\]

\end{lem}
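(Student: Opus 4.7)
The plan is to adapt the proof of \cref{SZ-L1Lpmed-0}, substituting \cref{SZ-L1L2Identity} for \cref{SZ-L1LpIdentity}. All three cases (a), (b), (c) of \cref{SZ-L1L2Identity} take the common form
\[
0 \le F(-\lambda_2) - F(0) - F(\lambda_1-\lambda_2) + R(\mu_2) + \psi f(0) + \epsilon,
\]
where $R(\mu_2) = 0$ in cases (a) and (b) and $R(\mu_2) = \Re\{F(-\lambda_2+i\mu_2) - F(i\mu_2) - F(\lambda_1-\lambda_2 + i\mu_2)\}$ in case (c); the constant $\psi$ equals $4\phi$ except in case (b), where $\psi = 2\phi$. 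I will treat the real and complex subcases separately.

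When $\rho_2$ is real, $\mu_2 = 0$, so in case (c) the quantity $R(0)$ duplicates the first three real terms, and dividing that inequality by two reduces every case to $0 \le F(-\lambda_2) - F(0) - F(\lambda_1 - \lambda_2) + c f(0) + \epsilon$, with effective constant $c = 4\phi$ in case (a) and $c = 2\phi$ in cases (b) and (c). Since $\lambda_1 \le \lambda_2$ and $F$ restricted to $\R$ is positive and decreasing (by Condition 2), I can bound $-F(\lambda_1 - \lambda_2) \le -F(0)$ to obtain $0 \le F(-\lambda_2) - 2F(0) + c f(0) + \epsilon$. Exactly as in \cref{SZ-L1Lpmed-0}, I select the test function from \cite[Lemma 7.5]{HBLinnik} corresponding to $k = 2$, whose extremal angle $\theta \approx 0.9873$ satisfies $\cos^2\theta \approx 0.3034$, yielding $\lambda_2 \ge 0.3034/c$: this produces $0.3034$ in case (a) and $0.6069$ in cases (b) and (c).

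When $\rho_2$ is complex, cases (a) and (b) are unchanged because $R(\mu_2)$ does not appear, giving the stated bounds $0.3034$ and $0.6069$. In case (c) I apply \cref{RepelLemma} with $a = \lambda_2$ and $b = \lambda_1$ (valid since $0 \le \lambda_1 \le \lambda_2$) to get $R(\mu_2) \le F(-\lambda_2) - F(\lambda_1 - \lambda_2)$. Substituting and dividing by two gives
\[
0 \le F(-\lambda_2) - \tfrac{1}{2} F(0) - F(\lambda_1 - \lambda_2) + 2\phi f(0) + \tfrac{\epsilon}{2},
\]
and invoking $F(\lambda_1 - \lambda_2) \ge F(0)$ once more yields $0 \le F(-\lambda_2) - \tfrac{3}{2} F(0) + 2\phi f(0) + \tfrac{\epsilon}{2}$. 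Here I invoke \cite[Lemma 7.5]{HBLinnik} with $k = 3/2$ (extremal angle $\theta \approx 1.2729$, $\cos^2\theta \approx 0.0861$) to conclude $\lambda_2 \ge 0.0861/(2\phi) = 0.1722$, finishing case (c).

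The main obstacle is just careful bookkeeping: tracking how $\psi$ varies across the three cases of \cref{SZ-L1L2Identity}, how the coefficient of $F(0)$ on the right-hand side is altered either by halving (in case (c) with $\rho_2$ real) or by the application of \cref{RepelLemma} (in case (c) with $\rho_2$ complex), and selecting the matching parameter $k$ in \cite[Lemma 7.5]{HBLinnik} so that the resulting cosine-squared-over-linear bound produces the claimed numerical constants.
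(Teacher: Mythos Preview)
Your proposal is correct and follows essentially the same route as the paper's proof: the paper states that the argument is analogous to \cref{SZ-L1Lpmed-0} using \cref{SZ-L1L2Identity} in place of \cref{SZ-L1LpIdentity}, selecting $f$ from \cite[Lemma 7.5]{HBLinnik} with $k=2$ when $\rho_2$ is real or $\chi_2$ is non-principal, and with $k=3/2$ when $\rho_2$ is complex and $\chi_2$ is principal. Your case analysis, the halving in case (c) with $\mu_2=0$, the use of \cref{RepelLemma} in case (c) with $\rho_2$ complex, and the resulting numerical constants all match this exactly.
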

\begin{proof} Analogous to \cref{SZ-L1Lpmed-0} using \cref{SZ-L1L2Identity} in place of \cref{SZ-L1LpIdentity}. The arguments lead to selecting $f$ from \cite[Lemma 7.5]{HBLinnik} corresponding to $k=2$ (i.e. $\theta = 0.9873...$) when $\rho_2$ is real or $\chi_2$ is non-principal, and to $k=3/2$   (i.e. $\theta = 1.2729...$) when $\rho_2$ is complex and $\chi_2$ is principal. 
\end{proof}

For $\chi_2$ principal and $\rho_2$ complex, the ``polynomial method" of \cref{PolynomialEXP} yields better bounds. 
\begin{lem} \label{SZ-L1L2medium} Assume $\chi_1$ is quadratic and $\rho_1$ is real. Further suppose $\chi_2$ is principal and $\rho_2$ is complex.  Let  $\lambda > 0$ and $J > 0$. If $\cL$ is sufficiently large depending on $\epsilon, \lambda$ and $J$, then
\begin{align*}
0 & \leq (J^2 + \tfrac{1}{2}) \big( P_4(1) - P_4\big( \frac{\lambda}{\lambda+\lambda_1} \big) \big) - 2J P_4\big( \frac{\lambda}{\lambda+\lambda_2} \big)   +  2 \phi (J+1)^2 \lambda  + \epsilon 
\end{align*}
provided
\begin{equation}
\frac{J_0}{(\lambda+\lambda_2)^4} + \frac{1}{(\lambda+\lambda_1)^4} > \frac{1}{\lambda^4}. 
\label{SZ-L1L2medium-Condition}
\end{equation}
with $J_0 = \min\{ \tfrac{J}{2} + \tfrac{1}{2J}, 4J \}$. 
\end{lem}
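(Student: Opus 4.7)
The plan is to mirror the argument of \cref{SZ-L1Lpmedium}(a), with the roles of $\chi_1$ and $\chi_2 = \chi_0$ interchanged in the zero extraction. The starting point is the trigonometric inequality
\[
0 \leq \chi_0(\kn)\bigl(1+\chi_1(\kn)\bigr)\bigl(J + \Re\{\chi_0(\kn)(\N\kn)^{-i\gamma_2}\}\bigr)^2.
\]
Using $\chi_0^2 = \chi_0$ together with the fact that $\chi_1$ is quadratic, expansion of the square yields an expression in $\chi_0$ and $\chi_1$ of exactly the structural shape appearing in \cref{SZ-L1Lpmedium}. Multiplying by $\frac{\Lambda_K(\kn)}{(\N\kn)^{\sigma}}\sum_{k=1}^{4}a_k\frac{((\sigma-1)\log\N\kn)^{k-1}}{(k-1)!}$ with $P = P_4$ and $\sigma = 1 + \lambda/\cL$, and summing over integral ideals, produces an identity of the form of \eqref{SZ-L1Lpmedium-Identity} with every occurrence of $\gamma'$ replaced by $\gamma_2$.

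Next, apply \cref{PolyEI-Apply} to each of the six $\cP$-terms. The essential distinction from \cref{SZ-L1Lpmedium}(a) is that the complex-conjugate pair $\rho_2, \bar{\rho_2}$ now lies on $L(s,\chi_0)$ rather than on $L(s,\chi_1)$. From each of the three $\chi_1$-terms extract only $\rho_1$; from the three $\chi_0$-terms extract the pole at $s=1$ together with $\rho_2$, and in addition extract $\bar{\rho_2}$ only from the central term $\cP(\sigma+i\gamma_2,\chi_0)$ (the remaining zeros are discarded via the admissibility of $P_4$). This asymmetric extraction mirrors precisely the one in \cref{SZ-L1Lpmedium}(a), where $\bar{\rho'}$ is extracted only from $\cP(\sigma+i\gamma',\chi_1)$. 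Invoking $\cL_0 + \cL_{\chi_1} \leq 2\cL$ from \cref{QuantityRelations}, the cumulative $\phi$-contribution collapses to $\bigl((J^2+\tfrac12) + 2J + \tfrac12\bigr)\cdot 2\phi\lambda = 2\phi(J+1)^2\lambda$, matching the target.

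Finally, group the residual terms by imaginary height. The height-$i\mu_2$ group takes the form
\[
(J^2+1)\,\Re P_4\!\Bigl(\tfrac{\lambda}{\lambda+\lambda_2+i\mu_2}\Bigr) + 2J\,\Re P_4\!\Bigl(\tfrac{\lambda}{\lambda+\lambda_1+i\mu_2}\Bigr) - 2J\,\Re P_4\!\Bigl(\tfrac{\lambda}{\lambda+i\mu_2}\Bigr),
\]
whose non-negativity follows from \cref{PmLemma} with $(a,b,c) = (\lambda,\lambda+\lambda_1,\lambda+\lambda_2)$ and $(A,B,C) = (2J, 2J, J^2+1)$, subject to the constraint $(J/2 + 1/(2J))/(\lambda+\lambda_2)^4 + 1/(\lambda+\lambda_1)^4 \geq 1/\lambda^4$; here $\lambda_1 \leq \lambda_2$ holds by the indexing in \cref{sec:ZeroFreeGap_and_BadZeros}. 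The height-$2i\mu_2$ group yields analogously $(A,B,C) = (\tfrac12, \tfrac12, 2J)$ and the constraint $4J/(\lambda+\lambda_2)^4 + 1/(\lambda+\lambda_1)^4 \geq 1/\lambda^4$. Both constraints are captured by the hypothesis \eqref{SZ-L1L2medium-Condition} with $J_0 = \min\{J/2+1/(2J),\,4J\}$. The main obstacle is the combinatorial bookkeeping: selecting which subset of $\{\rho_2, \bar{\rho_2}\}$ to extract from each $\chi_0$-term so that the coefficients at both heights reproduce the same $J_0$-condition already obtained in \cref{SZ-L1Lpmedium}.
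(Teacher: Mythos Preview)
Your proposal is correct and follows essentially the same approach as the paper's own proof: the same trigonometric inequality (up to the harmless redundant $\chi_0(\kn)$ inside the square), the same extraction scheme ($\rho_1$ from the $\chi_1$-terms, pole and $\rho_2$ from the $\chi_0$-terms, with $\bar{\rho_2}$ additionally extracted only from $\cP(\sigma+i\gamma_2,\chi_0)$), and the same grouping into the two height blocks handled by \cref{PmLemma}. Your explicit identification of the $(A,B,C)$ triples and the observation that $\lambda_1 \leq \lambda_2$ is needed for the hypothesis $b \leq c$ in \cref{PmLemma} are details the paper leaves implicit.
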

\begin{proof} This is analogous to \cref{SZ-L1Lpmedium} so we give a brief outline here. We begin with the inequality
\begin{align*}
0 & \leq \chi_0(\kn) (1+\chi_1(\kn)) \big( J + \Re\{ (\N\kn)^{-i\gamma_2} \} \big)^2 \\
& = (J^2+\tfrac{1}{2})\big( \chi_0(\kn) + \chi_1(\kn) \big) + 2J \cdot \big( \Re\{ (\N\kn)^{-i\gamma_2} \} + \Re\{ \chi_1(\kn) (\N\kn)^{-i\gamma_2} \} \big)  \\
& \qquad + \tfrac{1}{2} \cdot \big( \Re\{ (\N\kn)^{-2i\gamma_2} \} + \Re\{ \chi_1(\kn) (\N\kn)^{-2i\gamma_2} \} \big). 
\end{align*}
We introduce $\cP(s,\chi) = \cP(s, \chi; P_4)$ in the usual way with $\sigma = 1 + \tfrac{\lambda}{\cL}$, yielding
\begin{equation}
\label{SZ-L1L2medium-Identity}
\begin{aligned}
0 & \leq (J^2+\tfrac{1}{2})\big( \cP(\sigma, \chi_0) + \cP(\sigma, \chi_1) \big) + 2J \cdot \big( \cP(\sigma+i\gamma_2, \chi_0)  +  \cP(\sigma+i\gamma_2, \chi_1)  \big)  \\
& \qquad + \tfrac{1}{2} \cdot \big(  \cP(\sigma+2i\gamma_2, \chi_0) +  \cP(\sigma+2i\gamma_2, \chi_1)\big). 
\end{aligned}
\end{equation}
Next, apply \cref{PolyEI-Apply} to each $\cP(\ast, \ast)$ term in \eqref{SZ-L1L2medium-Identity} extracting the zero $\rho_2$ from $\chi_0$-terms and the zero $\rho_1$ from the $\chi_1$-terms. One also extracts both zeros $\{\rho_2, \bar{\rho_2}\}$ from $\cP(\sigma+i\gamma_2, \chi_0)$. Noting $\frac{\cL_0+\cL_{\chi_1}}{\cL} \leq 2$ by \cref{QuantityRelations} and choosing a new $\epsilon$, these applications yield the following:
\begin{equation}
\label{SZ-L1L2medium-Identity-a}
\begin{aligned}
0 & \leq (J^2 + \tfrac{1}{2}) \big( P_4(1) - P_4\big( \frac{\lambda}{\lambda+\lambda_1} \big) \big) - 2J P_4\big( \frac{\lambda}{\lambda+\lambda_2} \big)  - A - B + 2 \phi(J+1)^2 \lambda + \epsilon
\end{aligned}
\end{equation}
provided $\cL$ is sufficiently large depending on $\epsilon$ and $\lambda$ and where
\begin{align*}
A & = (J^2 + 1)  \Re\big\{ P_4\big( \frac{\lambda}{\lambda+\lambda_2 + i\mu_2} \big) \big\}  +  2J \cdot  \Re\big\{ P_4\big( \frac{\lambda}{\lambda+\lambda_1+i\mu_2} \big) \big\} - 2J \cdot \Re\Big\{ P_4\big( \frac{\lambda}{\lambda +i\mu_2} \big) \Big\}, \\
B & = 2J \cdot  \Re\big\{ P_4\big( \frac{\lambda}{\lambda+\lambda_2 + 2i\mu_2} \big) \big\}  +  \frac{1}{2} \cdot  \Re\big\{ P_4\big( \frac{\lambda}{\lambda+\lambda_1+2i\mu_2} \big) \big\} - \frac{1}{2} \cdot \Re\Big\{ P_4\big( \frac{\lambda}{\lambda +2i\mu_2} \big) \Big\}. 
\end{align*}
Assumption \eqref{SZ-L1L2medium-Condition} implies $A,B \geq 0$ by \cref{PmLemma} yielding the desired result from \eqref{SZ-L1L2medium-Identity-a}. \end{proof}

With an appropriate numerical version of \cref{SZ-L1L2medium}, analogous to \cref{SZ-L1Lpmedium-Numerical}, we obtain lower bounds for $\lambda_2$ for $\lambda_1 \in [0,b]$ and fixed $b > 0$. Optimally choosing $\lambda = \lambda(b) > 0, J = J(b) > 0$  produces \cref{Table-SZ-L1L2medium-QuadPrincip} again. 

\subsubsection{Summary of bounds on $\lambda_2$}
We collect the estimates of the previous subsections for each range of $\lambda_1$ into a  one result for ease of use. 

\begin{prop} \label{SZ-L1L2-Summary} Assume $\chi_1$ and $\rho_1$ are real. Suppose $\cL$ is sufficiently large depending on $\epsilon > 0$:
\begin{enumerate}[(a)]
	\item Suppose $\chi_1$ is quadratic and $\lambda_2 \leq \lambda'$. Then
	\[
	\lambda_2 \geq 
	\begin{cases}
	(\tfrac{1}{2}-\epsilon) \log \lambda_1^{-1} & \text{if $\lambda_1 \leq 10^{-10}$}  \\
	0.2103 \log \lambda_1^{-1} & \text{if $\lambda_1 \leq 0.1227$}
	\end{cases}
	\]
	and if $\lambda_1 > 0.1227$ then the bounds in \cref{Table-SZ-L1L2medium-QuadPrincip} apply and $\lambda_2 \geq 0.2866.$
	\item Suppose $\chi_1$ is principal. Then 
	\[
	\lambda_2 \geq (1-\epsilon) \log \lambda_1^{-1} \quad \text{if $\lambda_1 \leq 0.0875$}.
	\]
	and if $\lambda_1 > 0.0875$ then the bounds in \cref{Table-SZ-L1L2small-Chi1Principal} apply and $\lambda_2 \geq 0.6069.$
\end{enumerate}
\end{prop}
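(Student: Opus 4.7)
The plan is to mimic the bookkeeping used in the proof of \cref{SZ-L1Lp-Summary}: split the range of $\lambda_1$ into the three regimes (\emph{very small}, \emph{small}, \emph{medium}) handled respectively by \cref{SZ-L1L2_verysmall}, \cref{SZ-L1L2small}, and the combination of \cref{SZ-L1L2med-0} with \cref{SZ-L1L2medium}, then take the worst bound for each regime across the sub--cases determined by whether $\chi_2$ is principal and whether $\rho_2$ is real or complex. The assumption $\lambda_2 \leq \lambda'$ in part (a) ensures that we actually have room to extract $\rho_2$ separately from $\rho'$, i.e.\ that the zeros used in the trigonometric identity \eqref{TI-SZ-L2} are genuinely distinct (modulo conjugation), so \cref{SZ-L1L2Identity} applies.

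For part (a), assume $\chi_1$ is quadratic. If $\lambda_1 \leq 10^{-10}$, then \cref{Table-SZ-L1L2small-Chi1Chi2NonPrincipal} and \cref{Table-SZ-L1L2small-Chi2Principal} both yield $\lambda_2 > 4e$ (the weaker requirement in \cref{SZ-L1L2_verysmall}(a),(c)), so \cref{SZ-L1L2_verysmall} gives the advertised $(\tfrac12-\epsilon)\log\lambda_1^{-1}$ bound. For $10^{-10} \leq \lambda_1 \leq 0.1227$, I proceed as in \cref{SZ-L1Lp-Summary}: on each dyadic subinterval $[b_1,b_2]$ from the list $[10^{-10},10^{-9}],\dots,[0.12,0.1227]$, \cref{Table-SZ-L1L2small-Chi1Chi2NonPrincipal,Table-SZ-L1L2small-Chi2Principal} supply a uniform lower bound $\lambda_2\geq\tilde\lambda_{b_2}$, from which
\[
\lambda_2 \geq \tilde\lambda_{b_2} \geq \frac{\tilde\lambda_{b_2}}{\log b_1^{-1}}\log\lambda_1^{-1},
\]
and one checks (by reading off each row) that the worst of these ratios over all subintervals and over both tables is $\geq 0.2103$. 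For $\lambda_1 > 0.1227$, we combine \cref{SZ-L1L2med-0} (which handles $\rho_2$ real or with $\chi_2$ non--principal) with \cref{SZ-L1L2medium} in its numerical form (via \cref{Table-SZ-L1L2medium-QuadPrincip}) for the remaining case $\chi_2$ principal, $\rho_2$ complex; one verifies that the former gives $\lambda_2 \geq 0.3034$, while the latter is the binding constraint and gives $\lambda_2 \geq 0.2866$ at $\lambda_1 = 0.2866$.

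For part (b), $\chi_1$ is principal, so by construction $\chi_2\neq\chi_1,\bar\chi_1$ is non--principal and only case (b) of \cref{SZ-L1L2_verysmall,SZ-L1L2small} is relevant. For $\lambda_1 \leq 10^{-5}$, \cref{Table-SZ-L1L2small-Chi1Principal} shows $\lambda_2 > 2e$, so \cref{SZ-L1L2_verysmall}(b) gives $(1-\epsilon)\log\lambda_1^{-1}$; for $10^{-5} \leq \lambda_1 \leq 0.0875$, the table yields the claim by the same subinterval comparison (one checks that every ratio $\tilde\lambda_{b_2}/\log b_1^{-1}$ is close to $1$, losing only $\epsilon$). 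For $\lambda_1 > 0.0875$, note that \cref{SZ-L1L2med-0} with $\chi_1$ principal gives $\lambda_2 \geq 0.6069$ regardless of whether $\rho_2$ is real or complex, matching the claimed bound.

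The main obstacle is not conceptual but rather the accountancy: one must make sure, in each regime, that the minimum over all admissible subcases of $(\chi_2,\rho_2)$ actually equals the advertised constant, and that the asymptotic constants $0.2103$ and $1-\epsilon$ really are dominated by the worst subinterval ratio (not by a single entry in one table). A secondary subtlety is the need to verify that in the ``medium'' regime for (a), the numerical version of \cref{SZ-L1L2medium} satisfies condition \eqref{SZ-L1L2medium-Condition} at the optimising parameters $\lambda(b),J(b)$ listed in \cref{Table-SZ-L1L2medium-QuadPrincip}; this is a direct numerical check using $J_0=\min\{J/2+1/(2J),4J\}$.
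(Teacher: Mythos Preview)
Your approach is essentially the same as the paper's: split by the subcases on $(\chi_2,\rho_2)$, invoke \cref{SZ-L1L2_verysmall}, the numerical tables from \cref{SZ-L1L2small}, and \cref{SZ-L1L2med-0}/\cref{SZ-L1L2medium} in the respective regimes, then record the worst constant. The paper organises part~(a) by first treating $\chi_2$ non-principal separately (obtaining the stronger $0.3506\log\lambda_1^{-1}$ there) and then $\chi_2$ principal (obtaining $0.2103\log\lambda_1^{-1}$), but the net outcome is identical to yours.

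One bookkeeping slip in part~(b): you place the crossover between \cref{SZ-L1L2_verysmall}(b) and \cref{Table-SZ-L1L2small-Chi1Principal} at $\lambda_1=10^{-5}$, but the table only begins at $\lambda_1=0.004$. On the subinterval $[10^{-5},0.004]$ the table entry $\tilde\lambda_{0.004}=6.150$ compared with $\log(10^{5})\approx 11.5$ gives a ratio near $0.53$, not ``close to $1$''. The fix is the paper's: since the table (via monotonicity) guarantees $\lambda_2\geq 6.150>2e$ for all $\lambda_1\leq 0.004$, \cref{SZ-L1L2_verysmall}(b) already applies on the whole range $\lambda_1\leq 0.004$ and yields $(1-\epsilon)\log\lambda_1^{-1}$; the table ratios on $[0.004,0.0875]$ are then indeed all $>1$. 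With this adjustment your argument goes through.
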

\begin{rem*} After comparing \cref{SZ-L1L2-Summary,SZ-L1Lp-Summary} in the case when $\chi_1$ is quadratic, we realize that the additional assumptions $\lambda'\leq \lambda_2$ or $\lambda_2 \leq \lambda'$ are superfluous. 
\end{rem*}
\begin{proof} (a) First, suppose $\chi_2$ is non-principal. For $\lambda_1 \leq 10^{-5}$, we see from \cref{Table-SZ-L1L2small-Chi1Chi2NonPrincipal} that $\lambda_2 \geq 5.828 > 2e$ so the desired bound follows form \cref{SZ-L1L2_verysmall}.  For $10^{-5} \leq \lambda_1 \leq 0.1227$, consider \cref{Table-SZ-L1L2small-Chi1Chi2NonPrincipal}. Apply the same process as in \cref{SZ-L1Lp-Summary} to each subinterval $[10^{-5}, 10^{-4}],  \dots, [0.12, 0.125]$ to obtain 
\[
\lambda_2 \geq 0.3506 \log \lambda_1^{-1}. 
\]

Now, suppose $\chi_2$ is principal. For $\lambda_1 \leq 10^{-10}$, we see from \cref{Table-SZ-L1L2small-Chi2Principal} that $\lambda_2 \geq 10.99 > 4e$ so the desired bound follows from \cref{SZ-L1L2_verysmall}. For $10^{-10} \leq \lambda_1 \leq 0.1227$, consider \cref{Table-SZ-L1L2small-Chi2Principal,Table-SZ-L1L2medium-QuadPrincip}. Apply the same process as in \cref{SZ-L1Lp-Summary} to  each subinterval $[10^{-10}, 10^{-9}],  \dots, [0.85,0.9],\dots [0.12, 0.1227]$ and obtain
\[
\lambda_2 \geq 0.2103 \log \lambda_1^{-1}.
\]
Upon comparing the two cases, the latter gives weaker results in the range $\lambda_1 \leq 0.1227$. For $\lambda_1 > 0.1227$, we compare \cref{SZ-L1L2med-0,Table-SZ-L1L2small-Chi1Chi2NonPrincipal,Table-SZ-L1L2medium-QuadPrincip} and see that  last one gives the weakest bounds.

\noindent
(b) Similar to (a) except we use \cref{Table-SZ-L1L2small-Chi1Principal} in conjunction with \cref{SZ-L1L2_verysmall}. The range $\lambda_1 \leq 0.004$ gives the bound $\lambda_2 \geq (1-\epsilon)\log \lambda_1^{-1}$. The range $0.004 \leq \lambda_1 \leq 0.0875$ turns out to give a better bound but we opt to write a bound uniform for $\lambda_1 \leq 0.0875$. For $\lambda_1 > 0.0875$, we use \cref{SZ-L1L2med-0,Table-SZ-L1L2small-Chi1Principal}. 
\end{proof}

\section{Zero Repulsion: $\chi_1$ or $\rho_1$ is complex}
\label{CC-ZeroRepulsion}
When $\chi_1$ or $\rho_1$ is complex, the effect of zero repulsion is lesser than when $\chi_1$ and $\rho_1$ are real. Nonetheless we will follow the same general outline as the previous section, but with modified trigonometric identities and more frequently using the ``polynomial method" of \cref{PolynomialEXP}. Also, whether $\chi_1$ is principal naturally affects our arguments in a significant manner so for clarity we further subdivide our results on this condition.

\subsection{Bounds for $\lambda'$} \label{CC-BoundsLp}

\subsubsection{$\chi_1$ non-principal} 
\begin{lem} \label{CC-L1LpIdentity} Assume $\chi_1$ or $\rho_1$ is complex with $\chi_1$ non-principal. Let  $\lambda > 0, J \geq \tfrac{1}{4}$. If $\cL$ is sufficiently large depending on $\epsilon, \lambda$ and $J$ then
\[
0 \leq  (J^2+\tfrac{1}{2}) P_4(1) -  (J^2+\tfrac{1}{2}) \cdot P_4\big( \frac{\lambda}{\lambda+\lambda'} \big) - 2J \cdot P_4\big( \frac{\lambda}{\lambda+\lambda_1}\big)  + 2(J+1)^2 \phi \lambda + \epsilon
\]
provided
\begin{equation}
	\frac{ J_0}{(\lambda+\lambda_1)^4} + \frac{1}{(\lambda+\lambda')^4} > \frac{1}{\lambda^4} \qquad \text{with $J_0 = \min\{ J+\tfrac{3}{4J}, 4J \}.$ }
 \label{CC-L1LpIdentity-Condition}
\end{equation}
\end{lem}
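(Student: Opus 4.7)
The plan is to combine a trigonometric identity tailored to the CC case with the polynomial explicit inequality \cref{PolyEI-Apply}, extracting $\rho_1$ and $\rho'$, and then to bound the auxiliary cross-terms via \cref{PmLemma}. Compared with \cref{SZ-L1Lpmedium} the roles of $\rho_1$ and $\rho'$ are swapped in the target inequality (now $\rho_1$ carries the smaller coefficient $2J$ and $\rho'$ the larger coefficient $J^2+\tfrac{1}{2}$), reflecting that in the CC case the worst zero $\rho_1$ of the worst character $\chi_1$ need not coincide with its own conjugate and cannot be ``squared'' through the identity $\chi_1\overline{\chi_1}=\chi_0$ as was done in the SZ case.

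Concretely, I would begin with the non-negative trigonometric identity
\[
0 \leq \chi_0(\kn)\bigl(1+\Re\{\chi_1(\kn)(\N\kn)^{-i\gamma'}\}\bigr)\bigl(J+\Re\{\chi_1(\kn)(\N\kn)^{-i\gamma_1}\}\bigr)^2,
\]
valid for every integral ideal $\kn$ since $\chi_0(\kn)\geq 0$, $1+\Re\{z\}\geq 0$ for $|z|\leq 1$, and the final factor is a real square. Expanding on the support of $\chi_0$ produces a linear combination of $\Re\{\chi(\kn)(\N\kn)^{-it}\}$ over $\chi\in\{\chi_0,\chi_1,\chi_1^2,\chi_1^3\}$ at various real shifts $t$, with total coefficient sum $2(J+1)^2$, and with coefficient $J^2+\tfrac{1}{2}$ on both $\chi_0$ and $\Re\{\chi_1(\N\kn)^{-i\gamma'}\}$ and coefficient $2J$ on $\Re\{\chi_1(\N\kn)^{-i\gamma_1}\}$. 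Multiplying by $\Lambda_K(\kn)(\N\kn)^{-\sigma}\sum_k a_k((\sigma-1)\log\N\kn)^{k-1}/(k-1)!$ with $\sigma=1+\lambda/\cL$ and $P=P_4$ and summing over $\kn$ converts the identity into an inequality in the functions $\cP(\sigma+it,\chi)$.

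Next I would apply \cref{PolyEI-Apply} to each $\cP$-term, extracting the zeros $\rho_1$ and $\rho'$ from every $\cP(\sigma+it,\chi_1)$-term (augmenting with their conjugates when $\chi_1$ is real). The three main contributions---namely $(J^2+\tfrac{1}{2})P_4(1)$ from the pole of $\cP(\sigma,\chi_0)$, $-(J^2+\tfrac{1}{2})P_4(\lambda/(\lambda+\lambda'))$ from extracting $\rho'$ at the matched shift $i\gamma'$, and $-2J\,P_4(\lambda/(\lambda+\lambda_1))$ from extracting $\rho_1$ at the matched shift $i\gamma_1$---give precisely the polynomial terms in the stated inequality. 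The cumulative $\phi\lambda\,\cL_\chi/\cL$ contributions sum by \cref{QuantityRelations}(v) to at most $2(J+1)^2\phi\lambda$, matching the asserted error. It remains to verify that the auxiliary terms combine non-positively so that they may be discarded. These terms are of two types: a positive pole contribution $+J\Re\{P_4(\lambda/(\lambda+i(\mu'-\mu_1)))\}$ coming from the $\chi_0$ part of $\cP(\sigma+i(\gamma'-\gamma_1),\chi_0)$, and several negative cross-extraction terms $-\Re\{P_4(\lambda/((\lambda+\lambda_*)+i\tau))\}$ with $\lambda_*\in\{\lambda',\lambda_1\}$. Grouping by the shift parameter $\tau=\mu'-\mu_1$ and invoking \cref{PmLemma} with the $\chi_0$-pole term playing the $A$-role at $a=\lambda$, the cross-extractions of $\rho'$ playing the $B$-role at $b=\lambda+\lambda'$, and the cross-extractions of $\rho_1$ playing the $C$-role at $c=\lambda+\lambda_1$ yields a condition of the form $C/c^4+B/b^4\geq A/a^4$ which, after normalising, becomes $J_0/c^4+1/b^4>1/a^4$. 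A secondary pairing arising from the $\chi_1^2$ contributions (and in particular the pole of $\chi_1^2=\chi_0$ in the quadratic subcase) gives the alternative bound $4J$, and taking the minimum of the two thresholds produces the stated $J_0=\min\{J+\tfrac{3}{4J},\,4J\}$.

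The main obstacle is the combinatorial bookkeeping of the extraction terms: tracking which shifts $\tau$ appear, identifying which role ($A$, $B$, or $C$) each term plays, and computing the combined coefficients so that \cref{PmLemma} consolidates into exactly the threshold $J_0$ stated in \eqref{CC-L1LpIdentity-Condition}. A secondary difficulty is handling uniformly the distinct subcases where $\chi_1$ is complex (so zeros of $\overline{\chi_1}$ do not enter the $\chi_1$-extractions) versus where $\chi_1$ is quadratic with $\rho_1$ complex (so $\overline{\rho_1}$ and $\overline{\rho'}$ must be extracted from the $\chi_1$-terms, altering the cross-term structure); in either case the constraint $J\geq\tfrac{1}{4}$ is used to ensure the relevant PmLemma coefficients remain admissible.
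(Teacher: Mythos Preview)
Your proposal is correct and follows essentially the same approach as the paper: the same trigonometric identity, the same application of \cref{PolyEI-Apply} with extraction of $\rho_1,\rho'$ from the $\chi_1$-terms, and the same use of \cref{PmLemma} to dispose of the auxiliary cross-terms, with the two thresholds $J+\tfrac{3}{4J}$ and $4J$ arising from different groupings. The only refinement in the paper is that the case division is by $\ord\chi_1\in\{\geq 4,\,=3,\,=2\}$ rather than just ``complex versus quadratic''; the order-$3$ case (where $\chi_1^2=\overline{\chi_1}$) requires its own extraction pattern and contributes the $4J$ threshold via a separate term $B$ at shift $\mu'+2\mu_1$.
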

\begin{proof} For simplicity, denote $\cP(s,\chi) = \cP(s,\chi; P_4)$. Our starting point is the  trigonometric identity
\[
0 \leq  \chi_0(\kn) \big(1 + \Re\{ \chi_1(\kn) (\N\kn)^{i\gamma'} \} \big) \big( J+ \Re\{ \chi_1(\kn) (\N\kn)^{i\gamma_1} \} \big)^2.
\]
In the usual way, it follows that
\begin{equation}
\begin{aligned}
0 & \leq(J^2+\tfrac{1}{2}) \big\{ \cP(\sigma, \chi_0)  + \cP(\sigma+i\gamma', \chi_1) \big\} + \\
& \qquad + J \cP(\sigma + i(\gamma_1+\gamma'), \chi_1^2) + 2J \cP(\sigma+i\gamma_1, \chi_1) + J \cP(\sigma+i(\gamma_1-\gamma'), \chi_0) \\
& \qquad + \tfrac{1}{4} \cP(\sigma + i(2\gamma_1+\gamma'), \chi_1^3) + \tfrac{1}{2} \cP(\sigma+2i\gamma_1, \chi_1^2) + \tfrac{1}{4} \cP(\sigma+i(2\gamma_1-\gamma'), \chi_1) \\
\end{aligned} 
\label{TrigIdentity-CC-L1Lp}
\end{equation}
where $\sigma = 1 + \tfrac{\lambda}{\cL}$. To each term $\cP( \, \cdot \, , \chi_1^r)$ above, we apply \cref{PolyEI-Apply} extracting zeros depending on the order of $\chi_1$ and the value of $r$.   We divide our argument into cases. 
\begin{enumerate}[(i)]
	\item $(\ord \chi_1 \geq 4)$ \emph{Extract $\{\rho_1, \rho'\}$ from $\cP( \, \cdot \, , \chi_1^r)$ when $r=1$.} From \eqref{TrigIdentity-CC-L1Lp}, we deduce
	\begin{equation}
	\begin{aligned}
	0 \leq & (J^2+\tfrac{1}{2}) P_4(1) -  (J^2+\tfrac{1}{2}) P_4\big( \frac{\lambda}{\lambda+\lambda'} \big) - 2J P_4\big( \frac{\lambda}{\lambda+\lambda_1}\big)  -A + \lambda \psi + \epsilon  
	\end{aligned}
	\label{CC-L1Lp-Case1}
	\end{equation}
	where  $\psi = (J^2 + 3J + \tfrac{3}{2}) \phi \tfrac{\cL_{\chi_1}}{\cL} + (J^2 + J + \tfrac{1}{2}) \phi \tfrac{\cL_0}{\cL},$ and
	\[
	A =  \Re\Big\{  (J^2+\tfrac{3}{4}) P_4\big( \frac{\lambda}{\lambda+\lambda_1+it_1} \big) + 2J \cdot P_4\big( \frac{\lambda}{\lambda+\lambda' + it_1}\big) - J \cdot P_4\big( \frac{\lambda}{\lambda+it_1}\big)\Big\}
	\]
	with $t_1 = \mu' - \mu_1$. One can easily verify that $J^2 + 3J+\tfrac{3}{2} \leq 3 \cdot(J^2 + J +\tfrac{1}{2})$ and so by \cref{QuantityRelations}, we may  more simply take $\psi =  2(J+1)^2 \phi$ in \eqref{CC-L1Lp-Case1}. By \cref{PmLemma}, assumption \eqref{CC-L1LpIdentity-Condition} implies $A \geq 0$, completing the proof of case (i). 
	\\

	\item $(\ord \chi_1 = 3)$ \emph{Extract $\{\rho_1, \rho'\}$ or $\{\bar{\rho_1}, \bar{\rho'}\}$ from $\cP( \, \cdot \, , \chi_1^r)$ when $r=1$ or $2$ respectively.} Then by \eqref{TrigIdentity-CC-L1Lp}, 
	\begin{equation}
	\begin{aligned}
	0 \leq & (J^2+\tfrac{1}{2}) P_4(1) -  (J^2+\tfrac{1}{2}) P_4\big( \frac{\lambda}{\lambda+\lambda'} \big) - 2J P_4\big( \frac{\lambda}{\lambda+\lambda_1}\big) - A - B +  \lambda \psi + \epsilon 
	\end{aligned}
	\label{CC-L1Lp-Case2}
	\end{equation}
	where $\psi = (J^2 + 3J + \tfrac{5}{4}) \phi \tfrac{\cL_{\chi_1}}{\cL} + (J^2 + J + \tfrac{3}{4}) \phi \tfrac{\cL_0}{\cL}$, the quantity $A$ is as defined in case (i), and
	\begin{align*}
	B = \Re\Big\{ J \cdot P_4\big( \frac{\lambda}{\lambda+\lambda_1+it_2} \big) + \tfrac{1}{2} \cdot P_4\big( \frac{\lambda}{\lambda+\lambda' + it_2}\big)- \tfrac{1}{4} \cdot P_4\big( \frac{\lambda}{\lambda+it_2}\big)\Big\}
	\end{align*}
	with $t_2 = \mu' + 2\mu_1$. Again, one can check that $J^2 + 3J+\tfrac{5}{4} \leq 3 \cdot(J^2 + J +\tfrac{3}{4})$ and so by \cref{QuantityRelations}, we may take $\psi =  2(J+1)^2 \phi$ in \eqref{CC-L1Lp-Case2}. Similar to (i), \cref{PmLemma} and assumption \eqref{CC-L1LpIdentity-Condition} imply $A, B \geq 0$. 
	
	\item $(\ord \chi_1 = 2)$ \emph{Extract $\{\rho_1, \bar{\rho_1}, \rho'\}$ from $\cP( \, \cdot \, , \chi_1^r)$ when $r=1$ or $3$.}
	Again, apply \cref{PolyEI-Apply} to the terms in \eqref{TrigIdentity-CC-L1Lp} except with a slightly more careful analysis. We outline these modifications here. 
		\begin{itemize}
			\item Write  $2J \cdot \cP(\sigma+i\gamma_1,\chi_1) = J \cdot \cP(\sigma+i\gamma_1,\chi_1) + J \cdot \cP(\sigma+i\gamma_1, \chi_1).$  Extract $\{\rho_ 1, \bar{\rho_1}, \rho'\}$ from the first term and extract  $\{\rho_1, \bar{\rho_1}, \bar{\rho'}\}$ from the second term.
			\item For $\tfrac{1}{4} \cP(\sigma + i(2\gamma_1+\gamma'), \chi_1)$ and  $\tfrac{1}{4} \cP(\sigma + i(2\gamma_1-\gamma'), \chi_1)$, extract $\{\rho_1,  \rho'\}$ and $\{\rho_1,  \bar{\rho'}\}$ respectively.
		\end{itemize}

	 With these modifications, \eqref{TrigIdentity-CC-L1Lp} overall yields
	\begin{equation}
	\begin{aligned}
	0 \leq & (J^2+\tfrac{1}{2}) P_4(1) -  (J^2+\tfrac{1}{2}) P_4\big( \frac{\lambda}{\lambda+\lambda'} \big) - 2J P_4\big( \frac{\lambda}{\lambda+\lambda_1}\big) + \lambda (\psi + \epsilon)  \\
	& \quad - \Re\Big\{  (J^2+\tfrac{3}{4}) P_4\big( \frac{\lambda}{\lambda+\lambda_1+it_1} \big) + J \cdot P_4\big( \frac{\lambda}{\lambda+\lambda' + it_1}\big) - J \cdot P_4\big( \frac{\lambda}{\lambda+it_1}\big)\Big\} \\
	& \quad - \Re\Big\{ (J^2 + \tfrac{3}{4}) \cdot P_4\big( \frac{\lambda}{\lambda+\lambda_1+it_3} \big) + J \cdot P_4\big( \frac{\lambda}{\lambda+\lambda' + it_3}\big)- J \cdot P_4\big( \frac{\lambda}{\lambda+it_3}\big)\Big\} \\
	& \quad - \Re\Big\{ 2J \cdot P_4\big( \frac{\lambda}{\lambda+\lambda_1+it_4} \big) + \tfrac{1}{2} \cdot P_4\big( \frac{\lambda}{\lambda+\lambda' + it_4}\big)- \tfrac{1}{2} \cdot P_4\big( \frac{\lambda}{\lambda+it_4}\big)\Big\}
	\end{aligned}
	\label{CC-L1Lp-Case3}
	\end{equation}
	where $t_1 = \mu' - \mu_1; t_3 = \mu' + \mu_1; t_4 = 2\mu_1;$ and $\psi = (J^2 + 2J+1) \phi \tfrac{\cL_{\chi_1}}{\cL} + (J^2 + 2J + 1) \phi \tfrac{\cL_0}{\cL}.$ Trivially $J^2 + 2J+1 \leq 3 \cdot(J^2 + 2J+1)$ and so by \cref{QuantityRelations}, we may  more simply take $\psi =  2(J+1)^2 \phi$. The three terms $\Re\{ \dots \}$ in \eqref{CC-L1Lp-Case3} are all $\geq 0$ by \cref{PmLemma} and \eqref{CC-L1LpIdentity-Condition} and hence can be ignored.
\end{enumerate}
This completes the proof in all cases. 
\end{proof}

A suitable numerical version of \cref{CC-L1LpIdentity} produces \cref{Table-Lp-CC_Chi1NonPrincipal}.

\begin{table}
\begin{tabular}{l|c|c|c} 
 $\lambda_1 \leq$ & $\lambda' \geq$& $\lambda$ & $J$ \\ 
 \hline 
 .1227 &  .7391 &  1.097 &  .7788 \\ 
 .125 &  .7266 &  1.104 &  .7797 \\ 
 .130 &  .7007 &  1.120 &  .7817 \\ 
 .135 &  .6766 &  1.135 &  .7836 \\ 
 .140 &  .6540 &  1.149 &  .7854 \\ 
 .145 &  .6328 &  1.162 &  .7872 \\ 
 .150 &  .6128 &  1.174 &  .7889 \\ 
 .155 &  .5939 &  1.185 &  .7906 \\ 
 .160 &  .5759 &  1.195 &  .7923 \\ 
 .165 &  .5589 &  1.204 &  .7939 \\ 
 .170 &  .5427 &  1.213 &  .7955 \\ 
 .175 &  .5272 &  1.221 &  .7971 \\ 
 .180 &  .5124 &  1.229 &  .7986 \\ 
 .185 &  .4982 &  1.236 &  .8001 \\ 
 .190 &  .4846 &  1.242 &  .8016 \\ 
 .195 &  .4715 &  1.249 &  .8030 \\ 
 .200 &  .4590 &  1.254 &  .8045 \\ 
 .205 &  .4469 &  1.259 &  .8059 \\ 
 \end{tabular}
 \qquad
 \begin{tabular}{l|c|c|c} 
 $\lambda_1 \leq$ & $\lambda' \geq$& $\lambda$ & $J$ \\ 
 \hline 
 .210 &  .4353 &  1.264 &  .8073 \\ 
 .215 &  .4241 &  1.269 &  .8087 \\ 
 .220 &  .4132 &  1.273 &  .8100 \\ 
 .225 &  .4027 &  1.276 &  .8114 \\ 
 .230 &  .3926 &  1.280 &  .8127 \\ 
 .235 &  .3828 &  1.283 &  .8140 \\ 
 .240 &  .3733 &  1.285 &  .8153 \\ 
 .245 &  .3641 &  1.288 &  .8166 \\ 
 .250 &  .3552 &  1.290 &  .8179 \\ 
 .255 &  .3465 &  1.292 &  .8191 \\ 
 .260 &  .3381 &  1.294 &  .8204 \\ 
 .265 &  .3300 &  1.295 &  .8216 \\ 
 .270 &  .3220 &  1.296 &  .8229 \\ 
 .275 &  .3143 &  1.297 &  .8241 \\ 
 .280 &  .3068 &  1.298 &  .8253 \\ 
 .285 &  .2995 &  1.299 &  .8265 \\ 
 .290 &  .2924 &  1.299 &  .8277 \\ 
 .2909 &  .2911 &  1.299 &  .8279 \\ 
\end{tabular}

\caption{Bounds for $\lambda'$ with $\chi_1$ or $\rho_1$ complex and $\chi_1$ non-principal}
\label{Table-Lp-CC_Chi1NonPrincipal}
\end{table}

\subsubsection{$\chi_1$ principal}
\begin{lem} \label{CC-L1LpIdentity-0-real} 
Assume $\chi_1$ is principal, $\rho_1$ is complex, and $\rho'$ is real. 
Suppose $f$ satisfies Conditions 1 and 2. For $\epsilon > 0$, provided $\cL$ is sufficiently large depending on $\epsilon$ and $f$,  the following holds: 
\[
0 \leq 2F(-\lambda') -  2F(\lambda_1-\lambda')  - F(0) + 2\phi f(0)+\epsilon.
\]
\end{lem}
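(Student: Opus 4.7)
My plan is to adapt the proof of \cref{SZ-L1LpIdentity}(b), which treated the parallel case in which $\rho_1$ is real; here the complex conjugate $\bar\rho_1$ is an additional zero of $L(s,\chi_1)=L(s,\chi_0)$ that must be extracted alongside $\rho_1$. The target inequality matches exactly the form produced by combining \cref{SZ-L1LpIdentity}(b) with \cref{RepelLemma} inside the proof of \cref{SZ-L1Lp_verysmall}, which strongly suggests an analogous ``identity + RepelLemma'' route.

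First I would start from the elementary non-negativity
\[
0 \leq \chi_0(\kn) + \Re\{\chi_0(\kn)(\N\kn)^{i\gamma_1}\},
\]
multiply by $\Lambda_K(\kn)(\N\kn)^{-\beta'}f(\cL^{-1}\log\N\kn)$, and sum over integral ideals to obtain the bilateral bound $0 \leq \cK(\beta',\chi_0) + \cK(\beta'+i\gamma_1,\chi_0)$. I would then apply \cref{ExplicitNP-Apply} to both terms with extracted multiset $\mathcal{Z} = \{\rho_1,\bar\rho_1,\rho'\}$; the inclusion of $\bar\rho_1$ is forced since $\beta_1 > \beta'$, and since $\rho'$ is real ($\mu'=0$) many of the arguments of $F$ simplify. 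Crucially, at $s=\beta'+i\gamma_1$ the extraction of $\rho_1$ produces the purely real contribution $F(\lambda_1-\lambda')$, which is exactly the quantity appearing in the target inequality.

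After adding the two resulting inequalities I would invoke \cref{RepelLemma} with $(a,b,y)=(\lambda',\lambda_1,\mu_1)$ in the regime $\lambda_1\leq\lambda'$ (which holds because $\rho_1$ is the worst zero by the labelling in Section~\ref{sec:ZeroFreeGap_and_BadZeros}), so that
\[
\Re\{F(-\lambda'+i\mu_1) - F(i\mu_1) - F(\lambda_1-\lambda'+i\mu_1)\} \leq F(-\lambda') - F(\lambda_1-\lambda').
\]
Together with the already-real $-F(\lambda_1-\lambda')$ from the extraction of $\rho_1$ at $\beta'+i\gamma_1$ and the real $-F(0)$ from the extraction of $\rho'$ at $\beta'$, this produces the target shape $2F(-\lambda')-2F(\lambda_1-\lambda')-F(0)$, while \cref{QuantityRelations} gives $\psi = 2\phi\,\cL_0/\cL \leq 2\phi$.

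The main technical step will be disposing of the residual $\Re F$-terms arising from the mandatory extraction of $\bar\rho_1$, namely $-\Re F(\lambda_1-\lambda'+i\mu_1)$ (a leftover half-copy from the pair at $s=\beta'$) and $-\Re F(\lambda_1-\lambda'+2i\mu_1)$ at $s=\beta'+i\gamma_1$. In the regime $\lambda_1-\lambda'\leq 0$ Condition~2 does not immediately supply $\Re F\geq 0$, so additional care is required: I would exploit the symmetry $\cK(\beta'+i\gamma_1,\chi_0)=\cK(\beta'-i\gamma_1,\chi_0)$ coming from the reality of $\chi_0$, together with $\Re F(z)=\Re F(\bar z)$, to symmetrize and match these residuals against the real $F(\lambda_1-\lambda')$-contributions, absorbing any leftover slack into the $\epsilon$-term by taking $\cL$ sufficiently large depending on $\epsilon$ and the content of $f$.
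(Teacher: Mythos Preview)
Your overall strategy is exactly the paper's: start from \eqref{TrigIdentity-0} with $(\chi,\gamma)=(\chi_0,\gamma_1)$ at $\sigma=\beta'$, apply \cref{ExplicitNP-Apply} to both $\cK(\beta',\chi_0)$ and $\cK(\beta'+i\gamma_1,\chi_0)$, and then invoke \cref{RepelLemma}. You also correctly observe that, because $\beta_1>\beta'$, the hypothesis of \cref{ExplicitNP-Apply} forces $\bar\rho_1\in\cZ$, producing the extra terms $-\Re F(\lambda_1-\lambda'+i\mu_1)$ (a leftover copy at $s=\beta'$) and $-\Re F(\lambda_1-\lambda'+2i\mu_1)$ (at $s=\beta'+i\gamma_1$), whose arguments have negative real part so Condition~2 gives no sign.

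The gap is in your proposed disposal of these residuals. The ``symmetry'' $\cK(\beta'+i\gamma_1,\chi_0)=\cK(\beta'-i\gamma_1,\chi_0)$ is immediate from $\cK=\Re\,\cW$ and $\bar\chi_0=\chi_0$; it is the same inequality written twice and yields no new relation among the $F$-values. And the residuals are fixed $O(1)$ quantities depending only on $\lambda_1,\lambda',\mu_1$ and $f$; they do not tend to $0$ with $\cL$ and therefore cannot be ``absorbed into the $\epsilon$-term''. Concretely, after your use of \cref{RepelLemma} one has
\[
0\le 2F(-\lambda')-2F(\lambda_1-\lambda')-F(0)+2\phi f(0)
-\Re F(\lambda_1-\lambda'+i\mu_1)-\Re F(\lambda_1-\lambda'+2i\mu_1)+\epsilon,
\]
and passing to the stated lemma requires $\Re F(\lambda_1-\lambda'+i\mu_1)+\Re F(\lambda_1-\lambda'+2i\mu_1)\ge 0$, i.e.\ $\int_0^\infty f(t)e^{(\lambda'-\lambda_1)t}\bigl(\cos(\mu_1 t)+\cos(2\mu_1 t)\bigr)\,dt\ge 0$, which is not guaranteed by Conditions~1 and~2 (the integrand can be negative). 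The paper's terse sketch sidesteps the second residual by listing only $\{\rho',\rho_1\}$ as extracted from $\cK(\beta'+i\gamma_1,\chi_0)$, but taken literally this violates the containment hypothesis of \cref{ExplicitNP-Apply}; so the point you raise is genuine, and a clean argument eliminating or controlling the $\bar\rho_1$-contributions is still needed.
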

\begin{proof} This is analogous to \cref{SZ-L1LpIdentity}. To be brief, use \eqref{TrigIdentity-0} with $(\chi,\gamma) = (\chi_0, \gamma_1)$ and $\sigma = \beta'$ and apply \cref{ExplicitNP-Apply} extracting $\{\rho', \rho_1, \bar{\rho_1}\}$ from $\cK(\beta',\chi_0)$ and $\{\rho', \rho_1\}$ from $\cK(\beta'+i\gamma_1,\chi_0)$.
\end{proof}

\begin{table}
\begin{tabular}{l|c|c} 
$\lambda_1 \leq$ & $\lambda' \geq$ & $\lambda$ \\ 
 \hline 
 .0875 &  1.836 &  1.189 \\ 
 .09 &  1.803 &  1.185 \\ 
 .10 &  1.681 &  1.170 \\ 
 .11 &  1.572 &  1.156 \\ 
 .12 &  1.472 &  1.142 \\ 
 .13 &  1.381 &  1.129 \\ 
 .14 &  1.297 &  1.116 \\ 
 .15 &  1.220 &  1.103 \\ 
 .16 &  1.148 &  1.091 \\ 
 .17 &  1.080 &  1.079 \\ 
 .18 &  1.017 &  1.068 \\ 
 .19 &  .9578 &  1.056 \\ 
 .20 &  .9020 &  1.045 \\ 
 .21 &  .8493 &  1.034 \\ 
 \end{tabular}
 \qquad
 \begin{tabular}{l|c|c} 
$\lambda_1 \leq$ & $\lambda' \geq$ & $\lambda$ \\ 
 \hline
 .22 &  .7994 &  1.023 \\ 
 .23 &  .7522 &  1.013 \\ 
 .24 &  .7073 &  1.002 \\ 
 .25 &  .6646 &  .9917 \\ 
 .26 &  .6239 &  .9813 \\ 
 .27 &  .5851 &  .9711 \\ 
 .28 &  .5480 &  .9609 \\ 
 .29 &  .5126 &  .9508 \\ 
 .30 &  .4787 &  .9407 \\ 
 .31 &  .4462 &  .9307 \\ 
 .32 &  .4150 &  .9208 \\ 
 .33 &  .3851 &  .9108 \\ 
 .34 &  .3565 &  .9009 \\ 
 .3443 &  .3445 &  .8966 \\ 
\end{tabular}

\caption{Bounds for $\lambda'$ with $\chi_1$ principal, $\rho_1$ complex and $\rho'$ real}
\label{Table-Lp-CC_Chi1Principal_RhopReal}
\end{table}

A numerical version of \cref{CC-L1LpIdentity-0-real} yields bounds for $\lambda'$  with $f = f_{\lambda}$ taken from \cite[Lemma 7.2]{HBLinnik}, producing \cref{Table-Lp-CC_Chi1Principal_RhopReal}. The remaining case consists of $\chi_1$ principal  with both $\rho_1$ and $\rho'$ complex.

\begin{lem} \label{CC-L1LpIdentity_Principal} Assume $\chi_1$ is principal, $\rho_1$ is complex and $\rho'$ is complex. Let  $\lambda > 0$ and $J  > 0$. If $\cL$ is sufficiently large depending on $\epsilon, \lambda$ and $J$ then
\[
0 \leq  (J^2+\tfrac{1}{2}) P_4(1) -  (J^2+\tfrac{1}{2}) \cdot P_4\big( \frac{\lambda}{\lambda+\lambda_1} \big) - 2J \cdot P_4\big( \frac{\lambda}{\lambda+\lambda'}\big)  + 2(J+1)^2 \phi \lambda + \epsilon
\]
provided both of the following hold:
\begin{equation}
\begin{aligned}
	\frac{1}{(\lambda+\lambda_1)^4} + \frac{J_0}{(\lambda+\lambda')^4} > \frac{1}{\lambda^4} \qquad \text{ and } 
	\qquad	\frac{2}{(\lambda+\lambda_1)^4} + \frac{J_1}{(\lambda+\lambda')^4} > \frac{1}{\lambda^4}, \\
\end{aligned}
 \label{CC-L1LpIdentity_Principal-Condition}
\end{equation}
where $J_0 = \min\{ J+\tfrac{3}{4J}, 4J \}$ and $J_1 = 4J/(J^2+1)$.  
\end{lem}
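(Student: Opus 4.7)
The plan is to adapt the argument of \cref{CC-L1LpIdentity} to the setting where $\chi_1$ is principal, exploiting the fact that both conjugates $\overline{\rho_1}$ and $\overline{\rho'}$ are now also zeros of $L(s,\chi_0) = \zeta_K(s)$. Observe that the coefficients $(J^2+\tfrac{1}{2})$ on $P_4(\lambda/(\lambda+\lambda_1))$ and $2J$ on $P_4(\lambda/(\lambda+\lambda'))$ in the conclusion are opposite to those in \cref{CC-L1LpIdentity}, which suggests using the trigonometric inequality
\[
0 \leq \chi_0(\kn)\bigl(1 + \Re\{(\N\kn)^{i\gamma_1}\}\bigr)\bigl(J + \Re\{(\N\kn)^{i\gamma'}\}\bigr)^2,
\]
in which $\gamma_1$ appears linearly and $\gamma'$ quadratically. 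Expanding the product, introducing $\cP(s,\chi)=\cP(s,\chi;P_4)$ via the usual multiplication by $\Lambda_K(\kn)(\N\kn)^{-\sigma}\sum_k a_k \tfrac{((\sigma-1)\log\N\kn)^{k-1}}{(k-1)!}$ with $\sigma = 1 + \lambda/\cL$, and summing over integral ideals yields
\[
\begin{aligned}
0 \leq{}& (J^2+\tfrac{1}{2})\cP(\sigma,\chi_0) + 2J\,\cP(\sigma+i\gamma',\chi_0) + \tfrac{1}{2}\cP(\sigma+2i\gamma',\chi_0) \\
&{}+ (J^2+\tfrac{1}{2})\cP(\sigma+i\gamma_1,\chi_0) + J\,\cP(\sigma+i(\gamma_1+\gamma'),\chi_0) + J\,\cP(\sigma+i(\gamma_1-\gamma'),\chi_0) \\
&{}+ \tfrac{1}{4}\cP(\sigma+i(\gamma_1+2\gamma'),\chi_0) + \tfrac{1}{4}\cP(\sigma+i(\gamma_1-2\gamma'),\chi_0),
\end{aligned}
\]
whose eight coefficients sum to exactly $2(J+1)^2$, precisely accounting for the $2(J+1)^2\phi\lambda$ error after bounding $\cL_0/\cL \leq 1$ via \cref{QuantityRelations}.

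I would then apply \cref{PolyEI-Apply} to each of the eight $\cP(\,\cdot\,,\chi_0)$ terms, extracting in every instance the four zeros $\{\rho_1,\overline{\rho_1},\rho',\overline{\rho'}\}$ of $\zeta_K$. Three ``diagonal'' contributions produce the stated main terms: the pole at $s=1$ of $(J^2+\tfrac{1}{2})\cP(\sigma,\chi_0)$ gives $(J^2+\tfrac{1}{2})P_4(1)$; extracting $\rho_1$ from the frequency-$\gamma_1$ term $(J^2+\tfrac{1}{2})\cP(\sigma+i\gamma_1,\chi_0)$ gives $-(J^2+\tfrac{1}{2})P_4\bigl(\tfrac{\lambda}{\lambda+\lambda_1}\bigr)$; and extracting $\rho'$ from $2J\cP(\sigma+i\gamma',\chi_0)$ gives $-2J\,P_4\bigl(\tfrac{\lambda}{\lambda+\lambda'}\bigr)$. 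Every remaining pole and extraction contribution carries a nonzero imaginary shift and must be shown to combine non-negatively using \cref{PmLemma}.

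For this, I would group the remaining contributions by their common frequency $t$; after using the complex-conjugation symmetry of the extraction pattern (which pairs frequency $t$ with $-t$), each group reduces to checking the non-negativity of an expression of the form
\[
\Re\Bigl\{A\,P_4\bigl(\tfrac{\lambda}{\lambda+it}\bigr) - B\,P_4\bigl(\tfrac{\lambda}{\lambda+\lambda'+it}\bigr) - C\,P_4\bigl(\tfrac{\lambda}{\lambda+\lambda_1+it}\bigr)\Bigr\}
\]
with $A>0$ and $B,C\geq 0$, which by \cref{PmLemma} holds provided $B/(\lambda+\lambda')^4 + C/(\lambda+\lambda_1)^4 \geq A/\lambda^4$. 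Working through the bookkeeping, I expect the inequalities at all frequencies to reduce to the two sufficient conditions in \eqref{CC-L1LpIdentity_Principal-Condition}: the first (with $J_0 = \min\{J+\tfrac{3}{4J},4J\}$) mirrors the non-principal case and dominates at the frequencies $\mu_1\pm\mu'$ and $\mu_1\pm 2\mu'$, while the second (with $J_1 = 4J/(J^2+1)$) is genuinely new and arises at $\mu'$ and $2\mu'$, where both $\rho'$ and $\overline{\rho'}$ are extracted twice --- once from $\cP(\sigma,\chi_0)$ and once from $\cP(\sigma+2i\gamma',\chi_0)$.

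The principal obstacle will be the combinatorial bookkeeping needed to correctly pair and regroup the roughly two dozen cross-terms so that a minimal set of \cref{PmLemma} applications suffices; one must check that the two stated conditions really do dominate every necessary frequency-by-frequency inequality, rather than merely most of them. Once the correct regrouping is identified, each individual verification is routine and the $\epsilon$-errors from \cref{PolyEI-Apply} combine harmlessly upon choosing a new $\epsilon$ and taking $\cL$ sufficiently large depending on $\epsilon, \lambda$ and $J$.
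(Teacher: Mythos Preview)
Your approach is essentially the paper's: the same trigonometric identity, the same expansion into eight $\cP(\,\cdot\,,\chi_0)$ terms, the same reduction via \cref{PolyEI-Apply} and \cref{PmLemma}. Two points need correction.

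First, the paper does \emph{not} extract all four zeros from every term; it extracts selectively. For instance, from $\cP(\sigma+i\gamma_1)$ it extracts only $\{\rho_1,\rho',\bar{\rho'}\}$, deliberately omitting $\bar{\rho_1}$; from $\cP(\sigma+i(2\gamma'+\gamma_1))$ only $\{\rho_1,\rho'\}$; and so on. Your ``extract all four everywhere'' plan would introduce additional frequencies (e.g.\ $2\mu_1$, $\mu_1+2\mu'$ with extra terms) whose \cref{PmLemma} conditions are not guaranteed to follow from the two stated in \eqref{CC-L1LpIdentity_Principal-Condition}. The selective pattern is what produces exactly seven frequencies $\mu_1,\ \mu',\ \mu'\pm\mu_1,\ 2\mu',\ 2\mu'\pm\mu_1$ with coefficient triples that reduce to the two displayed conditions.

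Second, your attribution of which frequencies yield which condition is swapped. The $J_0$ condition (first inequality) arises from the frequencies $\mu'$ and $2\mu'$ --- these have $A_r/C_r = 1$ and $B_r/C_r$ equal to $J+\tfrac{3}{4J}$ and $4J$ respectively, whence the minimum. The $J_1$ condition (second inequality) comes from the frequency $\mu_1$, where the pole of $(J^2+\tfrac12)\cP(\sigma+i\gamma_1)$ contributes $C_1 = J^2+\tfrac12$ while the $\lambda_1$-extractions give $A_1 = 2J^2+1$ and the $\lambda'$-extractions give $B_1 = 2J$; normalizing yields the coefficient $4J/(2J^2+1)$, and $J_1 = 4J/(J^2+1)$ is a (slightly weaker) simplification that still suffices. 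Once you adopt the paper's extraction pattern, the bookkeeping you correctly identify as the main obstacle becomes a routine table check.
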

\begin{proof} Analogous to \cref{CC-L1LpIdentity} but we exchange the roles of $\rho_1$ and $\rho'$ using the trigonometric identity
\[
0 \leq  \chi_0(\kn) \big(1 + \Re\{ (\N\kn)^{i\gamma_1} \} \big) \big( J+ \Re\{ (\N\kn)^{i\gamma'} \} \big)^2.
\]
Writing  $\cP(s) = \cP(s, \chi_0 ; P_4)$, it follows in the usual way that
\begin{equation}
\begin{aligned}
0 & \leq(J^2+\tfrac{1}{2}) \big\{ \cP(\sigma)  + \cP(\sigma+i\gamma_1) \big\}  + J \cP(\sigma + i(\gamma'+\gamma_1)) + 2J \cP(\sigma+i\gamma') + J \cP(\sigma+i(\gamma'-\gamma_1)) \\
& \qquad + \tfrac{1}{4} \cP(\sigma + i(2\gamma'+\gamma_1)) + \tfrac{1}{2} \cP(\sigma+2i\gamma') + \tfrac{1}{4} \cP(\sigma+i(2\gamma'-\gamma_1)) \\
\end{aligned} 
\label{TrigIdentity-CC-L1Lp_Principal}
\end{equation}
where $\sigma = 1 + \tfrac{\lambda}{\cL}$. Next, apply \cref{PolyEI-Apply} to each term according to the following outline:
\begin{itemize}
	\item $\cP(\sigma)$ and $\cP(\sigma+i\gamma')$ extract all 4 zeros $\{\rho_1, \bar{\rho_1}, \rho', \bar{\rho'} \}$.
	\item $\cP(\sigma+i\gamma_1)$ and $\cP(\sigma + i(\gamma' + \gamma_1)) $ extract only $\{\rho_1, \rho', \bar{\rho'} \}$.
	\item $\cP(\sigma + i(\gamma' - \gamma_1))$ extract only $\{ \bar{\rho_1}, \rho', \bar{\rho'} \}$.
	\item $ \cP(\sigma + i(2\gamma'+\gamma_1))$ and $\cP(\sigma + i(2\gamma'-\gamma_1))$  extract $\{\rho_1, \rho'\}$ and $\{\bar{\rho_1}, \rho'\}$ respectively.
	\item $\cP(\sigma + 2i\gamma')$  extract only $\{ \rho_1, \bar{\rho_1}, \rho'\}$. 
\end{itemize}
When necessary, we utilize that $P_4(\bar{X}) = \bar{P_4(X)}$. Then overall we obtain:
\begin{equation}
\begin{aligned}
0 \leq & (J^2 + \tfrac{1}{2})  P_4(1) -  (J^2+\tfrac{1}{2}) P_4\big( \frac{\lambda}{\lambda+\lambda_1} \big) - 2J \cdot P_4\big( \frac{\lambda}{\lambda+\lambda'}\big) + 2\phi (J+1)^2 \lambda + \epsilon \\
& \quad - \sum_{r=1}^{7} \Re\Big\{  A_r \cdot P_4\big( \frac{\lambda}{\lambda+\lambda_1+it_r} \big) + B_r \cdot P_4\big( \frac{\lambda}{\lambda+\lambda' + it_r}\big) - C_r \cdot P_4\big( \frac{\lambda}{\lambda+it_r}\big)\Big\} 
\end{aligned}
\label{CC-L1Lp-Principal_LastStep}
\end{equation}
where
\[
\begin{array}{c|c|c|c|c|c|c|c|c|c|c|c|c|c|}
r & 1 & 2 & 3 & 4 & 5 & 6 & 7 \\
\hline
t_r & \mu_1 & \mu' &  \mu' + \mu_1 & \mu'-\mu_1 & 2\mu'  & 2\mu' + \mu_1 &  2\mu' - \mu_1 \\
\hline
A_r & 2J^2 + 1 & 2J & 2J & 2J &  1/2 & 1/2 &  1/2 \\
\hline
B_r & 2J & 2J^2+3/2 & J^2 + 3/4 & J^2+3/4 & 2J & J & J
\\
\hline 
C_r &  J^2 + 1/2 & 2J & J & J & 1/2 & 1/4 & 1/4
\\
\hline 
\end{array}
\]
It suffices to show the sum over $r$ in \eqref{CC-L1Lp-Principal_LastStep} is non-negative. By \cref{PmLemma}, the sum is $\geq 0$ provided
\[
\frac{A_r}{(\lambda+\lambda_1)^4} + \frac{B_r}{(\lambda+\lambda')^4} > \frac{C_r}{ \lambda^4} \qquad \text{for $r=1,2,\dots,7.$}
\]
After inspection, the most stringent conditions are $r=1, 2$ and $5$, which are implied by assumption \eqref{CC-L1LpIdentity_Principal-Condition}. 
\end{proof}
This produces \cref{Table-Lp-CC_Chi1Principal_RhopComplex} in the usual fashion.

\begin{table}
\begin{tabular}{l|c|c|c} 
 $\lambda_1 \leq$ & $\lambda' \geq$ & $\lambda$ & $J$ \\ 
 \hline 
 .0875 &  .5330 &  1.155 &  .8815 \\ 
 .090 &  .5278 &  1.161 &  .8804 \\ 
 .095 &  .5179 &  1.171 &  .8782 \\ 
 .100 &  .5083 &  1.181 &  .8762 \\ 
 .105 &  .4991 &  1.190 &  .8742 \\ 
 .110 &  .4902 &  1.198 &  .8723 \\ 
 .115 &  .4817 &  1.206 &  .8704 \\ 
 .120 &  .4734 &  1.213 &  .8686 \\ 
 .125 &  .4654 &  1.220 &  .8669 \\ 
 .130 &  .4577 &  1.226 &  .8652 \\ 
 .135 &  .4502 &  1.232 &  .8636 \\ 
 .140 &  .4429 &  1.238 &  .8620 \\ 
 .145 &  .4359 &  1.243 &  .8605 \\ 
 .150 &  .4290 &  1.248 &  .8590 \\ 
 .155 &  .4223 &  1.252 &  .8576 \\ 
 .160 &  .4159 &  1.257 &  .8562 \\ 
 .165 &  .4096 &  1.261 &  .8548 \\ 
 .170 &  .4034 &  1.265 &  .8534 \\ 
 .175 &  .3974 &  1.268 &  .8521 \\ 
 .180 &  .3916 &  1.271 &  .8509 \\ 
 .185 &  .3859 &  1.274 &  .8496 \\ 
  .190 &  .3804 &  1.277 &  .8484 \\ 
 \end{tabular}
 \qquad
 \begin{tabular}{l|c|c|c} 
 $\lambda_1 \leq$ & $\lambda' \geq$ & $\lambda$ & $J$ \\ 
 \hline 
 .195 &  .3749 &  1.280 &  .8472 \\ 
 .200 &  .3696 &  1.282 &  .8460 \\ 
 .205 &  .3645 &  1.284 &  .8449 \\ 
 .210 &  .3594 &  1.286 &  .8437 \\ 
 .215 &  .3545 &  1.288 &  .8426 \\ 
 .220 &  .3497 &  1.290 &  .8415 \\ 
 .225 &  .3449 &  1.291 &  .8405 \\ 
 .230 &  .3403 &  1.293 &  .8394 \\ 
 .235 &  .3358 &  1.294 &  .8384 \\ 
 .240 &  .3314 &  1.295 &  .8374 \\ 
 .245 &  .3270 &  1.296 &  .8364 \\ 
 .250 &  .3228 &  1.297 &  .8354 \\ 
 .255 &  .3186 &  1.297 &  .8344 \\ 
 .260 &  .3145 &  1.298 &  .8335 \\ 
 .265 &  .3106 &  1.298 &  .8326 \\ 
 .270 &  .3066 &  1.299 &  .8317 \\ 
 .275 &  .3028 &  1.299 &  .8308 \\ 
 .280 &  .2990 &  1.299 &  .8299 \\ 
 .285 &  .2953 &  1.299 &  .8290 \\ 
 .290 &  .2917 &  1.299 &  .8281 \\ 
 .2909 &  .2911 &  1.299 &  .8280 \\ 
\end{tabular}
\caption{Bounds for $\lambda'$ with $\chi_1$ principal, $\rho_1$ complex and $\rho'$ complex}
\label{Table-Lp-CC_Chi1Principal_RhopComplex}
\end{table}

\subsubsection{Summary of bounds} We collect the results in the subsection into a single proposition for the reader's convenience. 

\begin{prop} \label{Bounds-Lp-CC}
Assume $\chi_1$ or $\rho_1$ is complex. Provided $\cL$ is sufficiently large, we have the following:
\begin{enumerate}[(a)]
	\item If $\chi_1$ is non-principal then $\lambda' \geq 0.2909$ and the bounds for $\lambda'$ in \cref{Table-Lp-CC_Chi1NonPrincipal} apply.
	\item If $\chi_1$ is principal then  $\lambda' \geq 0.2909$ and the bounds for $\lambda'$ in \cref{Table-Lp-CC_Chi1Principal_RhopComplex} apply.
\end{enumerate}
\end{prop}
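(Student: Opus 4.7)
The strategy is to reduce each assertion to a numerical optimization problem built from the three lemmas just proved in this subsection, namely \cref{CC-L1LpIdentity,CC-L1LpIdentity-0-real,CC-L1LpIdentity_Principal}, and then to bundle the cases together. The overall template is the one used in \cref{SZ-L1Lp-Summary}: convert each inequality into a ``numerical version'' using monotonicity of the relevant building blocks ($P_4$ has non-negative coefficients and vanishes at $0$; the Laplace transform $F(-\lambda)-F(b-\lambda)$ is increasing in $\lambda$ and $b$ under Conditions 1 and 2), then deduce that if $\lambda_1\le b$ the quoted lower bound $\lambda_b'$ on $\lambda'$ is forced, provided the admissibility-type hypothesis (e.g.\ \eqref{CC-L1LpIdentity-Condition} or \eqref{CC-L1LpIdentity_Principal-Condition}) is verified at the numerical parameters chosen.

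For part (a), $\chi_1$ non-principal, I would apply a numerical version of \cref{CC-L1LpIdentity} analogous to \cref{SZ-L1Lpmedium-Numerical}: given a target range $\lambda_1\le b$, choose $\lambda=\lambda(b)>0$ and $J=J(b)\ge \tfrac{1}{4}$, and set $\lambda_b'$ to be the largest non-negative root of
\[
(J^2+\tfrac{1}{2})\bigl(P_4(1)-P_4(\tfrac{\lambda}{\lambda+b})\bigr)-2J\,P_4(\tfrac{\lambda}{\lambda+\lambda_b'})+2(J+1)^2\phi\lambda=0,
\]
verifying \eqref{CC-L1LpIdentity-Condition} at $(\lambda_1,\lambda')=(b,\lambda_b')$. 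The monotonicity of $P_4(\lambda/(\lambda+x))$ in $x$ then converts the lemma into ``$\lambda'\ge \lambda_b'-\epsilon$'', and choosing $(\lambda,J)$ roughly optimally by numerical search produces the entries in \cref{Table-Lp-CC_Chi1NonPrincipal}. The explicit bound $\lambda'\ge 0.2909$ is the entry at the end of that table, and is the one of interest for \cref{MainTheorem-ZFR_2}.

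For part (b), $\chi_1$ principal, I would split on whether $\rho'$ is real or complex. If $\rho'$ is real, I apply \cref{CC-L1LpIdentity-0-real} together with \cref{RepelLemma} (exactly as in \cref{SZ-L1Lpsmall}), using the weight family $f=f_\lambda$ of \cite[Lemma 7.2]{HBLinnik} with $\lambda=\lambda(b)$ chosen to maximize the resulting threshold $\lambda_b'$; this produces \cref{Table-Lp-CC_Chi1Principal_RhopReal}. If $\rho'$ is complex, I apply the numerical version of \cref{CC-L1LpIdentity_Principal} (the two-parameter optimization over $\lambda,J$ subject to both inequalities in \eqref{CC-L1LpIdentity_Principal-Condition}), producing \cref{Table-Lp-CC_Chi1Principal_RhopComplex}. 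Comparing the two tables entry by entry (e.g.\ at $\lambda_1\le 0.0875$ the real case yields $\lambda'\ge 1.836$ while the complex case yields $\lambda'\ge 0.5330$) shows that the complex subcase is uniformly the weaker, so the summary bound on $\lambda'$ in part (b) is read off from \cref{Table-Lp-CC_Chi1Principal_RhopComplex}, giving $\lambda'\ge 0.2909$ at the endpoint.

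The main obstacle is not conceptual but computational: one must verify, at each row of both tables, that the side conditions \eqref{CC-L1LpIdentity-Condition} and \eqref{CC-L1LpIdentity_Principal-Condition} actually hold at the quoted $(\lambda,J,\lambda_b',b)$, since the inequalities fail if $(\lambda,J)$ are pushed too far in the optimization. This is a finite, mechanical check done numerically, and only after it succeeds do the lemmas yield the claimed lower bounds. Once the tables are validated, the proposition is immediate by combining them with the dichotomy ``$\chi_1$ principal or not'' and, in the principal case, ``$\rho'$ real or complex.''
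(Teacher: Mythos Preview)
Your proposal is correct and takes essentially the same approach as the paper: for (a) you invoke the single applicable table (\cref{Table-Lp-CC_Chi1NonPrincipal}) produced via a numerical version of \cref{CC-L1LpIdentity}, and for (b) you split on whether $\rho'$ is real or complex, compare \cref{Table-Lp-CC_Chi1Principal_RhopReal} against \cref{Table-Lp-CC_Chi1Principal_RhopComplex}, and observe the latter is uniformly weaker. The paper's own proof is just the two-sentence summary of this comparison, since the tables themselves are produced in the text preceding the proposition; your write-up simply folds that table-production into the proof body.
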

\begin{proof} If $\chi_1$ is non-principal, then the only bounds available come from \cref{Table-Lp-CC_Chi1NonPrincipal}. If $\chi_1$ is principal, then upon comparing \cref{Table-Lp-CC_Chi1Principal_RhopReal,Table-Lp-CC_Chi1Principal_RhopComplex}, one finds that the latter gives weaker bounds.
\end{proof}

\subsection{Bounds for $\lambda_2$}
Before dividing into cases, we begin with the following lemma analogous to \cref{CC-L1LpIdentity}. 
\begin{lem} \label{CC-L1L2Identity} Assume $\chi_1$ or $\rho_1$ is complex. Suppose $f$ satisfies Conditions 1 and 2. For $\epsilon > 0$, provided $\cL$ is sufficiently large depending on $\epsilon$ and $f$,  the following holds: 

\begin{enumerate}[(a)]
	\item If $\chi_1,\chi_2$ are non-principal, then
\[
0 \leq F(-\lambda_1) - F(0) - F(\lambda_2-\lambda_1) + 4\phi f(0)+\epsilon.
\]
	\item If $\chi_1$ is principal, then $\rho_1$ is complex, $\chi_2$ is non-principal and  
	\begin{align*}
0 \leq & \quad F(-\lambda_1)  - F(0) - F(\lambda_2-\lambda_1) + \Re\{ F(-\lambda_1 + i\mu_1) - F(i\mu_1) - F(\lambda_2-\lambda_1+i\mu_1)  \} + 4\phi f(0)+\epsilon.
\end{align*}
	\item If $\chi_2$ is principal, then $\chi_1$ is non-principal and
	\	\begin{align*}
0 \leq & \quad F(-\lambda_1)  - F(0) - F(\lambda_2-\lambda_1) + \Re\{ F(-\lambda_1 + i\mu_2) - F(i\mu_2) - F(\lambda_2-\lambda_1+i\mu_2)  \} + 4\phi f(0) +\epsilon.
\end{align*}
\end{enumerate}
\end{lem}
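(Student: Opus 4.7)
The plan is to imitate the structure of \cref{SZ-L1L2Identity}: apply the smoothed explicit inequality \cref{ExplicitNP-Apply} to each term arising from a single trigonometric identity, but this time evaluated at $\sigma = \beta_1$ (rather than $\beta_2$ as in the real case), since here it is $\rho_1$ whose complexity we want to exploit. Concretely, starting from the non-negative quantity
\[
0 \leq \chi_0(\kn)\bigl(1 + \Re\{\chi_1(\kn)(\N\kn)^{i\gamma_1}\}\bigr)\bigl(1 + \Re\{\chi_2(\kn)(\N\kn)^{i\gamma_2}\}\bigr),
\]
multiply by $\Lambda_K(\kn)(\N\kn)^{-\beta_1} f(\cL^{-1}\log\N\kn)$ and sum over integral ideals $\kn$. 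Expanding the product as in the derivation of \eqref{TrigIdentity}, this yields the base inequality
\[
0 \leq \cK(\beta_1,\chi_0) + \cK(\beta_1+i\gamma_1,\chi_1) + \cK(\beta_1+i\gamma_2,\chi_2) + \tfrac{1}{2}\cK(\beta_1 + i(\gamma_1+\gamma_2),\chi_1\chi_2) + \tfrac{1}{2}\cK(\beta_1 + i(\gamma_1-\gamma_2),\chi_1\bar{\chi_2}).
\]

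Next I will apply \cref{ExplicitNP-Apply} term by term. By the labelling of \cref{sec:ZeroFreeGap_and_BadZeros}, $\beta_1$ is maximal, so the technical condition on the extracted multiset is vacuous, and we are free to extract whatever zeros are convenient. In case (a), both $\chi_1,\chi_2$ are non-principal, and by construction $\chi_1\chi_2$ and $\chi_1\bar{\chi_2}$ are non-principal as well, so one simply extracts $\rho_1$ from the $\chi_1$-term (producing $-F(0)$) and $\rho_2$ from the $\chi_2$-term (producing $-F(\lambda_2-\lambda_1)$), while the $\chi_0$-term contributes $F(-\lambda_1)$ from its simple pole and the two mixed-product terms contribute only the $f(0)\phi$-piece. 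In case (b), $\chi_1=\chi_0$ and $\rho_1$ is a nontrivial complex zero of $\zeta_K$; thus $\cK(\beta_1,\chi_0)$ and $\cK(\beta_1+i\gamma_1,\chi_0)$ should each have $\rho_1$ extracted, and the two terms $\cK(\beta_1+i(\gamma_1\pm\gamma_2),\chi_2^{\pm 1})$ should each have $\rho_2$ (resp.\ $\bar{\rho_2}$) extracted, producing the additional $\Re$-terms $\Re\{F(-\lambda_1+i\mu_1) - F(i\mu_1) - F(\lambda_2-\lambda_1+i\mu_1)\}$. Case (c) is exactly dual: here $\chi_2=\chi_0$ has the complex zero $\rho_2$, which one extracts from $\cK(\beta_1,\chi_0)$ and $\cK(\beta_1+i\gamma_2,\chi_0)$, together with $\rho_1$ extracted from $\cK(\beta_1+i\gamma_1,\chi_1)$ and the two $\cK(\beta_1 + i(\gamma_1\pm\gamma_2),\chi_1)$-terms; the resulting $\Re$-terms involve $\mu_2$ in place of $\mu_1$.

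Summing the five bounds with the coefficients $1,1,1,\tfrac12,\tfrac12$ produces an error of the form $f(0)\phi \, \Psi/\cL + O(\epsilon)$, where in each case $\Psi$ is a sum of the shape $a\cL_0 + b(\text{sum of }\cL_{\chi})$ with $a\in\{1,2\}$ and $b=3$ (case (a)) or $b=2$ (cases (b), (c)). In every case one verifies directly that $\Psi \leq 4\cL$ via \cref{QuantityRelations}(v) applied character by character (noting that products like $\chi_1\chi_2$ satisfy $\cL_{\chi_1\chi_2}\leq \cL_0 + \log\N\kq$ just like any other character modulo $\kq$), giving the stated coefficient $4\phi f(0)$ in front of the error.

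The main obstacle will be purely book-keeping: tracking the imaginary shifts through each of the five $\cK$-applications to confirm that the extracted terms collapse precisely to the combinations shown on the right-hand sides of (a), (b), (c), and verifying in cases (b), (c) that signs work out (using $\Re\{F(z)\} = \Re\{F(\bar z)\}$, valid since $f$ is real-valued). Once the arithmetic of the shifts is done correctly and \cref{QuantityRelations}(v) is invoked to simplify $\Psi$, rescaling $\epsilon$ completes the proof.
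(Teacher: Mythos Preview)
Your proposal is correct and follows exactly the paper's approach: the paper's own proof simply says to use \eqref{TrigIdentity} with $(\chi,\rho)=(\chi_1,\rho_1)$, $(\chi_*,\rho_*)=(\chi_2,\rho_2)$ and $\sigma=\beta_1$, then apply \cref{ExplicitNP-Apply} to each $\cK$-term extracting $\rho_1$ or $\rho_2$ as appropriate. Your detailed term-by-term extraction and the verification $\Psi\leq 4\cL$ via \cref{QuantityRelations}(v) are precisely what the paper leaves implicit; the only tiny slip is calling $\rho_2$ ``complex'' in case (c), which is not assumed and not needed for the argument to go through.
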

\begin{proof} The arguments involved are very similar to \cref{SZ-L1LpIdentity} and \cref{SZ-L1L2Identity} so we omit most of the details. Briefly, use \eqref{TrigIdentity} by setting $(\chi, \rho) = (\chi_1, \rho_1)$ and $(\chi_*, \rho_*)  = (\chi_2, \rho_2)$ and $\sigma = \beta_1$, which gives
\begin{equation}
\begin{aligned}
0 & \leq \cK(\beta_1, \chi_0)  + \cK(\beta_1+i\gamma_1, \chi_1) + \cK(\beta_1 + i\gamma_2, \chi_2)  \\
& \qquad + \tfrac{1}{2} \cK(\beta_1+ i(\gamma_1+\gamma_2), \chi_1\chi_2) + \tfrac{1}{2}\cK(\beta_1+ i(\gamma_1-\gamma_2), \chi_1\bar{\chi_2}).
\end{aligned}
\label{TI-CC-L2}
\end{equation}
Apply \cref{ExplicitNP-Apply} to each $\cK(\ast, \ast)$ term, extracting zeros $\rho_1$ or $\rho_2$ whenever possible, depending on the cases. Recall $\chi_1\chi_2$ and $\chi_1\bar{\chi_2}$ are always non-principal by construction (see \cref{sec:ZeroFreeGap_and_BadZeros}).  
\end{proof}
\subsubsection{$\chi_1$ and $\chi_2$ non-principal}

A numerical version of \cref{CC-L1L2Identity} suffices here. 

\begin{lem} \label{CC-L1L2small_Chi1Chi2NonPrincipal} Assume $\chi_1$ or $\rho_1$ is complex with $\chi_1, \chi_2$ non-principal. Let $\epsilon > 0$ and for $b > 0$, assume $0 < \lambda_1 \leq b$. Suppose, for some $\tilde{\lambda}_b > 0$, we have 
\[
\begin{array}{ll}
 F(-b) - F(0)- F(\tilde{\lambda}_b-b) +   4 \phi f(0) \leq 0 & 
\end{array}
\]
Then $\lambda_2 \geq \tilde{\lambda}_b - \epsilon$ provided $\cL$ is sufficiently large depending on $\epsilon$ and $f$.
\end{lem}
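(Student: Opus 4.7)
The plan is to follow the same pattern as the proof of \cref{SZ-L1Lpsmall} (and its analog \cref{SZ-L1L2small}), substituting the case (a) identity of \cref{CC-L1L2Identity} at the start. Namely, since we are in the situation where $\chi_1$ or $\rho_1$ is complex and both $\chi_1, \chi_2$ are non-principal, part (a) of \cref{CC-L1L2Identity} directly gives
\[
0 \leq F(-\lambda_1) - F(0) - F(\lambda_2 - \lambda_1) + 4\phi f(0) + \epsilon
\]
for $\cL$ sufficiently large depending on $\epsilon$ and $f$. This is the starting inequality for the remainder of the argument.

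Next I would use the monotonicity observation exploited in \cref{SZ-L1Lpsmall}: by Conditions 1 and 2, writing $F$ as the Laplace transform of $f$ gives
\[
F(-x) - F(y - x) = \int_0^\infty f(t)\, e^{xt}\bigl(1 - e^{-yt}\bigr)\, dt,
\]
which is non-negative and increasing in $x$ (for fixed $y \geq 0$) and also increasing in $y$ (for fixed $x \geq 0$). Applying the first of these with $x = \lambda_1 \leq b$ and $y = \lambda_2$ upgrades the inequality above to
\[
0 \leq F(-b) - F(0) - F(\lambda_2 - b) + 4\phi f(0) + \epsilon.
\]

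Finally, I would combine this with the hypothesis $F(-b) - F(0) - F(\tilde\lambda_b - b) + 4\phi f(0) \leq 0$ and the fact that $\lambda_2 \mapsto F(-b) - F(0) - F(\lambda_2 - b) + 4\phi f(0)$ is (strictly) increasing in $\lambda_2$. This implies that the displayed inequality can only hold if $\lambda_2 \geq \tilde\lambda_b$ up to an $O(\epsilon)$ adjustment; after choosing a new $\epsilon$, we obtain $\lambda_2 \geq \tilde\lambda_b - \epsilon$, as desired. The argument is essentially identical in spirit to \cref{SZ-L1Lpsmall,SZ-L1L2small}, and since \cref{CC-L1L2Identity}(a) has already absorbed the case analysis unique to the ``$\chi_1$ or $\rho_1$ complex'' regime, there is no real obstacle here — the only thing to watch is that the $\epsilon$ absorbs the $o(\cL)$ error from invoking \cref{ExplicitNP-Apply} inside the proof of \cref{CC-L1L2Identity}, which is handled by the standard ``rescale $\epsilon$'' argument already used throughout the paper.
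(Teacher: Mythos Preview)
Your proposal is correct and follows exactly the approach the paper takes: the paper's own proof simply says ``Analogous to \cref{SZ-L1L2small} using \cref{CC-L1L2Identity} in place of \cref{SZ-L1L2Identity},'' which is precisely the argument you have written out. Your handling of the monotonicity step and the final $\epsilon$-comparison also matches the details given in the proof of \cref{SZ-L1Lpsmall}.
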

\begin{proof} Analogous to \cref{SZ-L1L2small} using \cref{CC-L1L2Identity} in place of \cref{SZ-L1L2Identity}. Hence, we omit the proof.  
\end{proof}

This produces \cref{Table-L2-CC_Chi1Chi2NonPrincipal} by taking $f = f_{\lambda}$ from \cite[Lemma 7.2]{HBLinnik} with parameter $\lambda = \lambda(b)$.  
\begin{table}
\begin{tabular}{l|c|c} 
 $\lambda_1 \leq$ & $\lambda_2 \geq$ & $\lambda$ \\ 
 \hline 
 .1227 &  .4890 &  .3837 \\ 
 .13 &  .4779 &  .3888 \\ 
 .135 &  .4706 &  .3922 \\ 
 .140 &  .4635 &  .3955 \\ 
 .145 &  .4566 &  .3986 \\ 
 .150 &  .4499 &  .4017 \\ 
 .155 &  .4433 &  .4047 \\ 
 .160 &  .4370 &  .4077 \\ 
 .165 &  .4308 &  .4105 \\ 
 .170 &  .4247 &  .4133 \\ 
 .175 &  .4188 &  .4160 \\ 
 .180 &  .4131 &  .4187 \\ 
 .185 &  .4075 &  .4213 \\ 
 .190 &  .4020 &  .4238 \\ 
 .195 &  .3966 &  .4263 \\ 
 .200 &  .3914 &  .4287 \\ 
 .205 &  .3862 &  .4311 \\ 
  .210 &  .3812 &  .4334 \\ 
   .215 &  .3763 &  .4357 \\ 
 \end{tabular}
 \qquad
 \begin{tabular}{l|c|c} 
 $\lambda_1 \leq$ & $\lambda_2 \geq$ & $\lambda$ \\ 
 \hline 
 .220 &  .3715 &  .4380 \\ 
 .225 &  .3668 &  .4402 \\ 
 .230 &  .3622 &  .4423 \\ 
 .235 &  .3576 &  .4444 \\ 
 .240 &  .3532 &  .4465 \\ 
 .245 &  .3488 &  .4486 \\ 
 .250 &  .3446 &  .4506 \\ 
 .255 &  .3404 &  .4526 \\ 
 .260 &  .3363 &  .4545 \\ 
 .265 &  .3322 &  .4564 \\ 
 .270 &  .3283 &  .4583 \\ 
 .275 &  .3244 &  .4602 \\ 
 .280 &  .3205 &  .4620 \\ 
 .285 &  .3168 &  .4638 \\ 
 .290 &  .3131 &  .4656 \\ 
 .295 &  .3094 &  .4673 \\ 
 .300 &  .3059 &  .4690 \\ 
 .3034 &  .3035 &  .4702 \\ 
\end{tabular}
\caption{Bounds for $\lambda_2$ with $\chi_1$ or $\rho_1$ complex and $\chi_1, \chi_2$ non-principal}
\label{Table-L2-CC_Chi1Chi2NonPrincipal}
\end{table}

\subsubsection{$\chi_1$ principal or $\chi_2$ is principal}

When $\chi_2$ is principal and $\rho_2$ is real,  a numerical version of \cref{CC-L1L2Identity} suffices.

\begin{lem} \label{CC-L1L2small} Assume $\chi_1$ or $\rho_1$ is complex. Further assume $\chi_2$ is principal and $\rho_2$ is real. Let $\epsilon > 0$ and for $b > 0$, assume $0 < \lambda_1 \leq b$. Suppose, for some $\tilde{\lambda}_b > 0$, we have 
\[
\begin{array}{ll}
F(-b) - F(0)- F(\tilde{\lambda}_b-b) +   2\phi f(0) \leq 0 & 
\end{array}
\]
Then $\lambda_2 \geq \tilde{\lambda}_b - \epsilon$ provided $\cL$ is sufficiently large depending on $\epsilon$ and $f$.
\end{lem}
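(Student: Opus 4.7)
The plan is to specialize part (c) of \cref{CC-L1L2Identity} using that $\rho_2$ is real (so $\mu_2 = 0$), and then exchange the inequality's dependence on $\lambda_1$ for one depending on its uniform upper bound $b$. The argument closely mirrors that of \cref{SZ-L1L2small} and \cref{CC-L1L2small_Chi1Chi2NonPrincipal}.

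First, I note that under the hypotheses of the lemma the character $\chi_1$ is automatically non-principal: by the construction in \cref{sec:ZeroFreeGap_and_BadZeros} the characters $\chi_1, \chi_2$ are distinct, so $\chi_2 = \chi_0$ forces $\chi_1 \neq \chi_0$. Hence part (c) of \cref{CC-L1L2Identity} applies. Substituting $\mu_2 = 0$, the $\Re\{\cdot\}$ term becomes a verbatim copy of the three explicit terms already present, yielding
\[
0 \leq 2F(-\lambda_1) - 2F(0) - 2F(\lambda_2 - \lambda_1) + 4\phi f(0) + \epsilon.
\]
Dividing by $2$ and renaming $\epsilon$ gives
\[
0 \leq F(-\lambda_1) - F(0) - F(\lambda_2 - \lambda_1) + 2\phi f(0) + \epsilon,
\]
matching the shape of the hypothesis.

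Next, I would pass from $\lambda_1$ to $b$ by monotonicity. The identity $F(-\lambda) - F(c - \lambda) = \int_0^\infty f(t) e^{\lambda t}(1 - e^{-ct})\, dt$, combined with $f \geq 0$ and $c = \lambda_2 > 0$, shows the map $\lambda \mapsto F(-\lambda) - F(\lambda_2 - \lambda)$ is non-decreasing. Since $\lambda_1 \leq b$, this yields
\[
F(-\lambda_1) - F(\lambda_2 - \lambda_1) \leq F(-b) - F(\lambda_2 - b),
\]
and hence
\[
0 \leq F(-b) - F(0) - F(\lambda_2 - b) + 2\phi f(0) + \epsilon.
\]

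Finally, since $F$ is decreasing on $\R$ by Condition 2, the function $h(\lambda) := F(-b) - F(0) - F(\lambda - b) + 2\phi f(0)$ is non-decreasing in $\lambda$; by hypothesis $h(\tilde{\lambda}_b) \leq 0$, while the previous display says $h(\lambda_2) \geq -\epsilon$. The standard rescaling argument given in \cite[p.\,773]{KadNg} then forces $\lambda_2 \geq \tilde{\lambda}_b - \epsilon$ upon choosing a new $\epsilon$. No step presents a serious obstacle; the only real observation is that the hypothesis $\mu_2 = 0$ exactly doubles the main inequality in \cref{CC-L1L2Identity}(c), so the coefficient $4\phi$ halves to the $2\phi$ appearing in the numerical condition of the lemma.
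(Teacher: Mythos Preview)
Your proposal is correct and follows exactly the approach the paper intends: specialize \cref{CC-L1L2Identity}(c) with $\mu_2 = 0$ (which doubles the main terms, reducing $4\phi$ to $2\phi$), then argue by monotonicity as in \cref{SZ-L1L2small} and \cref{CC-L1L2small_Chi1Chi2NonPrincipal}. The paper omits the proof entirely, relying on the established pattern, and your write-up fills in precisely those details.
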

This produces \cref{Table-L2-CC_Chi2Principal_Rho2Real} by taking $f$ from \cite[Lemma 7.2]{HBLinnik} with parameter $\lambda = \lambda(b)$. Now, when $\chi_1$ is principal or when $\chi_2$ is principal and $\rho_2$ is complex, we employ the ``polynomial method".   

\begin{table}
\begin{tabular}{l|c|c} 
 $\lambda_1 \leq$ & $\lambda_2 \geq$ & $\lambda$ \\ 
 \hline 
 .1227 &  1.221 &  .6530 \\ 
 .13 &  1.203 &  .6620 \\ 
 .15 &  1.155 &  .6846 \\ 
 .17 &  1.112 &  .7049 \\ 
 .19 &  1.073 &  .7234 \\ 
 .21 &  1.037 &  .7403 \\ 
 .23 &  1.003 &  .7560 \\ 
 .25 &  .9710 &  .7707 \\ 
 .27 &  .9412 &  .7844 \\ 
 .29 &  .9132 &  .7973 \\ 
 .31 &  .8867 &  .8095 \\ 
 .33 &  .8615 &  .8210 \\ 
 .35 &  .8377 &  .8320 \\ 
 \end{tabular}
 \qquad
 \begin{tabular}{l|c|c} 
 $\lambda_1 \leq$ & $\lambda_2 \geq$ & $\lambda$ \\ 
 \hline 
 .37 &  .8149 &  .8425 \\ 
 .39 &  .7932 &  .8526 \\ 
 .41 &  .7725 &  .8622 \\ 
 .43 &  .7526 &  .8714 \\ 
 .45 &  .7335 &  .8803 \\ 
 .47 &  .7152 &  .8889 \\ 
 .49 &  .6977 &  .8971 \\ 
 .51 &  .6807 &  .9051 \\ 
 .53 &  .6644 &  .9128 \\ 
 .55 &  .6487 &  .9203 \\ 
 .57 &  .6336 &  .9276 \\ 
 .59 &  .6189 &  .9346 \\ 
 .6068 &  .6070 &  .9404 \\ 
\end{tabular}

\caption{Bounds for $\lambda_2$ with $\chi_1$ or $\rho_1$ complex and $\chi_2$ principal and $\rho_2$ real}
\label{Table-L2-CC_Chi2Principal_Rho2Real}
\end{table}

\begin{lem} \label{CC-L1L2Identity-0} Suppose $\chi_j$ is principal and $\rho_j$ is complex, and let $\chi_k \neq \chi_j$. Let  $\epsilon, \lambda, J  > 0$.  If $\cL$ is sufficiently large depending on $\epsilon, \lambda$ and $J$, then 
\begin{align*}
0 & \leq (J^2 + \tfrac{1}{2}) \big\{ P_4(1) - P_4\big( \frac{\lambda}{\lambda+\lambda_k} \big) \big\} - 2J P_4\big( \frac{\lambda}{\lambda+\lambda_j} \big) +  2\phi (J+1)^2 \lambda    + \epsilon
\end{align*}
provided 
\begin{equation}
\frac{J_0}{(\lambda+\lambda_j)^4} + \frac{1}{(\lambda+\lambda_k)^4} > \frac{1}{ \lambda^4} \qquad \text{with $J_0 = \min\{ J + \tfrac{3}{4J}, 4J \}$.}
 \label{CC-L1L2-Principal-Condition}
\end{equation}
\end{lem}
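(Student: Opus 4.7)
The plan is to mirror the proof of \cref{CC-L1LpIdentity}, but with the roles of the two characters interchanged: since $\chi_j$ is principal, it plays the ``squared'' role in the trigonometric identity while the necessarily non-principal $\chi_k$ plays the ``linear'' role. First I would start from the identity
\[
0 \;\leq\; \chi_0(\kn)\bigl(1+\Re\{\chi_k(\kn)(\N\kn)^{i\gamma_k}\}\bigr)\bigl(J+\Re\{(\N\kn)^{i\gamma_j}\}\bigr)^2,
\]
which is valid precisely because $\chi_j = \chi_0$. Expanding via $\Re\{z\}^2 = \tfrac{1}{2}(1+\Re\{z^2\})$ for $|z|=1$, multiplying by the standard factor $\Lambda_K(\kn)(\N\kn)^{-\sigma}\sum_{r=1}^{4} a_r((\sigma-1)\log \N\kn)^{r-1}/(r-1)!$ with $\sigma = 1 + \lambda/\cL$, and summing over $\kn \subseteq \cO_K$ yields
\begin{align*}
0 \;\leq\; & (J^2+\tfrac{1}{2})\bigl[\cP(\sigma,\chi_0) + \cP(\sigma+i\gamma_k,\chi_k)\bigr] + 2J\,\cP(\sigma+i\gamma_j,\chi_0) \\
& + J\bigl[\cP(\sigma+i(\gamma_j+\gamma_k),\chi_k) + \cP(\sigma+i(\gamma_k-\gamma_j),\chi_k)\bigr] + \tfrac{1}{2}\cP(\sigma+2i\gamma_j,\chi_0) \\
& + \tfrac{1}{4}\bigl[\cP(\sigma+i(2\gamma_j+\gamma_k),\chi_k) + \cP(\sigma+i(\gamma_k-2\gamma_j),\chi_k)\bigr],
\end{align*}
where $\cP(s,\chi) = \cP(s,\chi;P_4)$.

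Next I would apply \cref{PolyEI-Apply} to each $\cP$-term. From the three $\chi_0$-terms I extract the pole at $s=1$ together with the conjugate pair $\{\rho_j,\bar\rho_j\}$ of zeros of $\zeta_K$ (both present because $\chi_j$ is principal and $\rho_j$ is complex); from the five $\chi_k$-terms I extract the single zero $\rho_k$. Using $P_4(\bar X) = \overline{P_4(X)}$ to pair up conjugate contributions, the accumulated linear-in-$\phi$ error totals exactly $(J+1)^2 \phi\bigl[\cL_0/\cL + \cL_{\chi_k}/\cL\bigr]$, which is bounded by $2\phi(J+1)^2$ via part (v) of \cref{QuantityRelations}. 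Collecting the shift-$0$ pieces, which assemble into the main terms $(J^2+\tfrac{1}{2})P_4(1)$, $-(J^2+\tfrac{1}{2})P_4(\lambda/(\lambda+\lambda_k))$, and $-2J\,P_4(\lambda/(\lambda+\lambda_j))$, leaves a remainder $-\mathcal{A}-\mathcal{B}$, where $\mathcal{A}$ collects the real-part contributions at imaginary shifts $\pm i\mu_j$ and $\mathcal{B}$ those at shifts $\pm 2i\mu_j$.

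The main obstacle, just as in \cref{CC-L1LpIdentity}, is to verify $\mathcal{A}, \mathcal{B} \geq 0$ under the single hypothesis \eqref{CC-L1L2-Principal-Condition}, so that the remainder may be discarded. Both $\mathcal{A}$ and $\mathcal{B}$ have the shape treated by \cref{PmLemma} with $a = \lambda$ and $\{b,c\} = \{\lambda+\lambda_j,\lambda+\lambda_k\}$, with the extracted $\chi_0$-pole supplying the ``$-A$'' term and the extracted zeros supplying the ``$B$'' and ``$C$'' terms. A direct accounting of coefficients shows that $\mathcal{A} \geq 0$ reduces, after dividing by the pole coefficient $2J$, to
\[
\frac{J+\tfrac{3}{4J}}{(\lambda+\lambda_j)^4} + \frac{1}{(\lambda+\lambda_k)^4} \;>\; \frac{1}{\lambda^4},
\]
while $\mathcal{B} \geq 0$ reduces, after dividing by $\tfrac{1}{2}$, to $4J/(\lambda+\lambda_j)^4 + 1/(\lambda+\lambda_k)^4 > 1/\lambda^4$ (since at shift $\pm 2i\mu_j$ the $\chi_0$-side carries weight $2J$ while the $\chi_k$-side carries only $\tfrac{1}{2}$). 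Taking $J_0 = \min\{J+\tfrac{3}{4J},\,4J\}$ is therefore exactly what is needed to guarantee both inequalities simultaneously, and a final rescaling of $\epsilon$ completes the proof.
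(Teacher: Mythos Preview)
Your proposal is correct and follows essentially the same approach as the paper: the same trigonometric identity (up to a harmless sign convention in the exponents, which only trades $\bar{\chi_k}$-terms for $\chi_k$-terms at conjugate shifts), the same zero extractions, the same bound $\psi \leq 2\phi(J+1)^2$ via \cref{QuantityRelations}, and the same reduction of the remainder to two \cref{PmLemma} applications yielding exactly the constraints $J+\tfrac{3}{4J}$ and $4J$.
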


\begin{proof} Write $\cP(s,\chi) = \cP(s, \chi; P_4)$. We begin with the inequality
\begin{align*}
0 & \leq \chi_0(\kn) \big(1+\Re\{ \chi_k(\kn) (\N\kn)^{-i\gamma_k}\} \big) \big( J + \Re\{ (\N\kn)^{-i\gamma_j} \} \big)^2 
\end{align*}
It follows in the usual fashion that
\begin{equation}
\begin{aligned}
0 & \leq(J^2+\tfrac{1}{2}) \big\{ \cP(\sigma, \chi_0)  + \cP(\sigma+i\gamma_k, \chi_k) \big\} + \\
& \qquad + J \cP(\sigma + i(\gamma_j+\gamma_k), \chi_k) + 2J \cP(\sigma+i\gamma_j, \chi_0) + J \cP(\sigma+i(\gamma_j-\gamma_k), \bar{\chi_k}) \\
& \qquad + \tfrac{1}{4} \cP(\sigma + i(2\gamma_j+\gamma_k), \chi_k) + \tfrac{1}{2} \cP(\sigma+2i\gamma_j, \chi_0) + \tfrac{1}{4} \cP(\sigma+i(2\gamma_j-\gamma_k), \bar{\chi_k}) \\
\end{aligned} 
\label{TrigIdentity-CC-L1L2}
\end{equation}
where $\sigma = 1 + \tfrac{\lambda}{\cL}$. Next,apply \cref{PolyEI-Apply} to each $\cP(\ast, \ast)$ term in \eqref{TrigIdentity-CC-L1L2} extracting $\{\rho_j, \bar{\rho_j}\}$  from $\chi_0$-terms, $\rho_k$ from the $\chi_k$-terms, and $\bar{\rho_k}$ from $\bar{\chi_k}$-terms. When necessary, we also use that $P_4(\bar{X}) = \bar{P_4(X)}$. Then  overall
\begin{equation}
\begin{aligned}
0 \leq & (J^2+\tfrac{1}{2}) P_4(1) -  (J^2+\tfrac{1}{2}) P_4\big( \frac{\lambda}{\lambda+\lambda_k} \big) - 2J P_4\big( \frac{\lambda}{\lambda+\lambda_j}\big) +  \psi  \lambda + \epsilon - A - B
\end{aligned}
\label{CC-L1L2-Principal}
\end{equation}
where 
\begin{align*}
A & = \Re\Big\{  (2J^2+\tfrac{3}{2}) P_4\big( \frac{\lambda}{\lambda+\lambda_j+i\mu_j} \big) + 2J \cdot P_4\big( \frac{\lambda}{\lambda+\lambda_k + i\mu_j}\big) - 2J \cdot P_4\big( \frac{\lambda}{\lambda+i\mu_j}\big)\Big\},  \\
B & = \Re\Big\{  2J \cdot P_4\big( \frac{\lambda}{\lambda+\lambda_j+2i\mu_j} \big) + \tfrac{1}{2} \cdot P_4\big( \frac{\lambda}{\lambda+\lambda_k + 2i\mu_j}\big) - \tfrac{1}{2} \cdot P_4\big( \frac{\lambda}{\lambda+2i\mu_j}\big)\Big\},
\end{align*}
and $\psi = (J^2 + 2J+1) \phi \tfrac{\cL_{\chi_k}}{\cL} + (J^2 + 2J + 1) \phi \tfrac{\cL_0}{\cL}.$ Trivially $J^2 + 2J+1 \leq 3 \cdot(J^2 + 2J+1)$ and so by \cref{QuantityRelations}, we may  more simply take $\psi =  2(J+1)^2 \phi$ in \eqref{CC-L1L2-Principal}. From \cref{PmLemma} and \eqref{CC-L1L2-Principal-Condition}, it follows $A,B \geq 0$.
\end{proof}

We record a numerical version of \cref{CC-L1L2Identity-0} without proof. 
\begin{cor} \label{CC-L1L2-Principmedium-Numerical} Suppose $\chi_1$ or $\rho_1$ is complex. For $b > 0$, assume $0 < \lambda_1 \leq b$ and let $\lambda, J > 0$. Denote $J_0 := \min\{ J + \tfrac{3}{4J}, 4J \}$. Assume one of the following holds: 

\begin{enumerate}[(a)]
	\item  $\chi_1$ is principal, $\rho_1$ is complex. Further there exists $\tilde{\lambda_b}\in [0,\infty)$ satisfying
\begin{align*}
0 & = (J^2 + \tfrac{1}{2}) \big( P_4(1) - P_4\big( \frac{\lambda}{\lambda+ \tilde{\lambda_b} } \big) \big) - 2J \cdot P_4\big( \frac{\lambda}{\lambda+b} \big)  + 2\phi (J+1)^2 \lambda + \epsilon.
\end{align*}
and
\[
\frac{J_0}{(\lambda+b)^4} + \frac{1}{(\lambda+\tilde{\lambda_b})^4} > \frac{1}{ \lambda^4}.
\]
	\item $\chi_2$ is principal, $\rho_2$ is complex. Further there exists $\tilde{\lambda_b}\in [0,\infty)$ satisfying
	\begin{align*}
0 & = (J^2 + \tfrac{1}{2}) \big( P_4(1) - P_4\big( \frac{\lambda}{\lambda+ b} \big) \big) - 2J \cdot P_4\big( \frac{\lambda}{\lambda+\tilde{\lambda_b}} \big)  + 2\phi (J+1)^2 \lambda + \epsilon
\end{align*}
and
\[
 \frac{1}{(\lambda+b)^4} + \frac{J_0}{(\lambda+\tilde{\lambda_b})^4} > \frac{1}{ \lambda^4}.
\]
\end{enumerate}
Then, in either case, $\lambda_2 \geq \tilde{\lambda_b} - \epsilon$ for $\cL$ sufficiently large depending on $\epsilon, b, \lambda$ and $J$. 
\end{cor}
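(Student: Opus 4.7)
The plan is to mirror the proof of Corollary \ref{SZ-L1Lpmedium-Numerical}, which serves as the template for converting a ``polynomial explicit inequality'' lemma into a numerical lower bound on a zero parameter. In case (a), since $\chi_1$ is principal with $\rho_1$ complex, I will invoke Lemma \ref{CC-L1L2Identity-0} with $j=1$ and $k=2$, yielding
\[
0 \leq (J^2 + \tfrac{1}{2}) \bigl( P_4(1) - P_4\bigl( \tfrac{\lambda}{\lambda+\lambda_2}\bigr) \bigr) - 2J \cdot P_4\bigl( \tfrac{\lambda}{\lambda+\lambda_1}\bigr) + 2\phi(J+1)^2 \lambda + \epsilon,
\]
provided the condition $J_0/(\lambda+\lambda_1)^4 + 1/(\lambda+\lambda_2)^4 > 1/\lambda^4$ in \eqref{CC-L1L2-Principal-Condition} holds. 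In case (b), applying the same lemma with $j=2, k=1$ produces the analogous inequality with the roles of $\lambda_1$ and $\lambda_2$ interchanged in the $P_4$ terms.

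The crucial observation is that since $P_4$ has non-negative coefficients with $P_4(0)=0$, the function $x \mapsto P_4(\lambda/(\lambda+x))$ is strictly decreasing on $[0,\infty)$. Using $\lambda_1 \leq b$, replacing $\lambda_1$ with $b$ in the Lemma's inequality can only enlarge the right-hand side, since the term involving $\lambda_1$ carries a negative coefficient. After this substitution, the resulting inequality involves only $\lambda_2$ as a free variable, and its right-hand side is an increasing function of $\lambda_2$. The hypothesized equation defining $\tilde{\lambda_b}$ is precisely this same expression with $\lambda_2$ replaced by $\tilde{\lambda_b}$ and equality in place of inequality. Comparing the two, monotonicity in $\lambda_2$ forces $\lambda_2 \geq \tilde{\lambda_b}$, and hence $\lambda_2 \geq \tilde{\lambda_b} - \epsilon$. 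The argument for case (b) follows the identical template, swapping the roles of $\lambda_1$ and $\lambda_2$ throughout.

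The main obstacle I anticipate is verifying that the strict inequality \eqref{CC-L1L2-Principal-Condition} actually holds so that Lemma \ref{CC-L1L2Identity-0} can legitimately be invoked. I plan to handle this by contradiction: assume $\lambda_2 < \tilde{\lambda_b}$. Then $1/(\lambda+\lambda_2)^4 > 1/(\lambda+\tilde{\lambda_b})^4$, and combined with $\lambda_1 \leq b$ (so $J_0/(\lambda+\lambda_1)^4 \geq J_0/(\lambda+b)^4$ in case (a), and an analogous inequality in case (b)), the hypothesized strict inequality of the Corollary transfers to \eqref{CC-L1L2-Principal-Condition} for the actual parameters $\lambda_1, \lambda_2$. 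The Lemma then applies, and the chain of inequalities above yields the desired contradiction. A final rescaling of $\epsilon$ absorbs the slight gap between $\lambda_2 \geq \tilde{\lambda_b}$ obtained directly and the weaker conclusion $\lambda_2 \geq \tilde{\lambda_b} - \epsilon$ stated in the Corollary.
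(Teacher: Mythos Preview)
Your proposal is correct and follows exactly the template the paper uses for the analogous \cref{SZ-L1Lpmedium-Numerical}: invoke the underlying polynomial explicit inequality lemma (here \cref{CC-L1L2Identity-0} with $(j,k)=(1,2)$ or $(2,1)$), use that $x\mapsto P_4(\lambda/(\lambda+x))$ is decreasing to bound $\lambda_1$ by $b$, and compare monotonically with the defining equation for $\tilde{\lambda_b}$. In fact the paper records this corollary explicitly \emph{without proof}, so there is no detailed argument to compare against beyond the parallel with \cref{SZ-L1Lpmedium-Numerical}; your treatment of the side condition \eqref{CC-L1L2-Principal-Condition} via the contradiction hypothesis $\lambda_2<\tilde{\lambda_b}$ is a point the paper's terse proofs of these numerical corollaries leave implicit.
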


This produces \cref{Table-L2-CC_Chi1Principal,Table-L2-CC_Chi2Principal_Rho2Complex}.

\begin{table}
\begin{tabular}{l|c|c|c} 
 $\lambda_1 \leq$ & $\lambda_2 \geq$ & $\lambda$ & $J$ \\ 
 \hline 
 .0875 &  1.017 &  .9321 &  .7627 \\ 
 .090 &  .9892 &  .9474 &  .7640 \\ 
 .095 &  .9385 &  .9760 &  .7666 \\ 
 .100 &  .8937 &  1.002 &  .7690 \\ 
 .105 &  .8537 &  1.026 &  .7713 \\ 
 .110 &  .8175 &  1.048 &  .7735 \\ 
 .115 &  .7846 &  1.069 &  .7757 \\ 
 .120 &  .7544 &  1.087 &  .7777 \\ 
 .125 &  .7266 &  1.104 &  .7797 \\ 
 .130 &  .7007 &  1.120 &  .7817 \\ 
 .135 &  .6766 &  1.135 &  .7836 \\ 
 .140 &  .6540 &  1.149 &  .7854 \\ 
 .145 &  .6328 &  1.162 &  .7872 \\ 
 .150 &  .6128 &  1.174 &  .7889 \\ 
 .155 &  .5939 &  1.185 &  .7906 \\ 
 .160 &  .5759 &  1.195 &  .7923 \\ 
 .165 &  .5589 &  1.204 &  .7939 \\ 
 .170 &  .5427 &  1.213 &  .7955 \\ 
 .175 &  .5272 &  1.221 &  .7971 \\ 
 .180 &  .5124 &  1.229 &  .7986 \\ 
 .185 &  .4982 &  1.236 &  .8001 \\ 
  .190 &  .4846 &  1.242 &  .8016 \\ 
 \end{tabular}
 \qquad
 \begin{tabular}{l|c|c|c} 
 $\lambda_1 \leq$ & $\lambda_2 \geq$ & $\lambda$ & $J$ \\ 
 \hline 
 .195 &  .4715 &  1.249 &  .8030 \\ 
 .200 &  .4590 &  1.254 &  .8045 \\ 
 .205 &  .4469 &  1.259 &  .8059 \\ 
 .210 &  .4353 &  1.264 &  .8073 \\ 
 .215 &  .4241 &  1.269 &  .8087 \\ 
 .220 &  .4132 &  1.273 &  .8100 \\ 
 .225 &  .4027 &  1.276 &  .8114 \\ 
 .230 &  .3926 &  1.280 &  .8127 \\ 
 .235 &  .3828 &  1.283 &  .8140 \\ 
 .240 &  .3733 &  1.285 &  .8153 \\ 
 .245 &  .3641 &  1.288 &  .8166 \\ 
 .250 &  .3552 &  1.290 &  .8179 \\ 
 .255 &  .3465 &  1.292 &  .8191 \\ 
 .260 &  .3381 &  1.294 &  .8204 \\ 
 .265 &  .3300 &  1.295 &  .8216 \\ 
 .270 &  .3220 &  1.296 &  .8229 \\ 
 .275 &  .3143 &  1.297 &  .8241 \\ 
 .280 &  .3068 &  1.298 &  .8253 \\ 
 .285 &  .2995 &  1.299 &  .8265 \\ 
 .290 &  .2924 &  1.299 &  .8277 \\ 
 .2909 &  .2911 &  1.299 &  .8279 \\ 
\end{tabular}
\caption{Bounds for $\lambda_2$ with $\chi_1$ principal and $\rho_1$ complex}
\label{Table-L2-CC_Chi1Principal}
\end{table}

\begin{table}
\begin{tabular}{l|c|c|c} 
 $\lambda_1 \leq$ & $\lambda_2 \geq$ & $\lambda$ & $J$ \\ 
 \hline 
 .1227 &  .4691 &  1.217 &  .8677 \\ 
 .125 &  .4654 &  1.220 &  .8669 \\ 
 .130 &  .4577 &  1.226 &  .8652 \\ 
 .135 &  .4502 &  1.232 &  .8636 \\ 
 .140 &  .4429 &  1.238 &  .8620 \\ 
 .145 &  .4359 &  1.243 &  .8605 \\ 
 .150 &  .4290 &  1.248 &  .8590 \\ 
 .155 &  .4223 &  1.252 &  .8576 \\ 
 .160 &  .4159 &  1.257 &  .8562 \\ 
 .165 &  .4096 &  1.261 &  .8548 \\ 
 .170 &  .4034 &  1.265 &  .8534 \\ 
 .175 &  .3974 &  1.268 &  .8521 \\ 
 .180 &  .3916 &  1.271 &  .8509 \\ 
 .185 &  .3859 &  1.274 &  .8496 \\ 
 .190 &  .3804 &  1.277 &  .8484 \\ 
 .195 &  .3749 &  1.280 &  .8472 \\ 
 .200 &  .3696 &  1.282 &  .8460 \\ 
 .205 &  .3645 &  1.284 &  .8449 \\ 
 .210 &  .3594 &  1.286 &  .8437 \\ 
 \end{tabular}
 \qquad
 \begin{tabular}{l|c|c|c} 
 $\lambda_1 \leq$ & $\lambda_2 \geq$ & $\lambda$ & $J$ \\ 
 \hline 
 .215 &  .3545 &  1.288 &  .8426 \\ 
 .220 &  .3497 &  1.290 &  .8415 \\ 
 .225 &  .3449 &  1.291 &  .8405 \\ 
 .230 &  .3403 &  1.293 &  .8394 \\ 
 .235 &  .3358 &  1.294 &  .8384 \\ 
 .240 &  .3314 &  1.295 &  .8374 \\ 
 .245 &  .3270 &  1.296 &  .8364 \\ 
 .250 &  .3228 &  1.297 &  .8354 \\ 
 .255 &  .3186 &  1.297 &  .8344 \\ 
 .260 &  .3145 &  1.298 &  .8335 \\ 
 .265 &  .3106 &  1.298 &  .8326 \\ 
 .270 &  .3066 &  1.299 &  .8317 \\ 
 .275 &  .3028 &  1.299 &  .8308 \\ 
 .280 &  .2990 &  1.299 &  .8299 \\ 
 .285 &  .2953 &  1.299 &  .8290 \\ 
 .290 &  .2917 &  1.299 &  .8281 \\ 
 .2909 &  .2911 &  1.299 &  .8280 \\ 
\end{tabular}
\caption{Bounds for $\lambda_2$ with  $\chi_1$ or $\rho_1$ complex and  $\chi_2$ principal and $\rho_2$ complex}
\label{Table-L2-CC_Chi2Principal_Rho2Complex}
\end{table}

\subsubsection{Summary of bounds} We collect the results in the subsection into a single proposition for the reader's convenience. 

\begin{prop} \label{Bounds-L2-CC}
Assume $\chi_1$ or $\rho_1$ is complex. Provided $\cL$ is sufficiently large, the following holds:

\begin{enumerate}[(a)]
	\item If $\chi_1$ is non-principal, then  $\lambda_2 \geq 0.2909$ and the bounds for $\lambda_2$ in \cref{Table-L2-CC_Chi2Principal_Rho2Complex} apply. 
	\item If $\chi_1$ is principal, then $\lambda_2 \geq 0.2909$ and the bounds for $\lambda_2$ in \cref{Table-L2-CC_Chi1Principal} apply. 
\end{enumerate}
\end{prop}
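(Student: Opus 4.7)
The proof will follow the template of Proposition \ref{Bounds-Lp-CC}: in each case, enumerate the bounds on $\lambda_2$ established earlier in this section and retain the weakest one. Since all the analytic work has been done in the preceding lemmas, the proof amounts to a careful bookkeeping exercise together with a numerical comparison across the tabulated values.

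For part (a), where $\chi_1$ is non-principal, the relevant estimate depends on $\chi_2$. If both $\chi_1$ and $\chi_2$ are non-principal, Lemma \ref{CC-L1L2small_Chi1Chi2NonPrincipal} (with the weight $f_{\lambda}$ from \cite[Lemma 7.2]{HBLinnik}) supplies Table \ref{Table-L2-CC_Chi1Chi2NonPrincipal}. If $\chi_2$ is principal with $\rho_2$ real, Lemma \ref{CC-L1L2small} yields Table \ref{Table-L2-CC_Chi2Principal_Rho2Real}. If $\chi_2$ is principal with $\rho_2$ complex, Corollary \ref{CC-L1L2-Principmedium-Numerical}(b), coming from Lemma \ref{CC-L1L2Identity-0} via the polynomial method with $P_4$, produces Table \ref{Table-L2-CC_Chi2Principal_Rho2Complex}. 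A direct entry-by-entry comparison on the overlapping ranges of $\lambda_1$ shows that Table \ref{Table-L2-CC_Chi2Principal_Rho2Complex} supplies the smallest lower bounds, and hence governs case (a).

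For part (b), $\chi_1$ principal combined with the standing hypothesis forces $\rho_1$ to be complex, and by the construction in Section \ref{sec:ZeroFreeGap_and_BadZeros} one has $\chi_2 \neq \chi_1$ non-principal. Only Corollary \ref{CC-L1L2-Principmedium-Numerical}(a) is directly applicable, giving Table \ref{Table-L2-CC_Chi1Principal}.

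Finally, to secure the uniform inequality $\lambda_2 \geq 0.2909$ in both parts: the tables cover $\lambda_1 \in (0, 0.2909]$ and deliver $\lambda_2 \geq 0.2911$ throughout this regime; for $\lambda_1 > 0.2909$, the ordering $\beta_2 \leq \beta_1$ encoded in the indexing of Section \ref{sec:ZeroFreeGap_and_BadZeros} immediately yields $\lambda_2 \geq \lambda_1 > 0.2909$. The only real task is the numerical verification of which table dominates in case (a); there is no substantive analytic obstacle, and dependence on $\cL$ being sufficiently large is inherited uniformly from the finitely many lemmas cited.
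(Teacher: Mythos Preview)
Your proposal is correct and matches the paper's approach exactly: for (a) you compare \cref{Table-L2-CC_Chi1Chi2NonPrincipal,Table-L2-CC_Chi2Principal_Rho2Real,Table-L2-CC_Chi2Principal_Rho2Complex} and retain the weakest, and for (b) you observe that only \cref{Table-L2-CC_Chi1Principal} is available. Your added justification for the uniform bound $\lambda_2 \geq 0.2909$ (via the table endpoints and the inequality $\lambda_2 \geq \lambda_1$ from the indexing) makes explicit what the paper leaves implicit.
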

\begin{proof} If $\chi_1$ is non-principal then one compares \cref{Table-L2-CC_Chi1Chi2NonPrincipal}, \cref{Table-L2-CC_Chi2Principal_Rho2Real} and \cref{Table-L2-CC_Chi2Principal_Rho2Complex} and finds that the last one gives the weakest bounds. If $\chi_1$ is principal, then the only bounds available come from \cref{Table-L2-CC_Chi1Principal}. 
\end{proof}

\section{Zero-Free Region}
\label{sec:ZeroFreeRegion}
\noindent
\emph{Proof of \cref{MainTheorem-ZFR}}:   If $\chi_1$ and $\rho_1$ are both real, then \cref{MainTheorem-ZFR} is implied by \cref{SZ-L1Lp-Summary,SZ-L1L2-Summary}. 
Thus, it remains to consider when $\chi_1$ or $\rho_1$ is complex, dividing our cases according to the order of $\chi_1$.
\noindent
\addtocontents{toc}{\SkipTocEntry}
\subsection*{$\chi_1$ has order $\geq 5$}
We begin with the inequality 
\begin{equation}
0  \leq \chi_0(\kn) \big(3+10 \cdot\Re\{ \chi_1(\kn) (\N\kn)^{-i\gamma_1}\} \big)^2 \big( 9 + 10 \cdot \Re\{\chi_1(\kn) (\N\kn)^{-i\gamma_1} \} \big)^2
\label{ZFR-TrigIdentity-NonPrincipal}
\end{equation}
which was also used in \cite[Section 9]{HBLinnik}. This will also be roughly optimal for our purposes. We shall use the smoothed explicit inequality with a weight $f$ satisfying Conditions 1 and 2. By the usual arguments, we expand out the above identity, multiply by the appropriate factor and sum over $\kn$. Overall this yields
\begin{equation}
\begin{aligned}
0 & \leq 14379 \cdot \cK(\sigma, \chi_0)  +  24480  \cdot \cK(\sigma+i\gamma_1, \chi_1) +  14900 \cdot \cK(\sigma+2i\gamma_1, \chi_1^2) \\
& \qquad +  6000 \cdot \cK(\sigma + 3i\gamma_1, \chi_1^3) + 1250 \cdot \cK(\sigma + 4i\gamma_1, \chi_1^4) \\
\end{aligned} 
\label{TrigIdentity-ZFR-Smooth}
\end{equation}
where $\cK(s,\chi) = \cK(s,\chi; f)$ and $\sigma = 1-\frac{\lambda^{\star}}{\cL}$ with constant $\lambda^{\star}$ satisfying
\[
\lambda_1 \leq \lambda^{\star} \leq \min\{ \lambda', \lambda_2\}. 
\]
Now, apply \cref{ExplicitNP-Apply} to each term in \eqref{TrigIdentity-ZFR-Smooth} and consider cases depending on $\ord \chi_1$. For $\cK(\sigma+ni\gamma_1,\chi_1^n)$:

\begin{itemize}
	\item  ($\ord \chi_1 \geq 6)$ Extract $\{\rho_1\}$ if $n=1$ only. 
	\item $(\ord \chi_1 = 5)$ Set $\lambda^{\star} = \lambda_1$ and extract $\{\rho_1\}$ if $n=1$ only. 
\end{itemize}
It follows that
\begin{equation} 
0 \leq 14379 \cdot F(-\lambda^{\star})  - 24480 \cdot F\big( \lambda_1 - \lambda^{\star} \big) + B f(0) \phi + \epsilon
\label{ZFR-Weil-Identity}
\end{equation}
where $B = 14379 \cdot \tfrac{\cL_0}{\cL} + 46630 \cdot \tfrac{\cL_{\chi_1}}{\cL}.$ From \cref{QuantityRelations}, $B \leq 57516 + 3493 \tfrac{\cL_{\chi_1}}{\cL} \leq 62174$ so \eqref{ZFR-Weil-Identity} reduces to
\begin{equation} 
0 \leq 14379 \cdot F(-\lambda^{\star})  - 24480 \cdot F\big( \lambda_1 - \lambda^{\star} \big) + 62174 \phi f(0) + \epsilon.
\label{ZFR-Weil-Identity2}
\end{equation}
We now consider cases. 
\begin{itemize}
	\item $(\ord \chi_1 \geq 6)$ Without loss, we may assume $\lambda_1 \leq 0.180$. From \cref{Bounds-Lp-CC,Bounds-L2-CC}, we may take $\lambda^{\star} = 0.3916$. Choose $f$ according to \cite[Lemma 7.1]{HBLinnik} with parameters $\theta = 1$ and $\lambda = 0.243$. Then \eqref{ZFR-Weil-Identity2} implies $\lambda_1 \geq 0.1764$. 

	\item $(\ord \chi_1 = 5)$ Since $\lambda^{\star} = \lambda_1$ in this case, \eqref{ZFR-Weil-Identity2} becomes
	\[
	0 \leq 14379 \cdot F(-\lambda_1)  - 24480 \cdot F(0)+ 62174 f(0) \phi + \epsilon.
	\]
	We choose $f$ according to \cite[Lemma 7.5]{HBLinnik} with $k = 24480/14379$ giving $\theta = 1.1580...$ and
	\[
	\lambda_1^{-1} \cos^2 \theta \leq \frac{1}{4} \cdot \frac{62174}{14379} + \epsilon
	\]
	whence $\lambda_1 \geq 0.1489$. 
\end{itemize}

\begin{rem*} To bound the quantity $B$ for $\ord \chi_1 \geq 5$, the proof above uses that $\vartheta \geq \tfrac{3}{4}$ leading to some minor loss in the lower bound for $\lambda_1$. If one uses $\vartheta = 1$, say, then one can slightly improve this lower bound. 
\end{rem*}

\noindent
\addtocontents{toc}{\SkipTocEntry}
\subsection*{$\chi_1$ has order $2, 3$ or $4$} We use the same identity \eqref{ZFR-TrigIdentity-NonPrincipal} but instead will apply the ``polynomial method" with $P_4(X)$. In the usual way, it follows from \eqref{ZFR-TrigIdentity-NonPrincipal} that
\begin{equation}
\begin{aligned}
0 & \leq 14379 \cdot \cP(\sigma, \chi_0)  +  24480  \cdot \cP(\sigma+i\gamma_1, \chi_1) +  14900 \cdot \cP(\sigma+2i\gamma_1, \chi_1^2) \\
& \qquad +  6000 \cdot \cP(\sigma + 3i\gamma_1, \chi_1^3) + 1250 \cdot \cP(\sigma + 4i\gamma_1, \chi_1^4) \\
\end{aligned} 
\label{TrigIdentity-ZFR-Poly}
\end{equation}
where $\sigma = 1 + \tfrac{\lambda}{\cL}$ with $\lambda > 0$. The above identity will be roughly optimal for our purposes. Now, we apply \cref{PolyEI-Apply} to each term above and consider cases depending on $\ord \chi_1$. For each term  $\cP(\sigma+ni\gamma_1,\chi_1^n)$:

\begin{itemize}
	\item $(\ord \chi_1 = 4)$ Extract $\{\rho_1\}$ if $n=1$ and $\{ \bar{\rho_1}\}$ if $n=3$. 	
	\item $(\ord \chi_1 = 3)$ Extract $\{\rho_1\}$ if $n=1$ or $4$ and $\{ \bar{\rho_1}\}$ if $n=2$.
	\item $(\ord \chi_1 = 2)$ Extract $\{\rho_1, \bar{\rho_1} \}$ if $n=1$ or $3$ since $\rho_1$ is necessarily complex.
\end{itemize}
It then follows that
\begin{equation} 
0 \leq 14379 \cdot P_4(1)  - 24480 \cdot P_4\big( \frac{\lambda}{\lambda+\lambda_1} \big) + A_{\chi_1} + B_{\chi_1}\phi \lambda + \epsilon
\label{ZFR-Poly-Identity}
\end{equation}
with
\[
A_{\chi_1} = \begin{cases}
\vspace*{2mm}
\ds \Re\{ 1250 \cdot P_4\big( \frac{\lambda}{\lambda+4i\mu_1} \big)  - 6000 \cdot P_4\big( \frac{\lambda}{\lambda+\lambda_1 + 4i\mu_1} \big) \}, & \ord \chi_1 = 4, \\
\vspace*{2mm}
\ds  \Re\{ 6000 \cdot P_4\big( \frac{\lambda}{\lambda+3i\mu_1} \big)   - 16150 \cdot P_4\big( \frac{\lambda}{\lambda+\lambda_1 + 3i\mu_1} \big)  \}, & \ord \chi_1 = 3, \\
\vspace*{2mm}
\ds \Re\{ 14900 \cdot P_4\big( \frac{\lambda}{\lambda+2i\mu_1} \big)  - 30480  \cdot P_4\big( \frac{\lambda}{\lambda+\lambda_1 + 2i\mu_1} \big)  \} & \ord \chi_1 = 2. \\ 
\vspace*{2mm}
\ds \qquad + \Re\{ 1250 \cdot P_4\big( \frac{\lambda}{\lambda+4i\mu_1} \big)   - 6000 \cdot P_4\big( \frac{\lambda}{\lambda+\lambda_1 + 4i\mu_1} \big)  \},
\end{cases}
\]
and
\[
B_{\chi_1} = \begin{cases}
\vspace*{2mm}
15629 \cdot \frac{\cL_0}{\cL} + 45380 \cdot \frac{\cL_{\chi_1}}{\cL} & \text{if } \ord \chi_1 = 4, \\
\vspace*{2mm}
20379 \cdot \frac{\cL_0}{\cL} + 40630 \cdot \frac{\cL_{\chi_1}}{\cL} & \text{if } \ord \chi_1 = 3, \\
\vspace*{2mm}
30529 \cdot \frac{\cL_0}{\cL} + 30480 \cdot \frac{\cL_{\chi_1}}{\cL} & \text{if } \ord \chi_1 = 2. 
\end{cases}
\]
By \cref{QuantityRelations}, we observe $B_{\chi_1} \leq 61009$. Furthermore, applying \cref{PmLemma} to  $A_{\chi_1}$, it follows that $A_{\chi_1} \leq 0$ in all cases provided
\begin{equation}
\frac{14900}{\lambda^4} -  \frac{30480}{(\lambda+\lambda_1)^4} \leq 0.
\label{ZFR-Poly-Condition} 
\end{equation}
Thus, \eqref{ZFR-Poly-Identity}  implies
\[
0 \leq 14379 \cdot P_4(1)  - 24480 \cdot P_4\big( \frac{\lambda}{\lambda+\lambda_1} \big) + 61009 \phi \lambda+ \epsilon
\]
provided \eqref{ZFR-Poly-Condition} holds. Taking $\lambda = 0.9421$ yields $\lambda_1 \geq 0.1227$.

\addtocontents{toc}{\SkipTocEntry}
\subsection*{$\chi_1$ is principal} Recall in this case we assume $\rho_1$ is complex.  We begin with a slightly different inequality:
\begin{align*}
0 & \leq \chi_0(\kn) \big(0+10 \cdot\Re\{  (\N\kn)^{-i\gamma_1}\} \big)^2 \big( 7 + 10 \cdot \Re\{ (\N\kn)^{-i\gamma_1} \} \big)^2.
\end{align*}
Again using the ``polynomial method" with $P_4(X)$, it similarly follows that
\begin{equation}
\begin{aligned}
0 & \leq 620 \cdot \cP(\sigma, \chi_0)  +  1050  \cdot \cP(\sigma+i\gamma_1, \chi_0) +  745 \cdot \cP(\sigma+2i\gamma_1, \chi_0) \\
& \qquad +  350 \cdot \cP(\sigma + 3i\gamma_1, \chi_0) + 125 \cdot \cP(\sigma + 4i\gamma_1, \chi_0) \\
\end{aligned} 
\label{TrigIdentity-ZFR-Poly-P}
\end{equation}
where $\sigma = 1 + \tfrac{\lambda}{\cL}$ with $\lambda > 0$. Apply \cref{PolyEI-Apply} to each term above, extracting $\{ \rho_1, \bar{\rho_1}\}$ since $\rho_1$ is necessarily complex.   It then follows that
\begin{equation} 
0 \leq 620 \cdot P_4(1)  - 1050 \cdot P_4\big( \frac{\lambda}{\lambda+\lambda_1} \big) + A_0 + 2890\phi \lambda+ \epsilon
\label{ZFR-Poly-Identity-P}
\end{equation}
since $\cL_0 \leq \cL$, and where
\begin{align*}
A_0 & =   \ds \Re\{ 1050 \cdot  P_4\big( \frac{\lambda}{\lambda+i\mu_1} \big)  - 1365 \cdot P_4\big( \frac{\lambda}{\lambda+\lambda_1 + i\mu_1} \big)  \} \\
 \vspace*{2mm}
&  \ds \qquad + \Re\{ 745 \cdot P_4\big( \frac{\lambda}{\lambda+2i\mu_1} \big)   - 1400 \cdot P_4\big( \frac{\lambda}{\lambda+\lambda_1 + 2i\mu_1} \big)  \} \\
&  \ds \qquad + \Re\{ 350 \cdot P_4\big( \frac{\lambda}{\lambda+3i\mu_1} \big)   - 870 \cdot P_4\big( \frac{\lambda}{\lambda+\lambda_1 + 3i\mu_1} \big)  \}  \\
&  \ds \qquad + \Re\{ 125 \cdot P_4\big( \frac{\lambda}{\lambda+4i\mu_1} \big)  - 350 \cdot P_4\big( \frac{\lambda}{\lambda+\lambda_1 + 4i\mu_1} \big)  \}.
\end{align*}
Applying \cref{PmLemma} to each term of $A_0$, it follows that $A_0 \leq 0$ provided

\begin{equation}
\frac{1050}{\lambda^4} -  \frac{1365}{(\lambda+\lambda_1)^4} \leq 0.
\label{ZFR-Poly-Condition-2} 
\end{equation}
Thus, \eqref{ZFR-Poly-Identity-P}  implies
\[
0 \leq 620 \cdot P_4(1)  - 1050 \cdot P_4\big( \frac{\lambda}{\lambda+\lambda_1} \big) + 2890\phi \lambda + \epsilon
\]
provided \eqref{ZFR-Poly-Condition-2} is satisfied. Taking $\lambda = 1.291$ yields $\lambda_1 \geq 0.0875$. This completes the proof of \cref{MainTheorem-ZFR}. \hfill \qed

 
\bibliographystyle{alpha}
\bibliography{biblio} 

\end{document}